\newtheorem{proposition}{Proposition}[section]
\newtheorem{theorem}[proposition]{Theorem}
\newtheorem{corollary}[proposition]{Corollary}
\newtheorem{lemma}[proposition]{Lemma}
\theoremstyle{definition}
\newtheorem{definition}[proposition]{Definition}
\newtheorem{remark}[proposition]{Remark}
\newtheorem{questions}[proposition]{Questions}
\newtheorem{example}[proposition]{Example}
\newtheorem{examples}[proposition]{Examples}
\newcommand{\cst}{\ensuremath{\mathrm{C}^*}}
\newcommand{\comp}{\circ}
\newcommand{\dd}{\,\mathrm{d}}
\newcommand{\ee}{\mathrm{e}}
\newcommand{\eps}{\varepsilon}
\newcommand{\ph}{\varphi}
\newcommand{\hh}[1]{\widehat{#1}}
\newcommand{\I}{\mathds{1}}
\newcommand{\id}{\mathrm{id}}
\newcommand{\ii}{\mathrm{i}}
\newcommand{\Int}{\int\limits}
\newcommand{\is}[2]{{\left\langle{#1}\,\vline\,#2\right\rangle}}
\newcommand{\biggis}[2]{{\biggl\langle{#1}\,\biggr|\biggl.\,#2\biggr\rangle}}
\newcommand{\ket}[1]{{\left|#1\right\rangle}}
\newcommand{\bra}[1]{{\left\langle#1\right|}}
\newcommand{\tens}{\otimes}
\newcommand{\vtens}{\overline{\otimes}}
\newcommand{\btens}{\boxtimes}
\newcommand{\atens}{\otimes_{\text{\tiny{\rm{alg}}}}\!}
\newcommand{\CC}{\mathbb{C}}
\newcommand{\EE}{\mathbb{E}}
\newcommand{\HH}{\mathbb{H}}
\newcommand{\GG}{\mathbb{G}}
\newcommand{\KK}{\mathbb{K}}
\newcommand{\NN}{\mathbb{N}}
\newcommand{\QQ}{\mathbb{Q}}
\newcommand{\RR}{\mathbb{R}}
\newcommand{\TT}{\mathbb{T}}
\newcommand{\ZZ}{\mathbb{Z}}
\newcommand{\sH}{\mathsf{H}}
\newcommand{\sK}{\mathsf{K}}
\newcommand{\cC}{\mathscr{C}}
\newcommand{\cO}{\mathcal{O}}
\newcommand{\cQ}{\mathcal{Q}}
\newcommand{\bA}{{\boldsymbol{A}}}
\newcommand{\bh}{{\boldsymbol{h}}}
\DeclareMathOperator{\B}{B}
\DeclareMathOperator{\C}{C}
\DeclareMathOperator{\Dom}{Dom}
\DeclareMathOperator{\HS}{HS}
\DeclareMathOperator{\Lone}{\mathsf{L}^1}
\DeclareMathOperator{\Ltwo}{\mathsf{L}^2}
\DeclareMathOperator{\Linf}{\mathsf{L}^{\!\infty}}
\DeclareMathOperator{\cZ}{\mathscr{Z}}
\DeclareMathOperator{\SU}{SU}
\newcommand{\modOp}[1][\hspace{0.87pt}]{\nabla_{\!#1}}
\newcommand{\btau}[1][\hspace{0.65pt}]{\boldsymbol{\tau}_{\;\!\!\!#1}}
\newcommand{\rM}{\mathrm{M}}
\newcommand{\rN}{\mathrm{N}}
\DeclareMathOperator*{\barbigotimes}{\overline{\bigotimes}}
\DeclareMathOperator{\Ttau}{\mathnormal{T}^{\tau\!}}
\DeclareMathOperator{\TtauInn}{\mathnormal{T}^{\tau}_{\operatorname{Inn}\!}}
\DeclareMathOperator{\TtauAInn}{\mathnormal{T}^{\tau}_{\overline{\operatorname{Inn}}}}
\DeclareMathOperator{\Sd}{\mathnormal{S}\!\!\;\mathnormal{d}}
\numberwithin{equation}{section}
\title[Compact quantum groups with $\operatorname{\mathsf{L}^{\!\infty}}(\mathbb{G})$ a factor]{Examples of compact quantum groups with $\operatorname{\mathsf{L}^{\!\infty}}(\mathbb{G})$ a factor}
\author{Jacek Krajczok}
\address{School of Mathematics and Statistics, University of Glasgow, UK}
\email{jacek.krajczok@vub.be}
\author{Piotr M.~So{\l}tan}
\address{Department of Mathematical Methods in Physics, Faculty of Physics, University of Warsaw}
\email{piotr.soltan@fuw.edu.pl}
\keywords{compact quantum group, von Neumann algebra, factor}
\subjclass[2020]{46L67, 46L36}
\begin{document}

\begin{abstract}
For each $\lambda\in\left]0,1\right]$ we exhibit an uncountable family of compact quantum groups $\mathbb{G}$ such that the von Neumann algebra $\mathsf{L}^{\!\infty}(\mathbb{G})$ is the injective factor of type $\mathrm{III}_\lambda$ with separable predual. We also show that uncountably many injective factors of type $\mathrm{III}_0$ arise as $\mathsf{L}^{\!\infty}(\mathbb{G})$ for some compact quantum group $\mathbb{G}$. We introduce natural invariants of quantum groups related to the scaling group, modeled on the Connes invariant $T$ for von Neumann algebras.
We compute the values of these invariants in our examples, which allows us to distinguish between them. We also investigate their connection to the Connes invariants $T(\mathsf{L}^{\!\infty}(\mathbb{G}))$, $S(\mathsf{L}^{\!\infty}(\mathbb{G}))$.

In the final section we show that for a compact quantum group $\mathbb{G}$ the von Neumann algebra $\mathsf{L}^{\!\infty}(\mathbb{G})$ cannot have a direct summand of the form $\operatorname{B}(\mathsf{H})$ with $\dim{\mathsf{H}}=\infty$. In particular, factors of type $\mathrm{I}$ (of any dimension) cannot be obtained as $\mathsf{L}^{\!\infty}(\mathbb{G})$ for a non-trivial compact quantum group $\mathbb{G}$.
\end{abstract}

\maketitle

\tableofcontents

\allowdisplaybreaks

\section{Introduction}

The theory of quantum groups on operator algebra level has for a long time been a rich source of examples of interesting operator algebras. Beginning with the algebra $\C(\operatorname{SU}_q(2))$ associated to the famous quantum $\operatorname{SU}(2)$ group (\cite{su2}) there has been a lot of interest in the operator algebraic properties of the \cst-algebras and von Neumann algebras arising as ``functions on quantum groups'' (see e.g.~\cite{BrannanVergnioux,DeCommerFreslonYamashita} or \cite{typeI}). In this paper we will show that for any $\lambda\in\left]0,1\right]$ there exist uncountably many pairwise non-isomorphic compact quantum groups $\GG$ with the von Neumann algebra $\Linf(\GG)$ isomorphic to the injective factor of type $\mathrm{III}_\lambda$ with separable predual. Furthermore, uncountably many different type $\mathrm{III}_0$ factors with separable predual arise as $\Linf(\GG)$ as well. In particular, any of these von Neumann algebras carries a faithful and ergodic action of a compact quantum group.

Von Neumann factors appeared very early in the theory of compact quantum groups. Already in \cite{Banica} it was shown that the algebra of functions on the free unitary group $\operatorname{U}_2^+$ is isomorphic to the group von Neumann algebra of the free group on two generators, which in our notation would be written as $\Linf(\operatorname{U}_2^+)\cong\Linf(\hh{\mathbb{F}_2})$. The investigation of von Neumann algebras arising from compact quantum groups is still being carried out with a recent important result of \cite{BrannanVergnioux} stating that for $n\geq{3}$ the von Neumann algebra of functions on the free orthogonal group $\operatorname{O}_n^+$ (which is known to be a factor of type $\mathrm{II}_1$) is not isomorphic to $\Linf(\hh{\mathbb{F}_k})$ for any $k$. In fact, factors of type $\mathrm{II}_1$ are fairly common in the context of quantum groups, but only of Kac type (\cite[Remark A.2]{qBohr}). However, many non-isomorphic quantum groups may share the same von Neumann algebra. For example the injective factor of type $\mathrm{II}_1$ appears as the group von Neumann algebra of uncountably many pairwise non-isomorphic amenable groups (see e.g.~\cite[Section 2.4]{delaHarpe}).

The first comprehensive work describing how various von Neumann factors appear as algebras related to quantum groups was carried out by Pierre Fima (\cite{Fima}) who studied locally compact quantum groups $\GG$ such that the von Neumann algebras $(\Linf(\GG),\Linf(\hh{\GG}))$ are a pair of factors of types $(\mathrm{I}_\infty,\mathrm{I}_\infty)$, $(\mathrm{II}_\infty,\mathrm{II}_\infty)$ and $(\mathrm{III}_\lambda,\mathrm{III}_\lambda)$ for $\lambda\in[0,1]$ (see Section \ref{ConnesTandS} for a brief survey of these topics and further references). However, Fima's examples are not compact, since for a non-trivial compact quantum group $\GG$ the algebra $\Linf(\hh{\GG})$ is never a factor.

Let us mention here that due to existence of the unitary antipode (see \cite{KustermansVaes,SoltanWoronowicz}) any von Neumann algebra of the form $\Linf(\GG)$ with $\GG$ a locally compact quantum group is anti-isomorphic to itself. Thus in view of \cite{Connes-anti} one cannot expect to obtain all factors of type $\mathrm{III}$ as $\Linf(\GG)$. However, the examples of factors which are not anti-isomorphic to themselves are not injective (cf.~Remarks \ref{remninj0} and \ref{remninjlambda}).

Our investigation is based on the recent paper \cite{KrajczokWasilewski} where the authors constructed an uncountable family of pairwise non-isomorphic compact quantum groups all of whose algebras are isomorphic to the injective factor of type $\mathrm{II}_\infty$. We will also make use of other properties of these compact quantum groups discovered in \cite{KrajczokWasilewski}.

Let us briefly describe the contents of the paper. After recalling several notational conventions in Section \ref{conventions}, we give some necessary background and references on von Neumann factors and their invariants in Section \ref{ConnesTandS}. Section \ref{ITPFac} is devoted to standard results on infinite tensor products of factors which are usually formulated for tensor products of finite factors, hence the need to include their proofs in the more general case. Section \ref{sectIPQG} deals with the construction of the infinite Cartesian product of compact quantum groups which will be instrumental in constructing our examples. This section also contains the crucial Theorem \ref{theCenters} concerning the center of $\Linf(\GG)^{\sigma}$ (fixed point subalgebra of the modular group) which will be used extensively later on.

The first batch of examples of compact quantum groups giving rise to various factors is constructed in Section \ref{TheExamples} which begins with the introduction of the building blocks: the quantum groups $\operatorname{SU}_q(2)$ and their bicrossed products $\HH_{\nu,q}$ defined in \cite[Section 4.2]{KrajczokWasilewski}. Then, in Section \ref{sectTheFactors1}, we construct examples of compact quantum groups $\GG$ with $\Linf(\GG)$ the injective factors of type $\mathrm{III}_\lambda$ for all $\lambda\in\left]0,1\right]$ as well as uncountably many (non)-injective factors of type $\mathrm{III}_0$.

In Section \ref{sectTtau} we introduce three natural invariants of compact quantum groups which are modeled on the Connes invariant $T$ for von Neumann algebras. They are defined using the scaling automorphism group. More precisely, real number $t$ belongs to invariant $\Ttau(\GG)$ (resp.~$\TtauInn(\GG)$, $\TtauAInn(\GG)$) if and only if $\tau^{\GG}_t$ is trivial (resp.~inner, approximately inner). Let us mention that invariant $\TtauInn(\GG)$ is similar to the invariant $T(\Linf(\GG),\Delta_{\GG})$ introduced in \cite[Definition 3.4]{Vaes}, however in general they are different (see Remark \ref{remark1}). These invariants carry interesting information about quantum group, and can be used to distinguish between compact quantum groups which share the same factor as their associated von Neumann algebra. After establishing some of their basic properties we compute the newly introduced invariants for the examples discussed previously. Next, in Section \ref{sectBicrossed}, we tweak those examples by taking the bicrossed product by an arbitrary subgroup of $\RR$ (with the discrete topology) acting by the scaling automorphisms. This results in uncountably many non-isomorphic compact quantum groups $\GG$ with $\Linf(\GG)$ isomorphic to the injective factor of type $\mathrm{III}_\lambda$ for any $\lambda\in\left]0,1\right]$. Taking Cartesian product with $\hh{\mathbb{F}_2}$ (the dual of the free group on two generators) produces non-injective factors. The results of this section rely on information on the relative commutant of the algebra of class functions on $\HH_{\nu,q}$ discovered in \cite{KrajczokWasilewski}. In Section \ref{TtauandT} we provide more information on the connection between the quantum group invariants introduced in Section \ref{sectTtau} and the Connes invariant $T$ of the corresponding von Neumann algebra. In particular, we discuss the importance of a certain symmetry assumption on the spectra of the $\uprho$-operators (see Section \ref{conventions}).

Section \ref{sectTypeI} is devoted to the proof of an unrelated result that factors of type $\mathrm{I}_\infty$ cannot arise as algebras of functions on compact quantum groups. Finally the Appendix (Section \ref{sectApp}) deals with the technical aspects of infinite tensor products of closed operators needed for the results of Section \ref{ITPFac}.

\subsection{Notation and conventions}\label{conventions}\hspace*{\fill}

We refer to \cite[Chapter 1]{NeshveyevTuset} for the rudiments of the theory of compact quantum groups. We will also adopt some of the notational conventions of this monograph. Thus, for a compact quantum group $\GG$ described by the \cst-algebra with comultiplication $(\C(\GG),\Delta)$ the Haar measure of $\GG$ will be denoted by $\bh_\GG$ (or $\bh$ if there is no danger of confusion). The G.N.S.~Hilbert space for $\bh$ will be denoted by $\Ltwo(\GG)$ and we will write $\Linf(\GG)$ for the strong closure of the image of $\C(\GG)$ under the G.N.S.~representation. The canonical cyclic vector (image of the unit of $\C(\GG)$ in $\Ltwo(\GG)$) will be denoted by $\Omega_\bh$ and the objects of the Tomita-Takesaki theory (\cite{Connes}, \cite[Chapter III]{Takesaki2}) will be $S_\bh$, $J_\bh$ and $\modOp[\bh]$ (see also Section \ref{ConnesTandS}). Finally $\|\cdot\|_2$ will denote the Hilbert space norm on $\Linf(\GG)$ coming from the injection $\Linf(\GG)\ni{x}\mapsto{x}\Omega_\bh\in\Ltwo(\GG)$.

We will use the symbol $\operatorname{Irr}{\GG}$ to denote the set of equivalence classes of irreducible representations of $\GG$. For each $\alpha\in\operatorname{Irr}{\GG}$ we will always make a choice of a unitary representation $U^\alpha\in\alpha$. Recall that $U^\alpha$ is then a unitary element of the \cst-algebra $\B(\sH_\alpha)\tens\C(\GG)$ for a certain finite-dimensional Hilbert space $\sH_\alpha$. We will furthermore always choose an orthonormal basis of $\sH_\alpha$ diagonalizing the canonical operator $\uprho_{U^\alpha}$ responsible for the modular properties of $\bh$ (\cite[Section 1.4]{NeshveyevTuset}) and for $i,j\in\{1,\dotsc,\dim{U^\alpha}\}$ the symbols $U^\alpha_{i,j}$ ($i,j\in\{1,\dotsc,\dim{U^\alpha}\}$) will denote the matrix elements of $U^\alpha$ with respect to this basis. Furthermore we will sometimes refer to $\uprho_{U^\alpha}$ simply as $\uprho_\alpha$ and denote the eigenvalues of this operator by $\uprho_{\alpha,1},\dotsc,\uprho_{\alpha,\dim{\alpha}}$, where we used $\dim{\alpha}$ as a synonym for $\dim{U^\alpha}$. We will also sometimes refer to $\uprho_\alpha$ without specifying the class $\alpha$ as the \emph{$\uprho$-operator}.

The scaling group and the modular group of $\GG$ will be denoted by $(\tau^\GG_t)_{t\in\RR}$ and $(\sigma^\bh_t)_{t\in\RR}$ respectively. Throughout the paper we will be using the well-known formulas for the action of these groups on matrix elements of irreducible representations, namely
\begin{equation}\label{sigmatau}
\left\{\begin{array}{@{}r@{\;=\;}l@{}}
\sigma^\bh_t(U^\alpha_{i,j})&\uprho_{\alpha,i}^{\ii{t}}U^\alpha_{i,j}\uprho_{\alpha,j}^{\ii{t}}\\[5pt]
\tau^\GG_t(U^\alpha_{i,j})&\uprho_{\alpha,i}^{\ii{t}}U^\alpha_{i,j}\uprho_{\alpha,j}^{-\ii{t}}
\end{array}\right.,\quad\qquad{t}\in\RR,\:i,j\in\{1,\dotsc,\dim{\alpha}\}
\end{equation}
(cf.~\cite[Section 1.7]{NeshveyevTuset}). The subalgebras of elements $x\in\Linf(\GG)$ invariant under the modular or the scaling group will be denoted respectively by $\Linf(\GG)^{\sigma}$ and $\Linf(\GG)^\tau$.

We will use $\operatorname{Pol}(\GG)$ to refer to the canonical dense Hopf $*$-subalgebra of $\C(\GG)$ and $\Lone(\GG)$ will denote the predual of $\Linf(\GG)$. Finally, we say that a compact quantum groups $\GG$ is \emph{second countable} if $\Lone(\GG)$ is separable (see also \cite[Lemma 14.6]{KrajczokTypeI} for equivalent conditions).

\section{Factors and Connes invariants}\label{ConnesTandS}

The theory of von Neumann algebras originates in the groundbreaking papers of Murray and von Neumann \cite{MurrayVonNeumann1,MurrayVonNeumann2,vonNeumann3,MurrayVonNeumann4} and it was already in \cite{MurrayVonNeumann1} that \emph{factors}, i.e.~von Neumann algebras with trivial center, were introduced and classified into types $\mathrm{I}$, $\mathrm{II}$ and $\mathrm{III}$ with further subdivisions. The theory has since been greatly developed and refined. We refer the reader to series of monographs such as \cite{Takesaki1,Takesaki2,Takesaki3} or \cite{StratilaZsido,Stratila} as well as e.g.~the collection \cite{Varenna} for general theory and details of various constructions. In what follows we will give a very brief account of results (mainly due to Connes) which we will need in our paper.

Via the famous Tomita-Takesaki theory a normal semifinite faithful (n.s.f.) weight $\ph$ on a von Neumann algebra $\rM$ gives rise to a positive self-adjoint (in most cases unbounded) operator on the G.N.S.~Hilbert space $\sH_\ph$ called the \emph{modular operator} for $\ph$ which we will denote $\modOp[\ph]$ (since the traditional notation $\Delta_\ph$ is likely to cause confusion with the standard notation for the coproduct) and the \emph{modular group} $\{\sigma_t^\ph\}_{t\in\RR}$ of $\rM$ defined by
\[
\pi_\ph\bigl(\sigma^\ph_t(x)\bigr)=\modOp[\ph]^{\ii{t}}\pi_\ph(x)\modOp[\ph]^{-\ii{t}},\qquad{t}\in\RR,\:x\in\rM,
\]
where $\pi_\ph$ is the G.N.S.~representation. The famous result of Connes about the modular homomorphism states that the class of $\sigma_t^\ph$ in the outer automorphism group $\operatorname{Out}(\rM)=\operatorname{Aut}(\rM)/\operatorname{Inn}(\rM)$ does not depend on $\ph$ and is therefore intrinsically determined by $\rM$ itself. The invariant $T(\rM)$ is defined as the kernel of the modular homomorphism, i.e.
\[
T(\rM)=\bigl\{t\in\RR\,\bigr|\bigl.\,\sigma_t^\ph\in\operatorname{Inn}(\rM)\bigr\}.
\]
It is known that $\rM$ is semifinite if and only if $T(\rM)=\RR$ (the ``if'' part holds only for $\rM$ with separable predual). Furthermore this invariant can be used to distinguish many non-isomorphic factors (in particular, the famous Powers' factors $\mathrm{R}_\lambda$, \cite{Powers}).

The other invariant introduced by Connes is $S(\rM)$ which is defined as
\begin{equation}\label{SM}
S(\rM)=\bigcap_\ph\operatorname{Sp}{\modOp[\ph]},
\end{equation}
where $\ph$ runs over the set of all n.s.f.~weights on a factor $\rM$. The first crucial property of $S(\rM)$ is that for a type $\mathrm{III}$ factor the subset $S(\rM)\subset\RR_{\geq{0}}$ must be one of the following
\begin{itemize}
\item $S(\rM)=\{0\}\cup\{\lambda^n\,\bigr|\bigl.\,n\in\ZZ\}$ for some $\lambda\in\left]0,1\right[$,
\item $S(\rM)=\{0,1\}$,
\item $S(\rM)=\RR_{\geq{0}}$.
\end{itemize}
Assigning the to the second case $\lambda=0$ and $\lambda=1$ to the third one, we obtain a classification of type $\mathrm{III}$ factors into classes $\mathrm{III}_\lambda$ ($\lambda\in[0,1]$). Moreover, it is known that for $\lambda>0$, among von Neumann algebras with separable predual, there is only one injective factor of type $\mathrm{III}_\lambda$ (for $\lambda=1$ this result is due to Haagerup \cite{Haagerup}).

The second crucial property of $S(\rM)$ is that it is often computable. More precisely, if $\rM$ is a factor then the intersection of spectra \eqref{SM} is equal to the spectrum of one particular modular operator $\modOp[\ph]$ if $\ph$ is such that its \emph{centralizer}
\[
\rM_\ph=\bigl\{y\in\rM\,\bigr|\bigl.\,\sigma_t^\ph(y)=y\text{ for all }t\in\RR\bigr\}
\]
is a factor (\cite[Theorem 28.3]{Stratila}, see also \cite[Theorem 3.11 \emph{b})]{Connes}).

\subsection{Infinite tensor product of semifinite factors}\label{ITPFac}\hspace*{\fill}

For $n\in\NN$ let $\rN_n$ be a factor equipped with a n.s.f.~tracial weight $\btau[n]$ represented on the G.N.S.~Hilbert space $\sH_n$ for a faithful normal non-tracial state $\omega_n$. Let $\Omega_n\in\sH_n$ be the corresponding cyclic vector. By \cite[Section C.10.2]{StratilaZsido}, for each $n$ we have $\omega_n=\btau[n](\cdot{h_n})$ for a strictly positive self-adjoint operator $h_n$ affiliated with $\rN_n$.

The infinite tensor product
\[
\rN=\barbigotimes_{n=1}^{\infty}\rN_n
\]
is defined as the weak closure of the set of operators of the form
\[
a_1\tens\dotsm\tens{a_N}\tens\I\tens\I\tens\dotsm
\]
(often denoted by $a_1\tens\dotsm\tens{a_N}\tens\I^{\tens\infty}$) on the infinite tensor product $\bigotimes\limits_{n=1}^{\infty}(\sH_n,\Omega_n)$ of Hilbert spaces with distinguished unit vector, where $a_i\in\rN_i$ for each $i$ and $N$ is arbitrary. Furthermore we let $\omega$ be the infinite product state $\bigotimes\limits_{n=1}^{\infty}\omega_n$ on $\rN$. Then
\begin{itemize}
\item $\rN$ is a factor (\cite[Corollary XIV.1.10]{Takesaki3}),
\item we have $\sigma_t^\omega=\bigotimes\limits_{n=1}^{\infty}\sigma_t^{\omega_n}$ for all $t\in\RR$ (\cite[Proposition XIV.1.11]{Takesaki3}),
\item $\rN$ is semifinite if and only if $\sum\limits_{n=1}^\infty\bigl(1-|\omega_n(h_n^{\ii{t}})|\bigr)<+\infty$ for all $t\in\RR$ (\cite[Theorem XIV.1.14]{Takesaki3}).
\end{itemize}

\begin{proposition}\label{typeIIIfactor}
Let $\bigl\{(\rN_n,\omega_n)\bigr\}_{n\in\NN}$ and $\rN$ be as above. Assume that there is a von Neumann algebra with a faithful normal state which appears in the sequence $\bigl((\rN_n,\omega_n)\bigr)_{n\in\NN}$ infinitely many times. Then $\rN$ is of type $\mathrm{III}$.
\end{proposition}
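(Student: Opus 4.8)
The plan is to obtain type $\mathrm{III}$ from the failure of semifiniteness. Since $\rN$ is a factor (as noted above), and a factor is of type $\mathrm{III}$ precisely when it is not semifinite, it suffices — by the semifiniteness criterion recalled above — to exhibit a single $t\in\RR$ for which the series $\sum_{n=1}^{\infty}\bigl(1-|\omega_n(h_n^{\ii t})|\bigr)$ diverges. Note every term is non-negative: $h_n^{\ii t}$ is unitary and $\omega_n$ is a state, so $|\omega_n(h_n^{\ii t})|\le\|\omega_n\|\,\|h_n^{\ii t}\|=1$.

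Let $(\rN_0,\omega_0)$ be a pair occurring in the sequence $\bigl((\rN_n,\omega_n)\bigr)_{n\in\NN}$ at every index of some infinite set $I\subseteq\NN$, and write $\omega_0=\btau[0](\cdot\,h_0)$ with $h_0$ strictly positive and affiliated with $\rN_0$, as in the setup. The quantity $|\omega_n(h_n^{\ii t})|$ depends only on the isomorphism class of the pair $(\rN_n,\omega_n)$: the decomposition $\omega_n=\btau[n](\cdot\,h_n)$ is unique only up to a rescaling $(\btau[n],h_n)\mapsto(c^{-1}\btau[n],c\,h_n)$ with $c>0$ — under which $\omega_n(h_n^{\ii t})$ merely picks up the unimodular factor $c^{\ii t}$ — and a state-preserving isomorphism of pairs carries one such decomposition to another (by uniqueness of the trace on a factor up to a positive scalar). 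Hence $|\omega_n(h_n^{\ii t})|=|\omega_0(h_0^{\ii t})|$ for all $n\in I$ and all $t$, so the series in question dominates $\sum_{n\in I}\bigl(1-|\omega_0(h_0^{\ii t})|\bigr)$. It therefore suffices to produce $t_0\in\RR$ with $|\omega_0(h_0^{\ii t_0})|<1$: the dominating series then has infinitely many equal strictly positive terms and diverges.

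For this I would use the elementary observation that a faithful normal state $\ph$ on a von Neumann algebra $\rM$ satisfies $|\ph(u)|<1$ for every unitary $u\in\rM$ which is not a scalar. Indeed, representing $\rM$ on its G.N.S.\ space with cyclic and separating vector $\xi$ so that $\ph=\is{\xi}{(\cdot)\,\xi}$, an equality $|\ph(u)|=1$ is equality in the Cauchy--Schwarz estimate $|\is{\xi}{u\,\xi}|=1=\|\xi\|\,\|u\,\xi\|$, forcing $u\,\xi=c\,\xi$ for a scalar $c$ of modulus $1$; since $u$ commutes with $\rM'$ and $\xi$ is cyclic for $\rM'$, this gives $u=c\,\I$. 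Applying this with $\ph=\omega_0$ and $u=h_0^{\ii t}$: were $|\omega_0(h_0^{\ii t})|=1$ for \emph{every} $t\in\RR$, we would get $h_0^{\ii t}\in\CC\,\I$ for all $t$, and then Stone's theorem (the spectral theorem applied to the one-parameter unitary group $t\mapsto h_0^{\ii t}$) forces $h_0$ to be a positive scalar, whence $\omega_0=\btau[0](\cdot\,h_0)$ is proportional to the tracial weight $\btau[0]$ and hence tracial, contradicting the standing assumption that each $\omega_n$ is non-tracial. So $|\omega_0(h_0^{\ii t_0})|<1$ for some $t_0$, and the argument is complete.

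The proof is short and presents no serious obstacle; the only points needing a little care are the equality case of Cauchy--Schwarz for a faithful normal state and the passage from ``$h_0^{\ii t}$ scalar for all $t$'' to ``$h_0$ scalar''. One can in fact bypass the ``for all $t$'' version entirely: non-traciality of $\omega_0$ means $\sigma^{\omega_0}_{t}=\operatorname{Ad}\bigl(h_0^{\ii t}\bigr)$ is non-trivial for some $t_0$, so $h_0^{\ii t_0}$ is a non-scalar unitary (the center of the factor $\rN_0$ being $\CC\,\I$), and then the Cauchy--Schwarz fact alone yields $|\omega_0(h_0^{\ii t_0})|<1$.
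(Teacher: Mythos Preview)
Your proof is correct and follows essentially the same approach as the paper: exhibit a single $t$ with $|\omega_0(h_0^{\ii t})|<1$ via the Cauchy--Schwarz equality case (using that the G.N.S.\ vector is cyclic for the commutant), then invoke the semifiniteness criterion. The paper runs the logic in the order ``$\omega$ non-tracial $\Rightarrow$ $h$ not scalar $\Rightarrow$ $h^{\ii t}$ not scalar for some $t$ $\Rightarrow$ $|\omega(h^{\ii t})|<1$'', which is your alternative argument (b); your extra care about why $|\omega_n(h_n^{\ii t})|$ depends only on the isomorphism class of the pair is a nice touch the paper omits by treating the repetition as literal equality.
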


\begin{proof}
Let $n_1<n_2<\dotsm$ be such that $(\rN_{n_k},\omega_{n_k})=(\rN_{n_1},\omega_{n_1})$ for all $k\geq{2}$. Since $\omega_{n_1}$ is not tracial, the operator $h=h_{n_1}$ is not a (real) scalar multiple of the identity, and hence there exists $t$ such that $h^{\ii{t}}$ is not a scalar multiple of the identity. It follows that $h^{\ii{t}}\Omega_{n_1}$ is not a scalar multiple of $\Omega_{n_1}$ because otherwise for any $x'\in\rN_{n_1}^{\:\prime}$ we would have $h^{\ii{t}}x'\Omega_{n_1}=x'h^{\ii{t}}\Omega_{n_1}=\lambda{x'}\Omega_{n_1}$ for some scalar $\lambda$ and $\Omega_{n_1}$ is cyclic for $\rN_{n_1}'$. Consequently we must have
\[
\bigl|\omega_{n_1}(h^{\ii{t}})\bigr|=\bigl|\is{\Omega_{n_1}}{h^{\ii{t}}\Omega_{n_1}}\bigr|<\|\Omega_{n_1}\|^2=1.
\]
It follows that a non-zero term is repeated infinitely many times in the series $\sum\limits_{n=1}^\infty\bigl(1-|\omega_n(h_n^{\ii{t}})|\bigr)$ and so $\rN$ is not semifinite. Since it is a factor, it must be of type $\mathrm{III}$.
\end{proof}

\begin{proposition}\label{injTP}
Let $\bigl\{(\rN_n,\omega_n)\bigr\}_{n\in\NN}$ and $\rN$ be as above. If $\rN_n$ is injective for all $n$ then $\rN$ is injective.
\end{proposition}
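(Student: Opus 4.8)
The plan is to exhibit $\rN$ (acting on $\sH=\bigotimes_{n=1}^\infty(\sH_n,\Omega_n)$) as the von Neumann algebra generated by an increasing sequence of injective subalgebras which are, moreover, ranges of normal conditional expectations, and then to manufacture a norm-one projection of $\B(\sH)$ onto $\rN$ as a cluster point of suitable extensions of those conditional expectations.

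First I would fix the building blocks. For $N\in\NN$ let $\rN^{(N)}\subseteq\rN$ be the von Neumann subalgebra generated by the operators $a_1\tens\dotsm\tens a_N\tens\I^{\tens\infty}$ with $a_i\in\rN_i$; it is isomorphic, as a von Neumann algebra, to $\rN_1\vtens\dotsm\vtens\rN_N$. Since a von Neumann tensor product of finitely many injective von Neumann algebras is injective (a standard fact; see e.g.\ \cite[Chapter XV]{Takesaki3}), each $\rN^{(N)}$ is injective. The $\rN^{(N)}$ form an increasing sequence with $\bigl(\bigcup_{N}\rN^{(N)}\bigr)''=\rN$ by the very definition of the infinite tensor product. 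Finally, slicing with the product state $\bigotimes_{n>N}\omega_n$ gives a normal, $\omega$-preserving conditional expectation $\EE_N\colon\rN\to\rN^{(N)}$, and $\EE_N(x)\to x$ $\sigma$-weakly for every $x\in\rN$: this is immediate for $x$ in the generating $*$-algebra and follows in general from a density argument using $\|\EE_N\|=1$.

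Then comes the construction. Working in a faithful normal representation $\rN\subseteq\B(\sH)$, injectivity of $\rN^{(N)}$ lets us extend the unital completely positive (u.c.p.) map $\EE_N\colon\rN\to\rN^{(N)}$ to a u.c.p.\ map $\widetilde E_N\colon\B(\sH)\to\rN^{(N)}$; as $\widetilde E_N$ is the identity on $\rN^{(N)}$ and has range $\rN^{(N)}$, it is a norm-one projection of $\B(\sH)$ onto $\rN^{(N)}$. The set of u.c.p.\ maps $\B(\sH)\to\B(\sH)$ is compact in the topology of pointwise $\sigma$-weak convergence, so the sequence $(\widetilde E_N)_N$ has a cluster point $E$ (for instance a limit along a free ultrafilter on $\NN$), and $E$ is again u.c.p. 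Because every $\widetilde E_N$ has range in the $\sigma$-weakly closed algebra $\rN$, so does $E$; and because $\widetilde E_N|_\rN=\EE_N\to\id_\rN$ pointwise $\sigma$-weakly, we obtain $E|_\rN=\id_\rN$. Hence $E^2=E$, $E(\B(\sH))=\rN$, and $E$ is a norm-one projection of $\B(\sH)$ onto $\rN$; thus $\rN$ is injective.

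The only genuinely external input here is the stability of injectivity under finite von Neumann tensor products; everything else is soft. The place where the particular structure of the infinite tensor product is used — rather than merely the fact that $\rN$ is generated by an increasing family of injective subalgebras — is the availability of the normal conditional expectations $\EE_N$ converging to $\id_\rN$: this is exactly what lets one identify the cluster point $E$ as the identity on all of $\rN$, and not just on the $\sigma$-weakly dense subalgebra $\bigcup_N\rN^{(N)}$. (Alternatively, one may simply appeal to the general fact that a von Neumann algebra generated by an increasing sequence of injective von Neumann subalgebras is injective.)
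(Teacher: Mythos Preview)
Your proof is correct and takes a genuinely different route from the paper's. The paper argues via hyperfiniteness: using that each injective $\rN_n$ is hyperfinite (the deep direction of Connes' theorem), it picks for every $n$ an increasing directed family $\{\rM_{i,n}\}_{i\in I_n}$ of finite-dimensional unital $*$-subalgebras of $\rN_n$ with strongly dense union, and then forms the finite-dimensional subalgebras $\rM_{i_1,\dotsc,i_N}=\operatorname{span}\bigl(\widetilde\rM_{i_1,1}\dotsm\widetilde\rM_{i_N,N}\bigr)$ of $\rN$; these constitute an increasing directed family with strongly dense union, so $\rN$ is hyperfinite, hence injective. Your argument instead works straight from the definition of injectivity: you manufacture a norm-one projection $\B(\sH)\to\rN$ as a point-$\sigma$-weak cluster point of u.c.p.\ extensions of the slice-map conditional expectations $\EE_N\colon\rN\to\rN^{(N)}$. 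The payoff is that you avoid Connes' theorem altogether, needing only stability of injectivity under finite von Neumann tensor products and soft compactness; you also isolate exactly where the ITPFI structure enters, namely through the normal expectations $\EE_N\to\id_\rN$, which is what pins the cluster point down on all of $\rN$ rather than merely on the dense union. The paper's route is shorter once Connes' theorem is taken as a black box and has the side benefit of displaying the finite-dimensional approximants explicitly. One small remark: your ``density argument using $\|\EE_N\|=1$'' for $\EE_N(x)\to x$ is correct but is really an $\Ltwo$/martingale argument (the $\EE_N$ implement increasing orthogonal projections in the GNS space for $\omega$), not a bare $\sigma$-weak density statement, since uniform boundedness plus $\sigma$-weak convergence on a $\sigma$-weakly dense set does not by itself force $\sigma$-weak convergence everywhere.
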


\begin{proof}
We need to show that $\rN$ is hyperfinite, i.e.~there is an increasing directed family of finite-dimensional unital $*$-subalgebras $\rM_i\subset\rN$ such that $\bigcup\limits_i\rM_i$ is strongly dense in $\rN$. For each $n$ there exists such a family for $\rN_n$, $\{\rM_{i,n}\}_{i\in{I_n}}$. Then for $N\in\NN,i_1\in{I_1},\dotsc,i_N\in{I_N}$ define
\[
\rM_{i_1,\dotsc,i_N}=\operatorname{span}\bigl(\widetilde{\rM}_{i_1,1}\dotsm\widetilde{\rM}_{i_N,N}\bigr),
\]
where $\widetilde{\rM}_{i_k,k}$ is $\rM_{i_k,k}$ canonically embedded into $\rN$ (so the subalgebras for different $k$ commute). These clearly are finite-dimensional unital $*$-subalgebras and they form an increasing directed family if we declare $(N,i_1,\dotsc,i_N)\leq(M,j_1,\dotsc,j_M)$ if and only if $N\leq{M}$ and $i_k\leq{j_k}$ in $I_k$ for $1\leq{k}\leq{N}$. The union of all these subalgebras is strongly dense in $\rN$.
\end{proof}

Let us now consider the objects of modular theory arising from the product state $\omega$ on $\rN$. We already mentioned that for each $t$ we have
\begin{equation}\label{sigmaprod}
\sigma_t^\omega=\bigotimes_{n=1}^{\infty}\sigma_t^{\omega_n}
\end{equation}
i.e.~$\sigma_t^\omega$ is the unique $\sigma$-weakly continuous extension of the map
\[
x_1\tens\dotsm\tens{x_N}\tens\I\tens\dotsm\longmapsto
\sigma_t^{\omega_1}(x_1)\tens\dotsm\tens\sigma_t^{\omega_N}(x_N)\tens\I\tens\dotsm
\]
defined on a dense subset of $\rN$. One of the consequences of the identification of the modular group for the product state is the following fundamental result on the invariant $T$ of infinite tensor products of semifinite factors (cf.~\cite[Lemma, p.~812]{Stormer}, \cite[Theorem 2.7]{Connes}):

\begin{proposition}\label{propStormer}
Let $\rN$ be the infinite tensor product of semifinite factors as above. Then a real number $t$ belongs to $T(\rN)$ if and only if
\[
\sum_{n=1}^\infty\Bigl(1-\bigl|\btau[n]\bigl(h_n^{1+\ii{t}}\bigr)\bigr|\Bigr)<+\infty.
\]
\end{proposition}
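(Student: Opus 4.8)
The plan is to compute $T(\rN)$ via the characterization of the invariant $T$ in terms of inner automorphisms, reduced to a concrete Hilbert-space/Radon--Nikodym statement using Connes' cocycle machinery. Recall that $t\in T(\rN)$ means $\sigma^\omega_t\in\operatorname{Inn}(\rN)$, i.e.\ there is a unitary $u\in\rN$ with $\sigma^\omega_t=\operatorname{Ad}(u)$. Since each $\rN_n$ is semifinite with n.s.f.\ tracial weight $\btau[n]$ and $\omega_n=\btau[n](\cdot\,h_n)$, the modular group of $\omega_n$ is inner in $\rN_n$, implemented by the unitaries $h_n^{\ii t}$; explicitly $\sigma^{\omega_n}_t=\operatorname{Ad}(h_n^{\ii t})$. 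Hence $\bigotimes_{n}h_n^{\ii t}$ ``wants to'' implement $\sigma^\omega_t=\bigotimes_n\sigma^{\omega_n}_t$ on $\rN$, and the whole question is whether this infinite product of unitaries converges to an element of $\rN$ — which, by a standard infinite-product-of-unitaries argument in the GNS space of $\omega$, happens precisely when the scalars $\omega_n(h_n^{\ii t})$ have product converging to something nonzero, i.e.\ when $\sum_n\bigl(1-|\omega_n(h_n^{\ii t})|\bigr)<+\infty$. The subtlety is that a priori $\sigma^\omega_t$ could be inner in $\rN$ via some unitary \emph{not} of product form, so convergence of $\prod_n h_n^{\ii t}$ is a sufficient but seemingly not obviously necessary condition. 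This is where one must be careful.

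First I would establish the ``if'' direction. Assume $\sum_n\bigl(1-|\btau[n](h_n^{1+\ii t})|\bigr)<+\infty$. Note $\btau[n](h_n^{1+\ii t})=\btau[n](h_n^{\ii t}h_n)=\omega_n(h_n^{\ii t})$, so the hypothesis is $\sum_n\bigl(1-|\omega_n(h_n^{\ii t})|\bigr)<+\infty$. Replace each $h_n^{\ii t}$ by $\lambda_n^{-1}h_n^{\ii t}$ where $\lambda_n=\omega_n(h_n^{\ii t})/|\omega_n(h_n^{\ii t})|$ is a phase (harmless, as it does not change $\operatorname{Ad}$); then $\omega_n(\lambda_n^{-1}h_n^{\ii t})=|\omega_n(h_n^{\ii t})|$ and $\sum_n(1-\omega_n(\lambda_n^{-1}h_n^{\ii t}))<\infty$. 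A direct estimate on the GNS vector $\Omega_\omega=\bigotimes_n\Omega_n$ shows $\|(\lambda_{N+1}^{-1}h_{N+1}^{\ii t}\tens\dotsm\tens\lambda_{M}^{-1}h_M^{\ii t}-\I)\Omega_\omega\|^2=2\bigl(1-\prod_{n=N+1}^M\omega_n(\lambda_n^{-1}h_n^{\ii t})\bigr)\to0$, so the partial products $u_N=\lambda_1^{-1}h_1^{\ii t}\tens\dotsm\tens\lambda_N^{-1}h_N^{\ii t}\tens\I^{\tens\infty}$ form a Cauchy net of unitaries in $\rN$ in the strong topology on a dense set of vectors; since they are uniformly bounded they converge strongly to a unitary $u\in\rN$. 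One checks on elementary tensors that $\operatorname{Ad}(u)$ agrees with $\sigma^\omega_t$ (using \eqref{sigmaprod} and $\sigma^{\omega_n}_t=\operatorname{Ad}(h_n^{\ii t})=\operatorname{Ad}(\lambda_n^{-1}h_n^{\ii t})$), hence $t\in T(\rN)$.

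For the ``only if'' direction I would argue by contraposition using Connes' cocycle derivative, which is the standard device for pinning down $T$ of a product and circumvents the product-form worry. Since $\rN$ is semifinite exactly when $T(\rN)=\RR$, but more precisely: pick the trace-like weight on $\rN$ built from the traces $\btau[n]$ where available — when only finitely many factors are present this is literally a trace, and in general one uses that $[D\omega:D\btau]_t$ is the product cocycle $\bigotimes_n h_n^{\ii t}$ whenever that product makes sense. The key point is that $t\in T(\rN)$ forces $\sigma^\omega_t$ inner, and by Connes' theorem this is equivalent to the Connes cocycle $[D\omega:D\omega\circ\operatorname{Ad}(v)]$ being a coboundary for a suitable $v$; tracking this through the tensor factors, the obstruction is measured precisely by whether the scalars $\prod_n\omega_n(h_n^{\ii t})$ converge away from $0$ — and divergence of $\sum_n(1-|\omega_n(h_n^{\ii t})|)$ forces this product to $0$, making $\sigma^\omega_t$ genuinely outer. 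Concretely: if $\sigma^\omega_t=\operatorname{Ad}(u)$ for some $u\in\rN$, then $u^*(\bigotimes_n h_n^{\ii t})$ centralizes $\rN$ in the bimodule sense, i.e.\ $[D\omega:D\omega]$-type considerations force $u$ to be, up to an element of the centralizer $\rN_\omega$, the product $\bigotimes_n h_n^{\ii t}$; but the centralizer of a product state acts diagonally enough that one recovers convergence of the scalar products. The main obstacle is exactly this last step — ruling out ``exotic'' implementing unitaries — and the cleanest route is to cite the cocycle-derivative computation already invoked before \eqref{sigmaprod} together with the St{\o}rmer-type argument of \cite[Lemma, p.~812]{Stormer} and \cite[Theorem 2.7]{Connes}, adapted verbatim to the semifinite (rather than finite) setting, since the only property of $\btau[n]$ used there is traciality, not finiteness.
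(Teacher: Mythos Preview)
Your ``if'' direction is fine and is essentially one half of \cite[Theorem~XIV.1.13]{Takesaki3}: the partial products $u_N$ converge strongly to a unitary in $\rN$ implementing $\sigma^\omega_t$.

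The ``only if'' direction, however, is not proved. You correctly isolate the difficulty---an implementing unitary $u$ need not be of product form---but you do not resolve it. The sentence ``$u^*\bigl(\bigotimes_n h_n^{\ii t}\bigr)$ centralizes $\rN$ in the bimodule sense'' is not meaningful as written: the infinite product $\bigotimes_n h_n^{\ii t}$ is exactly the object whose existence is in question, and if the series diverges it is not an element of $\rN$ (or even a bounded operator) to begin with. The subsequent appeal to Connes cocycles and to ``the centralizer of a product state acts diagonally enough'' is a sketch of a strategy, not an argument, and you end by deferring to \cite{Stormer,Connes} with the claim that the adaptation to semifinite factors is ``verbatim''---which is a citation, not a proof.

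The paper sidesteps all of this by invoking \cite[Theorem~XIV.1.13]{Takesaki3} directly: that theorem says that an infinite tensor product $\bigotimes_n\operatorname{Ad}(v_n)$ of inner automorphisms of factors $\rN_n$ is inner on $\rN$ if and only if $\sum_n\bigl(1-|\omega_n(v_n)|\bigr)<+\infty$. Applying this with $v_n=h_n^{\ii t}$ and noting $\omega_n(h_n^{\ii t})=\btau[n](h_n^{1+\ii t})$ gives both directions at once. (This theorem is already cited in the paragraph just before the proposition, for the semifiniteness criterion.) If you want to give a self-contained argument for the hard direction, the standard route is to use the conditional expectations $E_N\colon\rN\to\rN_1\vtens\dotsm\vtens\rN_N$ induced by $\omega_{N+1}\tens\omega_{N+2}\tens\dotsm$: if $\sigma^\omega_t=\operatorname{Ad}(u)$ then $E_N(u)$ implements $\sigma^{\omega_1}_t\tens\dotsm\tens\sigma^{\omega_N}_t$ on the finite product, hence differs from $h_1^{\ii t}\tens\dotsm\tens h_N^{\ii t}$ by a scalar, and a martingale-type estimate on $\|E_N(u)\|_2$ forces the scalar products to stay bounded away from zero. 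That is the content of Takesaki's proof.
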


\begin{proof}
For each $n\in\NN$ and $t\in\RR$ we have $\sigma_t^{\omega_n}=\operatorname{Ad}\bigl(h_n^{\ii{t}}\bigr)$. Then \cite[Theorem XIV.1.13]{Takesaki3} tells us that $\sigma_t^\omega=\bigotimes\limits_{n=1}^\infty\sigma_t^{\omega_n}\in\operatorname{Inn}(\rN)$ if and only if
\[
\sum_{n=1}^\infty\Bigl(1-\bigl|\omega_n(h_n^{\ii{t}}\bigr)\bigr|\Bigr)
=\sum_{n=1}^\infty\Bigl(1-\bigl|\btau[n]\bigl(h_n^{1+\ii{t}}\bigr)\bigr|\Bigr)<+\infty.
\]
\end{proof}

One can also identify the G.N.S.~Hilbert space for $\omega$ with the infinite tensor product $\bigotimes\limits_{n=1}^{\infty}(\sH_n,\Omega_n)$. The next proposition, based on Lemma \ref{lemB} from the appendix, identifies the corresponding modular operator.\footnote{Note that since the K.M.S.~condition characterizes the modular group uniquely, it is not necessary to study the G.N.S.~space for $\omega$ or compute the modular operator to obtain \eqref{sigmaprod}.}

\begin{proposition}\label{propPsNabla}
The modular operator for the state $\omega$ is $\modOp[\omega]=\bigotimes\limits_{n=1}^{\infty}\modOp[\omega_n]$ and
\[
\operatorname{Sp}\modOp[\omega]
=\overline{\bigl\{\lambda_1\dotsm\lambda_N\,\bigr|\bigl.\,N\in\NN,\:\lambda_i\in\operatorname{Sp}(\modOp[\omega_i]),\:i\in\{1,\dotsc,N\}\bigr\}}.
\]
\end{proposition}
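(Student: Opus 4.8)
The plan is to compute $\modOp[\omega]$ directly on the G.N.S.~Hilbert space of $\omega$, which has already been identified above with $\sH:=\bigotimes_{n=1}^{\infty}(\sH_n,\Omega_n)$ (with cyclic, and since $\omega$ is faithful also separating, vector $\Omega:=\bigotimes_{n=1}^{\infty}\Omega_n$): I would identify the Tomita operator $S_\omega$ of the pair $(\rN,\Omega)$ with an infinite tensor product of the Tomita operators $S_{\omega_n}$ of the pairs $(\rN_n,\Omega_n)$, and then read off $\modOp[\omega]$ from the polar decomposition. The infinite tensor products of the (generally unbounded, conjugate-linear) operators that occur, together with the spectrum formula for an infinite tensor product of positive self-adjoint operators, are exactly what Lemma~\ref{lemB} is meant to supply.

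First I would recall, for each $n$, the Tomita operator $S_{\omega_n}=\overline{\bigl(x\Omega_n\mapsto x^{*}\Omega_n\bigr)}$ of $(\rN_n,\Omega_n)$ with its polar decomposition $S_{\omega_n}=J_{\omega_n}\modOp[\omega_n]^{1/2}$, and the elementary observations $S_{\omega_n}\Omega_n=\Omega_n$, hence $\modOp[\omega_n]\Omega_n=\Omega_n$ and $J_{\omega_n}\Omega_n=\Omega_n$. Thus each of the operators $\modOp[\omega_n]$, $\modOp[\omega_n]^{1/2}$, $J_{\omega_n}$ and $S_{\omega_n}$ fixes the reference vector $\Omega_n$, so by Lemma~\ref{lemB} their infinite tensor products along $(\Omega_n)_n$ are defined; I write $J:=\bigotimes_{n}J_{\omega_n}$ (an involutive antiunitary on $\sH$) and $\modOp:=\bigotimes_{n}\modOp[\omega_n]$, the latter positive self-adjoint with $\modOp^{\ii t}=\bigotimes_{n}\modOp[\omega_n]^{\ii t}$ for all $t$ and with $\operatorname{Sp}\modOp$ equal to the asserted closure of finite products of eigenvalues, again by Lemma~\ref{lemB}.

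The heart of the argument is the identity $S_\omega=\bigotimes_{n}S_{\omega_n}$. For this I would use that the unital $*$-subalgebra $\rN_0:=\bigcup_{N}(\rN_1\atens\dotsm\atens\rN_N)\atens\CC\I\atens\dotsm$ is $\sigma$-weakly dense in $\rN$: by Kaplansky density every $x\in\rN$ is, in the $\sigma$-strong${}^{*}$ topology, the limit of a bounded net $(y_i)$ in $\rN_0$, so that $y_i\Omega\to x\Omega$ and $y_i^{*}\Omega\to x^{*}\Omega$ in norm, which shows $\rN_0\Omega$ is a core for $S_\omega$. On an elementary tensor $x_1\Omega_1\tens\dotsm\tens x_N\Omega_N\tens\Omega_{N+1}\tens\dotsm$ from $\rN_0\Omega$ the operator $S_\omega$ acts leg-by-leg via $x_i\mapsto x_i^{*}$, which is precisely the prescription defining $\bigotimes_{n}S_{\omega_n}$ in Lemma~\ref{lemB}; since $\rN_0\Omega$ is a core for both operators — for $\bigotimes_{n}S_{\omega_n}$ because each $\rN_n\Omega_n$ is a core for $S_{\omega_n}$ — passing to closures gives $S_\omega=\bigotimes_{n}S_{\omega_n}$. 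Finally, the infinite tensor product of the polar decompositions $S_{\omega_n}=J_{\omega_n}\modOp[\omega_n]^{1/2}$ is, by Lemma~\ref{lemB} (or by a direct check mimicking the finite-dimensional case), the polar decomposition of $\bigotimes_{n}S_{\omega_n}$, so $S_\omega=J\,\modOp^{1/2}$; since $J$ is antiunitary and $\modOp^{1/2}$ positive self-adjoint with trivial kernel, uniqueness of the polar decomposition of the closed conjugate-linear operator $S_\omega=J_\omega\modOp[\omega]^{1/2}$ forces $J_\omega=J$ and $\modOp[\omega]=\modOp=\bigotimes_{n}\modOp[\omega_n]$. The spectrum formula is then exactly the corresponding clause of Lemma~\ref{lemB}.

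I expect the genuine work to lie inside the statement and proof of Lemma~\ref{lemB}: how to define an infinite tensor product of closed, possibly unbounded operators fixing the reference vectors, to show it behaves well under adjoints, functional calculus and polar decomposition, and to compute its spectrum. Granting that, the only subtle points in the proposition itself are running the core argument for $S_\omega$ with $\sigma$-strong${}^{*}$ (not merely $\sigma$-strong) approximations — so that $S_\omega$ may legitimately be applied in the limit — and the uniqueness of the conjugate-linear polar decomposition; the remainder is bookkeeping on elementary tensors.
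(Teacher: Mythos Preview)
Your approach is correct but genuinely different from the paper's. The paper does not touch the Tomita operator $S_\omega$ at all: instead it exploits the already quoted fact $\sigma_t^\omega=\bigotimes_n\sigma_t^{\omega_n}$ (\cite[Proposition~XIV.1.11]{Takesaki3}) to show that $\modOp[\omega]^{\ii t}$ agrees with $\bigotimes_n\modOp[\omega_n]^{\ii t}$ on elementary tensors, hence everywhere (both are unitary). It then invokes Lemma~\ref{lemB} together with \cite[Theorem~VIII.20]{ReedSimon1} to conclude that $\bigl(\bigotimes_n\modOp[\omega_n]\bigr)^{\ii t}=\bigotimes_n\modOp[\omega_n]^{\ii t}$, yielding $\modOp[\omega]=\bigotimes_n\modOp[\omega_n]$; the spectrum clause is then read off directly from Lemma~\ref{lemB}.

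The trade-off is that the paper's route stays entirely within bounded (unitary) operators until the very last identification, and needs from the appendix only the self-adjoint case with its functional-calculus compatibility. Your route through $S_\omega$ and polar decomposition is conceptually natural but, as you correctly anticipate, demands more from the appendix than is actually proved there: Lemmas~\ref{lemA} and~\ref{lemB} treat only \emph{linear} closed operators (and Lemma~\ref{lemB} only the self-adjoint case), so the infinite tensor product of the conjugate-linear $S_{\omega_n}$ and the compatibility of polar decomposition with the infinite tensor product would both need to be established separately. None of this is hard, but it is extra work that the paper's argument sidesteps by leaning on the modular-group formula from Takesaki.
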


\begin{proof}
As we already mentioned the G.N.S.~Hilbert space for $\omega$ can be identified with $\bigotimes\limits_{n=1}^{\infty}(\sH_n,\Omega_n)$ and the corresponding cyclic vector is
\[
\Omega=\Omega_1\tens\Omega_2\tens\dotsm.
\]
For any $N\in\NN$ and $i\in\{1,\dotsc,N\}$ take $x_i\in\rN_i$. By \eqref{sigmaprod} we have
\begin{align*}
\modOp[\omega]^{\ii{t}}(x_1\tens\dotsm\tens{x_N}\tens\I^{\tens\infty})\Omega
&=\bigl(\sigma_t^{\omega_1}(x_1)\tens\dotsm\tens\sigma_t^{\omega_N}(x_N)\tens\I^{\tens\infty}\bigr)\Omega\\
&=\sigma_t^{\omega_1}(x_1)\Omega_1\tens\dotsm\tens\sigma_t^{\omega_N}(x_N)\Omega_N\tens\Omega_{N+1}\tens\dotsm\\
&=\modOp[\omega_1]^{\ii{t}}x_1\Omega_1\tens\dotsm\tens\modOp[\omega_N]^{\ii{t}}x_N\Omega_N\tens\Omega_{N+1}\tens\dotsm\\
&=\bigl(\modOp[\omega_1]^{\ii{t}}\tens\dotsm\tens\modOp[\omega_N]^{\ii{t}}\tens\I^{\tens\infty}\bigr)(x_1\tens\dotsm\tens{x_N}\tens\I^{\tens\infty})\Omega\\
&=\biggl(\bigotimes\limits_{n=1}^{\infty}\modOp[\omega_n]^{\ii{t}}\biggr)(x_1\tens\dotsm\tens{x_N}\tens\I^{\tens\infty})\Omega
\end{align*}
which implies
\[
\modOp[\omega]^{\ii{t}}=\bigotimes\limits_{n=1}^{\infty}\modOp[\omega_n]^{\ii{t}}.
\]
Now by Lemma \ref{lemB} and \cite[Theorem VIII.20]{ReedSimon1} for each $t\in\RR$ we have
\[
\biggl(\bigotimes\limits_{n=1}^{\infty}\modOp[\omega_n]\biggr)^{\ii{t}}
=\text{\sc{sot}-}\!\!\!\lim_{N\to\infty}\biggl(\bigotimes_{n=1}^N\modOp[\omega_n]\tens\I^{\tens\infty}\biggr)^{\ii{t}}
=\text{\sc{sot}-}\!\!\!\lim_{N\to\infty}\biggl(\bigotimes_{n=1}^N\modOp[\omega_n]^{\ii{t}}\tens\I^{\tens\infty}\biggr)=\bigotimes_{n=1}^{\infty}\modOp[\omega_n]^{\ii{t}}.
\]
Consequently $\modOp[\omega]=\bigotimes\limits_{n=1}^{\infty}\modOp[\omega_n]$.
\end{proof}

\section{Infinite product of compact quantum groups}\label{sectIPQG}

The construction of what should be called the Cartesian product of compact quantum groups was described under the name ``tensor product of compact quantum groups'' already in \cite{WangTensor}. We will briefly sketch the steps of the construction of the infinite Cartesian product of compact quantum groups in the context of von Neumann algebras.

Let $\{\GG_n\}_{n\in\NN}$ be a countable family of compact quantum groups with Haar measures $\{\bh_n\}_{n\in\NN}$. Let us, for the time being, denote by $\sH_n$ the Hilbert space $\Ltwo(\GG_n)$ and for each $n$ let $\Omega_n\in\sH_n$ be the corresponding cyclic vector. The compact quantum group $\GG=\bigtimes\limits_{n=1}^{\infty}\GG_n$ is defined by letting
\[
\Linf(\GG)=\barbigotimes_{n=1}^{\infty}\Linf(\GG_n)
\]
which is a von Neumann algebra acting on the Hilbert space
\[
\sH=\bigotimes_{n=1}^{\infty}(\sH_n,\Omega_n).
\]
The product state $\bh=\bigotimes\limits_{n=1}^\infty\bh_n$ on $\Linf(\GG)$ is the vector state for the (cyclic) vector $\Omega=\Omega_1\tens\Omega_2\tens\dotsm\in\sH$ which is also separating (cf.~\cite[Proposition XIV.1.11]{Takesaki3}) and $(\sH,\Omega)$ can be identified with the G.N.S.~representation of $\Linf(\GG)$ for $\bh$ (\cite[page 86]{Takesaki3}).

To define the comultiplication on $\Linf(\GG)$ we denote by $U$ the unitary operator
\[
\sH\tens\sH\longrightarrow\bigotimes\limits_{n=1}^{\infty}(\sH_n\tens\sH_n,\Omega_n\tens\Omega_n)
\]
extending the isometric map given on simple tensors by
\[
\resizebox{\textwidth}{!}{\ensuremath{
(\xi_1\tens\dotsm\tens\xi_N\tens\Omega_{N+1}\tens\dotsm)
\tens(\eta_1\tens\dotsm\tens\eta_N\tens\Omega_{N+1}\tens\dotsm)\longmapsto
(\xi_1\tens\eta_1)\tens\dotsm\tens(\xi_N\tens\eta_N)\tens(\Omega_{N+1}\tens\Omega_{N+1})\tens\dotsm.
}}
\]
Then with $W_n$ denoting the \emph{Kac-Takesaki} operator of $\GG_n$ (which can be identified with the \emph{right regular representation}, \cite[Section 1.5]{NeshveyevTuset}) we set $W=U^*\biggl(\bigotimes\limits_{n=1}^{\infty}W_n\biggr)U\in\B(\sH\tens\sH)$ which is well defined since $W_n(\Omega_n\tens\Omega_n)=\Omega_n\tens\Omega_n$ for all $n$ (\cite[Theorem 1.5.2]{NeshveyevTuset}). The comultiplication $\Delta\colon\Linf(\GG)\to\Linf(\GG)\vtens\Linf(\GG)$ is then
\[
\Delta(a)=W(a\tens\I)W^*,\qquad{a}\in\Linf(\GG).
\]
One easily checks that the above defines a compact quantum group and that $\bh$ is its Haar measure.

As we already mentioned, on the \cst-algebraic level the infinite Cartesian product of a family of compact quantum groups was first described in \cite{WangTensor}. In particular, the set of equivalence classes of irreducible representations of $\GG=\bigtimes\limits_{n=1}^{\infty}\GG_n$ was found to be
\begin{equation}\label{IrrProdG}
\operatorname{Irr}{\GG}=\biggl\{\bigboxtimes_{n=1}^\infty\alpha_n\,\bigg|\biggl.\,\text{almost all $\alpha_n$ are trivial}\biggr\},
\end{equation}
where $\btens$ denotes the \emph{exterior tensor product}, so that $\alpha_1\btens\alpha_2$ is the class of $\bigl(U^{\alpha_1}\bigr)_{13}\bigl(U^{\alpha_2}\bigr)_{24}$ with $U^{\alpha_1}\in\alpha_1$ and $U^{\alpha_2}\in\alpha_2$. By a slight abuse of notation we will denote the infinite exterior tensor product $\bigboxtimes\limits_{n=1}^\infty\alpha_n$ by $\alpha_1\btens\dotsm\btens\alpha_N$ whenever $\alpha_n$ is trivial for all $n>N$.

In what follows we fix for every $n$ and every $\alpha\in\operatorname{Irr}{\GG_n}$ a representation $U^\alpha$ of $\GG_n$ on $\sH_\alpha$ and an orthonormal basis of $\sH_\alpha$ such that the canonical operator $\uprho_\alpha$ is diagonal with diagonal entries $\uprho_{\alpha,1},\dotsc,\uprho_{\alpha,\dim{\alpha}}$. The matrix elements of $U^\alpha$ are taken with respect to this basis.

\begin{theorem}\label{theCenters}
Let $\{\GG_n\}_{n\in\NN}$ be a family of compact quantum groups such that each $\GG_m$ appears in the sequence $(\GG_n)_{n\in\NN}$ infinitely many times and let $\GG=\bigtimes\limits_{n=1}^\infty\GG_n$. Then
\begin{enumerate}
\item\label{theCenters1} $\cZ\bigl(\Linf(\GG)^\sigma\bigr)=\Linf(\GG)\cap\bigl(\Linf(\GG)^\sigma\bigr)'=\cZ\bigl(\Linf(\GG)\bigr)$,
\item\label{theCenters2} $\cZ\bigl(\Linf(\GG)^\tau\bigr)\subset\Linf(\GG)\cap\bigl(\Linf(\GG)^\tau\bigr)'=\cZ\bigl(\Linf(\GG)\bigr)$.
\end{enumerate}
\end{theorem}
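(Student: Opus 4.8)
First dispose of the routine inclusions. Since the modular group fixes $\cZ(\Linf(\GG))$ pointwise and the centre commutes with everything, $\cZ(\Linf(\GG))\subseteq\Linf(\GG)^{\sigma}\cap(\Linf(\GG)^{\sigma})'=\cZ(\Linf(\GG)^{\sigma})$, while $\cZ(\Linf(\GG)^{\sigma})\subseteq\Linf(\GG)\cap(\Linf(\GG)^{\sigma})'$ and $\cZ(\Linf(\GG)^{\tau})\subseteq\Linf(\GG)\cap(\Linf(\GG)^{\tau})'$ and $\cZ(\Linf(\GG))\subseteq\Linf(\GG)\cap(\Linf(\GG)^{\tau})'$ hold trivially. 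So the whole statement reduces to proving, for $G\in\{\sigma,\tau\}$,
\[
\Linf(\GG)\cap\bigl(\Linf(\GG)^{G}\bigr)'\subseteq\cZ\bigl(\Linf(\GG)\bigr).
\]
Fix $z$ in the left side; this set is a von Neumann algebra, so I may assume $z=z^{*}$. As $\Linf(\GG)$ is generated by the $\Linf(\GG_{n})_{[n]}$, it is enough to show $z$ commutes with $\Linf(\GG_{m})_{[n]}$ for every $n$, where $\GG_{m}:=\GG_{n}$.

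The relevant elements of $\Linf(\GG)^{G}$ come from \eqref{sigmatau}. Because the scalars $\uprho_{\alpha,k}^{\ii t}$ are unimodular, $(U^{\alpha}_{i,j})^{*}U^{\alpha}_{i,j}$ is $G$-invariant in $\Linf(\GG_{m})$ for every $\alpha\in\operatorname{Irr}\GG_{m}$ and all $i,j$; hence in the polar decomposition $U^{\alpha}_{i,j}=w\,|U^{\alpha}_{i,j}|$ inside $\Linf(\GG_{m})$ one has $|U^{\alpha}_{i,j}|\in\Linf(\GG_{m})^{G}$ and $w$ is a partial isometry that is a $G$-eigenvector, and $\Linf(\GG_{m})$ is generated by $\Linf(\GG_{m})^{G}$ together with all such $w$. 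Using the hypothesis, pick infinitely many positions $\mathfrak p_{1}<\mathfrak p_{2}<\dotsm$ with $\GG_{\mathfrak p_{k}}=\GG_{m}$. For each such $w$ the ``linking'' element $w_{[\mathfrak p_{k}]}\,w_{[\mathfrak p_{l}]}^{*}$ has trivial $G$-weight (the two legs carry conjugate unimodular weights), so it lies in $\Linf(\GG)^{G}$ and commutes with $z$; so does $\Linf(\GG_{m})^{G}$ placed at any single position, and, more generally, $\bigl(\barbigotimes_{n\in F}\Linf(\GG_{n})\bigr)^{G_{F}}$ embedded via $\otimes\,\I$ for every finite $F$, because $\Linf(\GG)^{G}$ is the $\sigma$-weak closure of these ``local'' fixed-point algebras (apply the $\bh$-preserving conditional expectation onto $\Linf(\GG)^{G}$, which preserves finite supports). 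Since $z$ commutes with $\Linf(\GG_{m})^{G}_{[\mathfrak p_{1}]}$ and with the $G$-fixed projections $w^{*}w,ww^{*}$ and with $|U^{\alpha}_{i,j}|$ at $\mathfrak p_{1}$, it suffices to show $[z,w_{[\mathfrak p_{1}]}]=0$ for each such $w$.

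When $\bh_{m}(w)\neq0$ this is quick: the $w_{[\mathfrak p_{l}]}^{*}$ are $\bh$-independent with common mean $\bh_{m}(w^{*})$, so a law-of-large-numbers argument gives $\tfrac1N\sum_{l=2}^{N+1}w_{[\mathfrak p_{l}]}^{*}\to\overline{\bh_{m}(w)}\,\I$ strongly; multiplying by $w_{[\mathfrak p_{1}]}$ and using that $[z,\cdot]$ is strongly continuous on bounded sets yields $[z,w_{[\mathfrak p_{1}]}]=0$. In general — and this is the crux — one reduces the whole step to the assertion that if $N$ is a von Neumann algebra and $\theta$ an almost periodic automorphism group preserving a faithful normal state $\omega$ (e.g.\ the restriction of $\sigma$ or $\tau$ with $\omega=\bh_m$), then in the infinite tensor product $\barbigotimes_{k}(N,\omega)$ one has $\bigl(\barbigotimes_{k}N\bigr)\cap\bigl((\barbigotimes_{k}N)^{\theta^{\otimes\infty}}\bigr)'=\barbigotimes_{k}\cZ(N)$. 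Granting this with $N=\Linf(\GG_m)$ and combining over all the (countably many) distinct $\GG_{m}$ occurring in the sequence forces $z\in\barbigotimes_{n}\cZ(\Linf(\GG_{n}))=\cZ(\Linf(\GG))$. That assertion is in turn proved by decomposing $N$ over its centre, reducing at each finite stage to a tensor product of factors, and invoking a Hewitt--Savage-type zero-one law for the resulting infinitely many \emph{identical} copies: although the relative commutants $\bigl(\barbigotimes_{k=1}^{N}\Linf(\GG_{m})\bigr)\cap\bigl((\barbigotimes_{k=1}^{N}\Linf(\GG_{m}))^{G^{\otimes N}}\bigr)'$ grow with $N$, intersecting them over $N$ collapses them to the exchangeable part, which equals $\barbigotimes_{k}\cZ(\Linf(\GG_{m}))$ precisely because the tensor factors coincide.

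The main obstacle is exactly this last step. The ``local'' fixed-point algebras are completely transparent through \eqref{sigmatau}, but their relative commutants genuinely do \emph{not} shrink to the centre at any finite stage — for a non-Kac finite quantum group one already sees this with two copies — so it is only the zero-one law, applied to the infinitely many identical copies provided by the hypothesis, that pins $z$ down to $\cZ(\Linf(\GG))$; making that precise in the non-factor, non-tracial setting (controlling the central disintegration and the identification of the identical fibres) is the technical heart of the argument. The case $G=\tau$ is identical, using the second line of \eqref{sigmatau} in place of the first.
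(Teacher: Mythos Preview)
Your reduction to showing $\Linf(\GG)\cap(\Linf(\GG)^{G})'\subseteq\cZ(\Linf(\GG))$ is correct, and the idea of pairing a $G$-eigenvector at one position with a compensating factor at another position of the same type to manufacture elements of $\Linf(\GG)^{G}$ is exactly the right one. But the proof is incomplete: you yourself flag that the ``Hewitt--Savage zero-one law in the non-factor, non-tracial setting'' is the technical heart, and you do not prove it. The law-of-large-numbers branch (when $\bh_m(w)\neq 0$) is essentially never applicable --- for non-trivial $\alpha$ one has $\bh_m(U^{\alpha}_{i,j})=0$ by the orthogonality relations, and there is no reason the partial isometry in the polar decomposition should have non-zero Haar expectation either. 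So the entire weight falls on the unproved zero-one step.

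The paper sidesteps all of this with a direct $\|\cdot\|_2$-approximation argument, no ergodic theory required. Given $x$ in the relative commutant and a target generator $\I^{\otimes(k-1)}\otimes U^{\beta}_{i,j}\otimes\I^{\otimes\infty}$, approximate $x$ in $\|\cdot\|_2$ by some $y\in\operatorname{Pol}(\GG)$ supported on the first $N$ factors. By hypothesis there is $k'>N$ with $\GG_{k'}=\GG_k$; place $U^{\overline{\beta}}_{i',j'}$ at slot $k'$ with $i',j'$ chosen so that $\uprho_{\overline{\beta},i'}=\uprho_{\beta,i}^{-1}$, $\uprho_{\overline{\beta},j'}=\uprho_{\beta,j}^{-1}$ (possible since $\uprho_{\overline{\beta}}=\jmath(\uprho_\beta)^{-1}$). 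The resulting two-legged element $z$ lies in $\Linf(\GG)^{\sigma}$, so $[x,z]=0$. Because $k'>N$, the element $y$ commutes with the slot-$k'$ leg, and the $\|\cdot\|_2$-norm factorises across tensor positions via the orthogonality relations; a chain of triangle inequalities (using $\sigma^{\bh}_{\ii/2}$-analyticity to move factors past $x$ in $\|\cdot\|_2$) then gives $\bigl\|[x,\I^{\otimes(k-1)}\otimes U^{\beta}_{i,j}\otimes\I^{\otimes\infty}]\bigr\|_2\leq C_\beta\,\eps$ with $C_\beta$ depending only on $\beta$. Letting $\eps\to 0$ finishes. The same works for $\tau$.

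The missing idea in your attempt is precisely this: rather than averaging over infinitely many compensating legs and invoking exchangeability, use a \emph{single} compensating leg placed beyond the finite support of a $\|\cdot\|_2$-approximant of $x$, and exploit that $\|\cdot\|_2$ is multiplicative across independent tensor slots to peel it off.
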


\begin{remark}
We have $\cZ(\Linf(\GG))=\barbigotimes\limits_{n=1}^{\infty}\cZ(\Linf(\GG_n))$ (see \cite[Section XIV.1]{Takesaki3}).
\end{remark}

\begin{proof}[Proof of Theorem \ref{theCenters}]
Ad \eqref{theCenters1}. Since $\sigma^\bh$ acts trivially on the center (\cite[Proposition VI.1.23]{Takesaki2}), we have $\cZ(\Linf(\GG))\subset\cZ(\Linf(\GG)^\sigma)\subset\Linf(\GG)\cap(\Linf(\GG)^\sigma)'$.

Take $x\in\Linf(\GG)\cap(\Linf(\GG)^\sigma)'$. In order to show that $x\in\cZ(\Linf(\GG))$ we fix $k\in\NN$, $\beta\in\operatorname{Irr}{\GG_k}$, $i,j\in\{1,\dotsc,\dim{\beta}\}$ and $\eps>0$.

Choose $y\in\operatorname{Pol}(\GG)$ such that $\|x-y\|_2\leq\tfrac{\eps}{2}$. There exists $N\in\NN$ such that $y$ is a linear combination of matrix elements of representations from classes of the form $\alpha_1\btens\dotsm\btens\alpha_N$. By assumption each of the compact quantum groups appears in the sequence $(\GG_n)_{n\in\NN}$ infinitely many times, so there exists $k'>N$ such that $\GG_{k'}=\GG_n$. Since $\uprho_{\overline{\beta}}=\jmath(\uprho_\beta)^{-1}$ (here $\jmath$ denotes the mapping of an operator on $\sH_\beta$ to its adjoint acting on $\sH_\beta^*$, cf.~\cite[Proposition 1.4.7]{NeshveyevTuset}) there exist $i',j'\in\{1,\dotsc,\dim{\overline{\beta}}\}$ such that $\uprho_{\overline{\beta},i'}=(\uprho_{\beta,i})^{-1}$ and $\uprho_{\overline{\beta},j'}=(\uprho_{\beta,j})^{-1}$. Consequently the element of $\operatorname{Pol}(\GG)$
\[
z=\I^{\tens(k-1)}\tens{U^\beta_{i,j}}\tens\I^{\tens(k'-k-1)}\tens{U^{\bar{\beta}}_{i',j'}}\tens\I^{\tens\infty}
\]
belongs to $\Linf(\GG)^\sigma$ (cf.~\eqref{sigmatau}) and it follows that $z$ commutes with $x$. From this, the $\sigma^\bh$-invariance of $z$ and the fact that $z$ is a contraction we obtain\footnote{\label{sigmaft}We use the fact that all elements of $\operatorname{Pol}(\GG)$ are analytic for the modular group and for any such $a$ and any $b\in\Linf(\GG)$ we have $ba\Omega=(a^*b^*)^*\Omega=S_{\bh}a^*b^*\Omega=S_{\bh}a^*S_{\bh}b\Omega=J_\bh\modOp[\bh]^{\frac{1}{2}}a^*J_\bh\modOp[\bh]^{\frac{1}{2}}b\Omega=J_\bh\modOp[\bh]^{\frac{1}{2}}a^*\modOp[\bh]^{-\frac{1}{2}}J_\bh{b}\Omega=J_\bh\bigl(\modOp[\bh]^{-\frac{1}{2}}a\modOp[\bh]^{\frac{1}{2}}\bigr)^*J_\bh{b}\Omega=J_\bh\sigma_{\frac{\ii}{2}}^\bh(a)^*J_\bh{b}\Omega$.}
\begin{align*}
\|yz-&zy\|_2=\|yz\Omega-zy\Omega\|=\bigl\|J_\bh\sigma^\bh_{\frac{\ii}{2}}(z)^*J_\bh{y}\Omega-zy\Omega\bigr\|\\
&\leq\bigl\|J_\bh\sigma^\bh_{\frac{\ii}{2}}(z)^*J_\bh{y}\Omega-J_\bh\sigma^\bh_{\frac{\ii}{2}}(z)^*J_\bh{x}\Omega\bigr\|
+\bigl\|J_\bh\sigma^\bh_{\frac{\ii}{2}}(z)^*J_\bh{x}\Omega-zx\Omega\bigr\|
+\|zx\Omega-zy\Omega\|\\
&\leq\bigl\|\sigma^\bh_{\frac{\ii}{2}}(z)\bigr\|\|x\Omega-y\Omega\|
+\bigl\|J_\bh\sigma^\bh_{\frac{\ii}{2}}(z)^*J_\bh{x}\Omega-zx\Omega\bigr\|
+\|z\|\|x\Omega-y\Omega\|\\
&=\bigl\|\sigma^\bh_{\frac{\ii}{2}}(z)\bigr\|\|x-y\|_2
+\bigl\|J_\bh\sigma^\bh_{\frac{\ii}{2}}(z)^*J_\bh{x}\Omega-zx\Omega\bigr\|
+\|z\|\|x-y\|_2\\
&\leq\tfrac{\eps}{2}\Bigl(\bigl\|\sigma^\bh_{\frac{\ii}{2}}(z)\bigr\|+\|z\|\Bigr)
+\bigl\|J_\bh\sigma^\bh_{\frac{\ii}{2}}(z)^*J_\bh{x}\Omega-zx\Omega\bigr\|\\
&=\tfrac{\eps}{2}\bigl(\|z\|+\|z\|\bigr)
+\bigl\|xz\Omega-zx\Omega\bigr\|=\eps\|z\|+\|xz-zx\|_2\leq\eps.
\end{align*}

Next we write $y$ as $y_0\tens\I^{\tens\infty}$ for some $y_0\in\operatorname{Pol}(\GG_1\times\dotsm\times\GG_N)$. Since $k'>N$, we have
\[
yz-zy=
\Bigl(y_0\bigl(\I^{\tens(k-1)}\tens{U^{\beta}_{i,j}}\tens\I^{\tens(N-k)}\bigr)
-\bigl(\I^{\tens(k-1)}\tens{U^{\beta}_{i,j}}\tens\I^{\tens(N-k)}\bigr)y_0\Bigr)
\tens\I^{\tens(k'-N-1)}\tens{U^{\overline{\beta}}_{i',j'}}\tens\I^{\tens\infty}
\]
and consequently
\[
\Bigl\|
y_0\bigl(\I^{\tens(k-1)}\tens{U^{\beta}_{i,j}}\tens\I^{\tens(N-k)}\bigr)
-\bigl(\I^{\tens(k-1)}\tens{U^{\beta}_{i,j}}\tens\I^{\tens(N-k)}\bigr)y_0
\Bigr\|_2\bigl\|U^{\overline{\beta}}_{i',j'}\bigr\|_2=\|yz-zy\|_2\leq\eps.
\]
and from the orthogonality relations (\cite[Theorem 1.4.3]{NeshveyevTuset})
\[
\Bigl\|
y\bigl(\I^{\tens(k-1)}\tens{U^{\beta}_{i,j}}\tens\I^{\tens\infty}\bigr)
-\bigl(\I^{\tens(k-1)}\tens{U^{\beta}_{i,j}}\tens\I^{\tens\infty}\bigr)y
\Bigr\|_2\leq\eps\sqrt{\|\uprho_{\overline{\beta}}\|\dim_q{\beta}}.
\]

Now we go back to $x$: using the technique used to estimate $\|yz-zy\|_2$ above we obtain
\begin{align*}
&\Bigl\|x\bigl(\I^{\tens(k-1)}\tens{U^\beta_{i,j}}\tens\I^{\tens\infty}\bigr)-\bigl(\I^{\tens(k-1)}\tens{U^\beta_{i,j}}\tens\I^{\tens\infty}\bigr)x\Bigr\|_2\\
&\qquad\leq\tfrac{\eps}{2}\Bigl(\bigl\|\sigma^\bh_{\frac{\ii}{2}}\bigl(U^\beta_{i,j}\bigr)^*\bigr\|+\bigr\|U^\beta_{i,j}\bigl\|\Bigr)
+\Bigl\|y\bigl(\I^{\tens(k-1)}\tens{U^\beta_{i,j}}\tens\I^{\tens\infty}\bigr)-\bigl(\I^{\tens(k-1)}\tens{U^\beta_{i,j}}\tens\I^{\tens\infty}\bigr)y\Bigr\|_2\\
&\qquad\leq\Bigl(\|\uprho_{\overline{\beta}}\|+\sqrt{\|\uprho_{\overline{\beta}}\|\dim_q{\beta}}\Bigr)\eps
\end{align*}
again by the orthogonality relations and \eqref{sigmatau}.

Since $\eps>0$ in the above estimates is arbitrary and $\beta$ is fixed, we see that $x$ and $\I^{\tens(k-1)}\tens{U^\beta_{i,j}}\tens\I^{\tens\infty}$ commute. Furthermore, as this holds for any $\beta,i,j$, we find that
\[
x\in\Bigl(\I^{\tens(k-1)}\tens\Linf(\GG)\tens\I^{\tens\infty}\Bigr)'
\]
for any $k$. Consequently
\[
x\in\bigcap_{k=1}^{\infty}\Bigl(\I^{\tens(k-1)}\tens\Linf(\GG)\tens\I^{\tens\infty}\Bigr)'=\biggl(\bigvee_{k=1}^{\infty}\Bigl(\I^{\tens(k-1)}\tens\Linf(\GG)\tens\I^{\tens\infty}\Bigr)\biggr)'=\Linf(\GG)'.
\]

Statement \eqref{theCenters2} can be proved by an analogous argument.
\end{proof}

\begin{remark}
The difference between statements \eqref{theCenters1} and \eqref{theCenters2} stems from the fact that the scaling group can act non-trivially on the center of $\Linf(\GG)$. It is the case for $\operatorname{SU}_q(2)$ and hence for $\bigtimes\limits_{n=1}^{\infty}\operatorname{SU}_q(2)$.
\end{remark}

\section{Type \texorpdfstring{$\mathrm{III}$}{III} factors as \texorpdfstring{$\Linf(\GG)$}{L(G)}}\label{TheExamples}

Throughout this section $q$ and $q_n$ will denote numbers in $\left]-1,1\right[\setminus\{0\}$. The basic building block of our construction is the quantum group $\operatorname{SU}_q(2)$ constructed in \cite{su2}.

\subsection{\texorpdfstring{$\operatorname{SU}_q(2)$}{SUq(2)} and the bicrossed product \texorpdfstring{$\HH_{\nu,q}$}{Hvq}}\label{skladniki}\hspace*{\fill}

Let us first fix the notation related to the quantum group $\operatorname{SU}_q(2)$ and its dual $\hh{\operatorname{SU}_q(2)}$. In what follows $\bh_q$ will denote the Haar measure of $\operatorname{SU}_q(2)$ and $\Omega_{\bh_q}$ the canonical cyclic vector in $\Ltwo(\operatorname{SU}_q(2))$. Recall from \cite[Section 4.1]{KrajczokWasilewski} (cf.~also \cite{DesmedtPhD,modular}) that there is a unitary operator
\[
\cQ_{L,q}\colon\Ltwo\bigl(\operatorname{SU}_q(2)\bigr)\ni{a}\Omega_{\bh_q}\longmapsto{\Int_\TT}^\oplus\psi_\lambda(a)D_{\lambda,q}^{-1}\dd{\mu}(\lambda)\in
{\Int_\TT}^\oplus\HS(\sH_\lambda)\dd{\mu}(\lambda),
\]
where
\begin{itemize}
\item $\{\psi_\lambda\}_{\lambda\in\TT}$ are the irreducible representations of $\C(\operatorname{SU}_q(2))$ on $\sH_\lambda=\ell^2(\ZZ_+)$,
\item $\HS(\sH_\lambda)$ denotes the space of Hilbert-Schmidt operators on $\sH_\lambda$,
\item $D_{\lambda,q}$ is a positive self-adjoint operator on $\sH_\lambda$ such that
\[
D_{\lambda,q}^{-1}=\sqrt{1-q^2}\operatorname{diag}\bigl(1,|q|,|q|^2,\dotsc\bigr),\qquad\lambda\in\TT
\]
with respect to the standard basis of $\ell^2(\ZZ_+)$ (clearly $D_{\lambda,q}^{-1}$ is a positive Hilbert-Schmidt operator),
\item $\mu$ is the normalized Lebesgue measure on $\TT$.
\end{itemize}
The operator $\cQ_{L,q}$ induces an isomorphism
\[
\Linf\bigl(\operatorname{SU}_q(2)\bigr)\ni{a}\longmapsto\cQ_{L,q}a{\cQ_{L,q}^{\;*}}\in{\Int_\TT}^\oplus\bigl(\B(\sH_\lambda)\tens\I\bigr)\dd\mu(\lambda)
\]
and we will denote the composition of this isomorphism with the identification
\[
{\Int_\TT}^\oplus\bigl(\B(\sH_\lambda)\tens\I\bigr)\dd\mu(\lambda)\cong
{\Int_\TT}^\oplus\B(\sH_\lambda)\dd\mu(\lambda)
\]
by
\[
\Phi_q\colon\Linf\bigl(\operatorname{SU}_q(2)\bigr)\longrightarrow{\Int_\TT}^\oplus\B(\sH_\lambda)\dd\mu(\lambda).
\]

Observe that if $a\in\Linf(\operatorname{SU}_q(2))$ and $\Phi_q(a)={\Int_\TT}^{\oplus}a_\lambda\dd{\mu}(\lambda)$ then
\begin{align*}
\bh_q(a)=\is{\Omega_{\bh_q}}{a\Omega_{\bh_q}}&=\is{\cQ_{L,q}\Omega_{\bh_q}}{\bigl(\cQ_{L,q}a{\cQ_{L,q}^{\;*}}\bigr)\cQ_{L,q}\Omega_{\bh_q}}\\
&=\is{{\Int_\TT}^{\oplus}D_{\lambda,q}^{-1}\dd{\mu}(\lambda)}{{\Int_\TT}^{\oplus}{a_\lambda}D_{\lambda,q}^{-1}\dd{\mu}(\lambda)}
=\Int_\TT\operatorname{Tr}\bigl({a_\lambda}D_{\lambda,q}^{-2}\bigr)\dd{\mu}(\lambda),
\end{align*}
hence
\begin{equation}\label{HaarSUq2}
\bh_q=\biggl({\Int_\TT}^{\oplus}\operatorname{Tr}\bigl(\,{\cdot}\,D_{\lambda,q}^{-2}\bigr)\dd{\mu}(\lambda)\biggr)\comp\Phi_q.
\end{equation}

Our next ingredient will be the compact quantum group $\HH_{\nu,q}$ constructed in \cite[Section 4.2]{KrajczokWasilewski}. For $\nu\in\RR\setminus\{0\}$ the compact quantum group $\HH_{\nu,q}$ is defined to be the bicrossed product $\QQ\bowtie\operatorname{SU}_q(2)$ (\cite{FimaMukherjeePatri}) obtained from the action of the group $\QQ$ (with discrete topology) on $\Linf(\operatorname{SU}_q(2))$ via the action $\alpha^\nu$:
\[
\alpha^\nu_\gamma(x)=\tau^{\operatorname{SU}_q(2)}_{\nu\gamma}(x),\qquad{x}\in\Linf\bigl(\operatorname{SU}_q(2)\bigr),\:\gamma\in\QQ.
\]
In particular, $\Linf(\HH_{\nu,q})$ is by definition the crossed product $\QQ\ltimes_{\alpha^\nu}\Linf(\operatorname{SU}_q(2))$. The action $\alpha^\nu$ can be interpreted as a $*$-homomorphism
\[
\Linf(\operatorname{SU}_q(2))\longrightarrow\ell^\infty(\QQ)\vtens\Linf(\operatorname{SU}_q(2))
\]
whose range is (by definition) contained in the crossed product $\QQ\ltimes_{\alpha^\nu}\Linf(\operatorname{SU}_q(2))$. Thus
\[
\alpha^\nu\colon\Linf(\operatorname{SU}_q(2))\longrightarrow\QQ\ltimes_{\alpha^\nu}\Linf(\operatorname{SU}_q(2))=\Linf(\HH_{\nu,q}).
\]
It was shown in \cite{KrajczokWasilewski} that
\begin{itemize}
\item $\HH_{\nu,q}$ is coamenable, hence $\Linf(\HH_{\nu,q})$ is injective (\cite[Theorem 3.9]{Tomatsu}),
\item if $\nu\log|q|\not\in\pi\QQ$ then $\Linf(\HH_{\nu,q})$ is the injective factor of type $\mathrm{II}_\infty$.
\end{itemize}
In what follows we will always assume (or explicitly choose) the numbers $\nu$ and $q$ to satisfy $\nu\log|q|\not\in\pi\QQ$.

The unique (up to positive constant) trace $\btau[\nu,q]$ on $\Linf(\HH_{\nu,q})$ is given by
\begin{equation}\label{taunuq}
\btau[\nu,q]=\biggl({\Int_{\TT}}^\oplus\operatorname{Tr}_{\lambda}(\,\cdot\,)\dd{\mu}(\lambda)\biggr)\comp\Phi_q\comp\EE_{\nu,q},
\end{equation}
where $\operatorname{Tr}_\lambda$ denotes the trace on $\B(\sH_\lambda)$ and
\[
\EE_{\nu,q}\colon\Linf(\HH_{\nu,q})\longrightarrow\Linf\bigl(\operatorname{SU}_q(2)\bigr)
\]
is the canonical faithful normal conditional expectation of the crossed product onto the fixed points of the dual action. Furthermore, by \cite[Theorem 3.4$(1)$]{FimaMukherjeePatri} the Haar measure of $\HH_{\nu,q}$ is
\begin{equation}\label{HaarHnuq}
\bh_{\nu,q}=\bh_q\comp\EE_{\nu,q}.
\end{equation}
From the comparison of \eqref{HaarSUq2}, \eqref{taunuq} and \eqref{HaarHnuq} it follows that $\bh_{\nu,q}=\btau[\nu,q](\,\cdot\,k_{\nu,q})$ where the element $k_{\nu,q}\in\Linf(\HH_{\nu,q})$ is the image under $\alpha^\nu$ (see above) of
\[
\Phi_q^{-1}\biggl({\Int_\TT}^{\oplus}D_{\lambda,q}^{-1}\dd{\mu}(\lambda)\biggr).
\]

Finally let us recall from \cite[Theorem 6.1]{FimaMukherjeePatri} that for each $(\nu,q)\in(\RR\setminus\{0\})\times(\left]-1,1\right[\setminus\{0\})$ irreducible representations of $\HH_{\nu,q}$ are labeled by $\QQ\times\operatorname{Irr}{\operatorname{SU}_q(2)}=\QQ\times\tfrac{1}{2}\ZZ_+$. In this correspondence the representation corresponding to $(\gamma,s)\in\QQ\times\tfrac{1}{2}\ZZ_+$ is
\[
U^{\gamma,s}=(\I\tens{u_\gamma})(\id\tens\alpha^{\nu})(U^s)\in\B(\sH_s)\atens\operatorname{Pol}(\HH_{\nu,q}),
\]
where $U^s\in\B(\sH_s)\atens\operatorname{Pol}(\operatorname{SU}_q(2))$ is the spin-$s$ representation of $\operatorname{SU}_q(2)$, $u_\gamma$ is the canonical element of $\Linf(\HH_{\nu,q})=\QQ\ltimes_{\alpha^\nu}\Linf(\operatorname{SU}_q(2))$ implementing $\alpha^\nu_\gamma$ and $\alpha^\nu$ is understood as a $*$-homomorphism $\Linf(\operatorname{SU}_q(2))\to\Linf(\HH_{\nu,q})$ as above. The corresponding $\uprho$-operator is $\uprho_{U^{\gamma,s}}=\uprho_s$ (cf.~\cite[Section 4.2]{KrajczokWasilewski}). Using this and the first equation of \eqref{sigmatau} we can easily determine the action of the modular operator $\modOp[\bh_{\nu,q}]$ on the orthogonal basis of $\Ltwo(\HH_{\nu,q})$ consisting of vectors of the form $U^{\gamma,s}_{i,j}\Omega_{\bh_{\nu,q}}$:
\begin{equation}\label{nablaUgammas0}
\modOp[\bh_{\nu,q}]U^{\gamma,s}_{i,j}\Omega_{\bh_{\nu,q}}=\uprho_{s,i}\uprho_{s,j}U^{\gamma,s}_{i,j}\Omega_{\bh_{\nu,q}},\qquad(\gamma,s)\in\QQ\times\tfrac{1}{2}\ZZ_+,\:i,j\in\{1,\dotsc,2s+1\}
\end{equation}
(as usual the matrix elements of $U^{\gamma,s}$ are with respect to an orthonormal basis of $\sH_s$ diagonalizing $\uprho_s$). It is well known (see e.g.~\cite{Koornwinder}) that $\uprho_{s,i}=|q|^{2(i-s-1)}$, i.e.
\[
\uprho_s=\operatorname{diag}\bigl(|q|^{-2s},|q|^{-2s+2},\dotsc,|q|^{2s}\bigr),\qquad{s}\in\tfrac{1}{2}\ZZ_+,
\]
so that \eqref{nablaUgammas0} becomes
\[
\modOp[\bh_{\nu,q}]U^{\gamma,s}_{i,j}\Omega_{\bh_{\nu,q}}
=|q|^{2(i+j-2s-2)}
U^{\gamma,s}_{i,j}\Omega_{\bh_{\nu,q}},\qquad{s}\in\tfrac{1}{2}\ZZ_+,\:i,j\in\{1,\dotsc,2s+1\}.
\]
In particular,
\begin{equation}\label{SpUgammas}
\operatorname{Sp}{\modOp[\bh_{\nu,q}]}=\{0\}\cup|q|^{2\ZZ}.
\end{equation}

\subsection{The factors and their invariants}\label{sectTheFactors1}\hspace*{\fill}

We will now consider compact quantum groups of the form $\GG=\bigtimes\limits_{n=1}^\infty\HH_{\nu_n,q_n}$, where $\bigl((\nu_n,q_n)\bigr)_{n\in\NN}$ is a sequence of elements of $(\RR\setminus\{0\})\times(\left]-1,1\right[\setminus\{0\})$ such that $\nu_n\log|q_n|\not\in\pi\QQ$ for all $n$ and at least one pair is repeated infinitely many times in the sequence. Later we will impose further conditions on $\bigl((\nu_n,q_n)\bigr)_{n\in\NN}$.

\begin{theorem}\label{thmT}
$\Linf(\GG)$ is an injective factor of type $\mathrm{III}$ and the invariant $T(\Linf(\GG))$ is given by
\[
T\bigl(\Linf(\GG)\bigr)=\biggl\{t\in\RR\,\biggr|\biggl.\,\sum_{n=1}^{\infty}\Bigl(1-\tfrac{1-q_n^2}{\left|1-|q_n|^{2+2\ii{t}}\right|}\Bigr)<+\infty\biggr\}.
\]
\end{theorem}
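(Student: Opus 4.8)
The plan is to observe that $\GG=\bigtimes_{n=1}^{\infty}\HH_{\nu_n,q_n}$ places us exactly in the setting of Section~\ref{ITPFac} and to read off the conclusion from the propositions collected there. First I would put $\rN_n=\Linf(\HH_{\nu_n,q_n})$: by the properties of $\HH_{\nu,q}$ recalled in Section~\ref{skladniki}, the standing assumption $\nu_n\log|q_n|\notin\pi\QQ$ makes every $\rN_n$ the injective factor of type $\mathrm{II}_\infty$, so it carries a n.s.f.~tracial weight $\btau[\nu_n,q_n]$ (the $\btau[n]$ of Section~\ref{ITPFac}), the Haar state $\bh_{\nu_n,q_n}$ is a faithful normal state, and $\bh_{\nu_n,q_n}=\btau[\nu_n,q_n](\,\cdot\,h_n)$ for a strictly positive $h_n=k_{\nu_n,q_n}$. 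Each $\bh_{\nu_n,q_n}$ is non-tracial: by \eqref{SpUgammas} the spectrum of $\modOp[\bh_{\nu_n,q_n}]$ equals $\{0\}\cup|q_n|^{2\ZZ}\neq\{1\}$, so $h_n$ is not a scalar. Recalling from Section~\ref{sectIPQG} that $\Linf(\GG)=\barbigotimes_{n=1}^{\infty}\rN_n$ with product state $\bh=\bigotimes_{n=1}^{\infty}\bh_{\nu_n,q_n}$, the whole discussion of Section~\ref{ITPFac} now applies.

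The structural assertions are then immediate. $\Linf(\GG)$ is a factor by \cite[Corollary~XIV.1.10]{Takesaki3}; it is injective by Proposition~\ref{injTP}, since each $\Linf(\HH_{\nu_n,q_n})$ is injective (coamenability of $\HH_{\nu_n,q_n}$ together with \cite[Theorem~3.9]{Tomatsu}); and it is of type $\mathrm{III}$ by Proposition~\ref{typeIIIfactor}, whose hypothesis holds here because some pair $(\nu_n,q_n)$ — hence some $(\rN_n,\bh_{\nu_n,q_n})$ — recurs infinitely often and all the states involved are non-tracial.

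It remains to identify $T(\Linf(\GG))$, and Proposition~\ref{propStormer} reduces this to evaluating $\btau[\nu_n,q_n](h_n^{1+\ii t})$: one has $t\in T(\Linf(\GG))$ if and only if $\sum_{n=1}^{\infty}\bigl(1-|\btau[\nu_n,q_n](h_n^{1+\ii t})|\bigr)<+\infty$. To compute the $n$-th term I would unwind \eqref{HaarSUq2}, \eqref{taunuq} and \eqref{HaarHnuq}: comparing them shows that $h_n$ is $\alpha^{\nu_n}$ applied to the element of $\Linf(\operatorname{SU}_{q_n}(2))$ whose $\Phi_{q_n}$-image is the constant direct-integral field $\lambda\mapsto D_{\lambda,q_n}^{-2}$, i.e.\ a positive diagonal operator with eigenvalues $(1-q_n^2)|q_n|^{2k}$, $k\geq 0$. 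Since $\Phi_{q_n}$ is a $*$-isomorphism, $\alpha^{\nu_n}$ a $*$-homomorphism and $\EE_{\nu_n,q_n}\comp\alpha^{\nu_n}=\id$, the functional calculus $z\mapsto z^{1+\ii t}$ passes through all three, and \eqref{taunuq} gives
\[
\btau[\nu_n,q_n]\bigl(h_n^{1+\ii t}\bigr)
=\Int_\TT\operatorname{Tr}\Bigl(\bigl(D_{\lambda,q_n}^{-2}\bigr)^{1+\ii t}\Bigr)\dd\mu(\lambda)
=(1-q_n^2)^{1+\ii t}\sum_{k=0}^{\infty}|q_n|^{2k(1+\ii t)}
=\frac{(1-q_n^2)^{1+\ii t}}{1-|q_n|^{2+2\ii t}},
\]
the geometric series converging because $\bigl||q_n|^{2+2\ii t}\bigr|=|q_n|^2<1$. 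As $1-q_n^2>0$ one has $\bigl|(1-q_n^2)^{1+\ii t}\bigr|=1-q_n^2$, hence $\bigl|\btau[\nu_n,q_n](h_n^{1+\ii t})\bigr|=\dfrac{1-q_n^2}{\left|1-|q_n|^{2+2\ii t}\right|}$; feeding this into the criterion of Proposition~\ref{propStormer} yields the stated description of $T(\Linf(\GG))$.

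I do not expect a genuine obstacle: the analytic heart of the matter — factoriality of infinite tensor products, the semifiniteness versus type-$\mathrm{III}$ dichotomy, and the Størmer-type formula for $T$ — has already been isolated in Propositions~\ref{typeIIIfactor}, \ref{injTP} and~\ref{propStormer}. The only delicate points are bookkeeping ones in the computation above: that $h_n^{1+\ii t}$ really is $\btau[\nu_n,q_n]$-integrable (it is, since fibrewise $\bigl|(D_{\lambda,q_n}^{-2})^{1+\ii t}\bigr|=D_{\lambda,q_n}^{-2}$, whose trace is $1$), and that $\EE_{\nu_n,q_n}$ and $\Phi_{q_n}$ genuinely intertwine the functional calculus, which is clear once one recalls that they are a normal conditional expectation and a $*$-isomorphism respectively.
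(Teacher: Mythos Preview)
Your proposal is correct and follows essentially the same route as the paper: invoke Propositions~\ref{typeIIIfactor} and~\ref{injTP} for the structural claims, Proposition~\ref{propStormer} for the $T$-invariant, and finish with the same direct-integral computation of $\btau[\nu_n,q_n]\bigl(k_{\nu_n,q_n}^{1+\ii t}\bigr)$. You add a few extra words of justification (non-traciality of the Haar states via \eqref{SpUgammas}, integrability of $h_n^{1+\ii t}$, passage of the functional calculus through $\alpha^{\nu_n}$ and $\Phi_{q_n}$), but the structure is identical.
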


\begin{proof}
$\Linf(\GG)$ is a factor of type $\mathrm{III}$ by Proposition \ref{typeIIIfactor} and it is injective by Proposition \ref{injTP}. From Proposition \ref{propStormer} we know that $t\in\RR$ belongs to $T(\Linf(\GG))$ if and only if
\[
\sum_{n=1}^{\infty}\Bigl(1-\bigl|\btau[\nu_n,q_n]\bigl(k_{\nu_n,q_n}^{1+\ii{t}}\bigr)\bigr|\Bigr)<+\infty.
\]
Finally, for any $n$ we have
\begin{align*}
\btau[\nu_n,q_n]\bigl(k_{\nu_n,q_n}^{1+\ii{t}}\bigr)&=\Int_\TT\operatorname{Tr}_\lambda\bigl((D_{\lambda,q_n}^{-2})^{1+\ii{t}}\bigr)\dd{\mu}(\lambda)\\
&=\bigl(1-q_n^2\bigr)^{1+\ii{t}}\Int_\TT\operatorname{Tr}_\lambda\Bigl(\operatorname{diag}\bigr(1,|q_n|^{2+2\ii{t}},|q_n|^{2(2+2\ii{t})},|q_n|^{3(2+2\ii{t})},\dotsc\bigr)\Bigr)\dd{\mu}(\lambda)\\
&=\bigl(1-q_n^2\bigr)^{1+\ii{t}}\sum_{k=0}^{\infty}|q_n|^{k(2+2\ii{t})}=\frac{(1-q_n^2)^{1+\ii{t}}}{1-|q_n|^{2+2\ii{t}}}.
\end{align*}
\end{proof}

Theorem \ref{thmT} allows us to identify the invariant $T(\Linf(\GG))$ in some special cases.

\begin{corollary}
\noindent
\begin{enumerate}
\item\label{corT1} If for all $n\in\NN$ we have $q_n=q$ for some $q\in\left]-1,1\right[\setminus\{0\}$ then $T(\Linf(\GG))=\tfrac{\pi}{\log|q|}\ZZ$.
\item\label{corT2} Assume that there are two subsequences $(q_{n_{1,p}})_{p\in\NN}$ and $(q_{n_{2,p}})_{p\in\NN}$ such that
\[
\bigl\{n_{1,p}\,\bigr|\bigl.\,p\in\NN\bigr\}\cap\bigl\{n_{2,p}\,\bigr|\bigl.\,p\in\NN\bigr\}=\emptyset
\]
and
\[
q_{n_{1,p}}\xrightarrow[p\to\infty]{}r_1,\quad
q_{n_{2,p}}\xrightarrow[p\to\infty]{}r_2
\]
for some $r_1,r_2\in\left]-1,1\right[\setminus\{0\}$ such that $\tfrac{\pi}{\log|r_1|}\ZZ\cap\tfrac{\pi}{\log|r_2|}\ZZ=\{0\}$. Then $T(\Linf(\GG))=\{0\}$.
\end{enumerate}
\end{corollary}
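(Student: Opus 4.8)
The plan is to deduce both statements directly from Theorem \ref{thmT}, by analyzing the convergence of the series
\[
\sum_{n=1}^{\infty}\Bigl(1-\tfrac{1-q_n^2}{\left|1-|q_n|^{2+2\ii{t}}\right|}\Bigr)
\]
for a fixed $t\in\RR$. For part \eqref{corT1}, with all $q_n=q$, every term of the series equals the single fixed number $c(t)=1-\tfrac{1-q^2}{|1-|q|^{2+2\ii{t}}|}$, so the series converges if and only if $c(t)=0$, i.e.\ if and only if $|1-|q|^{2+2\ii t}|=1-q^2=1-|q|^2=|1-|q|^2|$. Writing $|q|^{2\ii t}=\ee^{2\ii t\log|q|}$, the condition $\bigl|1-|q|^2\ee^{2\ii t\log|q|}\bigr|=1-|q|^2$ forces $\ee^{2\ii t\log|q|}=1$ (since for a fixed modulus $r=|q|^2\in\left]0,1\right[$ the function $\theta\mapsto|1-r\ee^{\ii\theta}|$ attains its minimum $1-r$ only at $\theta\in2\pi\ZZ$); this is exactly $2t\log|q|\in2\pi\ZZ$, i.e.\ $t\in\tfrac{\pi}{\log|q|}\ZZ$. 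Hence $T(\Linf(\GG))=\tfrac{\pi}{\log|q|}\ZZ$.

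For part \eqref{corT2}, fix $t\in T(\Linf(\GG))$; I must show $t=0$. Since the whole series converges, so does the sum over each of the disjoint index sets $\{n_{1,p}\}$ and $\{n_{2,p}\}$, so in particular
\[
1-\tfrac{1-q_{n_{j,p}}^2}{\bigl|1-|q_{n_{j,p}}|^{2+2\ii t}\bigr|}\xrightarrow[p\to\infty]{}0,\qquad j=1,2.
\]
Because $q_{n_{j,p}}\to r_j$ with $r_j\in\left]-1,1\right[\setminus\{0\}$, continuity of $q\mapsto1-\tfrac{1-q^2}{|1-|q|^{2+2\ii t}|}$ on this set gives $1-\tfrac{1-r_j^2}{|1-|r_j|^{2+2\ii t}|}=0$ for $j=1,2$; by the argument used in part \eqref{corT1} this means $t\in\tfrac{\pi}{\log|r_1|}\ZZ$ and $t\in\tfrac{\pi}{\log|r_2|}\ZZ$. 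By hypothesis $\tfrac{\pi}{\log|r_1|}\ZZ\cap\tfrac{\pi}{\log|r_2|}\ZZ=\{0\}$, so $t=0$, and therefore $T(\Linf(\GG))\subseteq\{0\}$; the reverse inclusion is trivial.

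The only real point requiring care is the elementary lemma that for $r\in\left]0,1\right[$ one has $|1-r\ee^{\ii\theta}|\geq1-r$ with equality precisely when $\ee^{\ii\theta}=1$ — this is what pins the vanishing of a term down to the arithmetic condition on $t$, and it is used in both parts. This is a straightforward computation ($|1-r\ee^{\ii\theta}|^2=1-2r\cos\theta+r^2$, minimized at $\cos\theta=1$), so there is no substantial obstacle; the corollary is essentially a direct unwinding of Theorem \ref{thmT}, with part \eqref{corT2} additionally invoking continuity to pass from the convergent-tail condition to an exact equation at the limit points $r_1,r_2$.
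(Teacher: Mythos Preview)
Your proof is correct and follows essentially the same approach as the paper's own proof: both parts are deduced directly from Theorem \ref{thmT} by analyzing when the general term $1-\tfrac{1-q_n^2}{|1-|q_n|^{2+2\ii t}|}$ vanishes, using the elementary fact that $|1-r\ee^{\ii\theta}|\geq 1-r$ with equality only at $\ee^{\ii\theta}=1$, and in part \eqref{corT2} passing to the limit along the two disjoint subsequences. Your presentation makes the non-negativity of the terms (needed to pass to subseries) slightly more explicit via the minimum lemma, but the substance is identical.
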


\begin{proof}
Ad \eqref{corT1}. When all $q_n$ are equal, by Theorem \ref{thmT} we have $t\in{T(\Linf(\GG))}$ if and only if $1-\tfrac{1-q^2}{\left|1-|q|^{2+2\ii{t}}\right|}=0$ which is easily seen to be equivalent to $|q|^{2\ii{t}}=1$, i.e.~$2\ii{t}\log|q|\in2\pi\ii\ZZ$.

Ad \eqref{corT2}. Clearly $0\in{T(\Linf(\GG))}$, so it is enough to consider a non-zero $t\in{T(\Linf(\GG))}$. Write
\[
q_{n_{1,p}}=r_1+\eps_{1,p}\quad\text{and}\quad
q_{n_{2,p}}=r_2+\eps_{2,p}.
\]
Since
\[
\sum_{n=1}^{\infty}\Bigl(1-\tfrac{1-q_n^2}{\left|1-|q_n|^{2+2\ii{t}}\right|}\Bigr)\geq
\sum_{p=1}^{\infty}\Bigl(1-\tfrac{1-(r_1+\eps_{1,p})^2}{\left|1-|r_1+\eps_{1,p}|^{2+2\ii{t}}\right|}\Bigr)
+
\sum_{p=1}^{\infty}\Bigl(1-\tfrac{1-(r_2+\eps_{2,p})^2}{\left|1-|r_2+\eps_{2,p}|^{2+2\ii{t}}\right|}\Bigr)
\]
the convergence of $\sum\limits_{n=1}^{\infty}\Bigl(1-\tfrac{1-q_n^2}{\left|1-|q_n|^{2+2\ii{t}}\right|}\Bigr)$ forces
\[
\tfrac{1-(r_1+\eps_{1,p})^2}{\left|1-|r_1+\eps_{1,p}|^{2+2\ii{t}}\right|}\xrightarrow[p\to\infty]{}1\quad\text{and}\quad
\tfrac{1-(r_2+\eps_{2,p})^2}{\left|1-|r_2+\eps_{2,p}|^{2+2\ii{t}}\right|}\xrightarrow[p\to\infty]{}1.
\]
This happens if and only if $|r_1|^{2\ii{t}}=1=|r_2|^{2\ii{t}}$, i.e.~when
\begin{equation}\label{nonzerot}
2\ii{t}\log|r_1|\in{2\pi\ii}\ZZ\quad\text{and}\quad
2\ii{t}\log|r_2|\in{2\pi\ii}\ZZ.
\end{equation}
But by assumption $\tfrac{\pi}{\log|r_1|}\ZZ\cap\tfrac{\pi}{\log|r_2|}\ZZ=\{0\}$, so \eqref{nonzerot} is impossible for non-zero $t$.
\end{proof}

Next we identify the invariant $S(\Linf(\GG))$ in some special cases.

\begin{theorem}\label{theSthm}
Assume that the sequence $\bigl((\nu_n,q_n)\bigr)_{n\in\NN}$ is such that each value $(\nu_n,q_n)$ appears in the sequence infinitely many times. Then $\Linf(\GG)_\bh=\Linf(\GG)^\sigma$ is a factor and
\begin{equation}\label{theS}
S\bigl(\Linf(\GG)\bigr)=\overline{\Bigl\{\lambda_1\dotsm\lambda_N\,\Bigr|\Bigl.\,N\in\NN,\:\lambda_i\in\bigl(\{0\}\cup|q_i|^{2\ZZ}\bigr),\:i\in\{1,\dotsc,N\}\Bigr\}}.
\end{equation}
\end{theorem}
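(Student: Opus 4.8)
The plan is to recognise $\bigl(\Linf(\GG),\bh\bigr)$ as an instance of the infinite tensor product of semifinite factors from Section~\ref{ITPFac} and then to feed it into the computability criterion for $S$ recalled in Section~\ref{ConnesTandS}. First I would set $\rN_n=\Linf(\HH_{\nu_n,q_n})$, which (since $\nu_n\log|q_n|\not\in\pi\QQ$) is the injective factor of type $\mathrm{II}_\infty$ with n.s.f.~tracial weight $\btau[\nu_n,q_n]$, and $\omega_n=\bh_{\nu_n,q_n}$. By the discussion around \eqref{taunuq} we have $\bh_{\nu_n,q_n}=\btau[\nu_n,q_n](\,\cdot\,k_{\nu_n,q_n})$ with $k_{\nu_n,q_n}$ a (bounded) strictly positive operator affiliated with $\rN_n$, and $\omega_n$ is non-tracial because $\modOp[\bh_{\nu_n,q_n}]$ is non-trivial by \eqref{SpUgammas}; hence $\bigl((\rN_n,\omega_n)\bigr)_{n\in\NN}$ fits the framework of Section~\ref{ITPFac} with $h_n=k_{\nu_n,q_n}$, and by the construction of Section~\ref{sectIPQG} the pair $\bigl(\Linf(\GG),\bh\bigr)$ is exactly the corresponding infinite tensor product $\barbigotimes_{n=1}^\infty\rN_n$ with the product state $\bigotimes_{n=1}^\infty\omega_n$. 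In particular $\Linf(\GG)$ is a factor (\cite[Corollary~XIV.1.10]{Takesaki3}; it is in fact of type~$\mathrm{III}$ by Proposition~\ref{typeIIIfactor}).

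Next I would settle the factoriality of the centralizer. By definition of the modular group, the centralizer $\Linf(\GG)_\bh$ of the faithful Haar state is precisely $\Linf(\GG)^\sigma$. The hypothesis that every value $(\nu_n,q_n)$ occurs infinitely often forces every quantum group $\HH_{\nu_m,q_m}$ to occur infinitely often in the sequence $(\HH_{\nu_n,q_n})_{n\in\NN}$, so Theorem~\ref{theCenters}\eqref{theCenters1} applies and gives $\cZ\bigl(\Linf(\GG)^\sigma\bigr)=\cZ\bigl(\Linf(\GG)\bigr)$; the latter is $\CC\I$ since $\Linf(\GG)$ is a factor. Hence $\Linf(\GG)_\bh=\Linf(\GG)^\sigma$ is a factor, which is the first assertion.

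Then the invariant $S$ becomes computable from a single modular operator: $\bh$ is an n.s.f.~weight on the factor $\Linf(\GG)$ whose centralizer $\Linf(\GG)_\bh$ is a factor, so $S\bigl(\Linf(\GG)\bigr)=\operatorname{Sp}\modOp[\bh]$ by \cite[Theorem~28.3]{Stratila} (see also \cite[Theorem 3.11 \emph{b})]{Connes}). By Proposition~\ref{propPsNabla},
\[
\operatorname{Sp}\modOp[\bh]=\overline{\bigl\{\lambda_1\dotsm\lambda_N\,\bigr|\bigl.\,N\in\NN,\:\lambda_i\in\operatorname{Sp}(\modOp[\bh_{\nu_i,q_i}]),\:i\in\{1,\dotsc,N\}\bigr\}},
\]
and since $\operatorname{Sp}\modOp[\bh_{\nu_i,q_i}]=\{0\}\cup|q_i|^{2\ZZ}$ by \eqref{SpUgammas}, substituting this into the displayed formula yields exactly \eqref{theS}.

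The statement is thus an assembly of material already in place, and the only genuinely substantial inputs are Theorem~\ref{theCenters}\eqref{theCenters1} — which is precisely what makes $\bh$ an admissible weight for the $S$-computation — and Proposition~\ref{propPsNabla}, the description of the spectrum of the modular operator of an infinite tensor product. The remaining points (that $\omega_n$ is non-tracial, that $k_{\nu_n,q_n}$ is strictly positive and affiliated with $\rN_n$, and that the identifications of Sections~\ref{ITPFac} and \ref{sectIPQG} are compatible) are routine verifications; I do not expect any real obstacle beyond invoking those two results correctly.
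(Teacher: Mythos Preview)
Your proposal is correct and follows essentially the same route as the paper: factoriality of $\Linf(\GG)$, Theorem~\ref{theCenters}\eqref{theCenters1} for factoriality of the centralizer, the computability criterion \cite[Theorem~28.3]{Stratila} to reduce $S(\Linf(\GG))$ to $\operatorname{Sp}\modOp[\bh]$, then Proposition~\ref{propPsNabla} together with \eqref{SpUgammas}. The paper's proof is terser because the infinite-tensor-product setup you spell out was already established at the start of Section~\ref{sectTheFactors1}, and it invokes Theorem~\ref{thmT} (rather than Proposition~\ref{typeIIIfactor} directly) for factoriality, but the substance is identical.
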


\begin{proof}
From Theorem \ref{thmT} we know that $\Linf(\GG)$ is a factor and hence, by Theorem \ref{theCenters}\eqref{theCenters1} the centralizer $\Linf(\GG)_\bh=\Linf(\GG)^\sigma$ of $\bh$ is a factor as well. As we mentioned at the end of Section \ref{ConnesTandS}, this implies that $S(\Linf(\GG))=\operatorname{Sp}{\modOp[\bh]}$ which by Proposition \ref{propPsNabla} is
\[
S\bigl(\Linf(\GG)\bigr)
=\operatorname{Sp}{\modOp[\bh]}
=\overline{\bigl\{\lambda_1\dotsm\lambda_N\,\bigr|\bigl.\,N\in\NN,\:\lambda_i\in\operatorname{Sp}(\modOp[\bh_{\nu_i,q_i}]),\:i\in\{1,\dotsc,N\}\bigr\}}.
\]
In view of \eqref{SpUgammas}, this proves \eqref{theS}.
\end{proof}

\begin{corollary}\label{corS}
\noindent
\begin{enumerate}
\item\label{corS1} If $(\nu_n,q_n)=(\nu,q)$ for all $n$ and some $(\nu,q)\in(\RR\setminus\{0\})\times(\left]-1,1\right[\setminus\{0\})$ then $S(\Linf(\GG))=\{0\}\cup|q|^{2\ZZ}$. In particular, $\Linf(\GG)$ is the injective factor of type $\mathrm{III}_{|q|^2}$.
\item\label{corS2} Assume that there are two subsequences $(q_{n_{1,p}})_{p\in\NN}$ and $(q_{n_{2,p}})_{p\in\NN}$ such that
\[
\bigl\{n_{1,p}\,\bigr|\bigl.\,p\in\NN\bigr\}\cap\bigl\{n_{2,p}\,\bigr|\bigl.\,p\in\NN\bigr\}=\emptyset
\]
and
\[
q_{n_{1,p}}\xrightarrow[p\to\infty]{}r_1,\quad
q_{n_{2,p}}\xrightarrow[p\to\infty]{}r_2
\]
for some $r_1,r_2\in\left]-1,1\right[\setminus\{0\}$ such that $\tfrac{\pi}{\log|r_1|}\ZZ\cap\tfrac{\pi}{\log|r_2|}\ZZ=\{0\}$. Then
$S(\Linf(\GG))=\RR_{\geq{0}}$. In particular, $\Linf(\GG)$ is the injective factor of type $\mathrm{III}_1$.
\end{enumerate}
\end{corollary}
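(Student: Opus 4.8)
The plan is to derive Corollary \ref{corS} directly from Theorem \ref{theSthm}, so the only work is to evaluate the closure of the multiplicative set appearing in \eqref{theS} in the two cases. In both cases the sequence $\bigl((\nu_n,q_n)\bigr)_{n\in\NN}$ is assumed to have each value repeated infinitely often (in case \eqref{corS2} one should first note that the existence of the two subsequences forces this, or more precisely we only need that the hypotheses of Theorem \ref{theSthm} apply; if not stated one passes to a subsequence-free reformulation, but in our setup the standing assumption already guarantees it), so Theorem \ref{theSthm} tells us that $\Linf(\GG)^\sigma$ is a factor and that $S(\Linf(\GG))$ equals the closure of $\bigl\{\lambda_1\dotsm\lambda_N \bigm| N\in\NN,\ \lambda_i\in\{0\}\cup|q_i|^{2\ZZ}\bigr\}$.

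For part \eqref{corS1}, since every $q_n$ equals the fixed $q$, the set of finite products of elements of $\{0\}\cup|q|^{2\ZZ}$ is just $\{0\}\cup|q|^{2\ZZ}$ itself (the group $|q|^{2\ZZ}$ is closed under multiplication, and multiplying by $0$ gives $0$), and this set is already closed in $\RR_{\geq 0}$ because $|q|^{2\ZZ}$ is a discrete subset of $\RR_{>0}$ accumulating only at $0$. Hence $S(\Linf(\GG))=\{0\}\cup|q|^{2\ZZ}=\{0\}\cup\{(|q|^2)^n\mid n\in\ZZ\}$, which by the trichotomy for $S(\rM)$ recalled in Section \ref{ConnesTandS} means $\Linf(\GG)$ is of type $\mathrm{III}_{|q|^2}$; combined with injectivity from Theorem \ref{thmT} and Haagerup's / Connes' uniqueness of the injective type $\mathrm{III}_\lambda$ factor with separable predual, it is \emph{the} injective factor of that type.

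For part \eqref{corS2}, I would argue that the closure of the product set is all of $\RR_{\geq 0}$. It suffices to show the product set is dense in $\RR_{>0}$ (it already contains $0$). The product set contains the multiplicative subgroup $G$ of $\RR_{>0}$ generated by $|r_1|^2$ — wait, more carefully: it contains $|q_{n_{1,p}}|^{2\ZZ}$ and $|q_{n_{2,p}}|^{2\ZZ}$ for every $p$, hence it contains the group generated by all the numbers $|q_{n_{1,p}}|^2$ and $|q_{n_{2,p}}|^2$. Passing to $\log$, density of this multiplicative set in $\RR_{>0}$ is equivalent to density in $\RR$ of the additive subgroup of $\RR$ generated by $\{2\log|q_{n_{1,p}}|\}_p\cup\{2\log|q_{n_{2,p}}|\}_p$. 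As $p\to\infty$ these generators converge to $2\log|r_1|$ and $2\log|r_2|$ respectively; an additive subgroup of $\RR$ is either dense or of the form $c\ZZ$, and if ours were $c\ZZ$ then each generator would be an integer multiple of $c$, but a convergent-but-eventually-nonconstant sequence of integer multiples of $c$ is impossible unless the sequence is eventually constant — if the $q_{n_{i,p}}$ are eventually equal to $r_i$, then the group contains $2\log|r_1|$ and $2\log|r_2|$ and being $c\ZZ$ forces $\tfrac{\log|r_1|}{\log|r_2|}\in\QQ$, contradicting $\tfrac{\pi}{\log|r_1|}\ZZ\cap\tfrac{\pi}{\log|r_2|}\ZZ=\{0\}$; otherwise the group is not discrete, hence dense. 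Either way the group is dense, so $S(\Linf(\GG))=\RR_{\geq 0}$, giving type $\mathrm{III}_1$, and again injectivity plus Haagerup's uniqueness theorem identifies $\Linf(\GG)$ with the unique injective type $\mathrm{III}_1$ factor with separable predual.

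The only mildly delicate point — the ``main obstacle'', such as it is — is the elementary-but-careful argument in \eqref{corS2} that a subgroup of $(\RR,+)$ containing a non-eventually-constant convergent sequence of generators cannot be discrete, together with the bookkeeping needed to handle the borderline case where the two subsequences are eventually constant (where one falls back on the arithmetic incommensurability hypothesis on $\log|r_1|,\log|r_2|$). Everything else is a direct substitution into Theorem \ref{theSthm} and an appeal to the structure theory of $S(\rM)$ and to the uniqueness of injective type $\mathrm{III}_\lambda$ factors quoted in Section \ref{ConnesTandS}.
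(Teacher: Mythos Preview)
Your argument is correct. Part \eqref{corS1} is identical to the paper's. For part \eqref{corS2} you take a slightly different route: you work with the \emph{pre-closure} multiplicative group generated by the actual values $|q_{n_{i,p}}|^2$ and argue directly that this subgroup of $\RR_{>0}$ is dense, via a case split on whether the subsequences are eventually constant. The paper instead passes immediately to the \emph{closed} group $S(\Linf(\GG))\setminus\{0\}$, observes that it contains the limit values $|r_1|^2$ and $|r_2|^2$ (hence the cyclic groups $|r_1|^{2\ZZ}$ and $|r_2|^{2\ZZ}$), and then uses only the incommensurability of $\log|r_1|$ and $\log|r_2|$ to rule out discreteness---no case split needed. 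Your version is a touch more elementary in that it never invokes the fact that the closure of a subgroup of $\RR_{>0}$ removed from $0$ is again a subgroup, but the paper's version is cleaner because it sidesteps the ``eventually constant vs.\ not'' dichotomy entirely. Both are valid; you might streamline yours by noting that in either case the closed subgroup $S(\Linf(\GG))\setminus\{0\}$ contains $|r_1|^2$ and $|r_2|^2$, which is all that is actually used.
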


\begin{proof}
Statement \eqref{corS1} is clear from Theorem \ref{theSthm} and the fact that the $S$-invariant of a type $\mathrm{III}_\lambda$ factor is $\{0\}\cup\lambda^\ZZ$.

Ad \eqref{corS2}. By Theorem \ref{theSthm} we have
\[
S\bigl(\Linf(\GG)\bigr)=\overline{\Bigl\{\lambda_1\dotsm\lambda_N\,\Bigr|\Bigl.\,N\in\NN,\:\lambda_i\in\bigl(\{0\}\cup|q_i|^{2\ZZ}\bigr),\:i\in\{1,\dotsc,N\}\Bigr\}}.
\]
In particular, $S(\Linf(\GG))\setminus\{0\}$ is a closed multiplicative subgroup of
$\RR_{>0}$. By assumption it contains both the subgroups $|r_1|^{2\ZZ}$ and $|r_2|^{2\ZZ}$ whose intersection is $\{1\}$, because $|r_1|^{2k}=|r_2|^{2l}$ for some $k,l\in\ZZ\setminus\{0\}$ implies $2k\log|r_1|=2l\log|r_2|$ and hence $\tfrac{\pi}{\log|r_1|}\ZZ\cap\tfrac{\pi}{\log|r_2|}\ZZ$ contains the non-zero number $\tfrac{2\pi{l}}{\log|r_1|}=\tfrac{2\pi{k}}{\log|r_2|}$ contrary to our assumption.

Subgroups of
$\RR_{>0}$ are either dense or discrete and the discrete ones are of the form $r^\ZZ$ for some $r>0$. If $S(\Linf(\GG))\setminus\{0\}$ were discrete we would have $|r_1|=r^k$ and $|r_2|=r^l$ for some $k,l\in\ZZ$. But then $|r_1|^l$ would be equal to $|r_2|^k$ and hence $|r_1|^\ZZ\cap|r_2|^\ZZ$ would not be trivial. It follows that $S(\Linf(\GG))\setminus\{0\}$ is dense in
$\RR_{>0}$, and since it is closed, we obtain
$S(\Linf(\GG))=\RR_{\geq{0}}$.
\end{proof}

\subsection{Type \texorpdfstring{$\mathrm{III}_0$}{III0}}\label{sectIII0}\hspace*{\fill}

In this section we will construct the sequence $\bigl((\nu_n,q_n)\bigr)_{n\in\NN}$ in such a way that $\Linf(\GG)$ with $\GG=\bigtimes\limits_{n=1}^{\infty}\HH_{\nu_n,q_n}$ is an injective factor of type $\mathrm{III}_0$.

For $s\in\left]0,1\right[$ let
\[
t_s=\sum_{p=1}^{\infty}\tfrac{\left\lfloor{p^{1-s}}\right\rfloor}{p!},
\]
where $\lfloor{x}\rfloor$ denotes the integer part of $x\in\RR_{\geq{0}}$. Note that the mapping
$\left]0,1\right[\ni{s}\mapsto{t_s}\in\RR_{>0}$ is injective.

\begin{theorem}\label{thmTypeIII0}
For each $s\in\left]0,1\right[$ there exists a compact quantum group $\GG_s$ such that $\Linf(\GG_s)$ is an injective factor and its invariant $T$ satisfies
\begin{enumerate}
\item\label{QinT} $\QQ\subset{T(\Linf(\GG_s))}$,
\item\label{sprimT} $t_{s'}\in{T(\Linf(\GG_s))}$ if and only if $s'>s$.
\end{enumerate}
In particular, $\bigl\{\Linf(\GG_s)\bigr\}_{s\in\left]0,1\right[}$ is an uncountable family of pairwise non-isomorphic injective type $\mathrm{III}_0$ factors.
\end{theorem}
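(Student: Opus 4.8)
The plan is to realize $\GG_s$ as an infinite Cartesian product $\bigtimes_{n=1}^{\infty}\HH_{\nu_n,q_n}$ in which the parameters $\log|q_n|$ are concentrated on $\{-\pi k!:k\in\NN\}$ with carefully chosen multiplicities, and then to read $T\bigl(\Linf(\GG_s)\bigr)$ off a series of the type in Theorem \ref{thmT}. First I would fix an irrational $\nu$ (say $\nu=\sqrt{2}$), set $c_k=\bigl\lceil(k+1)^{2s-1}\ee^{2\pi k!}\bigr\rceil\in\NN$ for $k\in\NN$, and let $\bigl((\nu_n,q_n)\bigr)_{n\in\NN}$ enumerate the multiset in which $(\nu,\ee^{-\pi k!})$ occurs exactly $c_k$ times. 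Since $\nu_n\log|q_n|=-\nu\pi k!\notin\pi\QQ$, each $\HH_{\nu_n,q_n}$ is the injective type $\mathrm{II}_\infty$ factor, and $\Linf(\GG_s)=\barbigotimes_n\Linf(\HH_{\nu_n,q_n})$ is a factor (an infinite tensor product of factors), injective by Proposition \ref{injTP}, with separable predual. Here no pair $(\nu_n,q_n)$ is repeated infinitely often; this is harmless, because the $T$-invariant of $\Linf(\GG_s)$ is still given by the formula of Theorem \ref{thmT} -- its derivation uses only Proposition \ref{propStormer} and the elementary identity $\btau[\nu_n,q_n]\bigl(k_{\nu_n,q_n}^{1+\ii t}\bigr)=\tfrac{(1-q_n^2)^{1+\ii t}}{1-|q_n|^{2+2\ii t}}$, neither of which needs the repeated-pair hypothesis -- and type $\mathrm{III}$ will follow a posteriori.

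The analytic heart is the elementary uniform estimate
\[
1-\tfrac{1-r^{2}}{\lvert 1-r^{2+2\ii t}\rvert}\;\asymp\;r^{2}\sin^{2}(t\log r)\;\asymp\;r^{2}\,\bigl\lVert\tfrac{t\log r}{\pi}\bigr\rVert^{2}\qquad(0<r\le\ee^{-\pi},\ t\in\RR),
\]
where $\lVert\,\cdot\,\rVert$ denotes the distance to the nearest integer; it follows by writing the left-hand side as $1-(1+v)^{-1/2}$ with the bounded quantity $v=\tfrac{4r^{2}\sin^{2}(t\log r)}{(1-r^{2})^{2}}$. Grouping the terms of the series in Theorem \ref{thmT} according to the value of $q_n$, and using $c_k\ee^{-2\pi k!}\asymp(k+1)^{2s-1}$, one obtains
\[
t\in T\bigl(\Linf(\GG_s)\bigr)\iff\sum_{k=1}^{\infty}(k+1)^{2s-1}\,\lVert k!\,t\rVert^{2}<\infty .
\]
If $t=u/v\in\QQ$ then $\lVert k!\,t\rVert=0$ whenever $v\mid k!$, hence for all $k\ge v$, so the series has only finitely many nonzero terms and $t\in T(\Linf(\GG_s))$; this gives \eqref{QinT}. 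For $t=t_{s'}$ the decisive point is $\lVert k!\,t_{s'}\rVert\asymp(k+1)^{-s'}$ for large $k$: one writes $k!\,t_{s'}$ as the integer $\sum_{p\le k}\lfloor p^{1-s'}\rfloor\,(p+1)\cdots k$ plus the positive remainder $\sum_{p>k}\frac{\lfloor p^{1-s'}\rfloor}{(k+1)\cdots p}$, which for large $k$ lies in $\left]0,1\right[$ and is comparable to its leading term $\frac{\lfloor(k+1)^{1-s'}\rfloor}{k+1}\asymp(k+1)^{-s'}$. Consequently $\sum_k(k+1)^{2s-1}\lVert k!\,t_{s'}\rVert^{2}\asymp\sum_k(k+1)^{2(s-s')-1}$, which converges precisely when $s'>s$; this proves \eqref{sprimT}, and the endpoint $s'=s$ shows $t_s\notin T(\Linf(\GG_s))$.

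It remains to identify the type and to distinguish the examples. As $t_s\notin T(\Linf(\GG_s))$, we have $T(\Linf(\GG_s))\ne\RR$, so $\Linf(\GG_s)$ is not semifinite and, being a factor, is of type $\mathrm{III}$. By Proposition \ref{propPsNabla}, $\operatorname{Sp}{\modOp[\bh]}$ is the closure of the set of finite products of elements of the spectra $\operatorname{Sp}{\modOp[\bh_{\nu_n,q_n}]}=\{0\}\cup|q_n|^{2\ZZ}$ (see \eqref{SpUgammas}); here these are the sets $\{0\}\cup\ee^{-2\pi k!\ZZ}$, and since $\{k!:k\in\NN\}$ generates $\ZZ$ the closure equals $\{0\}\cup\ee^{2\pi\ZZ}$. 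Thus $S(\Linf(\GG_s))\subseteq\operatorname{Sp}{\modOp[\bh]}=\{0\}\cup\ee^{2\pi\ZZ}\ne\RR_{\ge0}$, which rules out type $\mathrm{III}_1$; and were $\Linf(\GG_s)$ of type $\mathrm{III}_\lambda$ for some $\lambda\in\left]0,1\right[$, then by injectivity and separability it would be the Powers factor $\mathrm{R}_\lambda$, whose invariant $T(\mathrm{R}_\lambda)=\tfrac{2\pi}{\lvert\log\lambda\rvert}\ZZ$ is cyclic and hence cannot contain $\QQ$, contradicting \eqref{QinT}. Therefore $\Linf(\GG_s)$ is the injective factor of type $\mathrm{III}_0$ with separable predual. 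Finally, for $s<s'$ in $\left]0,1\right[$, \eqref{sprimT} gives $t_{s'}\in T(\Linf(\GG_s))\setminus T(\Linf(\GG_{s'}))$, so $\Linf(\GG_s)\not\cong\Linf(\GG_{s'})$; since $\left]0,1\right[$ is uncountable this produces the claimed uncountable family. The step I expect to be the main obstacle is the number-theoretic estimate $\lVert k!\,t_{s'}\rVert\asymp(k+1)^{-s'}$ together with the bookkeeping by which the integer multiplicities $c_k$ absorb the factor $\ee^{-2\pi k!}$ and leave exactly the critical weights $(k+1)^{2s-1}$; the rest is routine convergence analysis.
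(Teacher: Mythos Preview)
Your proposal is correct and follows essentially the same construction and analysis as the paper: both build $\GG_s$ as an infinite product of $\HH_{\nu,q}$'s with $q$ ranging over $\{e^{-\pi k!}:k\in\NN\}$ and multiplicities of order $k^{2s-1}e^{2\pi k!}$, then reduce the $T$-invariant to the convergence of $\sum_k k^{2s-1}\lVert k!\,t\rVert^2$ via the same algebraic identity for $1-\tfrac{1-r^2}{|1-r^{2+2\ii t}|}$, and finally use the estimate $\{k!\,t_{s'}\}\sim k^{-s'}$ (your $\lVert k!\,t_{s'}\rVert\asymp(k+1)^{-s'}$ is exactly the content of the paper's Lemma~\ref{weNeed}). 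Your observation that the ``repeated pair'' hypothesis of Theorem~\ref{thmT} is not needed for the $T$-formula is correct and is implicitly used in the paper as well (it appeals to ``reasoning analogous to'' that theorem); the only cosmetic differences are your use of $\lceil\cdot\rceil$ and $(k+1)$ in place of $\lfloor\cdot\rfloor$ and $k$, and your separate exclusion of types $\mathrm{III}_1$ (via $S\subset\operatorname{Sp}\nabla_{\bh}$) and $\mathrm{III}_\lambda$ (via cyclicity of $T$), where the paper simply notes that $\QQ\subset T\neq\RR$ rules out all $\mathrm{III}_\lambda$ with $\lambda>0$.
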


For the proof of Theorem \ref{thmTypeIII0} we need the next lemma in whose statement we write $\{x\}$ for the fractional part $x-\lfloor{x}\rfloor$ of $x\in\RR_{\geq{0}}$.

\begin{lemma}\label{weNeed}
For any $s\in\left]0,1\right[$ we have $\{t_sk!\}-\tfrac{1}{k^s}=\cO\bigl(\tfrac{1}{k}\bigr)$ as $k\to\infty$.
\end{lemma}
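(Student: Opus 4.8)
The plan is to analyze the fractional part $\{t_s k!\}$ by splitting the defining series $t_s = \sum_{p=1}^\infty \lfloor p^{1-s}\rfloor/p!$ at $p = k$. First I would write
\[
t_s k! = \sum_{p=1}^{k} \lfloor p^{1-s}\rfloor \frac{k!}{p!} + \sum_{p=k+1}^{\infty} \lfloor p^{1-s}\rfloor \frac{k!}{p!}.
\]
The first sum is an integer (since $p \le k$ means $k!/p!$ is a positive integer), so it contributes nothing to the fractional part. Hence $\{t_s k!\} = \bigl\{\sum_{p=k+1}^\infty \lfloor p^{1-s}\rfloor \tfrac{k!}{p!}\bigr\}$, and the key point will be that this tail sum is itself small — in fact $o(1)$ — so that the outer fractional-part bracket can eventually be dropped and we just need to estimate the tail directly.

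Second, I would estimate the tail $R_k := \sum_{p=k+1}^\infty \lfloor p^{1-s}\rfloor \tfrac{k!}{p!}$. The dominant term is $p = k+1$, which gives $\lfloor (k+1)^{1-s}\rfloor \cdot \tfrac{1}{k+1}$. Since $\lfloor(k+1)^{1-s}\rfloor = (k+1)^{1-s} + O(1)$ and $\tfrac{1}{k+1} = \tfrac{1}{k}(1 + O(1/k))$, this leading term is $(k+1)^{1-s}/(k+1) + O(1/k) = (k+1)^{-s} + O(1/k) = k^{-s} + O(1/k)$, using $(k+1)^{-s} = k^{-s}(1+1/k)^{-s} = k^{-s} + O(k^{-s-1}) = k^{-s} + O(1/k)$. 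The remaining terms $p \ge k+2$ contribute at most $\sum_{p=k+2}^\infty p^{1-s}\tfrac{k!}{p!} \le \sum_{p\ge k+2} \tfrac{p}{(k+1)(k+2)\cdots p} \le \tfrac{1}{(k+1)(k+2)} \sum_{j\ge 0} \tfrac{k+2+j}{(k+3)\cdots(k+2+j)}$, which is $O(1/k^2)$ after a crude geometric-type bound (each extra factor shrinks by roughly $1/(k+2)$, and the numerators $p$ grow only linearly). Putting these together, $R_k = k^{-s} + O(1/k)$, hence $R_k \to 0$, so for large $k$ we have $0 < R_k < 1$ and $\{t_s k!\} = R_k$. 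Therefore $\{t_s k!\} - k^{-s} = O(1/k)$, which is the claim.

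The main obstacle — really the only delicate point — is making the bound on the tail $\sum_{p \ge k+2} \lfloor p^{1-s}\rfloor \tfrac{k!}{p!}$ rigorous and uniform enough to land at $O(1/k^2)$ (and in particular $o(1)$, which is what licenses removing the fractional-part bracket). One has to be a little careful because $s$ is fixed but the constants in the $O(\cdot)$ may depend on $s$; however, since the lemma is stated for a fixed $s \in \left]0,1\right[$, that is harmless. A clean way to handle the tail is to note $\tfrac{k!}{p!} = \prod_{j=k+1}^p \tfrac{1}{j} \le (k+1)^{-(p-k)}$ and $\lfloor p^{1-s}\rfloor \le p \le 2^{p-k} k$ is too lossy, so instead bound $\lfloor p^{1-s}\rfloor \le p^{1-s} \le p$ and use $p \cdot (k+1)^{-(p-k)}$, summing the resulting series $\sum_{m\ge 2}(k+m)(k+1)^{-m}$ which is comparable to $k \cdot (k+1)^{-2} \cdot \tfrac{1}{1 - 1/(k+1)} = O(1/k)$ for the crude bound, and with slightly more care (separating the leading $p=k+1$ term as above) $O(1/k^2)$ for the remainder. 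I would carry out this geometric estimate explicitly but briefly, then assemble the three pieces (integer part, leading tail term, smaller tail terms) into the final $O(1/k)$ statement.
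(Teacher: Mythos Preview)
Your proposal is correct and follows essentially the same route as the paper: both split $t_sk!$ at $p=k$, note the sum over $p\le k$ is an integer, show the tail is $o(1)$ so that the fractional-part bracket can be dropped, isolate the $p=k+1$ term as $(k+1)^{-s}+\cO(1/k)=k^{-s}+\cO(1/k)$, and bound the remaining terms $p\ge k+2$ by a geometric-type estimate. The paper obtains the slightly sharper bound $\cO(k^{-1-s})$ for the $p\ge k+2$ tail by factoring out $(k+1)^{-s}$ before summing, whereas your crude bound gives $\cO(1/k)$; either suffices for the lemma.
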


\begin{proof}
Observe that
\[
t_sk!=\sum_{p=1}^{k}\tfrac{k!}{p!}\bigl\lfloor{p^{1-s}}\bigr\rfloor+\tfrac{1}{k+1}\bigl\lfloor{(k+1)^{1-s}}\bigr\rfloor+\sum_{p=k+2}^{\infty}\tfrac{1}{p(p-1)\dotsm(k+1)}\bigl\lfloor{p^{1-s}}\bigr\rfloor
\]
The first term on the right-hand side is an integer which we can disregard when computing $\{t_sk!\}$. Moreover the last term can be estimated as
\begin{align*}
\tfrac{1}{(k+1)^s}\sum_{p=k+2}^{\infty}\tfrac{(k+1)^s\left\lfloor{p^{1-s}}\right\rfloor}{p(p-1)\dotsm(k+1)}
&<\tfrac{1}{(k+1)^s}\sum_{p=k+2}^{\infty}\tfrac{p^s\left\lfloor{p^{1-s}}\right\rfloor}{p(p-1)\dotsm(k+1)}\\
&\leq\tfrac{1}{(k+1)^s}\sum_{p=k+2}^{\infty}\tfrac{1}{(p-1)\dotsm(k+1)}\\
&\leq\tfrac{1}{(k+1)^s}\sum_{p=k+2}^{\infty}\tfrac{1}{(k+1)^{p-k-1}}=\tfrac{1}{(k+1)^s}\tfrac{1}{k}
\end{align*}
which is arbitrarily small for sufficiently large $k$. Since so is $\tfrac{1}{k+1}\bigl\lfloor{(k+1)^{1-s}}\bigr\rfloor$, we find that for sufficiently large $k$
\[
\{t_sk!\}=\tfrac{1}{k+1}\bigl\lfloor{(k+1)^{1-s}}\bigr\rfloor+\sum_{p=k+2}^{\infty}\tfrac{1}{p(p-1)\dotsm(k+1)}\bigl\lfloor{p^{1-s}}\bigr\rfloor
\]

Hence
\[
\{t_sk!\}-\tfrac{1}{k^s}=\Bigl(\tfrac{1}{k+1}\bigl\lfloor{(k+1)^{1-s}}\bigr\rfloor-\tfrac{1}{k^s}\Bigr)+\sum_{p=k+2}^{\infty}\tfrac{1}{p(p-1)\dotsm(k+1)}\bigl\lfloor{p^{1-s}}\bigr\rfloor
\]
and we already know that the second term is $\cO\bigl(\tfrac{1}{k^{s+1}}\bigr)$. As for the first term, it is clearly negative ($\tfrac{1}{k+1}\bigl\lfloor{(k+1)^{1-s}}\bigr\rfloor\leq\tfrac{1}{(k+1)^s}\leq\tfrac{1}{k^s}$), so
\begin{align*}
\Bigl|\tfrac{1}{k+1}\bigl\lfloor{(k+1)^{1-s}}\bigr\rfloor-\tfrac{1}{k^s}\Bigr|
&=\tfrac{1}{k^s}-\tfrac{1}{k+1}\bigl\lfloor{(k+1)^{1-s}}\bigr\rfloor\\
&\leq\tfrac{1}{k^s}-\tfrac{1}{k+1}\bigl((k+1)^{1-s}-1\bigr)\\
&=\tfrac{1}{k^s}-\tfrac{1}{(k+1)^s}+\tfrac{1}{k+1}\\
&=\tfrac{1}{k(k+1)^s}\Bigl(k\tfrac{(k+1)^s-k^s}{k^s}\Bigr)+\tfrac{1}{k+1}\\
&=\tfrac{1}{k(k+1)^s}k\Bigl(\bigl(1+\tfrac{1}{k}\bigr)^s-1\Bigr)+\tfrac{1}{k+1}\\
&=\tfrac{s}{k(k+1)^s}+\tfrac{1}{k+1}=\cO\bigl(\tfrac{1}{k}\bigr)
\end{align*}
by the elementary inequality $\bigl(1+\tfrac{1}{k}\bigr)^s\leq{1}+\tfrac{s}{k}$.
\end{proof}

\begin{proof}[Proof of Theorem \ref{thmTypeIII0}]
Let us fix $s\in\left]0,1\right[$ and define
\[
l_k=\bigl\lfloor\exp(2\pi{k!})k^{2s-1}\bigr\rfloor,\qquad{k}\in\NN.
\]
Next let $(q_n)_{n\in\NN}$ be the sequence
\[
\bigl(
\underbrace{\exp(-\pi{1!}),\dotsc,\exp(-\pi{1!})}_{l_1\text{ times}},
\underbrace{\exp(-\pi{2!}),\dotsc,\exp(-\pi{2!})}_{l_2\text{ times}},\dotsc
\bigr)
\]
and for each $n$ choose $\nu_n\in\RR\setminus\{0\}$ such that $\nu_n\log|q_n|\not\in\QQ$. Define also $k(n)$ to be the positive integer such that
\[
q_n=\exp\bigl(-\pi{k(n)!}\bigr),\qquad{n}\in\NN.
\]
Clearly $n\mapsto{k(n)}$ is an unbounded non-decreasing function.

The compact quantum group $\GG_s$ is defined as
\[
\GG_s=\bigtimes_{n=1}^{\infty}\HH_{\nu_n,q_n}.
\]
Then $\Linf(\GG_s)$ is an infinite tensor product of injective factors (of type $\mathrm{II}_\infty$) and hence it is an injective factor (Proposition \ref{injTP} and remarks before Proposition \ref{typeIIIfactor}). Next we will study the invariant $T(\Linf(\GG_s))$.

Take $r\in\QQ$. Then for $n$ large enough the number
\[
2\ii{r}\log|q_n|=-2\pi\ii{r}k(n)!
\]
belongs to $2\pi\ii\ZZ$ and hence, for such $n$,
\[
1-\tfrac{1-q_n^2}{\left|1-|q_n|^{2+2\ii{r}}\right|}=1-\tfrac{1-q_n^2}{\left|1-|q_n|^2\right|}=0.
\]
Consequently $r\in{T(\Linf(\GG_s))}$ by a reasoning analogous to the proof of Theorem \ref{thmT}. This proves statement \eqref{QinT}.

We now pass to the proof of statement \eqref{sprimT}. Let $t=t_{s'}$ for some $s'\in\left]0,1\right[$. Our aim is to show that $t\in{T(\Linf(\GG_s))}$ if and only if $s'>s$. Denote
\[
S_t=\sum_{n=1}^\infty\Bigl(1-\tfrac{1-q_n^2}{\left|1-|q_n|^{2+2\ii{r}}\right|}\Bigr)
=\sum_{k=1}^{\infty}l_k\Bigl(1-\tfrac{1-\exp(-2\pi{k!})}{\left|1-\exp(-(2+2\ii{t})\pi{k!})\right|}\Bigr).
\]

Fix $k\in\NN$. Writing $x_k$ for $\bigl|1-\exp\bigl(-(2+2\ii{t})\pi{k!}\bigr)\bigr|$ we have
\begin{align*}
1-\tfrac{1-\exp(-2\pi{k!})}{\left|1-\exp(-(2+2\ii{t})\pi{k!})\right|}&=1-\tfrac{1-\exp(-2\pi{k!})}{x_k}\\
&=\tfrac{x_k-\left(1-\exp(-2\pi{k!})\right)}{x_k}\\
&=\tfrac{x_k^2-\left(1-\exp(-2\pi{k!})\right)^2}{x_k\left(x_k+\left(1-\exp(-2\pi{k!})\right)\right)}\\
&=\tfrac{2}{x_k\left(x_k+\left(1-\exp(-2\pi{k!})\right)\right)}\exp(-2\pi{k!})\bigl(1-\cos(2\pi{t}k!)\bigr).
\end{align*}
Since $x_k\xrightarrow[k\to\infty]{}1$, the series $S_t$ converges if and only if
\[
S_t'=\sum_{k=1}^{\infty}l_k\exp(-2\pi{k!})\bigl(1-\cos(2\pi{t}k!)\bigr)
\]
converges. To analyze this series we note the obvious fact that
\[
\tfrac{l_k\exp(-2\pi{k!})}{k^{2s-1}}\xrightarrow[k\to\infty]{}1
\]
and study the factor $\bigl(1-\cos(2\pi{t}k!)\bigr)$:
\[
1-\cos(2\pi{t}k!)=1-\cos\bigl(2\pi\{tk!\}\bigr)
=1-\cos\Bigl(\tfrac{2\pi}{k^{s'}}+2\pi\bigl(\{tk!\}-\tfrac{1}{k^{s'}}\bigr)\Bigr)
=1-\cos\Bigl(\tfrac{2\pi}{k^{s'}}+\cO\bigl(\tfrac{1}{k}\bigr)\Bigr)
\]
by Lemma \ref{weNeed}. Using this we immediately arrive at
\[
\lim_{k\to\infty}\tfrac{1-\cos(2\pi{t}k!)}{\frac{2\pi^2}{k^{2s'}}}=1
\]
which shows that $S_t'$ converges if and only if
\[
S_t''=\sum_{k=1}^{\infty}k^{2s-1}\tfrac{2\pi^2}{k^{2s'}}=2\pi^2\sum_{k=1}^{\infty}\tfrac{k^{2(s-s')}}{k}
\]
converges which happens if and only if $s'>s$.

Since $T(\Linf(\GG_s))$ is not equal to $\RR$, the injective factor $\Linf(\GG_s)$ is of type $\mathrm{III}$. Furthermore, since $\QQ\subset{T(\Linf(\GG_s))}$ it is not of type $\mathrm{III}_\lambda$ for any $\lambda>0$ and consequently it is of type $\mathrm{III}_0$. The fact that $\bigl\{\Linf(\GG_s)\bigr\}_{s\in\left]0,1\right[}$ are pairwise non-isomorphic is clear.
\end{proof}

\begin{remark}\label{remninj0}
Let $\{\GG_s\}_{s\in\left]0,1\right[}$ be the family of compact quantum groups constructed in Theorem \ref{thmTypeIII0}. Consider compact quantum groups $\GG_s\times\hh{\mathbb{F}_2}$ where $\mathbb{F}_2$ is the free group on two generators. Then $\bigl\{\GG_{s}\times\hh{\mathbb{F}_2}\bigr\}_{s\in\left]0,1\right[}$ is an uncountable family of second countable compact quantum groups such that $\bigl\{\Linf(\GG_s\times\hh{\mathbb{F}_2})\bigr\}_{s\in\left]0,1\right[}$ are pairwise non-isomorphic, non-injective type $\mathrm{III}_0$ factors. Indeed, $\Linf(\GG_s\times\hh{\mathbb{F}_2})=\Linf(\GG_s)\vtens\Linf(\hh{\mathbb{F}_2})$ is non-injective, since $\Linf(\hh{\mathbb{F}_2})$ is non-injective, while the Connes invariant $T$ of $\Linf(\GG_s\times\hh{\mathbb{F}_2})$ is equal to $T(\Linf(\GG_s))$ (\cite[Proposition 27.3]{Stratila}).
\end{remark}

\section{Invariants \texorpdfstring{$\Ttau(\GG)$}{TtG}, \texorpdfstring{$\TtauInn(\GG)$}{TtinnG} and \texorpdfstring{$\TtauAInn(\GG)$}{TtinnG}}\label{sectTtau}

Let $\rM$ be a von Neumann algebra. The natural topology on the group $\operatorname{Aut}(\rM)$ is the restriction to $\operatorname{Aut}(\rM)$ of the topology on the vector space $\B_{\sigma\text{-w}}(\rM)$ of all $\sigma$-weakly continuous linear maps $\rM\to\rM$ defined by the seminorms
\[
\B_{\sigma\text{-w}}(\rM)\ni\Theta\longmapsto\|\theta\comp\Theta\|\in\RR_{\geq{0}},\qquad\theta\in\rM_*.
\]
(often called the \emph{$u$-topology}, cf.~\cite{HaagerupStandardForm,Stratila}). It is known that $\operatorname{Aut}(\rM)$ is a topological group with the $u$-topology (see \cite[Section I.2]{Stratila}). We will denote the closure of the subgroup $\operatorname{Inn}(\rM)\subset\operatorname{Aut}(\rM)$ in $u$-topology by $\overline{\operatorname{Inn}}(\rM)$ and its elements will be referred to as \emph{approximately inner} automorphisms of $\rM$.

\begin{definition}\label{def:invariants}
For a locally compact quantum group $\GG$ define
\begin{align*}
\Ttau(\GG)&=\bigl\{t\in\RR\,\bigr|\bigl.\,\tau^\GG_t=\id\bigr\},\\
\TtauInn(\GG)&=\bigl\{t\in\RR\,\bigr|\bigl.\,\tau^\GG_t\in\operatorname{Inn}(\Linf(\GG))\bigr\},\\
\TtauAInn(\GG)&=\bigl\{t\in\RR\,\bigr|\bigl.\,\tau^\GG_t\in\overline{\operatorname{Inn}}(\Linf(\GG))\bigr\}.
\end{align*}
\end{definition}

\begin{remark}\label{remark1}
\noindent
\begin{enumerate}
\item If $\GG$ and $\HH$ are isomorphic locally compact quantum groups then $\Ttau(\GG)=\Ttau(\HH)$ and similarly for $\TtauInn$, $\TtauAInn$.
\item Obviously $\Ttau(\GG)\subset{\TtauInn(\GG)}\subset\TtauAInn(\GG)$ and each invariant is a subgroup of $\RR$.
\item For any locally compact quantum group $\GG$ we have $\Ttau(\GG)=\Ttau(\hh{\GG})$. This follows easily from the formula $\bigl(\tau^{\hh{\GG}}_t\tens\tau^{\GG}_t\bigr)W=W$ for all $t\in\RR$, where $W$ is the Kac-Takesaki operator (\cite[Formula $(5.28)$]{SoltanWoronowicz}).
\item If $\GG$ is compact then $\GG$ is of Kac type if and only of $\Ttau(\GG)=\RR$ (cf.~\cite[Section 1.7]{NeshveyevTuset}).
\item An invariant similar to $\TtauInn(\GG)$ was studied in \cite[Definition 3.4]{Vaes}, namely $T(\Linf(\GG),\Delta)=\bigl\{t\in\RR\,\bigr|\bigl.\,\exists\:u\in\Linf(\GG),\:\tau^{\GG}_t=\operatorname{Ad}(u)\text{ and }\Delta(u)=u\tens{u}\bigr\}$. Obviously this set is contained in $\TtauInn(\GG)$, but it can be shown that the two invariants are different. More precisely, for $q\in\left]0,1\right[$ the scaling automorphism $\tau^{\SU_q(3)}_{t}\in\operatorname{Aut}(\Linf(\SU_q(3)))$ with $t=\frac{\pi}{2\log{q}}$ is inner, but not implemented by any group-like unitary. This fact relies on the study of the Plancherel objects of $\widehat{\SU_q(3)}$, and a formula expressesing scaling automorphisms of $\SU_q(3)$ at the level of direct integrals. For more details, see \cite[Section 4]{invariants}.
\item In this paper we use invariants introduced in Definition \ref{def:invariants} (in particular $\TtauInn(\GG)$) to distinguish between different quantum groups with the same von Neumann algebra $\Linf(\GG)$. Alternatively, as observed by the referee, rather than using $\TtauInn(\GG)$, we could use invariant $T(\Linf(\GG),\Delta)$ of Vaes to achieve this goal (see remarks \ref{remark2}, \ref{remark3}). Its usage would shorten some of the arguments, however we prefer to work with invariant $\TtauInn(\GG)$. We believe that it is more natural and in itself carries interesting information about quantum groups we construct.
\item One can find compact quantum groups $\GG,\HH$ which share the same $\Ttau$ and $\TtauInn$ invariants, while $\TtauAInn(\GG)\neq\TtauAInn(\HH)$ (see Example \ref{example1}). This shows that all the three invariants of Definition \ref{def:invariants} can be useful in telling quantum groups apart.
\end{enumerate}
\end{remark}

\begin{proposition}\label{TtauProp}
Let $\GG$ be a locally compact quantum group. Then $\Ttau(\GG)$ and $\TtauAInn(\GG)$ are closed subgroups of $\RR$.
\end{proposition}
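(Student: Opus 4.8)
The plan is to treat the two invariants separately, since the closedness of $\Ttau(\GG)$ is elementary while that of $\TtauAInn(\GG)$ requires a genuine argument about the $u$-topology. In both cases the group structure is already noted in the preceding remark (each set is a subgroup of $\RR$ because $t\mapsto\tau^\GG_t$ is a one-parameter group and $\operatorname{Inn}$, $\overline{\operatorname{Inn}}$ are normal subgroups), so only closedness is at stake. Throughout, the key external input is that the scaling group is $u$-continuous, i.e.\ the map $\RR\ni t\mapsto\tau^\GG_t\in\operatorname{Aut}(\Linf(\GG))$ is continuous for the $u$-topology; this is a standard fact about locally compact quantum groups (the scaling group is a $\sigma$-weakly continuous one-parameter group of automorphisms of $\Linf(\GG)$, hence automatically $u$-continuous by the argument in \cite[Section I.2]{Stratila}).

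\textbf{Closedness of $\Ttau(\GG)$.} First I would argue that $\Ttau(\GG)=\bigl\{t\in\RR\,\bigr|\bigl.\,\tau^\GG_t=\id\bigr\}$ is closed. Suppose $t_n\to t$ with each $\tau^\GG_{t_n}=\id$. By $u$-continuity of the scaling group, $\tau^\GG_{t_n}\to\tau^\GG_t$ in the $u$-topology, so for every $\theta\in\Lone(\GG)$ we get $\|\theta\comp(\tau^\GG_t-\id)\|=\lim_n\|\theta\comp(\tau^\GG_{t_n}-\id)\|=0$; hence $\tau^\GG_t=\id$ and $t\in\Ttau(\GG)$. (One could equally invoke that the fixed-point condition $\tau^\GG_t(x)=x$ for all $x$ is a closed condition once one knows $t\mapsto\tau^\GG_t(x)$ is $\sigma$-weakly continuous for each fixed $x$, but the $u$-continuity route is cleaner and is exactly what is needed for the second part anyway.)

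\textbf{Closedness of $\TtauAInn(\GG)$.} The substantive point is that $\overline{\operatorname{Inn}}(\Linf(\GG))$ is, by definition, a $u$-closed subset of $\operatorname{Aut}(\Linf(\GG))$. Since $t\mapsto\tau^\GG_t$ is $u$-continuous, the set $\{t\in\RR\,|\,\tau^\GG_t\in\overline{\operatorname{Inn}}(\Linf(\GG))\}$ is the preimage of a closed set under a continuous map, hence closed. So the heart of the matter reduces to the single assertion that $\overline{\operatorname{Inn}}(\rM)$ is $u$-closed in $\operatorname{Aut}(\rM)$ --- which holds tautologically, as it is the $u$-closure of $\operatorname{Inn}(\rM)$ inside $\operatorname{Aut}(\rM)$, and $\operatorname{Aut}(\rM)$ is a topological group so its closed subsets are closed in itself. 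I would spell this out as: if $t_n\to t$ with $\tau^\GG_{t_n}\in\overline{\operatorname{Inn}}(\Linf(\GG))$ for all $n$, then $\tau^\GG_t=u\text{-}\lim_n\tau^\GG_{t_n}$ lies in the $u$-closure of $\overline{\operatorname{Inn}}(\Linf(\GG))$, which equals $\overline{\operatorname{Inn}}(\Linf(\GG))$ since the latter is already $u$-closed; hence $t\in\TtauAInn(\GG)$.

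\textbf{Main obstacle.} The only real point requiring care is the $u$-continuity of $t\mapsto\tau^\GG_t$; everything else is formal manipulation with the definition of the $u$-topology and the fact that $\operatorname{Aut}(\rM)$ is a topological group in it (cited from \cite[Section I.2]{Stratila}). I expect no difficulty here, since $\sigma$-weak continuity of one-parameter automorphism groups upgrading to $u$-continuity is classical; still, I would state it explicitly as the one nontrivial ingredient. Note also that $\TtauInn(\GG)$ is deliberately \emph{not} claimed to be closed: $\operatorname{Inn}(\rM)$ need not be $u$-closed in general, so the same argument does not apply, which is precisely why the proposition singles out $\Ttau$ and $\TtauAInn$.
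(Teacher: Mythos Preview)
Your proposal is correct and follows essentially the same route as the paper: both arguments reduce to showing that $t\mapsto\tau^\GG_t$ is $u$-continuous, after which $\TtauAInn(\GG)$ is the preimage of the closed set $\overline{\operatorname{Inn}}(\Linf(\GG))$ under a continuous map. The only difference is in how $u$-continuity is justified: the paper invokes Haagerup's result \cite[Proposition 3.5]{HaagerupStandardForm} using that $\Linf(\GG)$ is in standard form on $\Ltwo(\GG)$ and the scaling group is implemented by a strongly continuous one-parameter unitary group, whereas you appeal to the general principle that pointwise $\sigma$-weak continuity of a one-parameter automorphism group upgrades to $u$-continuity.
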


\begin{proof}
Clearly all the subsets in question are subgroups and $\Ttau(\GG)$ is obviously closed. Moreover, since $\Linf(\GG)$ is in standard form on $\Ltwo(\GG)$ and the scaling group is implemented by a strongly continuous one parameter group of unitary operators on $\Ltwo(\GG)$, by the theorem of Haagerup \cite[Proposition 3.5]{HaagerupStandardForm} the map $\RR\ni{t}\mapsto\tau^\GG_t\in\operatorname{Aut}(\Linf(\GG))$ is continuous. This immediately implies that $\TtauAInn(\GG)$ is also closed.
\end{proof}

\begin{examples}\label{exTtau}
\noindent
\begin{enumerate}
\item Let $q\in\left]-1,1\right[\setminus\{0\}$. Then $\Ttau\bigl(\operatorname{SU}_q(2)\bigr)=\TtauInn\bigl(\operatorname{SU}_q(2)\bigr)=\TtauAInn\bigl(\operatorname{SU}_q(2)\bigr)=\tfrac{\pi}{\log|q|}\ZZ$ (\cite[Proposition 7.3]{modular}).
\item\label{exTtau2} Let $q\in\left]-1,1\right[\setminus\{0\}$, $\nu\in\RR\setminus\{0\}$ and let $\HH_{\nu,q}=\QQ\bowtie\operatorname{SU}_q(2)$ as in Section \ref{TheExamples}. Then by \cite[Proposition 4.8]{KrajczokWasilewski} we have $\Ttau(\HH_{\nu,q})=\tfrac{\pi}{\log|q|}\ZZ$ and $\TtauInn(\HH_{\nu,q})=\nu\QQ+\tfrac{\pi}{\log|q|}\ZZ$. In particular, $\TtauInn(\HH_{\nu,q})$ is not closed and $\TtauAInn(\HH_{\nu,q})=\RR$ by Proposition \ref{TtauProp}.
\end{enumerate}
\end{examples}

\begin{lemma}
\noindent
\begin{enumerate}
\item\label{tauProd1} Let $\GG$ and $\HH$ be locally compact quantum groups. Then the scaling group $\bigl(\tau^{\GG\times\HH}_t\bigr)_{t\in\RR}$ is the product of the scaling groups $\bigl(\tau^\GG_t\bigr)_{t\in\RR}$ and $\bigl(\tau^\HH_t\bigr)_{t\in\RR}$ of $\GG$ and $\HH$ in the sense that for each $t\in\RR$ we have $\tau^{\GG\times\HH}_t=\tau^\GG_t\tens\tau^\HH_t$.
\item\label{tauProd2} Let $\{\GG_n\}_{n\in\NN}$ be a family of compact quantum groups and let $\GG=\bigtimes\limits_{n=1}^\infty\GG_n$. Then for each $t\in\RR$ we have
\[
\tau^\GG_t=\bigotimes_{n=1}^{\infty}\tau^{\GG_n}_t,
\]
where $\bigl(\tau^{\GG_n}_t\bigr)_{t\in\RR}$ is the scaling group of $\GG_n$.
\end{enumerate}
\end{lemma}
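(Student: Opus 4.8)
The plan is to deduce both statements from the action of the scaling group on a $\sigma$-weakly dense $*$-subalgebra, using that a $*$-automorphism of a von Neumann algebra is $\sigma$-weakly continuous and hence determined by its restriction to such a subalgebra. For part \eqref{tauProd1} I would work with the polar decomposition $S^\GG=R^\GG\comp\tau^\GG_{-\ii/2}$ of the antipode (\cite{KustermansVaes}). By construction the antipode of the Cartesian product is $S^{\GG\times\HH}=S^\GG\tens S^\HH$ (closure of the algebraic tensor product of $S^\GG$ and $S^\HH$); on the other hand $R^\GG\tens R^\HH$ is an involutive normal $*$-anti-automorphism of $\Linf(\GG)\vtens\Linf(\HH)$, the family $(\tau^\GG_t\tens\tau^\HH_t)_{t\in\RR}$ is a $\sigma$-weakly continuous one-parameter group of automorphisms whose analytic generator at $-\tfrac{\ii}{2}$ is $\tau^\GG_{-\ii/2}\tens\tau^\HH_{-\ii/2}$, and
\[
(R^\GG\tens R^\HH)\comp\bigl(\tau^\GG_{-\ii/2}\tens\tau^\HH_{-\ii/2}\bigr)=\bigl(R^\GG\comp\tau^\GG_{-\ii/2}\bigr)\tens\bigl(R^\HH\comp\tau^\HH_{-\ii/2}\bigr)=S^\GG\tens S^\HH.
\]
Thus $\bigl(R^\GG\tens R^\HH,(\tau^\GG_t\tens\tau^\HH_t)_t\bigr)$ is a polar decomposition of $S^{\GG\times\HH}$, and the uniqueness of the polar decomposition of the antipode forces $\tau^{\GG\times\HH}_t=\tau^\GG_t\tens\tau^\HH_t$ for all $t$. (When $\GG$ and $\HH$ are compact one can also argue directly: by \eqref{IrrProdG} every matrix element of an irreducible representation of $\GG\times\HH$ is an elementary tensor $U^\alpha_{i,j}\tens U^\beta_{k,l}$, and since $\uprho_{\alpha\btens\beta}=\uprho_\alpha\tens\uprho_\beta$ the second line of \eqref{sigmatau} shows at once that $\tau^{\GG\times\HH}_t$ and $\tau^\GG_t\tens\tau^\HH_t$ agree on all of these, hence on the $\sigma$-weakly dense subalgebra $\operatorname{Pol}(\GG\times\HH)$.)

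For part \eqref{tauProd2} I would run a density argument parallel to the one behind the identity $\sigma^\omega_t=\bigotimes_n\sigma^{\omega_n}_t$ recalled in Section \ref{ITPFac}. First, each $\tau^{\GG_n}_t$ preserves the Haar state $\bh_n$: by \eqref{sigmatau} it multiplies a matrix element of an irreducible representation by a scalar which equals $1$ exactly when the representation is trivial, so $\bh_n\comp\tau^{\GG_n}_t=\bh_n$ on $\operatorname{Pol}(\GG_n)$ and hence everywhere. Consequently $\bigotimes_n\tau^{\GG_n}_t$ is a well-defined $*$-automorphism of $\Linf(\GG)=\barbigotimes_n\Linf(\GG_n)$ which fixes the cyclic vector $\Omega=\Omega_1\tens\Omega_2\tens\dotsm$, and therefore preserves $\bh=\bigotimes_n\bh_n$. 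Now $\operatorname{Pol}(\GG)$ is $\sigma$-weakly dense in $\Linf(\GG)$ and, by \eqref{IrrProdG}, is the algebraic span of the elementary tensors $U^{\alpha_1}_{i_1,j_1}\tens\dotsm\tens U^{\alpha_N}_{i_N,j_N}\tens\I^{\tens\infty}$; since $\uprho_{\alpha_1\btens\dotsm\btens\alpha_N}=\uprho_{\alpha_1}\tens\dotsm\tens\uprho_{\alpha_N}$, the second line of \eqref{sigmatau} gives
\[
\tau^\GG_t\bigl(U^{\alpha_1}_{i_1,j_1}\tens\dotsm\tens U^{\alpha_N}_{i_N,j_N}\tens\I^{\tens\infty}\bigr)=\tau^{\GG_1}_t\bigl(U^{\alpha_1}_{i_1,j_1}\bigr)\tens\dotsm\tens\tau^{\GG_N}_t\bigl(U^{\alpha_N}_{i_N,j_N}\bigr)\tens\I^{\tens\infty},
\]
which is exactly $\bigl(\bigotimes_n\tau^{\GG_n}_t\bigr)$ evaluated on the same element. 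Hence $\tau^\GG_t$ and $\bigotimes_n\tau^{\GG_n}_t$ agree on $\operatorname{Pol}(\GG)$, and being $*$-automorphisms of $\Linf(\GG)$ they coincide.

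I do not anticipate a genuine obstacle: the lemma is essentially bookkeeping and both parts rest on the ``check on generators'' principle. The only points that need a little care are, for \eqref{tauProd1}, handling the unbounded-operator subtleties in the identity $S^{\GG\times\HH}=S^\GG\tens S^\HH$ and in the analytic generator of a tensor product of one-parameter groups (all standard), and, for \eqref{tauProd2}, recording that $\bigotimes_n\tau^{\GG_n}_t$ is a legitimate automorphism of the infinite tensor product --- justified exactly as for $\sigma^\omega_t=\bigotimes_n\sigma^{\omega_n}_t$ --- together with recalling the behaviour of the $\uprho$-operator under exterior tensor products.
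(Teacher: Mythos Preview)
Your proposal is correct and, for part \eqref{tauProd2}, follows exactly the route the paper sketches: the paper's proof says only that \eqref{tauProd2} ``follows from the fact that $\bh=\bigotimes_{n}\bh_n$ and that the irreducible representations of $\GG$ are given by \eqref{IrrProdG}'', and you have spelled out precisely how those two ingredients combine (Haar-invariance to make $\bigotimes_n\tau^{\GG_n}_t$ a well-defined automorphism of the infinite tensor product, then agreement on $\operatorname{Pol}(\GG)$ via \eqref{sigmatau} and $\uprho_{\alpha_1\btens\dotsm\btens\alpha_N}=\uprho_{\alpha_1}\tens\dotsm\tens\uprho_{\alpha_N}$). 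For part \eqref{tauProd1} the paper simply cites \cite[Section 13.5]{KrajczokTypeI}, whereas you supply an argument via the polar decomposition of the antipode; this is a standard and clean way to do it, and your parenthetical compact-case shortcut is also fine.
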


\begin{proof}
Statement \eqref{tauProd1} is well known (see e.g.~\cite[Section 13.5]{KrajczokTypeI}), while \eqref{tauProd2} follows from the fact that $\bh=\bigotimes\limits_{n=1}^{\infty}\bh_n$ and that the irreducible representations of $\GG$ are given by \eqref{IrrProdG}.
\end{proof}

\begin{proposition}\label{propTprod}
\noindent
\begin{enumerate}
\item\label{propTprod1} For any locally compact quantum groups $\GG$ and $\HH$ we have
\begin{align*}
\Ttau(\GG\times\HH)&=\Ttau(\GG)\cap\Ttau(\HH),\\
\TtauInn(\GG\times\HH)&=\TtauInn(\GG)\cap\TtauInn(\HH),\\
\TtauAInn(\GG\times\HH)&\supset\TtauAInn(\GG)\cap\TtauAInn(\HH).
\end{align*}
\item\label{propTprod2} If $\{\GG_n\}_{n\in\NN}$ is a family of compact quantum groups and $\GG=\bigtimes\limits_{n=1}^\infty\GG_n$ then
\[
\Ttau(\GG)=\bigcap_{n=1}^{\infty}\Ttau(\GG_n).
\]
\end{enumerate}
\end{proposition}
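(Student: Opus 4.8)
The plan is to reduce the whole statement to the tensor-product form of the scaling group supplied by the preceding lemma, $\tau^{\GG\times\HH}_t=\tau^\GG_t\tens\tau^\HH_t$ for \eqref{propTprod1} and $\tau^\GG_t=\bigotimes_n\tau^{\GG_n}_t$ for \eqref{propTprod2}, and then to decide, purely at the level of von Neumann algebras, when a product automorphism of $\rM\vtens\rN$ (here $\rM=\Linf(\GG)$, $\rN=\Linf(\HH)$) is trivial, inner, or approximately inner.

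The $\Ttau$-identities are the routine ones. If $\tau^\GG_t\tens\tau^\HH_t=\id$, restricting this automorphism to $\rM\tens\I$, where it acts as $\tau^\GG_t\tens\id$, and using that $a\mapsto a\tens\I$ is injective gives $\tau^\GG_t=\id$; symmetrically $\tau^\HH_t=\id$, and the reverse implication is trivial. Part \eqref{propTprod2} is proved the same way: restricting $\bigotimes_n\tau^{\GG_n}_t$ to the $n$-th leg $\I^{\tens(n-1)}\tens\Linf(\GG_n)\tens\I^{\tens\infty}$ forces $\tau^{\GG_n}_t=\id$ for every $n$, while conversely, if all $\tau^{\GG_n}_t=\id$, then $\bigotimes_n\tau^{\GG_n}_t$---being the $\sigma$-weakly continuous extension of the identity on the $\sigma$-weakly dense $*$-subalgebra spanned by the algebraic tensors---is the identity.

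For $\TtauInn$ the inclusion $\supset$ is immediate: if $\tau^\GG_t=\operatorname{Ad}(u)$ and $\tau^\HH_t=\operatorname{Ad}(v)$ with $u\in\mathcal U(\rM)$, $v\in\mathcal U(\rN)$, then $\tau^{\GG\times\HH}_t=\operatorname{Ad}(u\tens v)$ and $u\tens v\in\mathcal U(\rM\vtens\rN)=\mathcal U(\Linf(\GG\times\HH))$. The reverse inclusion is the substantive part, and it is an instance of the general fact that a product automorphism $\theta_1\tens\theta_2$ of $\rM\vtens\rN$ is inner precisely when $\theta_1\in\operatorname{Inn}(\rM)$ and $\theta_2\in\operatorname{Inn}(\rN)$. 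When one factor---say $\theta_2$---is the identity, which is the situation in all our applications (the Cartesian products occurring there have a second factor of Kac type, e.g.\ $\hh{\mathbb{F}_2}$), there is a short argument: from $\theta_1\tens\id=\operatorname{Ad}(w)$ one gets that $w$ commutes with $\I\tens\rN$, hence $w$ lies in the relative commutant of $\I\tens\rN$ in $\rM\vtens\rN$, which by the commutation theorem for von Neumann tensor products equals $\rM\vtens\cZ(\rN)$; disintegrating over $\cZ(\rN)$ one writes $w=(w_x)_x$ with $w_x\in\mathcal U(\rM)$, and comparing $\operatorname{Ad}(w)(a\tens\I)=\theta_1(a)\tens\I$ fibrewise yields $\theta_1=\operatorname{Ad}(w_{x_0})$ for a suitable $x_0$. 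The general coupled statement is where I expect the real difficulty: the two conclusions ``$\theta_1$ inner'' and ``$\theta_2$ inner'' cannot be separated by formal manipulation, since innerness of a product of commuting automorphisms does not by itself force innerness of either factor; this is a standard fact about automorphisms of tensor products, whose proof we would include.

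Finally, for $\TtauAInn$ only the inclusion $\supset$ is claimed, and it follows from the $u$-continuity of the map $\operatorname{Aut}(\rM)\times\operatorname{Aut}(\rN)\ni(\theta_1,\theta_2)\mapsto\theta_1\tens\theta_2\in\operatorname{Aut}(\rM\vtens\rN)$. Indeed $(\varphi\tens\psi)\comp(\theta_1\tens\theta_2)=(\varphi\comp\theta_1)\tens(\psi\comp\theta_2)$ for $\varphi\in\rM_*$, $\psi\in\rN_*$, so $\|(\varphi\tens\psi)\comp(\theta_1\tens\theta_2)-(\varphi\tens\psi)\comp(\theta_1'\tens\theta_2')\|\le\|\varphi\comp\theta_1-\varphi\comp\theta_1'\|\,\|\psi\|+\|\varphi\|\,\|\psi\comp\theta_2-\psi\comp\theta_2'\|$, and since $\|\rho\comp(\theta_1\tens\theta_2)\|=\|\rho\|$ for every $\rho\in(\rM\vtens\rN)_*$, the resulting uniform boundedness upgrades this estimate from the norm-dense span of elementary tensors to all of $(\rM\vtens\rN)_*$. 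Choosing nets $\operatorname{Ad}(u_i)\to\tau^\GG_t$ and $\operatorname{Ad}(v_j)\to\tau^\HH_t$ in the $u$-topology, the product net $\operatorname{Ad}(u_i\tens v_j)=\operatorname{Ad}(u_i)\tens\operatorname{Ad}(v_j)$ converges to $\tau^\GG_t\tens\tau^\HH_t=\tau^{\GG\times\HH}_t$, which is therefore approximately inner. One cannot expect equality here, since the $u$-closure of $\{\operatorname{Ad}(u\tens v)\}$ need not exhaust $\overline{\operatorname{Inn}}(\rM\vtens\rN)$, nor need approximate innerness of a product descend to its factors.
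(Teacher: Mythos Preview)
Your outline matches the paper's proof almost exactly: the paper declares everything but the inclusion $\TtauInn(\GG\times\HH)\subset\TtauInn(\GG)\cap\TtauInn(\HH)$ to be trivial, and for that one inclusion simply cites \cite[Proposition 17.6]{Stratila}, which is precisely the ``standard fact'' you identify (innerness of $\theta_1\tens\theta_2$ on $\rM\vtens\rN$ forces innerness of each $\theta_i$). So your plan to invoke or reprove that result is exactly what is needed.

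One correction, however: your parenthetical remark that the special case $\theta_2=\id$ ``is the situation in all our applications'' is false. In Theorem~\ref{thmManyIIIlambda} and Corollary~\ref{corManyIIIlambda} the proposition is applied to $\GG\times\HH_{\nu,q}$, and $\HH_{\nu,q}$ is \emph{not} of Kac type, so for a generic $t\in\TtauInn(\GG\times\HH_{\nu,q})$ neither $\tau^\GG_t$ nor $\tau^{\HH_{\nu,q}}_t$ is the identity. The disintegration argument you sketch for the case $\theta_2=\id$ therefore does not cover these applications, and you genuinely need the full tensor-product statement (Str\u{a}til\u{a}'s 17.6 or an equivalent). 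Your $\TtauAInn$ argument via $u$-continuity of $(\theta_1,\theta_2)\mapsto\theta_1\tens\theta_2$ is fine and slightly more explicit than what the paper writes.
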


\begin{proof}
The only non-trivial assertion is that $\TtauInn(\GG\times\HH)\subset\TtauInn(\GG)\cap\TtauInn(\HH)$ which follows immediately from \cite[Proposition 17.6]{Stratila}.
\end{proof}

\begin{remark}
In the situation of Proposition \ref{propTprod}\eqref{propTprod1} it is not clear if $\TtauAInn(\GG\times\HH)\subset\TtauAInn(\GG)\cap\TtauAInn(\HH)$. Furthermore if $\{\GG_n\}_{n\in\NN}$ is as in Proposition \ref{propTprod}\eqref{propTprod2} then $\tau^\GG_t$ might not be inner even when all $\tau^{\GG_n}_t$ are inner. The general result on when $\tau^\GG_t$ is inner is \cite[Theorem XIV.1.13]{Takesaki3}.
\end{remark}

We will now continue our study of infinite products of quantum groups $\HH_{\nu,q}$ as in Section \ref{TheExamples}. Thus from now on $\bigl((\nu_n,q_n)\bigr)_{n\in\NN}$ is a sequence of elements of $(\RR\setminus\{0\})\times(\left]-1,1\right[\setminus\{0\})$ such that $\nu_n\log|q_n|\not\in\pi\QQ$ for all $n$ and $\HH_{\nu_n,q_n}$ is the bicrossed product compact quantum group $\QQ\bowtie\operatorname{SU}_{q_n}(2)$ described in Section \ref{skladniki}. Finally we put $\GG=\bigtimes\limits_{n=1}^{\infty}\HH_{\nu_n,q_n}$.

\begin{theorem}\label{TauGthm}
We have
\begin{subequations}
\begin{align}
\Ttau(\GG)&=\bigcap_{n=1}^{\infty}\tfrac{\pi}{\log|q_n|}\ZZ,\label{TtauG}\\
\TtauInn(\GG)&=\biggl\{t\in\bigcap_{n=1}^{\infty}\Bigl(\nu_n\QQ+\tfrac{\pi}{\log|q_n|}\ZZ\Bigr)\,\biggr|\biggl.\,t\in\tfrac{\pi}{\log|q_n|}\ZZ\text{ for all but finitely many }n\in\NN\biggr\},\label{TtauInnG}\\
\TtauAInn(\GG)&=\RR.\label{TtauAInnG}
\end{align}
\end{subequations}
\end{theorem}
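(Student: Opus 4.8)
The plan is to establish the three identities separately. Identity \eqref{TtauG} is immediate: by Proposition \ref{propTprod}\eqref{propTprod2} we have $\Ttau(\GG)=\bigcap_{n=1}^{\infty}\Ttau(\HH_{\nu_n,q_n})$, and each $\Ttau(\HH_{\nu_n,q_n})$ equals $\tfrac{\pi}{\log|q_n|}\ZZ$ by Examples \ref{exTtau}\eqref{exTtau2}.

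For \eqref{TtauInnG} recall from the Lemma that $\tau^{\GG}_t=\bigotimes_{n=1}^{\infty}\tau^{\HH_{\nu_n,q_n}}_t$. The inclusion ``$\supseteq$'' is easy: if $t$ lies in the right-hand side, pick $N$ with $t\in\tfrac{\pi}{\log|q_n|}\ZZ$ for all $n>N$; then $\tau^{\HH_{\nu_n,q_n}}_t=\id$ for such $n$ (by \eqref{TtauG} and Examples \ref{exTtau}\eqref{exTtau2}) while $\tau^{\HH_{\nu_n,q_n}}_t=\operatorname{Ad}(u_n)$ for a unitary $u_n$ when $n\le N$, so $\tau^{\GG}_t=\operatorname{Ad}\bigl(u_1\tens\dotsm\tens u_N\tens\I^{\tens\infty}\bigr)\in\operatorname{Inn}(\Linf(\GG))$. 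For ``$\subseteq$'' I would use \cite[Theorem XIV.1.13]{Takesaki3}. This theorem says that if $\tau^{\GG}_t=\bigotimes_n\tau^{\HH_{\nu_n,q_n}}_t$ is inner then each $\tau^{\HH_{\nu_n,q_n}}_t$ is inner and, writing $\tau^{\HH_{\nu_n,q_n}}_t=\operatorname{Ad}(u_n)$, the series $\sum_n\bigl(1-|\bh_{\nu_n,q_n}(u_n)|\bigr)$ converges. The first condition, via Examples \ref{exTtau}\eqref{exTtau2}, already places $t$ in $\bigcap_n\bigl(\nu_n\QQ+\tfrac{\pi}{\log|q_n|}\ZZ\bigr)$, and the number $1-|\bh_{\nu_n,q_n}(u_n)|$ does not depend on the choice of implementing unitary $u_n$ since $\Linf(\HH_{\nu_n,q_n})$ is a factor. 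The key point is then the computation $\bh_{\nu_n,q_n}(u_n)=0$ whenever $t\notin\tfrac{\pi}{\log|q_n|}\ZZ$. Indeed, since $\nu_n\log|q_n|\notin\pi\QQ$ the subgroups $\nu_n\QQ$ and $\tfrac{\pi}{\log|q_n|}\ZZ$ of $\RR$ intersect only in $\{0\}$, so $t$ has a unique expression $t=\nu_n\gamma_n+\tfrac{\pi}{\log|q_n|}m_n$ with $\gamma_n\in\QQ,\ m_n\in\ZZ$, and necessarily $\gamma_n\ne 0$; thus $\tau^{\HH_{\nu_n,q_n}}_t=\tau^{\HH_{\nu_n,q_n}}_{\nu_n\gamma_n}=\operatorname{Ad}(u_{\gamma_n})$ for the canonical unitary $u_{\gamma_n}$ of $\Linf(\HH_{\nu_n,q_n})=\QQ\ltimes_{\alpha^{\nu_n}}\Linf(\operatorname{SU}_{q_n}(2))$ (see \cite[Proposition 4.8]{KrajczokWasilewski}), and $\bh_{\nu_n,q_n}(u_{\gamma_n})=\bh_{q_n}\bigl(\EE_{\nu_n,q_n}(u_{\gamma_n})\bigr)=0$ by \eqref{HaarHnuq} since $\EE_{\nu_n,q_n}$ annihilates $u_{\gamma_n}$ for $\gamma_n\ne 0$. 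Consequently every term of the convergent series indexed by an $n$ with $t\notin\tfrac{\pi}{\log|q_n|}\ZZ$ equals $1$, so there are only finitely many such $n$, which is precisely the condition appearing in \eqref{TtauInnG}.

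For \eqref{TtauAInnG} one cannot merely take the closure of $\TtauInn(\GG)$, since the latter need not be dense (for instance when all pairs $(\nu_n,q_n)$ coincide, \eqref{TtauInnG} gives $\TtauInn(\GG)=\Ttau(\GG)$). Instead, fix $t\in\RR$; for each $N$ write $\GG=\GG^{(N)}\times\GG_{(N)}$ with $\GG^{(N)}=\HH_{\nu_1,q_1}\times\dotsm\times\HH_{\nu_N,q_N}$ and $\GG_{(N)}=\bigtimes_{n>N}\HH_{\nu_n,q_n}$, so $\Linf(\GG)=\Linf(\GG^{(N)})\vtens\Linf(\GG_{(N)})$ and, by the Lemma, $\tau^{\GG}_t=\tau^{\GG^{(N)}}_t\tens\tau^{\GG_{(N)}}_t$. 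Proposition \ref{propTprod}\eqref{propTprod1} used inductively, together with Examples \ref{exTtau}\eqref{exTtau2}, gives $\tau^{\GG^{(N)}}_t\in\overline{\operatorname{Inn}}(\Linf(\GG^{(N)}))$ for every $N$. Now the vectors $\xi_0\tens\Omega_{(N_0)}$ with $N_0\in\NN,\ \xi_0\in\Ltwo(\GG^{(N_0)})$ form a dense subspace of $\Ltwo(\GG)$ (these spaces increase with $N_0$ because $\Omega_{(N_0)}=\Omega_{N_0+1}\tens\Omega_{(N_0+1)}$, and their union contains the algebraic infinite tensor product), so --- writing $\omega_{\xi,\eta}$ for the functional $x\mapsto\is{\xi}{x\eta}$ --- the functionals $\omega_{\xi,\eta}$ with $\xi,\eta$ in that subspace span a norm-dense subspace of $\Lone(\GG)$, and each of them equals $\omega_{\xi_0,\eta_0}\tens\bh_{(N_0)}$ for suitable $\xi_0,\eta_0\in\Ltwo(\GG^{(N_0)})$. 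Since the Haar state is invariant under the scaling group, for a unitary $v\in\Linf(\GG^{(N_0)})$ one has $\bigl(\omega_{\xi_0,\eta_0}\tens\bh_{(N_0)}\bigr)\comp\tau^{\GG}_t=\bigl(\omega_{\xi_0,\eta_0}\comp\tau^{\GG^{(N_0)}}_t\bigr)\tens\bh_{(N_0)}$ and $\bigl(\omega_{\xi_0,\eta_0}\tens\bh_{(N_0)}\bigr)\comp\operatorname{Ad}(v\tens\I)=\bigl(\omega_{\xi_0,\eta_0}\comp\operatorname{Ad}(v)\bigr)\tens\bh_{(N_0)}$, so the norm of the difference equals $\bigl\|\omega_{\xi_0,\eta_0}\comp\operatorname{Ad}(v)-\omega_{\xi_0,\eta_0}\comp\tau^{\GG^{(N_0)}}_t\bigr\|$. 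Given a finite family of functionals in $\Lone(\GG)$ and $\eps>0$, approximate each within $\tfrac{\eps}{3}$ by a finite combination of the above functionals sharing a common $N_0$ --- that is, by one of the form $\psi\tens\bh_{(N_0)}$ with $\psi\in\Lone(\GG^{(N_0)})$ --- then use $\tau^{\GG^{(N_0)}}_t\in\overline{\operatorname{Inn}}(\Linf(\GG^{(N_0)}))$ to produce a single unitary $v\in\Linf(\GG^{(N_0)})$ for which the corresponding differences on $\Linf(\GG^{(N_0)})$ are below $\tfrac{\eps}{3}$; as precomposition with an automorphism is isometric on the predual, the triangle inequality then shows $u=v\tens\I$ approximates $\tau^{\GG}_t$ within $\eps$ on the given family. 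Hence $\tau^{\GG}_t\in\overline{\operatorname{Inn}}(\Linf(\GG))$ for every $t$, i.e.~$\TtauAInn(\GG)=\RR$.

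The main obstacle is the inclusion ``$\subseteq$'' in \eqref{TtauInnG}: it relies on the sharp criterion for innerness of an infinite tensor product of automorphisms and on the explicit description (from \cite{KrajczokWasilewski}) of the unitaries implementing the scaling automorphisms of $\HH_{\nu,q}$ as canonical crossed-product generators, which is what allows the Haar-state values to be pinned down to $0$; and it is exactly there that the standing hypothesis $\nu_n\log|q_n|\notin\pi\QQ$ intervenes, ensuring uniqueness of the decomposition $t=\nu_n\gamma_n+\tfrac{\pi}{\log|q_n|}m_n$.
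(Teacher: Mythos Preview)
Your proof is correct and follows essentially the same approach as the paper's. The only differences are presentational: for the inclusion ``$\subseteq$'' in \eqref{TtauInnG} you verify $\bh_{\nu_n,q_n}(u_{\gamma_n})=0$ via the conditional expectation $\EE_{\nu_n,q_n}$, whereas the paper observes that the implementing unitaries are characters of nontrivial irreducible representations and hence have Haar integral zero; and for \eqref{TtauAInnG} you bundle the first $N_0$ factors into $\GG^{(N_0)}$ and invoke Proposition~\ref{propTprod}\eqref{propTprod1} inductively, while the paper works factor by factor and chooses a separate unitary $u_{i,\eps}$ in each $\Linf(\HH_{\nu_i,q_i})$.
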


\begin{proof}
To lighten the notation let us write $\HH_n$ for $\HH_{\nu_n,q_n}$. We already know that $\Ttau(\HH_n)=\tfrac{\pi}{\log|q_n|}\ZZ$ and $\TtauInn(\HH_n)=\nu_n\QQ+\tfrac{\pi}{\log|q_n|}\ZZ$ (Example \ref{exTtau}\eqref{exTtau2}). This together with Proposition \ref{propTprod}\eqref{propTprod2} proves \eqref{TtauG}.

Let $t\in\bigcap\limits_{n=1}^{\infty}\bigl(\nu_n\QQ+\tfrac{\pi}{\log|q_n|}\ZZ\bigr)$ and assume that $t$ belongs to $\tfrac{\pi}{\log|q_n|}\ZZ$ for all $n\in\NN\setminus\{n_1,\dotsc,n_N\}$. It follows that for each $k\in\{1,\dotsc,N\}$ the scaling automorphism $\tau^{\HH_{n_k}}_t$ is implemented by some unitary $u_{n_k}\in\Linf(\HH_{n_k})$ and $\tau^{\HH_n}_t=\id$ for $n\in\NN\setminus\{n_1,\dotsc,n_N\}$. Setting $u_n=\I$ we obtain $\tau^{\HH_n}_t=\operatorname{Ad}(u_n)$ for all $n$ and since only finitely many of them are non-trivial we immediately see that
\[
\tau^{\GG}_t=\bigotimes_{n=1}^{\infty}\tau^{\HH_n}_t\in\operatorname{Inn}\bigl(\Linf(\GG)\bigr).
\]

Now let $t\in\TtauInn(\GG)$, so that the automorphism $\tau^\GG_t=\bigotimes\limits_{n=1}^{\infty}\tau^{\HH_n}_t$ is inner. By \cite[Theorem XIV.1.13]{Takesaki3} each $\tau^{\HH_n}_t$ is inner, and hence $t\in\bigcap\limits_{n=1}^{\infty}\bigl(\nu_n\QQ+\tfrac{\pi}{\log|q_n|}\ZZ\bigr)$. Furthermore, the same result says also that if $\tau^{\HH_n}_t=\operatorname{Ad}(u_n)$ then
\begin{equation}\label{sumhnun}
\sum_{n=1}^\infty\Bigl(1-\bigl|\bh_n(u_n)\bigr|\Bigr)<+\infty.
\end{equation}
As the implementing unitaries $\{u_n\}_{n\in\NN}$ are unique up to a constant in $\TT$ (since $\Linf(\HH_n)$ is a factor), the convergence of the series is not affected if we choose $u_n$ to be the operators considered in \cite[Proof of Proposition 4.8]{KrajczokWasilewski}, i.e.~the canonical unitaries in $\Linf(\HH_n)=\QQ\ltimes_{\alpha^{\nu_n}}\Linf(\operatorname{SU}_q(2))$ implementing the action. These unitaries are characters of irreducible representations, so for each $n$ we have two possibilities:
\begin{itemize}
\item $t\in\tfrac{\pi}{\log|q_n|}\ZZ$ and then $\tau^{\HH_n}_t=\id$,
\item $t\not\in\tfrac{\pi}{\log|q_n|}\ZZ$ in which case $u_n$ is non-trivial and consequently $\bh_n(u_n)=0$.
\end{itemize}
The fact that the series \eqref{sumhnun} converges implies that the latter possibility arises for only finitely many $n$ which proves \eqref{TtauInnG}.

Now let us show \eqref{TtauAInnG}. Take $t\in\RR$ and $\omega\in\Lone(\GG)$. Since $\Linf(\GG)$ is in standard form on $\Ltwo(\GG)=\bigotimes\limits_{n=1}^{\infty}(\Ltwo(\HH_n),\Omega_n)$, functionals of the form
\[
\omega=\omega_1\tens\dotsm\tens\omega_N\tens\bh_{N+1}\tens\bh_{N+2}\tens\dotsm
\]
with $\omega_i\in\Lone(\HH_i)$ ($i\in\{1,\dotsc,N\}$) form a linearly dense subset of $\Lone(\GG)$. Furthermore, since for each $n$ the automorphism $\tau^{\HH_n}_t$ is approximately inner (Example \ref{exTtau}\eqref{exTtau2}), given $\omega$ as above and $\eps>0$ we can find a unitaries $\{u_{i,\eps}\}_{i\in\{1,\dotsc,N\}}$ with $u_{i,\eps}\in\Linf(\HH_i)$ such that
\[
\bigl\|\omega_i\comp\tau^{\HH_i}_t-\omega_i\comp\operatorname{Ad}(u_{i,\eps})\bigr\|\leq\tfrac{\eps}{N},\qquad{i}\in\{1,\dotsc,N\}.
\]
Consequently
\[
\Bigl\|
\omega\comp\tau^\GG_t-\omega\comp\operatorname{Ad}\bigl(u_{1,\eps}\tens\dotsm\tens{u_{N,\eps}}\tens\I^{\tens\infty}\bigr)
\Bigr\|
=\biggl\|\bigotimes_{i=1}^N\bigl(\omega_i\comp\tau^{\HH_i}_t\bigr)-\bigotimes_{i=1}^N\bigl(\omega_i\comp\operatorname{Ad}(u_{i,\eps})\bigr)\biggr\|\leq{N}\tfrac{\eps}{N}=\eps.
\]
\end{proof}

\begin{examples}\label{exTtauG}
\noindent
\begin{enumerate}
\item\label{exTtauG1} Consider the sequence $\bigl((\nu_n,q_n)\bigr)_{n\in\NN}$ such that for all $n$ we have $(\nu_n,q_n)=(\nu,q)$ for some $\nu\in\RR\setminus\{0\}$ and $q\in\left]-1,1\right[\setminus\{0\}$ (so that $\Linf(\GG)$ is the injective factor of type $\mathrm{III}_{|q|^2}$, cf.~Corollary \ref{corS}\eqref{corS1}). Then Theorem \ref{TauGthm} yields $\Ttau(\GG)=\TtauInn(\GG)=\tfrac{\pi}{\log|q|}\ZZ$ and $\TtauAInn{(\GG)}=\RR$. Note, in particular, that $\TtauAInn{(\GG)}$ is not the closure of $\TtauInn{(\GG)}$.
\item Now consider $\bigl((\nu_n,q_n)\bigr)_{n\in\NN}$ as in Corollary \ref{corS}\eqref{corS2} (so that $\Linf(\GG)$ is the injective factor of type $\mathrm{III}_1$). Then $\Ttau(\GG)=\TtauInn(\GG)=\{0\}$ and $\TtauAInn(\GG)=\RR$. In fact, the last property is automatic by \cite[Theorem 1]{KawahigashiSutherlandTakesaki} which, in particular, says that any automorphism of the injective factor of type $\mathrm{III}_1$ with separable predual is approximately inner (see also \cite{Marrakchi}).
\item\label{exTtauG3} Let $\bigl((\nu_n,q_n)\bigr)_{n\in\NN}$ be defined as in Section \ref{sectIII0}, i.e.~$(q_n)_{n\in\NN}$ is the sequence
\[
\bigl(
\underbrace{\exp(-\pi{1!}),\dotsc,\exp(-\pi{1!})}_{l_1\text{ times}},
\underbrace{\exp(-\pi{2!}),\dotsc,\exp(-\pi{2!})}_{l_2\text{ times}},\dotsc
\bigr),
\]
where
\[
l_k=\bigl\lfloor\exp(2\pi{k!})k^{2s-1}\bigr\rfloor,\qquad{k}\in\NN
\]
while $(\nu_n)_{n\in\NN}$ are chosen so that $\nu_n\log|q_n|\not\in\pi\QQ$ for all $n$. By Theorem \ref{thmTypeIII0} the algebra $\Linf(\GG)$ is then a factor of type $\mathrm{III}_0$. Moreover, it is immediate that
\[
\Ttau(\GG)=\bigcap_{n=1}^{\infty}\tfrac{1}{n!}\ZZ=\ZZ\quad\text{and}\quad\TtauAInn(\GG)=\RR.
\]
Again by Theorem \ref{TauGthm}
\[
\TtauInn(\GG)=\biggl\{
t\in\bigcap_{n=1}^{\infty}\Bigl(\nu_n\QQ+\tfrac{1}{k(n)!}\ZZ\Bigr)\,\biggr|\biggl.\,t\in\tfrac{1}{k(n)!}\ZZ\text{ for all but finitely many }n\in\NN
\biggr\},
\]
where $k$ is the function defined by
\[
q_n=\exp\bigl(-\pi{k(n)!}\bigr),\qquad{n}\in\NN.
\]
We claim that in fact $\TtauInn(\GG)=\ZZ$.

To see this note first that $\ZZ\subset\TtauInn(\GG)$. Furthermore, for any $n\in\NN$ we have $\nu_n\not\in\QQ$, since $\nu_n(-\pi{k(n)}!)\not\in\pi\QQ$. Taking $n=1$ shows that for any $t\in\TtauInn(\GG)$ we have $t\in\nu_1\QQ+\ZZ$, so that $t=\nu_1r+p$ for some $r\in\QQ$ and $p\in\ZZ$. On the other hand $t\in\tfrac{1}{k(N)!}\ZZ$ for some $N\in\NN$, which forces $r=0$ and consequently $t\in\ZZ$.
\end{enumerate}
\end{examples}

\begin{remark}\label{remManyIII0}
If we shift the sequence $\bigl((\nu_n,q_n)\bigr)_{n\in\NN}$ considered in Example \ref{exTtauG}\eqref{exTtauG3} by $p\in\NN$, i.e.~let $(q_n)_{n\in\NN}$ be the sequence
\[
\Bigl(
\underbrace{\exp\bigl(-\pi{(1+p)!}\bigr),\dotsc,\exp\bigl(-\pi{(1+p)!}\bigr)}_{l_1\text{ times}},
\underbrace{\exp\bigl(-\pi{(2+p)!}\bigr),\dotsc,\exp\bigl(-\pi{(2+p)!}\bigr)}_{l_2\text{ times}},\dotsc
\Bigr),
\]
with $(l_k)_{k\in\NN}$ as before, then the invariant $T(\Linf(\GG))$ does not change. In particular, $\Linf(\GG)$ is a factor of type $\mathrm{III}_0$. However the resulting compact quantum group $\GG$ changes, since after the shift $\Ttau(\GG)=\TtauInn(\GG)=\tfrac{1}{p!}\ZZ$. It follows that each of the uncountably many type $\mathrm{III}_0$ factors obtained as $\Linf(\GG)$ in Theorem \ref{thmTypeIII0} arises from at least countably infinite family of pairwise non-isomorphic compact quantum groups.
\end{remark}

Similarly to the case of factors of type $\mathrm{III}_0$ discussed in Remark \ref{remManyIII0} above we can use the invariant $\TtauInn$ to show that each injective factor of type $\mathrm{III}_\lambda$ with $\lambda\in\left]0,1\right[$ arises as $\Linf(\GG)$ for at least countably infinitely many pairwise non-isomorphic compact quantum groups. This is done in Corollary \ref{corManyIIIlambda} below. These results will be considerably strengthened in Section \ref{sectBicrossed}, but we include them here because they can be obtained with much fewer ingredients and fewer technical complications.

\begin{theorem}\label{thmManyIIIlambda}
Fix $\lambda\in\left]0,1\right]$ and let $\GG$ be a compact quantum group with $\Linf(\GG)$ the injective type $\mathrm{III}_\lambda$ factor with separable predual. Furthermore let $\nu\in\RR\setminus\{0\}$ and $q\in\left]-1,1\right[\setminus\{0\}$ be such that $\nu\log|q|\not\in\pi\QQ$. Then $\Linf(\GG\times\HH_{\nu,q})\cong\Linf(\GG)$ and
\begin{align*}
\Ttau(\GG\times\HH_{\nu,q})&=\Ttau(\GG)\cap\tfrac{\pi}{\log|q|}\ZZ,\\
\TtauInn(\GG\times\HH_{\nu,q})&=\TtauInn(\GG)\cap\bigl(\nu\QQ+\tfrac{\pi}{\log|q|}\ZZ\bigr),\\
\TtauAInn(\GG\times\HH_{\nu,q})&\supset\TtauAInn(\GG).
\end{align*}
\end{theorem}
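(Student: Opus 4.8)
The plan is to establish the von Neumann algebra isomorphism first; the three invariant formulas will then follow immediately from results proved earlier. Since $\Linf(\GG\times\HH_{\nu,q})=\Linf(\GG)\vtens\Linf(\HH_{\nu,q})$ and, by the discussion in Section~\ref{skladniki}, the assumption $\nu\log|q|\not\in\pi\QQ$ makes $\Linf(\HH_{\nu,q})$ the injective factor of type $\mathrm{II}_\infty$ (with separable predual, since $\QQ$ is countable and $\Linf(\operatorname{SU}_q(2))$ has separable predual), the algebra $\rM=\Linf(\GG)\vtens\Linf(\HH_{\nu,q})$ is a factor with separable predual --- the center of a von Neumann tensor product being the tensor product of the centers --- and it is injective, being a tensor product of injective von Neumann algebras.

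It remains to show that $\rM$ is of type $\mathrm{III}_\lambda$: once this is known, the uniqueness of the injective type $\mathrm{III}_\lambda$ factor with separable predual recalled in Section~\ref{ConnesTandS} (due to Haagerup when $\lambda=1$) together with the hypothesis on $\Linf(\GG)$ gives $\rM\cong\Linf(\GG)$. To identify the type I would compute the Connes invariant $S(\rM)$. Since $\Linf(\GG)$ is a factor, by the result recalled at the end of Section~\ref{ConnesTandS} there is an n.s.f.\ weight $\ph$ on $\Linf(\GG)$ whose centralizer $\Linf(\GG)_\ph$ is a factor, so that $S(\Linf(\GG))=\operatorname{Sp}\modOp[\ph]$. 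Tensoring $\ph$ with the trace $\btau[\nu,q]$ of $\Linf(\HH_{\nu,q})$ produces an n.s.f.\ weight on $\rM$ whose modular group is $(\sigma^\ph_t\otimes\id)_{t\in\RR}$, whose modular operator is $\modOp[\ph]\otimes\I$, and whose centralizer is $\Linf(\GG)_\ph\vtens\Linf(\HH_{\nu,q})$ --- again a factor, being a tensor product of factors. Consequently $S(\rM)$ equals the spectrum of this one modular operator, i.e.\ $S(\rM)=\operatorname{Sp}\bigl(\modOp[\ph]\otimes\I\bigr)=\operatorname{Sp}\modOp[\ph]=S(\Linf(\GG))$. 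As this is the $S$-invariant of a type $\mathrm{III}_\lambda$ factor --- in particular it differs from $\{1\}$, so $\rM$ cannot be semifinite --- the factor $\rM$ is of type $\mathrm{III}_\lambda$.

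For the invariant formulas it suffices to combine Proposition~\ref{propTprod}\eqref{propTprod1} with the invariants of $\HH_{\nu,q}$ computed in Example~\ref{exTtau}\eqref{exTtau2}, namely $\Ttau(\HH_{\nu,q})=\tfrac{\pi}{\log|q|}\ZZ$, $\TtauInn(\HH_{\nu,q})=\nu\QQ+\tfrac{\pi}{\log|q|}\ZZ$ and $\TtauAInn(\HH_{\nu,q})=\RR$; this yields $\Ttau(\GG\times\HH_{\nu,q})=\Ttau(\GG)\cap\tfrac{\pi}{\log|q|}\ZZ$, $\TtauInn(\GG\times\HH_{\nu,q})=\TtauInn(\GG)\cap\bigl(\nu\QQ+\tfrac{\pi}{\log|q|}\ZZ\bigr)$ and $\TtauAInn(\GG\times\HH_{\nu,q})\supset\TtauAInn(\GG)\cap\RR=\TtauAInn(\GG)$. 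I expect the only delicate point to be the type identification in the middle step: one must make sure that an n.s.f.\ weight with factorial centralizer on $\Linf(\GG)$ exists and that, after tensoring with $\Linf(\HH_{\nu,q})$ equipped with its trace, the centralizer stays factorial, so that $S(\rM)$ may legitimately be read off from a single modular operator. Everything else is bookkeeping with previously established facts.
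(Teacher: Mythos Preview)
Your argument is correct and follows essentially the same route as the paper: show $S\bigl(\Linf(\GG)\vtens\Linf(\HH_{\nu,q})\bigr)=S\bigl(\Linf(\GG)\bigr)$ using that $\Linf(\HH_{\nu,q})$ is semifinite, invoke uniqueness of the injective $\mathrm{III}_\lambda$ factor, and then read off the three $\Ttau$-type invariants from Proposition~\ref{propTprod}\eqref{propTprod1} together with Example~\ref{exTtau}\eqref{exTtau2}. The paper simply cites \cite[Proposition 28.4]{Stratila} for the $S$-invariant identity, whereas you unpack it by exhibiting a weight on the tensor product with factorial centralizer.

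One point you flag as ``delicate'' but do not actually settle: the passage at the end of Section~\ref{ConnesTandS} only says that \emph{if} an n.s.f.\ weight $\ph$ has factorial centralizer then $S(\rM)=\operatorname{Sp}\modOp[\ph]$; it does not assert that such a weight exists, so your sentence ``by the result recalled at the end of Section~\ref{ConnesTandS} there is an n.s.f.\ weight $\ph$\dots'' is a misattribution. For a type $\mathrm{III}_\lambda$ factor with $\lambda\in\left]0,1\right]$ such weights do exist (periodic states for $\lambda<1$, dominant weights for $\lambda=1$), so your argument goes through once this is supplied; the paper sidesteps the issue by quoting \cite[Proposition 28.4]{Stratila} directly.
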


\begin{corollary}\label{corManyIIIlambda}
Let $\lambda\in\left]0,1\right[$, put $q_1=\sqrt{\lambda}$ and $\nu_1=\tfrac{2\pi^2}{\log{\lambda}}$ and define $\GG$ to be the infinite product $\bigtimes\limits_{n=1}^{\infty}\HH_{\nu_1,q_1}$. Furthermore for $k\in\NN$ let $\GG^\lambda_k=\GG\times\HH_{\nu,q}$ with $q=\lambda^{\frac{\pi}{2}}$ and $\nu=\tfrac{2}{\log{\lambda}}(\pi{k}-1)$. Then
\[
\begin{array}{r@{\;}c@{\;}l}
\Ttau(\GG^\lambda_k)&=&\{0\},\\[5pt]
\TtauInn(\GG^\lambda_k)&=&\tfrac{2\pi{k}}{\log{\lambda}}\ZZ,\\[5pt]
\TtauAInn(\GG^\lambda_k)&=&\RR,
\end{array}
\qquad\quad{k}\in\NN
\]
and setting $\GG^\lambda_0=\GG$ we obtain an infinite family of pairwise non-isomorphic compact quantum groups $\bigl\{\GG^\lambda_k\bigr\}_{k\in\ZZ_+}$ with $\Linf(\GG^\lambda_k)$ the injective factor of type $\mathrm{III}_\lambda$ with separable predual.
\end{corollary}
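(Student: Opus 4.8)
The plan is to first check that the building blocks are admissible, then read off the type of the resulting factors directly from Theorem~\ref{thmManyIIIlambda}, and finally compute the three invariants by combining Example~\ref{exTtauG}\eqref{exTtauG1}, Theorem~\ref{thmManyIIIlambda}, and two elementary observations resting on the irrationality of $\pi$.

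First I would verify the standing assumptions. With $q_1=\sqrt{\lambda}$ and $\nu_1=\tfrac{2\pi^2}{\log\lambda}$ one has $\nu_1\neq0$ and $\nu_1\log|q_1|=\pi^2\notin\pi\QQ$ (since $\pi\notin\QQ$), so $\GG=\bigtimes\limits_{n=1}^{\infty}\HH_{\nu_1,q_1}$ is well defined, and Corollary~\ref{corS}\eqref{corS1}, applied with $(\nu_n,q_n)=(\nu_1,q_1)$ for all $n$, shows that $\Linf(\GG)$ is the injective factor of type $\mathrm{III}_{|q_1|^2}=\mathrm{III}_\lambda$; it has separable predual since $\GG$ is a countable Cartesian product of second countable compact quantum groups. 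Likewise, for $k\in\NN$, $q=\lambda^{\frac{\pi}{2}}$ and $\nu=\tfrac{2}{\log\lambda}(\pi k-1)$ one has $\nu\neq0$ and $\nu\log|q|=\pi(\pi k-1)=\pi^2k-\pi\notin\pi\QQ$, so $\HH_{\nu,q}$ is one of the quantum groups of Section~\ref{skladniki}. Thus Theorem~\ref{thmManyIIIlambda} applies to $\GG^\lambda_k=\GG\times\HH_{\nu,q}$ and yields at once that $\Linf(\GG^\lambda_k)\cong\Linf(\GG)$ is the injective factor of type $\mathrm{III}_\lambda$ with separable predual, together with the formulas $\Ttau(\GG^\lambda_k)=\Ttau(\GG)\cap\tfrac{\pi}{\log|q|}\ZZ$, $\TtauInn(\GG^\lambda_k)=\TtauInn(\GG)\cap\bigl(\nu\QQ+\tfrac{\pi}{\log|q|}\ZZ\bigr)$ and $\TtauAInn(\GG^\lambda_k)\supset\TtauAInn(\GG)$.

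Next I would compute the invariants of $\GG$. By Example~\ref{exTtauG}\eqref{exTtauG1} we have $\Ttau(\GG)=\TtauInn(\GG)=\tfrac{\pi}{\log|q_1|}\ZZ=\tfrac{2\pi}{\log\lambda}\ZZ$ (using $\log|q_1|=\tfrac12\log\lambda$) and $\TtauAInn(\GG)=\RR$. Since $\log|q|=\tfrac{\pi}{2}\log\lambda$, hence $\tfrac{\pi}{\log|q|}\ZZ=\tfrac{2}{\log\lambda}\ZZ$, substituting into the formulas above gives $\TtauAInn(\GG^\lambda_k)=\RR$ at once, and $\Ttau(\GG^\lambda_k)=\tfrac{2\pi}{\log\lambda}\ZZ\cap\tfrac{2}{\log\lambda}\ZZ=\{0\}$, because $\pi m=n$ with $m,n\in\ZZ$ forces $m=n=0$. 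For $\TtauInn$ one is reduced to computing
\[
\tfrac{2\pi}{\log\lambda}\ZZ\cap\Bigl(\nu\QQ+\tfrac{2}{\log\lambda}\ZZ\Bigr)
=\tfrac{2}{\log\lambda}\Bigl(\pi\ZZ\cap\bigl((\pi k-1)\QQ+\ZZ\bigr)\Bigr),
\]
and I would then prove the elementary identity $\pi\ZZ\cap\bigl((\pi k-1)\QQ+\ZZ\bigr)=\pi k\ZZ$: the inclusion $\supseteq$ is clear from $\pi km=(\pi k-1)m+m$, while for $\subseteq$, writing $\pi m=(\pi k-1)\tfrac{a}{b}+c$ with $a,c\in\ZZ$, $b\in\NN$ and clearing denominators gives $\pi(mb-ka)=cb-a\in\ZZ$, so irrationality of $\pi$ forces $mb=ka$ and $a=cb$, whence $m=kc$, i.e.\ $k\mid m$. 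This yields $\TtauInn(\GG^\lambda_k)=\tfrac{2\pi k}{\log\lambda}\ZZ$, as asserted. I expect this small number-theoretic identity to be the only genuinely non-formal step of the argument; everything else is a substitution into Theorem~\ref{thmManyIIIlambda}, Corollary~\ref{corS}\eqref{corS1} and Example~\ref{exTtauG}\eqref{exTtauG1}.

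Finally I would deduce the pairwise non-isomorphism. Since $\Ttau$ and $\TtauInn$ are invariants of isomorphism of compact quantum groups, it suffices to note that $\Ttau(\GG^\lambda_0)=\Ttau(\GG)=\tfrac{2\pi}{\log\lambda}\ZZ\neq\{0\}=\Ttau(\GG^\lambda_k)$ for every $k\in\NN$, so $\GG^\lambda_0$ is isomorphic to none of the $\GG^\lambda_k$ with $k\geq1$, while for $k,k'\in\NN$ with $k\neq k'$ one has $\TtauInn(\GG^\lambda_k)=\tfrac{2\pi k}{\log\lambda}\ZZ\neq\tfrac{2\pi k'}{\log\lambda}\ZZ=\TtauInn(\GG^\lambda_{k'})$. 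Hence $\bigl\{\GG^\lambda_k\bigr\}_{k\in\ZZ_+}$ is an infinite family of pairwise non-isomorphic compact quantum groups, each with $\Linf(\GG^\lambda_k)$ the injective factor of type $\mathrm{III}_\lambda$ with separable predual.
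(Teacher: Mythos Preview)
Your proof is correct and follows essentially the same approach as the paper: you verify the admissibility conditions $\nu_1\log|q_1|,\,\nu\log|q|\notin\pi\QQ$, invoke Corollary~\ref{corS}\eqref{corS1} and Theorem~\ref{thmManyIIIlambda} for the factor type and the invariant formulas, read off $\Ttau(\GG)$, $\TtauInn(\GG)$, $\TtauAInn(\GG)$ from Example~\ref{exTtauG}\eqref{exTtauG1}, and reduce the computation of $\TtauInn(\GG^\lambda_k)$ to the elementary identity $\pi\ZZ\cap\bigl((\pi k-1)\QQ+\ZZ\bigr)=\pi k\ZZ$ based on the irrationality of $\pi$. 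The paper does the same computation without first factoring out $\tfrac{2}{\log\lambda}$, but the argument is otherwise identical; your treatment of pairwise non-isomorphism is slightly more explicit than the paper's, which only spells out the case $k=0$ versus $k\geq1$.
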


\begin{proof}[Proof of Theorem \ref{thmManyIIIlambda} and Corollary \ref{corManyIIIlambda}]
We first address Theorem \ref{thmManyIIIlambda}. The algebra $\Linf(\HH_{\nu,q})$ is semifinite (it is a factor of type $\mathrm{II}_\infty$). By \cite[Proposition 28.4]{Stratila} we have $S(\Linf(\GG\times\HH_{\nu,q}))=S(\Linf(\GG))$, hence the uniqueness of the injective type $\mathrm{III}_\lambda$ factor with separable predual implies $\Linf(\GG\times\HH_{\nu,q})\cong\Linf(\GG)$. Consequently the assertions about invariants $\Ttau(\GG\times\HH_{\nu,q})$, $\TtauInn(\GG\times\HH_{\nu,q})$ and $\TtauAInn(\GG\times\HH_{\nu,q})$ follow from Proposition \ref{propTprod}\eqref{propTprod1} and Example \ref{exTtau}\eqref{exTtau2}.

We now pass to the proof of Corollary \ref{corManyIIIlambda}. First note that $\tfrac{2\pi^2}{\log{\lambda}}\log{\sqrt{\lambda}}\not\in\pi\QQ$ and similarly $\tfrac{2}{\log{\lambda}}(\pi{k}-1)\log\bigl(\lambda^{\frac{\pi}{2}}\bigr)\not\in\pi\QQ$ for all $k\in\NN$. Therefore, by Theorem \ref{thmManyIIIlambda} and Proposition \ref{propTprod}\eqref{propTprod1}, we have
\[
\Ttau(\GG^\lambda_k)
=\tfrac{2\pi}{\log{\lambda}}\ZZ\cap\tfrac{\pi}{\log\bigl(\lambda^{\frac{\pi}{2}}\bigr)}\ZZ
=\tfrac{2\pi}{\log{\lambda}}\ZZ\cap\tfrac{2}{\log{\lambda}}\ZZ=\{0\}
\]
and
\begin{align*}
\TtauInn(\GG^\lambda_k)
&=\tfrac{2\pi}{\log{\lambda}}\ZZ\cap
\Bigl(
\tfrac{2}{\log{\lambda}}(\pi{k}-1)\QQ+
\tfrac{\pi}{\log\bigl(\lambda^{\frac{\pi}{2}}\bigr)}\ZZ
\Bigr)\\
&=\tfrac{2\pi}{\log{\lambda}}\ZZ\cap
\Bigl(
\tfrac{2}{\log{\lambda}}(\pi{k}-1)\QQ+
\tfrac{2}{\log{\lambda}}\ZZ
\Bigr).
\end{align*}

Now let $t\in\TtauInn(\GG^\lambda_k)$. Then $t=\tfrac{2\pi{n}}{\log{\lambda}}$ for some $n\in\ZZ$ and $t=\tfrac{2}{\log{\lambda}}(\pi{k}-1)r+\tfrac{2n'}{\log{\lambda}}$ for some $r\in\QQ$ and $n'\in\ZZ$. This implies $\tfrac{2\pi}{\log{\lambda}}(n-kr)=\tfrac{2}{\log{\lambda}}(n'-r)$ and consequently $n=kr$ and $r=n'$ (as $\pi\not\in\QQ$). It follows that $t\in\tfrac{2\pi{k}}{\log{\lambda}}\ZZ$.

The inclusion $\tfrac{2\pi{k}}{\log{\lambda}}\ZZ\subset\TtauInn(\GG^\lambda_k)$ can be seen as follows: if $t\in\tfrac{2\pi{kn}}{\log{\lambda}}$ for some $n\in\ZZ$ then $t\in\tfrac{2\pi}{\log{\lambda}}\ZZ$ and
\[
t=\tfrac{2}{\log{\lambda}}(\pi{k}-1)n+\tfrac{2n}{\log{\lambda}}\in\tfrac{2}{\log{\lambda}}(\pi{k}-1)\ZZ+\tfrac{2}{\log{\lambda}}\ZZ.
\]

Finally $\TtauAInn(\GG^\lambda_k)=\RR$ because $\TtauAInn(\GG^\lambda_k)\supset\TtauAInn(\GG)=\RR$. Note that $\GG^\lambda_0$ is not isomorphic to $\GG^\lambda_k$ for $k>0$ because $\Ttau(\GG^\lambda_0)=\tfrac{2\pi}{\log{\lambda}}\ZZ$ (cf.~Example \ref{exTtauG}\eqref{exTtauG1}).
\end{proof}

\subsection{Bicrossed products by subgroups of the scaling group}\label{sectBicrossed}\hspace*{\fill}

In this section we will generalize the construction used to obtain the examples presented in Sections \ref{sectTheFactors1} and \ref{sectIII0}. The idea is to introduce an additional step of taking a bicrossed product by an action of a countable subgroup $\Gamma$ acting by the scaling automorphisms in such a way that the desired properties of the von Neumann algebra are not changed, but the group $\Gamma$ makes it possible to distinguish our examples by means of the invariants introduced at the beginning of Section \ref{sectTtau}.

Let us fix a non-trivial subgroup $\Gamma\subset\RR$ (not necessarily dense) which we equip with the discrete topology. For any compact quantum group $\GG$ we let $\Gamma$ act on $\Linf(\GG)$ via the scaling automorphisms. By a slight abuse of notation we will write $\tau^\GG$ also for the action $\Gamma\ni\gamma\mapsto\tau^\GG_\gamma\in\operatorname{Aut}(\Linf(\GG))$. The associated bicrossed product (compact) quantum group will be denoted by $\KK=\Gamma\bowtie\GG$ and the canonical inclusion of $\Linf(\GG)$ into $\Linf(\KK)=\Gamma\ltimes_{\tau^{\GG}}\Linf(\GG)$ by $\iota\colon\Linf(\GG)\hookrightarrow\Linf(\KK)$.

Recall from \cite[Theorem 6.1]{FimaMukherjeePatri} (cf.~also Section \ref{skladniki}) that the canonical unitaries $u_\gamma\in\Linf(\KK)$ ($\gamma\in\Gamma$) implementing the action of $\Gamma$ on $\Linf(\GG)$ are one-dimensional representations (i.e.~they are group-like elements) and hence are invariant under the modular group of the Haar measure of $\KK$ (cf.~\eqref{sigmatau}).

Next, let us recall that an automorphism $\theta$ of a von Neumann algebra $\rM$ is \emph{free} if for any $x\in\rM$ the condition that $xy=\theta(y)x$ for all $y\in\rM$ implies $x=0$ (\cite[Definition 1.3]{Kallman}, \cite[Definition 1.4.2]{SunderJones}) and an action $\alpha$ of a group $G$ on $\rM$ is \emph{free} if $\alpha_g$ is free for all $g\in{G}\setminus\{e\}$. Recall also that an automorphism $\theta$ of $\rM$ is \emph{outer} if $\theta\in\operatorname{Aut}(\rM)\setminus\operatorname{Inn}(\rM)$.

We have
\begin{itemize}
\item if $\alpha$ is a free action of a discrete group $G$ on $\rM$ and $\iota\colon\rM\to{G}\ltimes_\alpha\rM$ denotes the canonical inclusion then $\iota(\rM)'\cap(G\ltimes_\alpha\rM)=\iota(\cZ(\rM))$ (\cite[Proposition 1.4.4$(i)$]{SunderJones}),
\item If $\rM$ is a factor then an automorphism $\theta$ of $\rM$ is free if and only if it is outer (\cite[Remark 1.7]{Kallman}).
\end{itemize}

As an immediate consequence of these facts we obtain

\begin{proposition}\label{propCrossed}
Assume that $\Linf(\GG)$ is a factor. If for every $\gamma\in\Gamma\setminus\{0\}$ the automorphism $\tau^\GG_\gamma$ is outer then $\iota(\Linf(\GG))'\cap\Linf(\KK)=\CC\I$. In particular, $\Linf(\KK)$ is a factor.
\end{proposition}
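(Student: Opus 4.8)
The plan is to read off the statement directly from the two bullet points recalled immediately above it. First I would use that $\Linf(\GG)$ is a factor: by \cite[Remark 1.7]{Kallman} an automorphism of a factor is free if and only if it is outer, so the hypothesis that $\tau^\GG_\gamma$ is outer for every $\gamma\in\Gamma\setminus\{0\}$ says precisely that the action $\tau^\GG$ of the discrete group $\Gamma$ on $\Linf(\GG)$ is free.

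Next I would apply \cite[Proposition 1.4.4$(i)$]{SunderJones} with $G=\Gamma$, $\rM=\Linf(\GG)$, $\alpha=\tau^\GG$ and $G\ltimes_\alpha\rM=\Gamma\ltimes_{\tau^\GG}\Linf(\GG)=\Linf(\KK)$, which gives
\[
\iota\bigl(\Linf(\GG)\bigr)'\cap\Linf(\KK)=\iota\bigl(\cZ(\Linf(\GG))\bigr).
\]
Since $\Linf(\GG)$ is a factor its center is $\CC\I$, and as $\iota$ is a unital embedding the right-hand side equals $\CC\I$; this is the first assertion. For the ``in particular'' part I would note that $\cZ(\Linf(\KK))$ commutes with all of $\Linf(\KK)$, hence in particular with $\iota(\Linf(\GG))$, so
\[
\cZ\bigl(\Linf(\KK)\bigr)\subset\iota\bigl(\Linf(\GG)\bigr)'\cap\Linf(\KK)=\CC\I,
\]
and therefore $\Linf(\KK)$ is a factor.

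I do not expect a genuine obstacle here; the argument is a verbatim application of the quoted results. The only things to keep straight are the bookkeeping identifications already built into the set-up: that $\KK=\Gamma\bowtie\GG$ is by definition the crossed product $\Gamma\ltimes_{\tau^\GG}\Linf(\GG)$ with $\iota$ its canonical (unital, normal, faithful) inclusion, and that $\Gamma$ is treated as a discrete group so that the cited statements about free actions of discrete groups apply directly.
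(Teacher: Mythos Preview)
Your proof is correct and is exactly the argument the paper has in mind: the paper states the proposition as ``an immediate consequence'' of the two bullet points you cite, and you have simply spelled out that immediate consequence in full.
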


Note that the scaling and modular automorphisms commute, so the scaling group $(\tau^\GG_t)_{t\in\RR}$ restricts to a one-parameter group of automorphisms of $\Linf(\GG)^\sigma$. In this context we have

\begin{proposition}\label{LKsigma}
If $\Linf(\GG)^\sigma$ is a factor and $\bigl.\tau^\GG_\gamma\bigr|_{\Linf(\GG)^\sigma}$ is outer for all $\gamma\in\Gamma\setminus\{0\}$ then $\Linf(\KK)^\sigma$ is a factor.
\end{proposition}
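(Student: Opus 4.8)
The plan is to identify $\Linf(\KK)^\sigma$ with the crossed product $\Gamma\ltimes_{\tau^\GG}\Linf(\GG)^\sigma$ and then apply the two facts about free actions recalled just before Proposition~\ref{propCrossed}. First I would pin down the modular group of the Haar measure $\bh_\KK$ of $\KK=\Gamma\bowtie\GG$ on the relevant subalgebras. As $\KK$ is a bicrossed product, $\bh_\KK=\bh_\GG\comp\EE$ with $\EE\colon\Linf(\KK)\to\iota(\Linf(\GG))$ the canonical faithful normal conditional expectation onto the fixed points of the dual action (exactly as for $\HH_{\nu,q}$ in Section~\ref{skladniki}, cf.~\cite{FimaMukherjeePatri}). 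Since $\EE$ is $\bh_\KK$-preserving, Takesaki's theorem on conditional expectations (see e.g.~\cite[Chapter~IX]{Takesaki2}) shows that $\iota(\Linf(\GG))$ is globally $\sigma^{\bh_\KK}$-invariant, that $\sigma^{\bh_\KK}_t$ restricts there to $\iota\comp\sigma^{\bh_\GG}_t\comp\iota^{-1}$, and that $\iota^{-1}\comp\EE$ intertwines the two modular groups. Combined with the fact (recalled before Proposition~\ref{propCrossed}) that the canonical unitaries $u_\gamma\in\Linf(\KK)$ are group-like and hence $\sigma^{\bh_\KK}$-invariant, this gives $\iota(\Linf(\GG)^\sigma)\subseteq\Linf(\KK)^\sigma$ and $u_\gamma\in\Linf(\KK)^\sigma$ for all $\gamma$. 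Therefore $\Linf(\KK)^\sigma$ contains the von Neumann subalgebra $P$ generated by $\iota(\Linf(\GG)^\sigma)$ and $\{u_\gamma\}_{\gamma\in\Gamma}$; since the scaling and modular groups commute, $\tau^\GG_\gamma$ preserves $\Linf(\GG)^\sigma$ and $P$ is a copy of the crossed product $\Gamma\ltimes_{\tau^\GG}\Linf(\GG)^\sigma$.

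Next I would establish the reverse inclusion $\Linf(\KK)^\sigma\subseteq P$. The scaling automorphisms commute with $\sigma^{\bh_\GG}$ and preserve $\bh_\GG$, so $\Linf(\GG)^\sigma$ is $\tau^\GG$-invariant and the unique $\bh_\GG$-preserving conditional expectation $\EE_0\colon\Linf(\GG)\to\Linf(\GG)^\sigma$ (which exists by Takesaki's theorem) is $\Gamma$-equivariant; hence it induces a faithful normal conditional expectation $F\colon\Linf(\KK)\to P$ with $F(\iota(a)u_\gamma)=\iota(\EE_0(a))u_\gamma$. Now for $x\in\Linf(\KK)$ the Fourier coefficients $x_\gamma=\iota^{-1}\bigl(\EE(xu_\gamma^*)\bigr)\in\Linf(\GG)$ determine $x$ uniquely, and $F(x)$ has Fourier coefficients $\EE_0(x_\gamma)$. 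If $x\in\Linf(\KK)^\sigma$ then, because $\iota^{-1}\comp\EE$ intertwines the modular groups and the $u_\gamma$ are $\sigma^{\bh_\KK}$-fixed, each $x_\gamma$ lies in $\Linf(\GG)^\sigma$, so $\EE_0(x_\gamma)=x_\gamma$ and $F(x)=x\in P$. Thus $\Linf(\KK)^\sigma=P\cong\Gamma\ltimes_{\tau^\GG}\Linf(\GG)^\sigma$.

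With this identification the conclusion is immediate. By hypothesis $\Linf(\GG)^\sigma$ is a factor and each $\bigl.\tau^\GG_\gamma\bigr|_{\Linf(\GG)^\sigma}$ with $\gamma\neq 0$ is outer, hence free (an outer automorphism of a factor is free); so $\Gamma$ acts freely on the factor $\Linf(\GG)^\sigma$, and \cite[Proposition~1.4.4$(i)$]{SunderJones} gives $\iota(\Linf(\GG)^\sigma)'\cap\Linf(\KK)^\sigma=\iota\bigl(\cZ(\Linf(\GG)^\sigma)\bigr)=\CC\I$. Since $\cZ\bigl(\Linf(\KK)^\sigma\bigr)\subseteq\iota(\Linf(\GG)^\sigma)'\cap\Linf(\KK)^\sigma$, we get $\cZ\bigl(\Linf(\KK)^\sigma\bigr)=\CC\I$, i.e.~$\Linf(\KK)^\sigma$ is a factor. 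I expect the main obstacle to be the identification $\Linf(\KK)^\sigma=\Gamma\ltimes_{\tau^\GG}\Linf(\GG)^\sigma$ carried out in the two previous paragraphs: it rests on the explicit description of $\sigma^{\bh_\KK}$ via $\bh_\KK=\bh_\GG\comp\EE$, on the $\Gamma$-equivariant conditional expectation onto $\Linf(\GG)^\sigma$, and on the standard but somewhat technical Fourier-coefficient bookkeeping for crossed products by discrete groups; once it is in place, factoriality follows in two lines from the quoted lemmas.
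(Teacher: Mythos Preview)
Your argument is correct and follows essentially the same route as the paper: both proofs use the Fourier decomposition $x=\sum_\gamma u_\gamma\iota(x_\gamma)$, the $\sigma^{\bh_\KK}$-invariance of the $u_\gamma$ to conclude $x_\gamma\in\Linf(\GG)^\sigma$, and then the fact that an outer automorphism of a factor is free to kill the non-trivial coefficients. The only difference is one of packaging. The paper works directly with an element $x\in\cZ(\Linf(\KK)^\sigma)$ and derives the relation $\tau^\GG_{-\gamma}(y)x_\gamma=x_\gamma y$ from commutation with $\iota(y)$ for $y\in\Linf(\GG)^\sigma$, concluding $x_\gamma=0$ for $\gamma\neq 0$ and $x_0\in\cZ(\Linf(\GG)^\sigma)=\CC\I$; it never names the intermediate crossed product. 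You instead first establish the stronger identification $\Linf(\KK)^\sigma\cong\Gamma\ltimes_{\tau^\GG}\Linf(\GG)^\sigma$ (via the equivariant expectation $\EE_0$ and the induced $F$) and then invoke \cite[Proposition~1.4.4$(i)$]{SunderJones} as a black box. Your version buys a reusable structural statement at the cost of a little more bookkeeping; the paper's version is shorter because it only needs the center, not the whole fixed-point algebra.
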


\begin{proof}
Let $x\in\cZ(\Linf(\KK)^\sigma)$. As $\Linf(\KK)=\Gamma\ltimes_{\tau^\GG}\Linf(\GG)$, we can\footnote{Define $x_{\gamma}=\mathbb{E}(u_{\gamma}^*x)\in\Linf(\GG)$, where $\mathbb{E}\colon\Gamma\ltimes_{\tau^{\GG}}\Linf(\GG)\rightarrow\Linf(\GG)$ is the canonical normal conditional expectation. Then an elementary calculation shows that $x(\delta_0\tens\Omega_{\GG})=\sum\limits_{\gamma\in\Gamma}\delta_\gamma\tens{x_\gamma}\Omega_{\GG}=\sum\limits_{\gamma\in\Gamma}u_\gamma\iota(x_\gamma)(\delta_0\tens\Omega_{\GG})$ in $\Ltwo(\KK)$.} write $x$ as a $\|\cdot\|_2$ convergent series $x=\sum\limits_{\gamma\in\Gamma}u_\gamma\iota(x_\gamma)$ with $x_\gamma\in\Linf(\GG)$ ($\gamma\in\Gamma$). Since the unitaries $u_\gamma$ are invariant under the modular group (of $\KK$) which is $\|\cdot\|_2$-continuous, the invariance of $x$ implies
\[
x_\gamma\in\Linf(\GG)^\sigma,\qquad\gamma\in\Gamma.
\]

For any $y\in\Linf(\GG)^\sigma$ we have $\iota(y)\in\Linf(\KK)^\sigma$ and hence
\[
\sum_{\gamma\in\Gamma}u_\gamma\iota\bigl(\tau^\GG_{-\gamma}(y)x_\gamma\bigr)
=\sum_{\gamma\in\Gamma}u_\gamma{u_\gamma^*}\iota(y)u_\gamma\iota(x_\gamma)
=\iota(y)x=x\iota(y)=\sum_{\gamma\in\Gamma}u_\gamma\iota(x_{\gamma}y)
\]
and consequently
\begin{equation}\label{freeK}
\tau^\GG_{-\gamma}(y)x_\gamma=x_{\gamma}y,\qquad\gamma\in\Gamma,\:y\in\Linf(\KK)^\sigma.
\end{equation}
As for $\gamma\neq{0}$ the map $\tau^\GG_\gamma$ is an outer automorphism of the factor $\Linf(\GG)^\sigma$, it is free and consequently \eqref{freeK} implies $x_\gamma=0$ for $\gamma\neq{0}$. It follows that $x=\iota(x_0)$ for some $x_0\in\Linf(\GG)^\sigma$ which by \eqref{freeK} belongs to $\cZ(\Linf(\GG)^\sigma)=\CC\I$.
\end{proof}

It may be hard to check that $\bigl.\tau^\GG_\gamma\bigr|_{\Linf(\GG)^\sigma}$ is outer for all $\gamma\neq{0}$. However, under a condition of symmetry of the spectrum of the $\uprho$-operators the situation becomes more manageable as will be seen in Theorem \ref{thm5}. Before stating and proving the theorem we need two preliminary results.

\begin{lemma}\label{tau2t}
Let $\HH$ be a compact quantum group and assume that $\operatorname{Sp}(\uprho_\alpha)=\operatorname{Sp}\bigl(\uprho_\alpha^{-1}\bigr)$ for all $\alpha\in\operatorname{Irr}{\HH}$. If $\bigl.\tau^\HH_t\bigr|_{\Linf(\HH)^\sigma}=\id$ for some $t\in\RR$ then $\tau^\HH_{2t}=\id$.
\end{lemma}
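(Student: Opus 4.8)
The plan is to reduce the whole statement to the explicit formulas \eqref{sigmatau} for the action of the modular and scaling groups on matrix elements of irreducible representations. Fix $\alpha\in\operatorname{Irr}{\HH}$; recall that the $\uprho$-operator $\uprho_\alpha$ is diagonal in the chosen orthonormal basis of $\sH_\alpha$ with strictly positive eigenvalues $\uprho_{\alpha,1},\dotsc,\uprho_{\alpha,\dim{\alpha}}$, that each matrix element $U^\alpha_{i,j}$ is a nonzero element of $\operatorname{Pol}(\HH)$ (by the orthogonality relations, \cite[Theorem 1.4.3]{NeshveyevTuset}), and that the scalars $\uprho_{\alpha,i}^{\ii{s}}$ commute with everything. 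From the first line of \eqref{sigmatau} we get $\sigma^\bh_s(U^\alpha_{i,j})=(\uprho_{\alpha,i}\uprho_{\alpha,j})^{\ii{s}}U^\alpha_{i,j}$, so whenever $\uprho_{\alpha,i}\uprho_{\alpha,j}=1$ the matrix element $U^\alpha_{i,j}$ is fixed by $\sigma^\bh_s$ for every $s\in\RR$ and therefore lies in $\Linf(\HH)^\sigma$.

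The key step is then the following. Fix $\alpha$ and an index $i$. Since $\uprho_{\alpha,i}\in\operatorname{Sp}(\uprho_\alpha)$ we have $\uprho_{\alpha,i}^{-1}\in\operatorname{Sp}(\uprho_\alpha^{-1})=\operatorname{Sp}(\uprho_\alpha)$ by the standing assumption, so there is an index $j$ with $\uprho_{\alpha,j}=\uprho_{\alpha,i}^{-1}$. By the previous paragraph $U^\alpha_{i,j}\in\Linf(\HH)^\sigma$, so the hypothesis $\bigl.\tau^\HH_t\bigr|_{\Linf(\HH)^\sigma}=\id$ applies to it, and the second line of \eqref{sigmatau} gives
\[
U^\alpha_{i,j}=\tau^\HH_t(U^\alpha_{i,j})=\uprho_{\alpha,i}^{\ii{t}}U^\alpha_{i,j}\uprho_{\alpha,j}^{-\ii{t}}=\uprho_{\alpha,i}^{2\ii{t}}U^\alpha_{i,j},
\]
using $\uprho_{\alpha,j}^{-\ii{t}}=\uprho_{\alpha,i}^{\ii{t}}$. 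As $U^\alpha_{i,j}\neq0$ this forces $\uprho_{\alpha,i}^{2\ii{t}}=1$. Since $\alpha$ and $i$ were arbitrary, $\uprho_{\alpha,i}^{2\ii{t}}=1$ for \emph{every} $\alpha\in\operatorname{Irr}{\HH}$ and every $i$. It then remains to feed this back into \eqref{sigmatau}: for any $\alpha$, $i$, $j$,
\[
\tau^\HH_{2t}(U^\alpha_{i,j})=\uprho_{\alpha,i}^{2\ii{t}}U^\alpha_{i,j}\uprho_{\alpha,j}^{-2\ii{t}}=U^\alpha_{i,j},
\]
so $\tau^\HH_{2t}$ fixes every matrix element, hence all of $\operatorname{Pol}(\HH)$. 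Since $\operatorname{Pol}(\HH)$ is $\sigma$-weakly dense in $\Linf(\HH)$ and $\tau^\HH_{2t}$ is normal, $\tau^\HH_{2t}=\id$.

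I do not expect a genuine obstacle here: the only points requiring a little care are that the relevant matrix element $U^\alpha_{i,j}$ is nonzero (so the scalar identity above can be read off) and that its membership in $\Linf(\HH)^\sigma$ is visible directly from \eqref{sigmatau}, without any density or conditional-expectation argument. The sole place the symmetry assumption enters is in guaranteeing that for every eigenvalue $\mu$ of $\uprho_\alpha$ the reciprocal $\mu^{-1}$ is again an eigenvalue, which produces the off-diagonal matrix element pairing $\mu$ with $\mu^{-1}$ that lies in the centralizer; without it there may be eigenvalues $\mu$ whose reciprocal is absent, and then no element of $\Linf(\HH)^\sigma$ constrains $\mu^{2\ii{t}}$.
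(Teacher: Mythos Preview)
Your proof is correct and follows essentially the same argument as the paper's own proof: pick for each eigenvalue $\uprho_{\alpha,i}$ an index with reciprocal eigenvalue, observe that the corresponding matrix element lies in $\Linf(\HH)^\sigma$, deduce $\uprho_{\alpha,i}^{2\ii t}=1$, and then read off $\tau^\HH_{2t}=\id$ from \eqref{sigmatau}. The only difference is that you spell out the density step and the nonvanishing of the matrix element more explicitly than the paper does.
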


\begin{proof}
Take $\alpha\in\operatorname{Irr}{\GG}$, a unitary representation $U^\alpha\in\alpha$ and $i\in\{1,\dotsc,\dim{\alpha}\}$. By assumption there exists $k\in\{1,\dotsc,\dim{\alpha}\}$ such that $\uprho_{\alpha,k}=\uprho_{\alpha,i}^{-1}$. Thus $U^\alpha_{i,k}\in\Linf(\HH)^\sigma$ and consequently
\[
U^\alpha_{i,k}=\tau^\HH_t(U^\alpha_{i,k})=\uprho_{\alpha,i}^{\ii{t}}\uprho_{\alpha,k}^{-\ii{t}}U^\alpha_{i,k}
\]
which implies that $\uprho_{\alpha,i}^{2\ii{t}}=1$ because $U^\alpha_{i,k}\neq{0}$ (cf.~the orthogonality relations \cite[Theorem 1.4.3]{NeshveyevTuset}). Since this is true for any $i$, given $i,j\in\{1,\dotsc,\dim{\alpha}\}$ we compute
\[
\tau^\HH_{2t}(U^\alpha_{i,j})=\uprho_{\alpha,i}^{2\ii{t}}\uprho_{\alpha,j}^{-2\ii{t}}U^\alpha_{i,j}=U^\alpha_{i,j}.
\]
As $\alpha$ is arbitrary, we obtain $\tau^\HH_{2t}=\id$.
\end{proof}

\begin{remark}\label{remSym}
The symmetry assumption on the spectrum of the $\uprho$-operators in Lemma \ref{tau2t} (cf.~also Section \ref{TtauandT}) has been studied in \cite{symmetry}. In particular, it was proved there that this condition holds automatically for compact quantum groups of subexponential growth.
\end{remark}

\begin{proposition}\label{propAdv}
Let $\{\HH_n\}_{n\in\NN}$ be a family of compact quantum groups with the property that each $\HH_n$ appears in the sequence $(\HH_n)_{n\in\NN}$ infinitely many times and let $\GG=\bigtimes\limits_{n=1}^{\infty}\HH_n$. Assume that for some $t\in\RR$ and a unitary $v\in\Linf(\GG)$ we have $\bigl.\tau^\GG_t\bigr|_{\Linf(\GG)^\sigma}=\bigl.\operatorname{Ad}(v)\bigr|_{\Linf(\GG)^\sigma}$. Then $v\in\cZ(\Linf(\GG))$ and hence $\operatorname{Ad}(v)=\id$. In particular, $\TtauInn(\GG)=\Ttau(\GG)$.
\end{proposition}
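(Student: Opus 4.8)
The plan is to establish the substantive assertion — that the unitary $v$ lies in $\cZ(\Linf(\GG))$ — by adapting the proof of Theorem \ref{theCenters}\eqref{theCenters1}, and then to deduce $\TtauInn(\GG)=\Ttau(\GG)$ formally. The key observation is that the $\sigma^\bh$-invariant elements exhibited in that proof are automatically invariant under the whole scaling group as well. Indeed, fix $k\in\NN$, $\beta\in\operatorname{Irr}{\HH_k}$ and $i,j\in\{1,\dotsc,\dim{\beta}\}$; choose $i',j'$ with $\uprho_{\overline{\beta},i'}=\uprho_{\beta,i}^{-1}$, $\uprho_{\overline{\beta},j'}=\uprho_{\beta,j}^{-1}$, pick $k'>k$ with $\HH_{k'}=\HH_k$ (possible, as each $\HH_k$ occurs infinitely often), and put
\[
z=\I^{\tens(k-1)}\tens{U^\beta_{i,j}}\tens\I^{\tens(k'-k-1)}\tens{U^{\overline{\beta}}_{i',j'}}\tens\I^{\tens\infty}.
\]
As in the proof of Theorem \ref{theCenters}, \eqref{sigmatau} shows $z\in\Linf(\GG)^\sigma$; moreover the scalars $\uprho_{\beta,i}^{\pm\ii{t}},\uprho_{\beta,j}^{\pm\ii{t}}$ produced by $\tau^\GG_t$ on the two non-trivial legs cancel, so $\tau^\GG_t(z)=z$ for every $t\in\RR$. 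Since $z\in\Linf(\GG)^\sigma$, the hypothesis yields $vzv^*=\tau^\GG_t(z)=z$, i.e.\ $v$ commutes with $z$.

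I would then re-run the estimate of the proof of Theorem \ref{theCenters}\eqref{theCenters1} verbatim with $v$ in place of $x$. That computation uses only that $x\in\Linf(\GG)$, that $x$ is $\|\cdot\|_2$-approximable by $\operatorname{Pol}(\GG)$ (automatic, as $\operatorname{Pol}(\GG)\Omega_\bh$ is dense in $\Ltwo(\GG)$), and that $x$ commutes with the elements $z$ above — all of which hold for $v$. Thus, approximating $v$ by some $y\in\operatorname{Pol}(\GG)$ supported on the first $N$ factors, choosing $k'>N$ with $\HH_{k'}=\HH_k$, and using the modular analyticity of polynomial elements together with the orthogonality relations to strip off the $U^{\overline{\beta}}_{i',j'}$-leg, one concludes, exactly as there, that $v$ commutes with $\I^{\tens(k-1)}\tens{U^\beta_{i,j}}\tens\I^{\tens\infty}$ for all $k,\beta,i,j$. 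Letting these vary gives $v\in\bigcap_{k}\bigl(\I^{\tens(k-1)}\tens\Linf(\HH_k)\tens\I^{\tens\infty}\bigr)'=\Linf(\GG)'$, hence $v\in\Linf(\GG)\cap\Linf(\GG)'=\cZ(\Linf(\GG))$ and $\operatorname{Ad}(v)=\id$.

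Finally, $\TtauInn(\GG)=\Ttau(\GG)$ is immediate: $\Ttau(\GG)\subset\TtauInn(\GG)$ always, and if $t\in\TtauInn(\GG)$ then $\tau^\GG_t=\operatorname{Ad}(v)$ for a unitary $v\in\Linf(\GG)$, so in particular $\bigl.\tau^\GG_t\bigr|_{\Linf(\GG)^\sigma}=\bigl.\operatorname{Ad}(v)\bigr|_{\Linf(\GG)^\sigma}$, whence $\operatorname{Ad}(v)=\id$ by the above and $\tau^\GG_t=\id$, i.e.\ $t\in\Ttau(\GG)$. The only point requiring care is the bookkeeping in the second step — checking that the long estimate from the proof of Theorem \ref{theCenters}\eqref{theCenters1} survives replacing $x$ by $v$ unchanged — but, as noted, it does, since the estimate never uses more about $x$ than the three properties listed above.
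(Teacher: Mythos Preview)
Your proposal is correct. Both your argument and the paper's share the decisive observation: the elements
\[
z=\I^{\tens(k-1)}\tens{U^\beta_{i,j}}\tens\I^{\tens(k'-k-1)}\tens{U^{\overline{\beta}}_{i',j'}}\tens\I^{\tens\infty}
\]
lie in $\Linf(\GG)^\sigma\cap\Linf(\GG)^\tau$, so $vzv^*=\tau^\GG_t(z)=z$ and $v$ commutes with every such $z$. From here the routes diverge in execution. You recycle the long estimate from Theorem~\ref{theCenters}\eqref{theCenters1} verbatim (approximating $v$ itself by a finitely supported polynomial and then stripping off the far leg via the orthogonality relations), which is efficient and requires nothing beyond the three properties of $v$ you list. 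The paper instead gives a self-contained direct argument: it writes $y_k=v(\text{near leg})v^*\cdot v(\text{far leg})v^*$, uses that $\bigl\|v(\text{far leg})v^*-(\text{far leg})\bigr\|_2\to0$ as the far position tends to infinity, approximates $v(\text{near leg})v^*$ (rather than $v$) by a polynomial, and extracts the conclusion by applying a rank-one operator $\ket{\Omega}\bra{U^{\overline{\alpha}}_{i',j'}\Omega}$ on the far slot. Your approach is shorter and reuses existing machinery; the paper's is independent of Theorem~\ref{theCenters} and makes the asymptotic localisation of $v$ explicit. Both land on exactly the same conclusion that $v$ commutes with each $\I^{\tens(k-1)}\tens{U^\beta_{i,j}}\tens\I^{\tens\infty}$, whence $v\in\cZ(\Linf(\GG))$.
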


Note that the statement of Proposition \ref{propAdv} is not that a scaling automorphism which coincides with an inner automorphism on $\Linf(\GG)^\sigma$ is trivial, but that the inner automorphism which coincides with the scaling automorphism $\tau^\GG_t$ on $\Linf(\GG)^\sigma$ is trivial, while $\tau^\GG_t$ itself might act non-trivially on some elements of $\Linf(\GG)\setminus\Linf(\GG)^\sigma$.

\begin{proof}[Proof of Proposition \ref{propAdv}]
Fix $N\in\NN$, $\alpha\in\operatorname{Irr}{\HH_N}$, $U^\alpha\in\alpha$ and $i,j\in\{1,\dotsc,\dim{\alpha}\}$. By assumption there is a strictly increasing sequence $(a_k)_{k\in\NN}$ of natural numbers such that $\HH_{a_k+1+N}=\HH_N$ for all $k$. Consequently with $i',j'\in\{1,\dotsc,\dim{\alpha}\}$ such that $\uprho_{\overline{\alpha},i'}=\uprho_{\alpha,i}^{-1}$ and $\uprho_{\overline{\alpha},j'}=\uprho_{\alpha,j}^{-1}$ (cf.~\cite[Proposition 1.4.7]{NeshveyevTuset}) we can define
\[
y_k=\I^{\tens(N-1)}\tens{U^\alpha_{i,j}}\tens\I^{\tens{a_k}}\tens{U^{\overline{\alpha}}_{i',j'}}\tens\I^{\tens\infty}\in\Linf(\GG),\qquad{k}\in\NN.
\]
It follows immediately from \eqref{sigmatau} that $y_k\in\Linf(\GG)^\sigma\cap\Linf(\GG)^\tau$ for all $k$. This means that we have
\begin{align*}
\I^{\tens(N-1)}\tens{U^\alpha_{i,j}}\tens\I^{\tens{a_k}}\tens{U^{\overline{\alpha}}_{i',j'}}&\tens\I^{\tens\infty}=y_k=\tau^\GG_t(y_k)=vy_kv^*\\
&=v(\I^{\tens(N-1)}\tens{U^\alpha_{i,j}}\tens\I^{\tens\infty})v^*v(\I^{\tens(a_k+N)}\tens{U^{\overline{\alpha}}_{i',j'}}\tens\I^{\tens\infty})v^*
\end{align*}
for all $k\in\NN$. It is easy to see that given $\eps>0$ there exists $k_\eps$ such that
\[
\bigl\|
\I^{\tens(a_k+N)}\tens{U^{\overline{\alpha}}_{i',j'}}\tens\I^{\tens\infty}
-v(\I^{\tens(a_k+N)}\tens{U^{\overline{\alpha}}_{i',j'}}\tens\I^{\tens\infty})v^*
\bigr\|_2\leq\eps
\]
for all $k\geq{k_\eps}$ and consequently
\begin{align*}
&\bigl\|y_k-v(\I^{\tens(N-1)}\tens{U^\alpha_{i,j}}\tens\I^{\tens\infty})v^*(\I^{\tens(a_k+N)}\tens{U^{\overline{\alpha}}_{i',j'}}\tens\I^{\tens\infty})\bigr\|_2\\
&\quad=\Bigl\|v(\I^{\tens(N-1)}\tens{U^\alpha_{i,j}}\tens\I^{\tens\infty})v^*\bigl(v(\I^{\tens(a_k+N)}\tens{U^{\overline{\alpha}}_{i',j'}}\tens\I^{\tens\infty})v^*-(\I^{\tens(a_k+N)}\tens{U^{\overline{\alpha}}_{i',j'}}\tens\I^{\tens\infty})\bigr)\Bigr\|_2\\
&\quad\leq\bigl\|v(\I^{\tens(N-1)}\tens{U^\alpha_{i,j}}\tens\I^{\tens\infty})v^*\bigr\|
\bigl\|
\I^{\tens(a_k+N)}\tens{U^{\overline{\alpha}}_{i',j'}}\tens\I^{\tens\infty}
-v(\I^{\tens(a_k+N)}\tens{U^{\overline{\alpha}}_{i',j'}}\tens\I^{\tens\infty})v^*
\bigr\|_2\\
&\quad\leq\bigl\|
\I^{\tens(a_k+N)}\tens{U^{\overline{\alpha}}_{i',j'}}\tens\I^{\tens\infty}
-v(\I^{\tens(a_k+N)}\tens{U^{\overline{\alpha}}_{i',j'}}\tens\I^{\tens\infty})v^*
\bigr\|_2\leq\eps
\end{align*}
which we will write as
\[
y_k\approx_{\eps}v(\I^{\tens(N-1)}\tens{U^\alpha_{i,j}}\tens\I^{\tens\infty})v^*(\I^{\tens(a_k+N)}\tens{U^{\overline{\alpha}}_{i',j'}}\tens\I^{\tens\infty}).
\]
Now let us apply both these operators to the cyclic vector $\Omega$ and act on the resulting vectors with the contraction $\I^{\tens(N+a_k)}\tens\ket{\bigl.\bigr.\Omega_{N+a_k+1}}\bra{U^{\overline{\alpha}}_{i',j'}\Omega_{N+a_k+1}}\tens\I^{\tens\infty}$. As a result we obtain
\begin{align*}
&\bigl\|U^{\overline{\alpha}}_{i',j'}\bigr\|_2^2\bigl(\Omega_1\tens\dotsm\tens\Omega_{N-1}\tens{U^\alpha_{i,j}\Omega_N}\tens\Omega_{N+1}\tens\dotsm\bigr)\\
&\qquad\approx_\eps
\bigl(\I^{\tens(N+a_k)}\tens\ket{\bigl.\bigr.\Omega_{N+a_k+1}}\bra{U^{\overline{\alpha}}_{i',j'}\Omega_{N+a_k+1}}\tens\I^{\tens\infty}\bigr)
v\bigl(\I^{\tens(N-1)}\tens{U^\alpha_{i,j}}\tens\I^{\tens\infty}\bigr)v^*\\
&\qquad\qquad\qquad\qquad\qquad\qquad\qquad\qquad\qquad\qquad\qquad\qquad\quad\quad\quad\bigl(\I^{\tens(a_k+N)}\tens{U^{\overline{\alpha}}_{i',j'}}\tens\I^{\tens\infty}\bigr)\Omega\\
&\qquad=\bigl(\I^{\tens(N+a_k)}\tens\ket{\bigl.\bigr.\Omega_{N+a_k+1}}\bra{U^{\overline{\alpha}}_{i',j'}\Omega_{N+a_k+1}}\tens\I^{\tens\infty}\bigr)
J_\bh\sigma^\bh_{\frac{\ii}{2}}\bigl(\I^{\tens(a_k+N)}\tens{U^{\overline{\alpha}}_{i',j'}}\tens\I^{\tens\infty}\bigr)^*J_\bh\\
&\qquad\qquad\qquad\qquad\qquad\qquad\qquad\qquad\qquad\qquad\qquad\qquad\quad\quad\quad{v}\bigl(\I^{\tens(N-1)}\tens{U^\alpha_{i,j}}\tens\I^{\tens\infty}\bigr)v^*\Omega
\end{align*}
(see Footnote \ref{sigmaft} in the proof of Theorem \ref{theCenters}). Now we can find $M$ sufficiently large and $z\in\operatorname{Pol}(\HH_1)\atens\dotsm\atens\operatorname{Pol}(\HH_M)\tens\I^{\tens\infty}$ such that
\[
\bigl\|v\bigl(\I^{\tens(N-1)}\tens{U^\alpha_{i,j}}\tens\I^{\tens\infty}\bigr)v^*-z\bigr\|_2\leq\eps\min\Bigl\{
\bigl\|U^{\overline{\alpha}}_{i',j'}\bigr\|_2^{-2},\bigl\|\sigma^\bh_{\frac{\ii}{2}}\bigl(U^{\overline{\alpha}}_{i',j'}\bigr)\bigr\|^{-1}\Bigr\}
\]
and consequently
\begin{align*}
&\bigl\|U^{\overline{\alpha}}_{i',j'}\bigr\|_2^2\bigl(\Omega_1\tens\dotsm\tens\Omega_{N-1}\tens{U^\alpha_{i,j}\Omega_N}\tens\Omega_{N+1}\tens\dotsm\bigr)\\
&\qquad\approx_{2\eps}
\bigl(\I^{\tens(N+a_k)}\tens\ket{\bigl.\bigr.\Omega_{N+a_k+1}}\bra{U^{\overline{\alpha}}_{i',j'}\Omega_{N+a_k+1}}\tens\I^{\tens\infty}\bigr)\\
&\qquad\qquad\qquad\qquad\qquad\qquad\qquad\qquad\qquad\quad\quad\quad
J_\bh\sigma^\bh_{\frac{\ii}{2}}\bigl(\I^{\tens(a_k+N)}\tens{U^{\overline{\alpha}}_{i',j'}}\tens\I^{\tens\infty}\bigr)^*J_\bh{z}\Omega\\
&\qquad=\bigl(\I^{\tens(N+a_k)}\tens\ket{\bigl.\bigr.\Omega_{N+a_k+1}}\bra{U^{\overline{\alpha}}_{i',j'}\Omega_{N+a_k+1}}\tens\I^{\tens\infty}\bigr)
z\bigl(\I^{\tens(a_k+N)}\tens{U^{\overline{\alpha}}_{i',j'}}\tens\I^{\tens\infty}\bigr)\Omega.
\end{align*}
Next note that for $k$ sufficiently large (such that $N+a_k\geq{M}$, e.g.~$k\geq{M}$) the operators $z$ and $\I^{\tens(N+a_k)}\tens\ket{\bigl.\bigr.\Omega_{N+a_k+1}}\bra{U^{\overline{\alpha}}_{i',j'}\Omega_{N+a_k+1}}\tens\I^{\tens\infty}$ commute, hence
\[
\bigl(\I^{\tens(N+a_k)}\tens\ket{\bigl.\bigr.\Omega_{N+a_k+1}}\bra{U^{\overline{\alpha}}_{i',j'}\Omega_{N+a_k+1}}\tens\I^{\tens\infty}\bigr)
z\bigl(\I^{\tens(a_k+N)}\tens{U^{\overline{\alpha}}_{i',j'}}\tens\I^{\tens\infty}\bigr)\Omega=
\bigl\|U^{\overline{\alpha}}_{i',j'}\bigr\|_2^2z\Omega,
\]
and so
\begin{align*}
\bigl\|U^{\overline{\alpha}}_{i',j'}\bigr\|_2^2\bigl(\Omega_1\tens\dotsm\tens\Omega_{N-1}\tens{U^\alpha_{i,j}\Omega_N}\tens\Omega_{N+1}\tens\dotsm\bigr)
&\approx_{2\eps}\bigl\|U^{\overline{\alpha}}_{i',j'}\bigr\|_2^2z\Omega\\
&\approx_{\eps}
\bigl\|U^{\overline{\alpha}}_{i',j'}\bigr\|_2^2
v\bigl(\I^{\tens(N-1)}\tens{U^\alpha_{i,j}}\tens\I^{\tens\infty}\bigr)v^*\Omega.
\end{align*}

As $\eps$ is arbitrary and $\Omega$ is separating for $\Linf(\GG)$, we obtain
\[
\I^{\tens(N-1)}\tens{U^\alpha_{i,j}}\tens\I^{\tens\infty}=
v\bigl(\I^{\tens(N-1)}\tens{U^\alpha_{i,j}}\tens\I^{\tens\infty}\bigr)v^*
\]
and since $N$, $\alpha$ and $i,j$ are arbitrary, we find that $v\in\cZ(\Linf(\GG))$, so that $\operatorname{Ad}(v)=\id$.
\end{proof}

\begin{remark}\label{remark2}
As noted by the referee, in the situation of Proposition \ref{propAdv} one can calculate the Vaes' invariant of $\GG$ as $T(\Linf(\GG),\Delta_{\GG})=\bigcap\limits_{n=1}^{\infty}T(\Linf(\HH_n),\Delta_{\HH_n})$. Proof of this result relies on the observation that any group-like unitary in $\Linf(\GG)$ is of the form $u_1\tens\dotsm\tens{u_N}\tens\I^{\tens\infty}$ for some $N\in\NN$ and group-like unitaries $u_k\in\Linf(\HH_k)$, ($1\leq{k}\leq{N}$).
\end{remark}

\begin{example}\label{example1}
Fix $q\in\left]-1,1\right[\setminus\{0\}$. Let the cyclic group $\tfrac{\pi}{2\log|q|}\ZZ$ act on $\Linf(\operatorname{SU}_q(2))$ via scaling automorphisms $(t,x)\mapsto\tau^{\operatorname{SU}_q(2)}_t(x)$ and define $\HH$ as the associated bicrossed product quantum group $\HH=(\tfrac{\pi}{2\log|q|}\ZZ)\bowtie\operatorname{SU}_q(2)$ (c.f.~Section \ref{skladniki}). In particular $\Linf(\HH)=\bigl(\tfrac{\pi}{2\log|q|}\ZZ\bigr)\ltimes\Linf(\SU_q(2))$ and, as in the introduction to this section, we let $\iota\colon\Linf(\SU_q(2))\to\Linf(\HH)$ be the canonical inclusion. Finally, set $\HH_n=\HH$ for $n\in\NN$ and $\GG=\bigtimes\limits_{n=1}^{\infty}\HH_n$. According to Proposition \ref{propAdv} we have $\TtauInn(\GG)=\Ttau(\GG)$ and one easily sees that $\Ttau(\GG)=\Ttau(\HH)=\Ttau(\SU_q(2))=\tfrac{\pi}{\log|q|}\ZZ$ (Proposition \ref{propTprod}, Example \ref{exTtau}). We claim that $\TtauAInn(\GG)=\tfrac{\pi}{2\log|q|}\ZZ$. To see inclusion $\supset$ observe first that $\tfrac{\pi}{2\log|q|}\ZZ\subset\TtauInn(\HH)$. Then the claim follows exactly as in the proof of formula \eqref{TtauAInnG} in Theorem \ref{TauGthm}. For the other inclusion, take $t\in\TtauAInn(\GG)$ and assume by contradiction that $t\not\in\tfrac{\pi}{2\log|q|}\ZZ$. Let $(v_m)_{m\in\NN}$ be a sequence of unitaries in $\Linf(\GG)$ such that
\begin{equation}\label{eq3}
\operatorname{Ad}(v_m)\xrightarrow[m\to\infty]{}\tau^{\GG}_t.
\end{equation}
Recall from Section \ref{skladniki} that there is an isomorphism $\Phi_q\colon\Linf(\SU_q(2))\rightarrow{\Int_{\TT}}^{\oplus}\B(\sH_\lambda)\dd\mu(\lambda)$, where $\sH_\lambda=\ell^2(\ZZ_+)$ ($\lambda\in\TT$). Under this isomorphism, the scaling automorphism $\tau^{\SU_q(2)}_t$ acts as a rotation by $|q|^{2\ii{t}}$ (\cite[Proposition 7.3]{modular}), i.e.
\[
\tau^{\SU_q(2)}_t\Biggl(\Phi_q^{-1}\biggl({\Int_{\TT}}^{\oplus}x_\lambda\dd\mu(\lambda)\biggr)\Biggr)
=\Phi_q^{-1}\biggl({\Int_{\TT}}^{\oplus}x_{|q|^{2\ii{t}}\lambda}\dd\mu(\lambda)\biggr)
\]
(c.f.~\cite{invariants}). Choose an arc with positive length $\Omega\subset\TT$ such that $|q|^{-2\ii{t}}\Omega\cap(\Omega\cup(-\Omega))=\emptyset$ (this is possible since $t\not\in\tfrac{\pi}{2\log|q|}\ZZ$) and let $1_\Omega$ be the associated characteristic function. Next, set $y=\Phi_q^{-1}\bigl({\Int_{\TT}}^{\oplus}1_\Omega(\lambda)\I_{{\sH}_{\lambda}}\dd\mu(\lambda)\bigr)\in\Linf(\SU_q(2))$. Fix $\omega_0$, a normal state on $\B(\sH_\lambda)=\B(\ell^2(\ZZ_+))$ and let $\omega\in\Lone(\SU_q(2))$ be given by $\omega\Bigl(\Phi_q^{-1}\bigl({\Int_{\TT}}^{\oplus}x_\lambda\dd\mu(\lambda)\bigr)\Bigr)=\Int_{|q|^{-2\ii{t}}\Omega}\omega_0(x_\lambda)\dd\mu(\lambda)$. Denote by $\EE\colon\Linf(\HH)=\bigl(\tfrac{\pi}{2\log|q|}\ZZ)\ltimes\Linf(\SU_q(2)\bigr)\rightarrow\iota(\Linf(\SU_q(2)))$ the canonical normal conditional expectation. Towards a contradiction, on the one hand we have
\begin{equation}\label{eq1}
\begin{aligned}
\bigl(\omega\comp\iota^{-1}\comp\EE\tens\bh_{\HH}^{\tens\infty}\bigr)
\Bigl(\tau^{\GG}_t\bigl(\iota(y)\tens\I^{\tens\infty}\bigr)\Bigr)
&=(\omega\comp\iota^{-1}\comp\EE)(\iota(\tau^{\SU_q(2)}_t(y)))\\
&=\omega\Biggl(
\Phi_q^{-1}\biggl(
{\Int_{\TT}}^{\oplus}1_{|q|^{-2\ii{t}}\Omega}(\lambda)
\I_{\sH_\lambda}\dd\mu(\lambda)
\biggr)\Biggr)\\
&=\Int_{|q|^{-2\ii{t}}\Omega}
1_{|q|^{-2\ii{t}}\Omega}(\lambda)\dd\mu(\lambda)=\mu(\Omega)>0.
\end{aligned}
\end{equation}
On the other hand, we will show that for any unitary operator $u\in\Linf(\GG)$ we have
\[
\bigl(\omega\comp\iota^{-1}\comp\EE\tens\bh_{\HH}^{\tens\infty}\bigr)\Bigl(\operatorname{Ad}(u)\bigl(\iota(y)\tens\I^{\tens\infty}\bigr)\Bigr)=0.
\]
Together with \eqref{eq1}, this clearly contradicts convergence \eqref{eq3}. In fact, we will show a more general result
\begin{equation}\label{eq4}
\bigl(\omega\comp\iota^{-1}\comp\EE\tens\bh_{\HH}^{\tens\infty}\bigr)\Bigl(z\bigl(\iota(y)\tens\I^{\tens\infty}\bigr)z'\Bigr)=0
\end{equation}
for arbitrary $z,z'\in\Linf(\GG)$. Since $\Linf(\GG)=\barbigotimes\limits_{n=1}^{\infty}\bigl(\tfrac{\pi}{2\log|q|}\ZZ\bigr)\ltimes\Linf(\SU_q(2))$ and $\omega\comp\iota^{-1}\comp\EE\tens\bh_{\HH}^{\tens\infty}$ is a normal functional, it is enough to consider $z=z_{1}\tens\dotsm\tens{z_M}\tens\I^{\tens\infty}$, $z'=z'_1\tens\dotsm\tens{z'_{M}}\tens\I^{\tens\infty}$ and $z_1,z'_1$ of the form
\[
z_1=\iota(w)u_{\frac{\pi}{2\log|q|}m},\qquad
z'_1=\iota(w')u_{\frac{\pi}{2\log|q|}m'}
\]
for some $w,w'\in\Linf(\SU_q(2))$, $m,m'\in\ZZ$, where $\{u_{\frac{\pi}{2\log|q|}k}\}_{k\in\ZZ}$ are the generators of $L\bigl(\tfrac{\pi}{2\log|q|}\ZZ\bigr)$ inside $\Linf(\HH)$. For this choice of $z,z'$ we have
\begin{align*}
\bigl(\omega\comp\iota^{-1}\comp\EE\tens\bh_{\HH}^{\tens\infty}\big)&\Bigl(z\bigl(\iota(y)\tens\I^{\tens\infty}\bigr)z'\Bigr)\\
&=\bigl(\omega\comp\iota^{-1}\comp\EE\bigr)
\bigl(\iota(w)u_{\frac{\pi}{2\log|q|}m}\iota(y)\iota(w')u_{\frac{\pi}{2\log|q|}m'}\bigr)
\biggl(\prod_{n=2}^{M}\bh_{\HH}(z_nz'_n)\biggr)\\
&=(\omega\comp\iota^{-1}\comp\EE)\Bigl(\iota\bigl(w\tau^{\SU_q(2)}_{\frac{\pi}{2\log|q|}m}(yw')\bigr)
u_{\frac{\pi}{2\log|q|}(m+m')}\Bigr)\biggl(\prod_{n=2}^{M}\bh_{\HH}(z_nz'_n)\biggr)\\
&=\delta_{m,-m'}\omega\Biggl(\Phi_q^{-1}\biggl({\Int_{\TT}}^{\oplus}w_\lambda(yw')_{|q|^{2\ii\frac{\pi}{2\log|q|}m}\lambda}\dd\mu(\lambda)\biggr)
\Biggr)\biggl(\prod_{n=2}^{M}\bh_{\HH}(z_nz'_n)\biggr)\\
&=\delta_{m,-m'}\biggl(\Int_{|q|^{-2\ii{t}}\Omega}1_{\Omega}(\ee^{\ii\pi{m}}\lambda)\omega_0(w_\lambda{w'_{\ee^{\ii\pi{m}}\lambda}})\dd\mu(\lambda)\biggr)
\biggl(\prod_{n=2}^{M}\bh_{\HH}(z_nz'_n)\biggr)=0,
\end{align*}
where the last equality follows from $|q|^{-2\ii{t}}\Omega\cap(\Omega\cup(-\Omega))=\emptyset$. This shows \eqref{eq4} and in turn ends the proof of $\TtauAInn(\GG)=\tfrac{\pi}{2\log|q|}\ZZ$.

In particular, observe that $\GG$ can be distinguished from $\SU_q(2)$ using $\TtauAInn$, but not using invariants $\Ttau$ and $\TtauInn$.
\end{example}

Before stating the main theorem of this section let us recall that for a compact quantum group $\HH$ the von Neumann algebra of \emph{class functions} on $\HH$ is
\[
\cC_\HH=\bigl\{\chi_\alpha\,\bigr|\bigl.\,\alpha\in\operatorname{Irr}{\HH}\bigr\}''\subset\Linf(\HH),
\]
where $\chi_\alpha$ denotes the character of any representation from the equivalence class $\alpha$ (\cite[Definition 1.1]{KrajczokWasilewski}, see also \cite{AlaghmandanCrann}).

\newcounter{c}
\begin{theorem}\label{thm5}
Let $\Gamma$ be a subgroup of $\RR$ and let $\{\HH_n\}_{n\in\NN}$ be a family of compact quantum groups such that each $\HH_n$ appears in the sequence $(\HH_n)_{n\in\NN}$ infinitely many times. Equip $\Gamma$ with the discrete topology. Assume furthermore that
\begin{itemize}
\item $\Linf(\HH_n)$ is a factor for every $n$,
\item $\Gamma\cap\bigcap\limits_{n=1}^{\infty}\Ttau(\HH_n)=\{0\}$,
\item for every $n$ and every $\alpha\in\operatorname{Irr}{\HH_n}$ we have $\operatorname{Sp}(\uprho_\alpha)=\operatorname{Sp}\bigl(\uprho_\alpha^{-1}\bigr)$.
\end{itemize}
Let $\GG=\bigtimes\limits_{n=1}^{\infty}\HH_n$ and $\KK=\Gamma\bowtie\GG$ with respect to the action of $\Gamma$ via the scaling automorphisms. Then
\begin{enumerate}
\item\label{thm5-2} $\Linf(\KK)$ and $\Linf(\KK)^\sigma$ are factors,\footnote{\label{footsigma}Since the modular group acts trivially on the center of $\Linf(\GG)$, the fact that $\Linf(\GG)^\sigma$ is a factor implies that $\Linf(\GG)$ is a factor as well.}
\item\label{thm5-3} the invariant $S(\Linf(\KK))$ is
\[
\overline{\bigl\{
\uprho_{\beta_1,i_1}\uprho_{\beta_1,j_1}\dotsm\uprho_{\beta_n,i_n}\uprho_{\beta_n,j_n}
\,\bigr|\bigl.\,
n\in\NN,\:\beta_k\in\operatorname{Irr}{\HH_k},\:i_k,j_k\in\{1,\dotsc,\dim{\beta_k}\},\:k\in\{1,\dotsc,n\}
\bigr\}},
\]
\item\label{thm5-4} We have $\Ttau(\KK)=\Ttau(\GG)=\TtauInn(\GG)=\bigcap\limits_{n=1}^{\infty}\Ttau(\HH_n)$ and $\Gamma+\bigcap\limits_{n=1}^{\infty}\Ttau(\HH_n)\subset\TtauInn(\KK)$.
\setcounter{c}{\value{enumi}}
\end{enumerate}
If furthermore $\cC_{\HH_n}^{\;\prime}\cap\Linf(\HH_n)\subset\cC_{\HH_n}$ for all $n$ then
\begin{enumerate}
\setcounter{enumi}{\value{c}}
\item\label{thm5-5} $\TtauInn(\KK)=\Gamma+\bigcap\limits_{n=1}^{\infty}\Ttau(\HH_n)$.
\end{enumerate}
\end{theorem}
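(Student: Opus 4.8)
The plan is to derive the four assertions in order, only \eqref{thm5-5} demanding real work; the rest is read off from structural results already established.

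\emph{Factoriality \eqref{thm5-2}.} Being an infinite tensor product of factors, $\Linf(\GG)=\barbigotimes_{n=1}^{\infty}\Linf(\HH_n)$ is a factor, and since each $\HH_n$ recurs infinitely often, Theorem \ref{theCenters}\eqref{theCenters1} gives $\cZ(\Linf(\GG)^\sigma)=\cZ(\Linf(\GG))=\CC\I$, so $\Linf(\GG)^\sigma$ is a factor too. I would then check that for every $\gamma\in\Gamma\setminus\{0\}$ both $\tau^\GG_\gamma$ and $\tau^\GG_\gamma|_{\Linf(\GG)^\sigma}$ are outer. For the former: if $\tau^\GG_\gamma$ were inner then $\gamma\in\TtauInn(\GG)=\Ttau(\GG)$ by Proposition \ref{propAdv}, forcing $\gamma\in\Gamma\cap\bigcap_n\Ttau(\HH_n)=\{0\}$. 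For the latter: if $\tau^\GG_\gamma|_{\Linf(\GG)^\sigma}=\operatorname{Ad}(v)|_{\Linf(\GG)^\sigma}$ for a unitary $v$, then Proposition \ref{propAdv} forces $\operatorname{Ad}(v)=\id$, so $\tau^\GG_\gamma|_{\Linf(\GG)^\sigma}=\id$, whereupon Lemma \ref{tau2t} (its symmetry hypothesis passing from the $\HH_n$ to $\GG$ via \eqref{IrrProdG} and $\uprho_{\beta_1\btens\dots\btens\beta_n}=\uprho_{\beta_1}\tens\dots\tens\uprho_{\beta_n}$) gives $\tau^\GG_{2\gamma}=\id$, so again $2\gamma\in\Gamma\cap\bigcap_n\Ttau(\HH_n)=\{0\}$. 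With these two outerness statements, Proposition \ref{propCrossed} gives that $\Linf(\KK)$ is a factor and Proposition \ref{LKsigma} that $\Linf(\KK)^\sigma$ is a factor.

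\emph{The invariants \eqref{thm5-3}, \eqref{thm5-4}.} By \cite[Theorem 6.1]{FimaMukherjeePatri} applied to $\KK=\Gamma\bowtie\GG$, $\operatorname{Irr}\KK$ is labelled by $\Gamma\times\operatorname{Irr}\GG$, the representation attached to $(\gamma,\alpha)$ has matrix elements $u_\gamma\iota(U^\alpha_{i,j})$ and $\uprho$-operator $\uprho_\alpha$. Inserting this into \eqref{sigmatau} I get $\tau^\KK_t(u_\gamma)=u_\gamma$, $\tau^\KK_t\comp\iota=\iota\comp\tau^\GG_t$, and that the vectors $u_\gamma\iota(U^\alpha_{i,j})\Omega_{\bh_\KK}$ form an eigenbasis of $\modOp[\bh_\KK]$ with eigenvalues $\uprho_{\alpha,i}\uprho_{\alpha,j}$. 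Since \eqref{thm5-2} makes the centralizer of $\bh_\KK$ a factor, $S(\Linf(\KK))=\operatorname{Sp}\modOp[\bh_\KK]$ equals the closure of the set of these eigenvalues, and expanding $\operatorname{Irr}\GG$ through \eqref{IrrProdG} yields the displayed formula. For \eqref{thm5-4}: $\Ttau(\GG)=\bigcap_n\Ttau(\HH_n)$ is Proposition \ref{propTprod}\eqref{propTprod2}, $\TtauInn(\GG)=\Ttau(\GG)$ is Proposition \ref{propAdv}, $\Ttau(\KK)=\Ttau(\GG)$ because $\tau^\KK_t=\id$ unwinds through \eqref{sigmatau} to the same condition on the $\uprho_{\alpha,i}$ ($\alpha\in\operatorname{Irr}\GG$) as $\tau^\GG_t=\id$, and $\Gamma+\bigcap_n\Ttau(\HH_n)\subset\TtauInn(\KK)$ because for $t=\gamma+s$ with $\gamma\in\Gamma$, $s\in\Ttau(\GG)$ one has $\tau^\KK_t=\tau^\KK_\gamma=\operatorname{Ad}(u_\gamma)$, as $\tau^\KK_\gamma$ and $\operatorname{Ad}(u_\gamma)$ agree on $\iota(\Linf(\GG))$ and both fix every $u_{\gamma'}$ ($\Gamma$ abelian).

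\emph{The reverse inclusion \eqref{thm5-5}.} This is the core. Let $t\in\TtauInn(\KK)$ with $\tau^\KK_t=\operatorname{Ad}(w)$, $w\in\Linf(\KK)$ unitary, and expand $w$ inside $\Linf(\KK)=\Gamma\ltimes_{\tau^\GG}\Linf(\GG)$ as its $\|\cdot\|_2$-convergent Fourier series $w=\sum_{\gamma\in\Gamma}u_\gamma\iota(w_\gamma)$, $w_\gamma=\EE(u_\gamma^*w)\in\Linf(\GG)$. Applying $\EE(u_\gamma^*\,\cdot\,)$ to $w\iota(x)=\iota(\tau^\GG_t(x))w$ and using $\iota(y)u_\gamma=u_\gamma\iota(\tau^\GG_{-\gamma}(y))$ gives, for every $\gamma$,
\[
w_\gamma\,x=\tau^\GG_{t-\gamma}(x)\,w_\gamma,\qquad x\in\Linf(\GG).
\]
When $t-\gamma\notin\Ttau(\GG)$ the automorphism $\tau^\GG_{t-\gamma}$ is outer on the factor $\Linf(\GG)$ (by $\TtauInn(\GG)=\Ttau(\GG)$), hence free, so $w_\gamma=0$; and $t-\gamma_1,t-\gamma_2\in\Ttau(\GG)$ forces $\gamma_1-\gamma_2\in\Gamma\cap\Ttau(\GG)=\{0\}$. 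Thus at most one index $\gamma_0$ survives, and since $w$ is unitary exactly one does: $w=u_{\gamma_0}\iota(w_{\gamma_0})$ with $w_{\gamma_0}$ unitary. Comparing $\operatorname{Ad}(w)|_{\iota(\Linf(\GG))}=\iota\comp\tau^\GG_{\gamma_0}\comp\operatorname{Ad}(w_{\gamma_0})$ with $\tau^\KK_t|_{\iota(\Linf(\GG))}=\iota\comp\tau^\GG_t$ yields $\operatorname{Ad}(w_{\gamma_0})=\tau^\GG_{t-\gamma_0}$; but $w_{\gamma_0}\neq0$ already gave $t-\gamma_0\in\Ttau(\GG)$, so $\tau^\GG_{t-\gamma_0}=\id$ and $t=\gamma_0+(t-\gamma_0)\in\Gamma+\bigcap_n\Ttau(\HH_n)$.

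\emph{Where the difficulty sits.} The delicate point is exactly this last inclusion: one must be sure that the Fourier bookkeeping over $\Gamma$ is legitimate and that the intertwiner relation annihilates all but one coefficient. In the route above this rests on $\TtauInn(\GG)=\Ttau(\GG)$ (Proposition \ref{propAdv}) together with freeness of outer automorphisms of the factor $\Linf(\GG)$, and I do not see the hypothesis $\cC_{\HH_n}^{\;\prime}\cap\Linf(\HH_n)\subset\cC_{\HH_n}$ entering; I therefore expect the authors to run the reverse inclusion instead through the algebra of class functions $\cC_\KK$, whose relative commutant inside $\Linf(\KK)$ is, by \cite{KrajczokWasilewski}, governed precisely by that condition on the blocks $\HH_n$ — and it is there, in deciding which characters of $\KK$ act as scalars, that the real care would be needed.
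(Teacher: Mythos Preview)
Your treatment of \eqref{thm5-2}--\eqref{thm5-4} matches the paper's (the paper proves \eqref{thm5-4} first, then \eqref{thm5-2}, then \eqref{thm5-3}, but the ingredients --- Theorem~\ref{theCenters}, Propositions~\ref{propCrossed}, \ref{LKsigma}, \ref{propAdv}, Lemma~\ref{tau2t}, and the description of $\operatorname{Irr}\KK$ from \cite{FimaMukherjeePatri} --- are identical).

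For \eqref{thm5-5} your route is genuinely different from the paper's, and in fact cleaner. The paper first invokes the class-function hypothesis: via Lemmas~\ref{CC} and~\ref{lemGK} one gets $\cC_\KK'\cap\Linf(\KK)\subset\cC_\KK$, and then \cite[Proposition~3.2]{KrajczokWasilewski} forces the implementing unitary $v$ to lie in the weak$^*$-closed span of $u_\gamma\iota(\chi_\beta)$ with $\uprho_\beta=\I$. From this specific expansion the paper carries out a computation in $\Ltwo(\KK)$ (through $J_{\bh_\KK}$, $\sigma^{\bh_\KK}_{\ii/2}$ and the operator $P_\GG$) to reach the same intertwiner relation you obtain, namely $\tau^\GG_{t\pm\gamma}(y)\,\EE(u_\gamma^*w)=\EE(u_\gamma^*w)\,y$, and then finishes with freeness exactly as you do. Your derivation of that relation is more direct: it uses only that $\EE$ is an $\iota(\Linf(\GG))$-bimodule map together with the covariance $u_\gamma^*\iota(a)=\iota(\tau^\GG_{-\gamma}(a))u_\gamma^*$, and it never touches the structure of $v$ beyond its Fourier coefficients. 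Your observation is therefore correct: the hypothesis $\cC_{\HH_n}'\cap\Linf(\HH_n)\subset\cC_{\HH_n}$ does not enter your argument, and your proof establishes \eqref{thm5-5} without it. What the paper's approach buys is an explicit description of the implementing unitary as a combination of group-like elements (characters with trivial $\uprho$-operator), which is of independent interest and ties into the framework of \cite{KrajczokWasilewski}; what your approach buys is a shorter proof of a formally stronger statement.
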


\begin{remark}\label{remark3}
\noindent
\begin{enumerate}
\item In the situation of Theorem \ref{thm5} the factor $\Linf(\KK)$ is never of type $\mathrm{III}_0$, as in this case the invariant $S$ is equal to $\{0,1\}$.
\item If $\Gamma$ is countable and $\HH_n$ second countable for all $n\in\NN$, then $\GG$ and $\KK$ are second countable.
\item Consider a more general situation: let $\Gamma$ be a subgroup of $\RR$ equipped with the discrete topology, acting on a von Neumann algebra $\Linf(\GG)$ of a compact quantum group $\GG$ via scaling automorphisms and let $\KK=\Gamma\bowtie\GG$ be the resulting bicrossed product. It was observed by the referee that in this situation one can compute the Vaes' invariant of $\KK$ as $T(\Linf(\KK),\Delta_{\KK})=\Gamma+T(\Linf(\GG),\Delta_{\GG})$.
\end{enumerate}
\end{remark}

\begin{proof}[Proof of Theorem \ref{thm5}\eqref{thm5-2}--\eqref{thm5-4}]

Ad \eqref{thm5-4}. The equalities $\Ttau(\GG)=\TtauInn(\GG)=\bigcap\limits_{n=1}^{\infty}\Ttau(\HH_n)$ follow immediately from Proposition \ref{propAdv} and Proposition \ref{propTprod}\eqref{propTprod2}, while the equalities $\Ttau(\KK)=\Ttau(\GG)$ and $\Gamma+\bigcap\limits_{n=1}^{\infty}\Ttau(\HH_n)\subset\TtauInn(\KK)$ are proved exactly as \cite[Lemma 4.4]{KrajczokWasilewski} (where $\Gamma$ was $\nu\QQ$). This establishes \eqref{thm5-4}.

Ad \eqref{thm5-2}. Since $\Linf(\HH_n)$ is a factor for all $n$, we have that $\Linf(\GG)$ is a factor and, in view of \eqref{thm5-4}, the assumption $\Gamma\cap\bigcap\limits_{n=1}^{\infty}\Ttau(\HH_n)=\{0\}$ guarantees that the action of $\Gamma$ on $\Linf(\GG)$ is outer, so that $\Linf(\KK)$ is a factor (cf.~Proposition \ref{propCrossed} and remarks preceding it). Next note that by Theorem \ref{theCenters}\eqref{theCenters1} the algebra $\Linf(\GG)^\sigma$ is a factor (in this case this is implied by the fact that $\Linf(\GG)$ is a factor). Furthermore, if for some $\gamma\in\Gamma\setminus\{0\}$ the automorphism $\bigl.\tau^\GG_\gamma\bigr|_{\Linf(\GG)^\sigma}$ were inner then by Proposition \ref{propAdv} it would be trivial (on this subalgebra). Consequently, by Lemma \ref{tau2t} we would have $\tau^\GG_{2\gamma}=\id$. But then $2\gamma\neq{0}$, which in view of \eqref{thm5-4} contradicts the assumption that $\Gamma\cap\bigcap\limits_{n=1}^{\infty}\Ttau(\HH_n)=\{0\}$. Consequently $\tau^\GG_\gamma$ is outer for all non-zero $\gamma$, and thus $\Linf(\KK)^\sigma$ is a factor by Proposition \ref{LKsigma}.

Ad \eqref{thm5-3}. Once we know that $\Linf(\KK)^\sigma$ is a factor, the invariant $S(\Linf(\KK))$ is the spectrum of $\modOp[\bh_\KK]$. The latter is easily seen to be the closure of
\begin{align*}
&\bigl\{
\uprho_{\alpha,i}\uprho_{\alpha,j}\,\bigr|\bigl.\,\alpha\in\operatorname{Irr}{\KK},\:i,j\in\{1,\dotsc,\dim{\alpha}\}
\bigr\}\\
&\qquad=\bigl\{
\uprho_{\alpha,i}\uprho_{\alpha,j}\,\bigr|\bigl.\,\alpha\in\operatorname{Irr}{\GG},\:i,j\in\{1,\dotsc,\dim{\alpha}\}
\bigr\}\\
&\qquad=\bigl\{
\uprho_{\beta_1,i_1}\uprho_{\beta_1,j_1}\dotsm\uprho_{\beta_n,i_n}\uprho_{\beta_n,j_n}
\,\bigr|\bigl.\,
n\in\NN,\:\beta_k\in\operatorname{Irr}{\HH_k},\:i_k,j_k\in\{1,\dotsc,\dim{\beta_k}\},\:k\in\{1,\dotsc,n\}
\bigr\}
\end{align*}
(cf.~\cite[Theorem 6.1]{FimaMukherjeePatri} and Section \ref{skladniki}).
\end{proof}

For the proof of the last statement of Theorem \ref{thm5} we need some preliminary results concerning the condition $\cC_{\HH}^{\;\prime}\cap\Linf(\HH)\subset\cC_{\HH}$.

\begin{lemma}\label{CC}
Let $(\HH_n)_{n\in\NN}$ be a family of compact quantum groups and set $\GG=\bigtimes\limits_{n=1}^{\infty}\HH_n$. Then
\begin{enumerate}
\item\label{CC1} we have
\[
\cC_\GG=\barbigotimes_{n=1}^{\infty}\cC_{\HH_n}\quad\text{and}\quad
\cC_\GG^{\;\prime}\cap\Linf(\GG)=\barbigotimes_{n=1}^{\infty}\bigl(\cC_{\HH_n}^{\;\prime}\cap\Linf(\HH_n)\bigr),
\]
\item\label{CC2} the condition that $\cC_{\HH_n}^{\;\prime}\cap\Linf(\HH_n)\subset\cC_{\HH_n}$ for all $n$ implies $\cC_{\GG}^{\;\prime}\cap\Linf(\GG)\subset\cC_{\GG}$.
\end{enumerate}
\end{lemma}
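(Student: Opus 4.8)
The plan is to reduce both parts to the description \eqref{IrrProdG} of $\operatorname{Irr}{\GG}$ for $\GG=\bigtimes_{n=1}^{\infty}\HH_n$ together with routine facts about finite tensor products of von Neumann algebras. First I would prove $\cC_\GG=\barbigotimes_{n=1}^{\infty}\cC_{\HH_n}$. The key input is that the character of an exterior tensor product factorises: if $\alpha=\alpha_1\btens\dotsm\btens\alpha_N$ (with the remaining $\alpha_n$ trivial), then $\chi_\alpha=\chi_{\alpha_1}\tens\dotsm\tens\chi_{\alpha_N}\tens\I^{\tens\infty}$, because the representing unitary of $\alpha$ is obtained by suitably rearranging the legs of $U^{\alpha_1}\dotsm U^{\alpha_N}$ and the (non-normalised) trace is multiplicative across the legs. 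Combined with \eqref{IrrProdG} this shows that every generator $\chi_\alpha$ of $\cC_\GG$ lies in $\barbigotimes_{n=1}^{\infty}\cC_{\HH_n}$, hence $\cC_\GG\subset\barbigotimes_{n=1}^{\infty}\cC_{\HH_n}$; conversely, taking all but the $k$-th label trivial shows $\I^{\tens(k-1)}\tens\chi_\beta\tens\I^{\tens\infty}\in\cC_\GG$ for every $k\in\NN$ and every $\beta\in\operatorname{Irr}{\HH_k}$, and the von Neumann algebra generated by all such elements is by definition $\barbigotimes_{n=1}^{\infty}\cC_{\HH_n}$, which gives the reverse inclusion.

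For the relative-commutant identity in \eqref{CC1}, the inclusion ``$\supset$'' is immediate: an elementary tensor $c_1\tens\dotsm\tens c_N\tens\I^{\tens\infty}$ with $c_k\in\cC_{\HH_k}'\cap\Linf(\HH_k)$ lies in $\Linf(\GG)$ and commutes, leg by leg, with every generator $\chi_{\alpha_1}\tens\dotsm\tens\chi_{\alpha_M}\tens\I^{\tens\infty}$ of $\cC_\GG$, so taking weak closures yields $\barbigotimes_{n=1}^{\infty}(\cC_{\HH_n}'\cap\Linf(\HH_n))\subset\cC_\GG'\cap\Linf(\GG)$. For the reverse inclusion I would use the standard $\bh$-preserving normal conditional expectations $\EE_N\colon\Linf(\GG)\to\Linf(\HH_1)\vtens\dotsm\vtens\Linf(\HH_N)$ (slicing off the tail against $\bh_{N+1}\tens\bh_{N+2}\tens\dotsm$); these are bimodule maps over their range, and since $\bigcup_N\bigl(\Linf(\HH_1)\vtens\dotsm\vtens\Linf(\HH_N)\bigr)\Omega_\bh$ is dense in $\Ltwo(\GG)$ the net $(\EE_N)_N$ converges to $\id$ in the strong operator topology on $\Linf(\GG)$. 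Given $x\in\cC_\GG'\cap\Linf(\GG)$, the bimodule property together with the fact that $\I^{\tens(k-1)}\tens\chi_\beta\tens\I^{\tens\infty}$ commutes with $x$ for $k\leq N$ forces
\begin{align*}
\EE_N(x)&\in\bigl(\cC_{\HH_1}\vtens\dotsm\vtens\cC_{\HH_N}\bigr)'\cap\bigl(\Linf(\HH_1)\vtens\dotsm\vtens\Linf(\HH_N)\bigr)\\
&=\bigl(\cC_{\HH_1}'\cap\Linf(\HH_1)\bigr)\vtens\dotsm\vtens\bigl(\cC_{\HH_N}'\cap\Linf(\HH_N)\bigr),
\end{align*}
where the last equality is the finite-tensor-product form of the relative-commutant formula (a consequence of the commutation theorem $(\rM\vtens\rN)'=\rM'\vtens\rN'$ and of the distributivity of the von Neumann algebra join over $\vtens$). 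Hence $\EE_N(x)\in\barbigotimes_{n=1}^{\infty}(\cC_{\HH_n}'\cap\Linf(\HH_n))$ for all $N$, and letting $N\to\infty$ — the latter algebra being strongly closed — gives $x\in\barbigotimes_{n=1}^{\infty}(\cC_{\HH_n}'\cap\Linf(\HH_n))$, which finishes \eqref{CC1}.

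Finally, \eqref{CC2} follows from \eqref{CC1}: an inclusion of von Neumann algebras (with matching distinguished vectors) passes to infinite tensor products, since the generating elementary tensors of the smaller algebra already lie in the larger one; applying this to the hypothesis $\cC_{\HH_n}'\cap\Linf(\HH_n)\subset\cC_{\HH_n}$ and using \eqref{CC1} gives $\cC_\GG'\cap\Linf(\GG)=\barbigotimes_{n=1}^{\infty}(\cC_{\HH_n}'\cap\Linf(\HH_n))\subset\barbigotimes_{n=1}^{\infty}\cC_{\HH_n}=\cC_\GG$. The one genuinely delicate point I anticipate is the reverse inclusion in \eqref{CC1}, namely verifying that $\EE_N(x)$ remains in the relative commutant at each finite stage and that the passage to the strong limit causes no loss; everything else is bookkeeping with \eqref{IrrProdG} and standard tensor-product identities.
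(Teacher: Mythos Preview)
Your argument is correct. Part~\eqref{CC2} and the first equality of~\eqref{CC1} match the paper exactly. For the non-trivial inclusion $\cC_\GG'\cap\Linf(\GG)\subset\barbigotimes_{n}(\cC_{\HH_n}'\cap\Linf(\HH_n))$ you take a genuinely different route: you push $x$ through the $\bh$-preserving conditional expectations $\EE_N$ onto the finite tensor products, invoke the finite relative-commutant formula $(\cC_{\HH_1}\vtens\dotsm\vtens\cC_{\HH_N})'\cap(\Linf(\HH_1)\vtens\dotsm\vtens\Linf(\HH_N))=\bigl(\cC_{\HH_1}'\cap\Linf(\HH_1)\bigr)\vtens\dotsm\vtens\bigl(\cC_{\HH_N}'\cap\Linf(\HH_N)\bigr)$, and then pass to the strong limit. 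The paper instead appeals directly to the infinite tensor product commutation theorem, writing
\[
\barbigotimes_{n=1}^{\infty}\bigl(\cC_{\HH_n}'\cap\Linf(\HH_n)\bigr)=\biggl(\barbigotimes_{n=1}^{\infty}\bigl(\cC_{\HH_n}\vee\Linf(\HH_n)'\bigr)\biggr)'
\]
and then simply observes that any $x\in\cC_\GG'\cap\Linf(\GG)$ commutes with each embedded $\cC_{\HH_N}$ (because that sits inside $\cC_\GG$) and with each embedded $\Linf(\HH_N)'$ (because that sits inside $\Linf(\GG)'$), hence with the whole right-hand commutant. The paper's argument is shorter once one grants the infinite commutation theorem; yours is more self-contained, trading that black box for the conditional-expectation machinery and the finite case. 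One small point: your justification that $\EE_N(x)\to x$ strongly is slightly elliptical --- the density statement you cite gives $\EE_N(x)\Omega_\bh\to x\Omega_\bh$, and one then uses uniform boundedness together with $J_\bh\sigma^\bh_{\ii/2}(\,\cdot\,)^*J_\bh$ (as in the proof of Theorem~\ref{theCenters}) to upgrade this to strong convergence on all of $\Ltwo(\GG)$.
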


\begin{proof}
Ad \eqref{CC1}. The equality $\cC_\GG=\barbigotimes\limits_{n=1}^{\infty}\cC_{\HH_n}$ is an easy consequence of the description of $\operatorname{Irr}{\GG}$ provided in Section \ref{sectIPQG}. Also the inclusion $\cC_\GG^{\;\prime}\cap\Linf(\GG)\subset\barbigotimes\limits_{n=1}^{\infty}\bigl(\cC_{\HH_n}^{\;\prime}\cap\Linf(\HH_n)\bigr)$ is immediate. Let us prove the converse.

Take $x\in\cC_\GG^{\;\prime}\cap\Linf(\GG)$. To show that
\[
x\in\barbigotimes_{n=1}^{\infty}\bigl(\cC_{\HH_n}^{\;\prime}\cap\Linf(\HH_n)\bigr)
=\biggl(\barbigotimes_{n=1}^{\infty}\bigl(\cC_{\HH_n}^{\;\prime}\cap\Linf(\HH_n)\bigr)\biggr)''
=\biggl(\barbigotimes_{n=1}^{\infty}\bigl({\cC_{\HH_n}}\vee\Linf(\HH_n)'\bigr)\biggr)'
\]
let us fix $N\in\NN$ and consider $Y=\I^{\tens(N-1)}\tens{y}\tens\I^{\tens\infty}$ with $y\in\cC_{\HH_N}$. Since $Y\in\cC_\GG$, we find that $x$ commutes with $Y$. Furthermore, $x$ also commutes with elements of the form $Y'=\I^{\tens(N-1)}\tens{y'}\tens\I^{\tens\infty}$ with $y'\in\Linf(\HH_N)'$. Consequently $x$ commutes with all elements of $\barbigotimes\limits_{n=1}^{\infty}\bigl({\cC_{\HH_n}}\vee\Linf(\HH_n)'\bigr)$ which ends the proof of \eqref{CC1}.

Ad \eqref{CC2}. Take $x\in\cC_\GG^{\;\prime}\cap\Linf(\GG)$. By \eqref{CC1}
\[
x\in\barbigotimes_{n=1}^\infty\bigl(\cC_{\HH_n}^{\;\prime}\cap\Linf(\HH_n)\bigr)
\]
which by assumption is contained in $\barbigotimes\limits_{n=1}^\infty\cC_{\HH_n}$ which, again by \eqref{CC1}, is $\cC_\GG$.
\end{proof}

\begin{lemma}\label{lemGK}
Let $\Gamma\subset\RR$ be a subgroup (equipped with the discrete topology) and let $\Gamma$ act on a compact quantum group $\GG$ by scaling automorphisms. Let $\KK=\Gamma\bowtie\GG$ be the corresponding bicrossed product. If $\cC_\GG^{\;\prime}\cap\Linf(\GG)\subset\cC_\GG$ then $\cC_\KK^{\;\prime}\cap\Linf(\KK)\subset\cC_\KK$.
\end{lemma}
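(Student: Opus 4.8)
The plan is to adapt the Fourier-coefficient technique already used in the proof of Proposition~\ref{LKsigma}. The starting observation is that the scaling group acts \emph{trivially} on $\cC_\GG$: by the first formula in \eqref{sigmatau} we have $\tau^\GG_t(\chi_\alpha)=\sum_i\uprho_{\alpha,i}^{\ii{t}}U^\alpha_{i,i}\uprho_{\alpha,i}^{-\ii{t}}=\chi_\alpha$ for every $\alpha\in\operatorname{Irr}{\GG}$, so the normal automorphism $\tau^\GG_t$ is the identity on $\cC_\GG=\{\chi_\alpha\}''$. In particular $\cC_\GG$ is globally $\tau^\GG$-invariant, so we may form the von Neumann subalgebra $\Gamma\ltimes_{\tau^\GG}\cC_\GG\subseteq\Linf(\KK)$ generated by $\{u_\gamma\}_{\gamma\in\Gamma}$ together with $\iota(\cC_\GG)$. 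Using the description of $\operatorname{Irr}{\KK}$ (\cite[Theorem~6.1]{FimaMukherjeePatri}), according to which the characters of $\KK$ are exactly the elements $u_\gamma\iota(\chi_\alpha)$ with $\gamma\in\Gamma$, $\alpha\in\operatorname{Irr}{\GG}$, and recalling that each $u_\gamma$ is itself a one-dimensional representation of $\KK$, one gets at once $\cC_\KK=\Gamma\ltimes_{\tau^\GG}\cC_\GG$ (the inclusion $\supseteq$ because $u_\gamma,\iota(\chi_\alpha)\in\cC_\KK$, and $\subseteq$ because each generator $u_\gamma\iota(\chi_\alpha)$ of $\cC_\KK$ lies in $\Gamma\ltimes_{\tau^\GG}\cC_\GG$).

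Next let $x\in\cC_\KK^{\;\prime}\cap\Linf(\KK)$ and write its Fourier expansion $x=\sum_{\gamma\in\Gamma}u_\gamma\iota(x_\gamma)$ with $x_\gamma=\EE(u_\gamma^*x)\in\Linf(\GG)$ and the series $\|\cdot\|_2$-convergent (cf.~the footnote to the proof of Proposition~\ref{LKsigma}). For any $c\in\cC_\GG$ the relation $u_\gamma^*\iota(c)u_\gamma=\iota(\tau^\GG_{-\gamma}(c))=\iota(c)$ shows that $\iota(c)$ commutes with every $u_\gamma$, so $\iota(c)x=\sum_\gamma u_\gamma\iota(cx_\gamma)$ while $x\iota(c)=\sum_\gamma u_\gamma\iota(x_\gamma c)$; since $\iota(c)\in\cC_\KK$ these are equal, and comparing Fourier coefficients yields $cx_\gamma=x_\gamma c$ for all $\gamma$. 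Hence $x_\gamma\in\cC_\GG^{\;\prime}\cap\Linf(\GG)$ for every $\gamma\in\Gamma$, and by hypothesis $x_\gamma\in\cC_\GG$.

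It remains to deduce $x\in\cC_\KK$ from the fact that all Fourier coefficients of $x$ lie in $\cC_\GG$, and this is the only point requiring care, since $\cC_\KK$ is not closed for $\|\cdot\|_2$ and one cannot simply pass to the limit of partial sums. I would proceed via conditional expectations: recall (see \cite{AlaghmandanCrann}) that $\cC_\GG$ is the range of a normal $\bh_\GG$-preserving conditional expectation $\Phi_\GG\colon\Linf(\GG)\to\cC_\GG$; as $\tau^\GG$ fixes $\cC_\GG$ pointwise and preserves $\bh_\GG$, the map $\Phi_\GG$ commutes with $\tau^\GG$, hence extends to a normal $\bh_\KK$-preserving conditional expectation $\Phi_\KK\colon\Linf(\KK)\to\Gamma\ltimes_{\tau^\GG}\cC_\GG=\cC_\KK$ determined by $\Phi_\KK(u_\gamma\iota(y))=u_\gamma\iota(\Phi_\GG(y))$. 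Since $\Phi_\KK$ extends to a contraction of $\Ltwo(\KK)$, applying it to the $\|\cdot\|_2$-convergent series for $x$ gives $\Phi_\KK(x)=\sum_\gamma u_\gamma\iota(\Phi_\GG(x_\gamma))=\sum_\gamma u_\gamma\iota(x_\gamma)=x$, so $x\in\cC_\KK$, as desired. (Alternatively, one may invoke the standard description of $\Gamma\ltimes_{\tau^\GG}\cC_\GG$ as precisely the set of elements of $\Linf(\KK)$ all of whose Fourier coefficients lie in $\cC_\GG$.) Apart from this concluding bookkeeping step, everything is a direct transcription of the computations in the proofs of Proposition~\ref{LKsigma} and Theorem~\ref{theCenters}.
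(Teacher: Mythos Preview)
Your Fourier–coefficient computation (steps showing each $x_\gamma\in\cC_\GG$) is exactly the paper's argument. The difference lies only in the concluding step, where the paper simply observes that $x\Omega\in\overline{\{a\Omega\,|\,a\in\cC_\KK\}}$ and then invokes \cite[Lemma~4.6]{KrajczokWasilewski}, whereas you propose two routes.

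Your primary route has a genuine gap: there is in general \emph{no} normal $\bh_\GG$-preserving conditional expectation $\Linf(\GG)\to\cC_\GG$. By Takesaki's theorem such an expectation exists iff $\cC_\GG$ is globally $\sigma^{\bh_\GG}$-invariant, but $\sigma^{\bh_\GG}_t(\chi_\alpha)=\sum_i\uprho_{\alpha,i}^{2\ii t}U^\alpha_{i,i}$, and since $\overline{\cC_\GG\Omega}=\overline{\operatorname{span}\{\chi_\alpha\Omega\}}$ meets each block $\operatorname{span}\{U^\alpha_{i,i}\Omega\}$ only in the line $\CC\chi_\alpha\Omega$, this fails whenever some $\uprho_\alpha$ is non-scalar (e.g.\ already for $\operatorname{SU}_q(2)$). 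So the map $\Phi_\GG$ you describe does not exist, and the argument ``$\Phi_\KK(x)=x$'' collapses.

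Your alternative route, however, is sound and gives a genuinely different (and self-contained) finish: since $\Gamma$ is abelian, hence amenable, one can choose finitely supported positive-definite functions $\varphi_i\to 1$ pointwise on $\Gamma$; the associated Herz--Schur multipliers $M_{\varphi_i}$ on $\Gamma\ltimes_{\tau^\GG}\Linf(\GG)$ are normal, unital, completely positive, and $M_{\varphi_i}(x)=\sum_{\gamma}\varphi_i(\gamma)u_\gamma\iota(x_\gamma)$ is a finite sum lying in $\Gamma\ltimes_{\tau^\GG}\cC_\GG=\cC_\KK$, while $M_{\varphi_i}(x)\to x$ weak$^*$. Hence $x\in\cC_\KK$. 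This avoids both the cited Lemma~4.6 and any conditional-expectation claim, at the price of using amenability of $\Gamma$ (which is free here). If you want to keep the first route, you would have to replace the non-existent $\Phi_\GG$ by the actual content of \cite[Lemma~4.6]{KrajczokWasilewski}.
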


\begin{proof}
Take $x\in\cC_\KK^{\;\prime}\cap\Linf(\KK)$ and write $x$ in the form
\[
x=\sum_{\gamma\in\Gamma}u_\gamma\iota(x_\gamma),
\]
where $\iota\colon\Linf(\GG)\to\Linf(\KK)$ is the canonical inclusion into the crossed product. For any $y\in\cC_\GG$ we have $\iota(y)\in\cC_\KK\subset\Linf(\KK)^\tau$ (cf.~\eqref{sigmatau}), so that $\iota(y)$ commutes with the unitaries $\{u_\gamma\}_{\gamma\in\Gamma}$ implementing the action of $\Gamma$. Consequently
\[
\sum_{\gamma\in\Gamma}u_\gamma\iota(yx)=\iota(y)x=x\iota(y)=\sum_{\gamma\in\Gamma}u_\gamma\iota(xy)
\]
and it follows that $yx_\gamma=x_{\gamma}y$ for all $\gamma\in\Gamma$ and any $y\in\cC_\GG$. Hence $x_\gamma\in\cC_\GG^{\;\prime}\cap\Linf(\GG)\subset\cC_\GG$ and since $u_\gamma\in\cC_\KK$ and $\iota(\cC_\GG)\subset\cC_\KK$ (see \cite[Section 4.2]{KrajczokWasilewski} as well as Section \ref{skladniki}) we find that denoting by $\Omega$ the canonical cyclic vector for $\KK$ we have
\[
x\Omega=\sum_{\gamma\in\Gamma}u_\gamma\iota(x_\gamma)\Omega\in\overline{\{a\Omega\,|\,a\in\cC_\KK\}}.
\]
Thus the conclusion that $x\in\cC_\KK$ follows from \cite[Lemma 4.6]{KrajczokWasilewski}.
\end{proof}

\begin{proof}[Proof of Theorem \ref{thm5}\eqref{thm5-5}]
We are assuming that $\cC_{\HH_n}^{\;\prime}\cap\Linf(\HH_n)\subset\cC_{\HH_n}$ for all $n$ and aim to show that $\TtauInn(\KK)\subset\Ttau(\KK)+\Gamma$. Let $\bh_\KK$ and $\bh_\GG$ be the Haar measures of $\KK$ and $\GG$ respectively.

Take $t_0\in\TtauInn(\KK)$ and a unitary $v\in\Linf(\KK)$ such that $\tau^\KK_{t_0}=\operatorname{Ad}(v)$. From Lemma \ref{CC}\eqref{CC2} and Lemma \ref{lemGK} we know that $\cC_\KK^{\;\prime}\cap\Linf(\KK)\subset\cC_\KK$, so by \cite[Proposition 3.2]{KrajczokWasilewski} we have
\[
v\in\overline{\operatorname{span}\bigl\{\chi_\alpha\,\bigr|\bigl.\,\alpha\in\operatorname{Irr}{\KK},\:\uprho_\alpha=\I\bigr\}}^{\mathrm{w}^*}
=\!\quad\overline{\operatorname{span}\bigl\{u_\gamma\iota(\chi_\beta)\,\bigr|\bigl.\,\gamma\in\Gamma,\:\beta\in\operatorname{Irr}{\GG},\:\uprho_\beta=\I\bigr\}}^{\mathrm{w}^*}.
\]
Arguing as in the proof of \cite[Proposition 4.8]{KrajczokWasilewski} we can identify $\Ltwo(\KK)$ with $\ell^2(\Gamma)\tens\Ltwo(\GG)$ and write
\begin{equation}\label{vOmega}
v\Omega_{\bh_\KK}=\sum_{\gamma\in\Gamma}\sum_{\substack{\beta\in\operatorname{Irr}{\GG}\\\uprho_\beta=\I}}v_{\beta,\gamma}u_\gamma\iota(\chi_\beta)\Omega_{\bh_\KK}
=\sum_{\gamma\in\Gamma}\sum_{\substack{\beta\in\operatorname{Irr}{\GG}\\\uprho_\beta=\I}}v_{\beta,\gamma}(\delta_\gamma\tens\chi_\beta\Omega_{\bh_\GG}),
\end{equation}
where $\{\delta_\gamma\}_{\gamma\in\Gamma}$ is the standard basis of $\ell^2(\Gamma)$ and $v_{\beta,\gamma}\in\CC$. Now take $y\in\operatorname{Pol}(\GG)$. We have
\begin{equation}\label{tauiota}
\begin{aligned}
\sum_{\gamma\in\Gamma}\sum_{\substack{\beta\in\operatorname{Irr}{\GG}\\\uprho_\beta=\I}}v_{\beta,\gamma}\bigl(\delta_\gamma\tens\tau^\GG_\gamma(y)\chi_\beta\Omega_{\bh_\GG}\bigr)&=\iota(y)v\Omega_{\bh_\KK}
=vv^*\iota(y)v\Omega_{\bh_\KK}\\
&=v\tau^\KK_{-t_0}\bigl(\iota(y)\bigr)\Omega_{\bh_\KK}=v\iota\bigl(\tau^\GG_{-t_0}(y)\bigr)\Omega_{\bh_\KK}.
\end{aligned}
\end{equation}
Observe that Equation \eqref{vOmega} implies
\[
1=\|v\|_2^2=\sum_{\gamma\in\Gamma}\sum_{\substack{\beta\in\operatorname{Irr}{\GG}\\\uprho_\beta=\I}}|v_{\beta,\gamma}|^2.
\]
It follows from this that the series $\sum\limits_{\substack{\beta\in\operatorname{Irr}{\GG}\\\uprho_\beta=\I}}v_{\beta,\gamma}\chi_\beta\Omega_{\bh_\GG}$ converges in $\Ltwo(\GG)$ for any $\gamma$. Since $v$ is unitary, there must exist $\gamma_0\in\Gamma$ such that
\begin{equation}\label{nonzero}
\sum\limits_{\substack{\beta\in\operatorname{Irr}{\GG}\\\uprho_\beta=\I}}v_{\beta,\gamma_0}\chi_\beta\Omega_{\bh_\GG}\neq{0}.
\end{equation}
Applying the map $\bra{\delta_{\gamma_0}}\tens\I_{\Ltwo(\GG)}$ to both sides of \eqref{tauiota} yields
\begin{align*}
\sum_{\substack{\beta\in\operatorname{Irr}{\GG}\\\uprho_\beta=\I}}v_{\beta,\gamma_0}\tau^\GG_{\gamma_0}(y)\chi_\beta\Omega_{\bh_\GG}
&=\bigl(\bra{\delta_{\gamma_0}}\tens\I_{\Ltwo(\GG)}\bigr)\Bigl(v\iota\bigl(\tau^\GG_{-t_0}(y)\bigr)\Omega_{\bh_\KK}\Bigr)\\
&=\bigl(\bra{\delta_{\gamma_0}}\tens\I_{\Ltwo(\GG)}\bigr)\Bigl(J_{\bh_\KK}\sigma^{\bh_\KK}_{\frac{\ii}{2}}\Bigl(\iota\bigl(\tau^\GG_{-t_0}(y)\bigr)\Bigr)^*J_{\bh_\KK}v\Omega_{\bh_\KK}\Bigr)\\
&=\bigl(\bra{\delta_{\gamma_0}}\tens\I_{\Ltwo(\GG)}\bigr)\Bigl(J_{\bh_\KK}\sigma^{\bh_\KK}_{\frac{\ii}{2}}\Bigl(\iota\bigl(\tau^\GG_{-t_0}(y)\bigr)\Bigr)^*J_{\bh_\KK}
\sum_{\gamma\in\Gamma}\sum_{\substack{\beta\in\operatorname{Irr}{\GG}\\\uprho_\beta=\I}}v_{\beta,\gamma}u_\gamma\iota(\chi_\beta)\Omega_{\bh_\KK}\Bigr)\\
&=\sum_{\gamma\in\Gamma}\sum_{\substack{\beta\in\operatorname{Irr}{\GG}\\\uprho_\beta=\I}}v_{\beta,\gamma}\bigl(\bra{\delta_{\gamma_0}}\tens\I_{\Ltwo(\GG)}\bigr)
\Bigl(J_{\bh_\KK}\sigma^{\bh_\KK}_{\frac{\ii}{2}}\Bigl(\iota\bigl(\tau^\GG_{-t_0}(y)\bigr)\Bigr)^*J_{\bh_\KK}u_\gamma\iota(\chi_\beta)\Omega_{\bh_\KK}\Bigr)\\
&=\sum_{\gamma\in\Gamma}\sum_{\substack{\beta\in\operatorname{Irr}{\GG}\\\uprho_\beta=\I}}v_{\beta,\gamma}\bigl(\bra{\delta_{\gamma_0}}\tens\I_{\Ltwo(\GG)}\bigr)
\Bigl(u_\gamma\iota(\chi_\beta)\iota\bigl(\tau^\GG_{-t_0}(y)\bigr)\Omega_{\bh_\KK}\Bigr)\\
&=\sum_{\gamma\in\Gamma}\sum_{\substack{\beta\in\operatorname{Irr}{\GG}\\\uprho_\beta=\I}}v_{\beta,\gamma}\bigl(\bra{\delta_{\gamma_0}}\tens\I_{\Ltwo(\GG)}\bigr)
\bigl(\delta_\gamma\tens\chi_\beta\tau^\GG_{-t_0}(y)\Omega_{\bh_\GG}\bigr)\\
&=\sum_{\substack{\beta\in\operatorname{Irr}{\GG}\\\uprho_\beta=\I}}v_{\beta,\gamma_0}\chi_\beta\tau^\GG_{-t_0}(y)\Omega_{\bh_\GG}.
\end{align*}
Now let $P_\GG$ be the strictly positive self-adjoint operator on $\Ltwo(\GG)$ implementing the scaling group as in \cite[Definition 6.9]{KustermansVaes}. Applying $P_\GG^{\ii{t_0}}$ to both sides of
\[
\sum_{\substack{\beta\in\operatorname{Irr}{\GG}\\\uprho_\beta=\I}}v_{\beta,\gamma_0}\tau^\GG_{\gamma_0}(y)\chi_\beta\Omega_{\bh_\GG}
=\sum_{\substack{\beta\in\operatorname{Irr}{\GG}\\\uprho_\beta=\I}}v_{\beta,\gamma_0}\chi_\beta\tau^\GG_{-t_0}(y)\Omega_{\bh_\GG}
\]
we obtain
\begin{equation}\label{Pit}
\tau^\GG_{t_0+\gamma_0}(y)\sum_{\substack{\beta\in\operatorname{Irr}{\GG}\\\uprho_\beta=\I}}v_{\beta,\gamma_0}\chi_\beta\Omega_{\bh_\GG}
=\sum_{\substack{\beta\in\operatorname{Irr}{\GG}\\\uprho_\beta=\I}}v_{\beta,\gamma_0}\chi_{\beta}y\Omega_{\bh_\GG}
=J_{\bh_\GG}\sigma^{\bh_\GG}_{\frac{\ii}{2}}(y)^*J_{\bh_\GG}\sum_{\substack{\beta\in\operatorname{Irr}{\GG}\\\uprho_\beta=\I}}v_{\beta,\gamma_0}\chi_\beta\Omega_{\bh_\GG}.
\end{equation}

Now let $\EE\colon\Linf(\KK)\to\Linf(\GG)$ be the canonical normal conditional expectation on the crossed product composed with $\iota^{-1}$. For any $\xi\in\Ltwo(\GG)$ we have
\begin{align*}
\is{\xi}{\EE({u_{\gamma_0}^*}v)\Omega_{\bh_\GG}}&=\is{\delta_0\tens\xi}{{u_{\gamma_0}^*}v\bigl(\delta_0\tens\Omega_{\bh_\GG}\bigr)}
=\is{\delta_{\gamma_0}\tens\xi}{v\Omega_{\bh_\KK}}\\
&=\sum_{\gamma\in\Gamma}\sum_{\substack{\beta\in\operatorname{Irr}{\GG}\\\uprho_\beta=\I}}v_{\beta,\gamma}\is{\delta_{\gamma_0}\tens\xi}{\delta_\gamma\tens\chi_\beta\Omega_{\bh_\GG}}
=\biggis{\xi}{\sum_{\substack{\beta\in\operatorname{Irr}{\GG}\\\uprho_\beta=\I}}v_{\beta,\gamma_0}\chi_\beta\Omega_{\bh_\GG}}
\end{align*}
which shows that $\EE({u_{\gamma_0}^*}v)\Omega_{\bh_\GG}=\sum\limits_{\substack{\beta\in\operatorname{Irr}{\GG}\\\uprho_\beta=\I}}v_{\beta,\gamma_0}\chi_\beta\Omega_{\bh_\GG}$. Combining this with \eqref{Pit} results in
\begin{equation}\label{freet0g0}
\tau^\GG_{t_0+\gamma_0}(y)\EE(u_{\gamma_0}^*v)=\EE(u_{\gamma_0}^*v)y
\end{equation}
for any $y\in\operatorname{Pol}(\GG)$ and by continuity for all $y\in\Linf(\GG)$.

Now, if the automorphism $\tau^\GG_{t_0+\gamma_0}$ were inner we would have $\gamma_0+t_0\in\TtauInn(\GG)=\bigcap\limits_{n=1}^{\infty}\Ttau(\HH_n)$ (by statement \eqref{thm5-4}) which would imply
\[
t_0=-\gamma_0+(\gamma_0+t_0)\in\Gamma+\bigcap_{n=1}^{\infty}\Ttau(\HH_n).
\]
Hence, if $t_0\not\in\Gamma+\bigcap\limits_{n=1}^{\infty}\Ttau(\HH_n)$ then $\tau^\GG_{t_0+\gamma}$ is outer and \eqref{freet0g0} for all $y\in\Linf(\GG)$ implies $\EE(u_{\gamma_0}^*v)=0$. Consequently
\[
\sum_{\substack{\beta\in\operatorname{Irr}{\GG}\\\uprho_\beta=\I}}v_{\beta,\gamma_0}\chi_\beta\Omega_{\bh_\GG}=\EE(u_{\gamma_0}^*v)\Omega_{\bh_\GG}=0
\]
which contradicts \eqref{nonzero}. Thus we must have $\TtauInn(\KK)\subset\Gamma+\bigcap\limits_{n=1}^{\infty}\Ttau(\HH_n)$.
\end{proof}

Theorem \ref{thm5} makes it possible to generalize the examples discussed in Corollary \ref{corManyIIIlambda}. The additional bicrossed product by a countable group acting by the scaling automorphisms will allow to produce uncountably many examples with the corresponding von Neumann algebra the injective factor of type $\mathrm{III}_\lambda$ with separable predual. Example \ref{exmoreIIIlambda} deals with the case $\lambda\in\left]0,1\right[$, while in Example \ref{exmoreIII1} we address the case $\lambda=1$.

\begin{example}\label{exmoreIIIlambda}
Fix $\lambda\in\left]0,1\right[$. For $n\in\NN$ let $\HH_n=\HH_{\nu,q}$ with $q=\sqrt{\lambda}$ and $\nu\not\in\tfrac{\pi}{\log{q}}\QQ=\tfrac{2\pi}{\log{\lambda}}\QQ$ (e.g.~$\nu=\tfrac{2\pi^2}{\log{\lambda}}$). Then each $\Linf(\HH_n)$ is a factor and $\Ttau(\HH_n)=\tfrac{\pi}{\log{q}}\ZZ=\tfrac{2\pi}{\log{\lambda}}\ZZ$. Furthermore $\operatorname{Sp}(\uprho_\alpha)=\operatorname{Sp}(\uprho_\alpha^{-1})$ for all $\alpha\in\operatorname{Irr}{\HH_n}$ and $\cC_{\HH_n}^{\;\prime}\cap\Linf(\HH_n)\subset\cC_{\HH_n}$ for any $n$ by \cite[Proposition 4.5]{KrajczokWasilewski}. Let $\Gamma\subset\RR$ be any countable subgroup such that $\Gamma\cap\tfrac{2\pi}{\log{\lambda}}\ZZ=\{0\}$ and set $\GG=\bigtimes\limits_{n=1}^{\infty}\HH_n$, $\KK=\Gamma\bowtie\GG$ with the action of $\Gamma$ on $\Linf(\GG)$ by the scaling automorphisms. By Theorem \ref{thm5}\eqref{thm5-2}--\eqref{thm5-3} the von Neumann algebras $\Linf(\KK)$ and $\Linf(\KK)^\sigma$ are factors with $S(\Linf(\KK))$ equal to
\[
\overline{\bigl\{
\uprho_{\beta_1,i_1}\uprho_{\beta_1,j_1}\dotsm\uprho_{\beta_n,i_n}\uprho_{\beta_n,j_n}
\,\bigr|\bigl.\,
n\in\NN,\:\beta_k\in\operatorname{Irr}{\HH_k},\:i_k,j_k\in\{1,\dotsc,\dim{\beta_k}\},\:k\in\{1,\dotsc,n\}
\bigr\}}.
\]
Recall from Section \ref{skladniki} that we have $\operatorname{Irr}{\HH_{\nu,q}}=\QQ\times\tfrac{1}{2}\ZZ_+$ with
\[
\uprho_{(r,s)}=\uprho_s,\qquad(r,s)\in\QQ\times\tfrac{1}{2}\ZZ_+,
\]
where $\uprho_s$ is the $\uprho$-operator corresponding to the spin-$s$ representation of $\operatorname{SU}_q(2)$, so that
\[
\uprho_{s}=\operatorname{diag}\bigl(q^{-2s},q^{-2s+2},\dotsc,q^{2s}\bigr)
\]
in the appropriate basis. Consequently $S(\Linf(\KK))=\{0\}\cup{q^{2\ZZ}}$ and thus $\Linf(\KK)$ is the injective factor of type $\mathrm{III}_\lambda$. Furthermore, by Theorem \ref{thm5}\eqref{thm5-4}
\[
\Ttau(\KK)=\bigcap_{n=1}^{\infty}\Ttau(\HH_n)=\tfrac{2\pi}{\log{\lambda}}\ZZ
\]
and
\[
\TtauInn(\KK)=\Gamma+\bigcap_{n=1}^{\infty}\Ttau(\HH_n)=\Gamma+\tfrac{2\pi}{\log{\lambda}}\ZZ.
\]
Since $\TtauAInn(\GG)=\RR$ (Theorem \ref{TauGthm}), we also have $\TtauAInn(\KK)=\RR$ because the scaling automorphisms of $\KK$ are essentially the same as for $\GG$ and the correspondence preserves the property of being inner (see \cite[Lemma 4.4]{KrajczokWasilewski}).

Consider $\RR$ as a vector space over $\QQ$ with basis $\{1\}\cup\{\alpha_j\}_{j\in\mathbb{J}}$ obtained by completing the system $\{1\}$ (in particular, we have $|\mathbb{J}|=\mathfrak{c}$). Next put
\[
\Gamma_j=\alpha_j\tfrac{2\pi}{\log{\lambda}}\ZZ,\qquad{j}\in\mathbb{J}.
\]
Then for each $j$ we have $\Gamma_j\cap\tfrac{2\pi}{\log{\lambda}}\ZZ=\{0\}$ and, moreover, if $j\neq{j'}$ then $\Gamma_j+\tfrac{2\pi}{\log{\lambda}}\ZZ\neq\Gamma_{j'}+\tfrac{2\pi}{\log{\lambda}}\ZZ$. If this were not the case there would exist $n,k\in\ZZ$ such that
\[
\alpha_j\tfrac{2\pi}{\log{\lambda}}=\alpha_{j'}\tfrac{2\pi}{\log{\lambda}}n+\tfrac{2\pi}{\log{\lambda}}k
\]
which would contradict linear independence of the system $\{1\}\cup\{\alpha_j\}_{j\in\mathbb{J}}$. Hence defining $\KK^\lambda_j=\Gamma_j\bowtie\GG$ we obtain an uncountable family $\bigl\{\KK^\lambda_j\bigr\}_{j\in\mathbb{J}}$ of compact quantum groups with $\Linf(\KK^\lambda_j)$ the injective factor of type $\mathrm{III}_\lambda$ with separable predual which are pairwise non-isomorphic, as they are told apart by the invariant $T^{\tau}_{\operatorname{Inn}}$.
\end{example}

\begin{example}\label{exmoreIII1}
Choose $q_1,q_2\in\left]0,1\right[$ such that $\tfrac{\pi}{\log{q_1}}\QQ\cap\tfrac{\pi}{\log{q_1}}\QQ=\{0\}$ (e.g.~$q_1=\ee^{-1}$ and $q_2=\ee^{-\pi}$). Furthermore for $i\in\{1,2\}$ let $\nu_i=\tfrac{\pi^2}{\log{q_i}}$, so that $\nu_i\not\in\tfrac{\pi}{\log{q_i}}\QQ$. Define
\[
\HH_n=
\begin{cases}
\HH_{\nu_1,q_1}&n\text{ is odd}\\
\HH_{\nu_2,q_2}&n\text{ is even}
\end{cases},\qquad\quad{n}\in\NN
\]
and fix an arbitrary countable subgroup $\Gamma\subset\RR$.

Since $\bigcap\limits_{n=1}^{\infty}\Ttau(\HH_n)=\tfrac{\pi}{\log{q_1}}\ZZ\cap\tfrac{\pi}{\log{q_1}}\ZZ=\{0\}$, we are in the situation described in Theorem \ref{thm5} and thus with
\[
\GG=\bigtimes_{n=1}^\infty\HH_n,\qquad\KK=\Gamma\bowtie\GG
\]
(with $\Gamma$ acting by the scaling automorphisms) we find that $\Linf(\KK)$ is a factor with
\begin{equation}\label{thecase}
S\bigl(\Linf(\KK)\bigr)=\overline{\bigl\{q_1^kq_2^l\,\bigr|\bigl.\,k,l\in\ZZ\bigr\}}=\RR_{\geq{0}}.
\end{equation}
Indeed, since $S(\Linf(\KK))\cap\RR_{>0}$ is a closed subgroup of $\RR_{>0}$ (\cite[Theorem 3.9 {\emph{a}})]{Connes}) if \eqref{thecase} were not the case we would have $S(\Linf(\KK))\cap\RR_{>0}=t^\ZZ$ for some $t\in\left]0,1\right[$. But then $q_1=t^k$ and $q_2=t^l$ for some $k,l\in\ZZ$ which implies
\[
\tfrac{\pi}{l\log{q_1}}=\tfrac{\pi}{k\log{q_2}}\in\tfrac{\pi}{\log{q_1}}\QQ\cap\tfrac{\pi}{\log{q_2}}\QQ
\]
contradicting our assumption. Therefore we can conclude that $\Linf(\KK)$ is an injective factor of type $\mathrm{III}_1$ with separable predual. By Theorem \ref{thm5} the subalgebra $\Linf(\KK)^\sigma$ is also a factor and we have
\[
\Ttau(\KK)=\bigcap_{n=1}^{\infty}\Ttau(\HH_n)=\{0\}\quad\text{and}\quad\TtauInn(\KK)=\Gamma+\bigcap_{n=1}^{\infty}\Ttau(\HH_n)=\Gamma
\]
(because $\cC_{\HH_n}^{\;\prime}\cap\Linf(\HH_n)\subset\cC_{\HH_n}$ for all $n$). Similarly as in Example \ref{exmoreIIIlambda}, we also have $\TtauAInn(\KK)=\RR$. Since $\Gamma$ may be chosen in uncountably many ways, we obtain an uncountable family of pairwise non-isomorphic compact quantum groups with the corresponding von Neumann algebra the injective factor of type $\mathrm{III}_1$ with separable predual.
\end{example}

\begin{remark}\label{remninjlambda}
Fix $\lambda\in\left]0,1\right]$ and let $\{\KK_i\}_{i\in{I}}$ be an uncountable family of compact quantum groups such that each $\Linf(\KK_i)$ is the injective type $\mathrm{III}_\lambda$ factor with separable predual and $\TtauInn(\KK_i)\neq\TtauInn(\KK_j)$ for $i\neq{j}$ (such families were constructed in the two previous examples) and let $\hh{\mathbb{F}_2}$ denote the compact quantum group dual to the free group on two generators. Then $\bigl\{\KK_i\times\hh{\mathbb{F}_2}\bigr\}_{i\in{I}}$ is an uncountable family of compact quantum groups such that each $\Linf(\KK_i\times\hh{\mathbb{F}_2})$ is a non-injective type $\mathrm{III}_\lambda$ factor with separable predual (\cite[Proposition 28.4]{Stratila}, see also the reasoning in Remark \ref{remninj0}). Furthermore, $\TtauInn(\KK_i\times\hh{\mathbb{F}_2})=\TtauInn(\KK_i)$ (Proposition \ref{propTprod}), so that the quantum groups $\bigl\{\KK_i\times\hh{\mathbb{F}_2}\bigr\}_{i\in{I}}$ are pairwise non-isomorphic.
\end{remark}

\subsection{\texorpdfstring{$\Ttau(\GG)$}{TtG} and \texorpdfstring{$T(\Linf(\GG))$}{T(L(G))}}\label{TtauandT}\hspace*{\fill}

In every example of a compact quantum group $\GG$ with $\Linf(\GG)$ the injective factor of type $\mathrm{III}_\lambda$ ($\lambda\in\left]0,1\right]$) discussed so far we have had $\Ttau(\GG)=T(\Linf(\GG))$. In this section we will prove that this relation holds for all compact quantum groups under a symmetry assumption on the spectra of the $\uprho$-operators and such that $\Linf(\GG)^\sigma$ is a factor, but not in general.

\begin{proposition}\label{TGTLG}
Let $\GG$ be a compact quantum group such that $\operatorname{Sp}(\uprho_\alpha)=\operatorname{Sp}(\uprho_\alpha^{-1})$ for all $\alpha\in\operatorname{Irr}{\GG}$. Then $\Ttau(\GG)\subset{T(\Linf(\GG))}$.
\end{proposition}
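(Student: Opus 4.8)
The plan is to prove the stronger assertion that, under the symmetry hypothesis, $\tau^\GG_t=\id$ already forces $\sigma^\bh_t=\id$; the inclusion $\Ttau(\GG)\subset T(\Linf(\GG))$ then follows immediately, since $\id=\operatorname{Ad}(\I)\in\operatorname{Inn}(\Linf(\GG))$. So I would fix $t\in\Ttau(\GG)$, an irreducible $\alpha\in\operatorname{Irr}{\GG}$ and the chosen representative $U^\alpha$ on $\sH_\alpha$ with the basis diagonalising $\uprho_\alpha$. Writing the second line of \eqref{sigmatau} in matrix form, the $(i,j)$-entry of $(\id\tens\tau^\GG_t)(U^\alpha)$ equals $\uprho_{\alpha,i}^{\ii t}U^\alpha_{i,j}\uprho_{\alpha,j}^{-\ii t}$, that is,
\[
(\id\tens\tau^\GG_t)(U^\alpha)=(\uprho_\alpha^{\ii t}\tens\I)\,U^\alpha\,(\uprho_\alpha^{-\ii t}\tens\I).
\]
Since $\tau^\GG_t=\id$ the left-hand side is $U^\alpha$, hence $(\uprho_\alpha^{\ii t}\tens\I)U^\alpha=U^\alpha(\uprho_\alpha^{\ii t}\tens\I)$; in other words the unitary $\uprho_\alpha^{\ii t}$ intertwines $U^\alpha$ with itself, so by irreducibility (Schur's lemma, \cite[Chapter 1]{NeshveyevTuset}) it is a scalar: $\uprho_\alpha^{\ii t}=c_\alpha\I_{\sH_\alpha}$ with $c_\alpha\in\TT$.

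The next step, where the symmetry hypothesis enters, is to improve this to $c_\alpha^2=1$. Every eigenvalue $\mu$ of $\uprho_\alpha$ satisfies $\mu^{\ii t}=c_\alpha$ (because $\uprho_\alpha^{\ii t}=c_\alpha\I$); by $\operatorname{Sp}(\uprho_\alpha)=\operatorname{Sp}(\uprho_\alpha^{-1})$ the number $\mu^{-1}$ is again an eigenvalue of $\uprho_\alpha$, so $c_\alpha=(\mu^{-1})^{\ii t}=(\mu^{\ii t})^{-1}=c_\alpha^{-1}$. As $|c_\alpha|=1$ this forces $c_\alpha\in\{-1,1\}$, whence $c_\alpha^2=1$.

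It then remains to feed this back into the modular group: using the first line of \eqref{sigmatau} together with $\uprho_{\alpha,i}^{\ii t}=(\uprho_\alpha^{\ii t})_{ii}=c_\alpha$, for all $\alpha$, $i$, $j$ one gets
\[
\sigma^\bh_t(U^\alpha_{i,j})=\uprho_{\alpha,i}^{\ii t}\,U^\alpha_{i,j}\,\uprho_{\alpha,j}^{\ii t}=c_\alpha^2\,U^\alpha_{i,j}=U^\alpha_{i,j}.
\]
Thus $\sigma^\bh_t$ fixes every matrix element, and since these span the $\sigma$-weakly dense subalgebra $\operatorname{Pol}(\GG)$ and $\sigma^\bh_t$ is normal, $\sigma^\bh_t=\id$ on $\Linf(\GG)$; in particular $t\in T(\Linf(\GG))$.

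I do not expect a real obstacle. The only point requiring care is the first reduction: it is cleanest to pass to the matrix identity above rather than argue entry-by-entry (so one need not track which $U^\alpha_{i,j}$ vanish) and then invoke Schur's lemma. The conceptual content is that symmetry of $\operatorname{Sp}(\uprho_\alpha)$ is exactly what turns the a priori scalar $c_\alpha\in\TT$ into $\pm1$; without it the same computation only shows that $\sigma^\bh_t$ rescales each spectral block $\operatorname{span}\{U^\alpha_{i,j}:i,j\}$ by $c_\alpha^2$, which in general need not be inner — this is the phenomenon discussed in Section~\ref{TtauandT}.
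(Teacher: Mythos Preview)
Your proof is correct and follows essentially the same route as the paper's: both show that $\tau^\GG_t=\id$ forces $\sigma^\bh_t=\id$ by first observing that all $\uprho_{\alpha,i}^{\ii t}$ coincide (you phrase this as $\uprho_\alpha^{\ii t}$ being a self-intertwiner and hence a scalar by Schur, the paper reads it off entry-by-entry using $U^\alpha_{i,j}\neq 0$), and then invoking the symmetry of $\operatorname{Sp}(\uprho_\alpha)$ to get $\uprho_{\alpha,i}^{\ii t}\uprho_{\alpha,j}^{\ii t}=1$. Your Schur-lemma packaging is a clean way to avoid tracking which matrix elements vanish, but the underlying argument is identical.
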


\begin{proof}
Take $t\in\Ttau(\GG)$, so that $\tau^\GG_t=\id$. It follows that
\begin{equation}\label{rhorho}
\uprho_{\alpha,i}^{\ii{t}}=\uprho_{\alpha,k}^{\ii{t}},\qquad\alpha\in\operatorname{Irr}{\GG},\:i,k\in\{1,\dotsc,\dim{\alpha}\}.
\end{equation}
Now fix $\alpha\in\operatorname{Irr}{\GG}$ and $i,j\in\{1,\dotsc,\dim{\alpha}\}$. By assumption there exists $k$ such that $\uprho_{\alpha,k}=\uprho_{\alpha,j}^{-1}$, so in view of \eqref{rhorho} we have
$\uprho_{\alpha,i}^{\ii{t}}
\uprho_{\alpha,j}^{\ii{t}}
=
\uprho_{\alpha,i}^{\ii{t}}
\uprho_{\alpha,k}^{-\ii{t}}=1$. Since this holds for all $\alpha$, $i$ and $j$, we conclude that $\sigma^\bh_t=\id$. In particular, the modular automorphism $\sigma^\bh_t$ is inner.
\end{proof}

As we already pointed out in Footnote \ref{footsigma} if $\Linf(\GG)^\sigma$ is a factor then so is $\Linf(\GG)$. Thus in the next theorem $\Linf(\GG)$ is implicitly assumed to be a factor.

\begin{theorem}\label{thmTGTL}
Let $\GG$ be a compact quantum group such that $\Linf(\GG)^\sigma$ is a factor and for all $\alpha\in\operatorname{Irr}{\GG}$ we have $\operatorname{Sp}(\uprho_\alpha)=\operatorname{Sp}(\uprho_\alpha^{-1})$. Then a real number $t$ belongs to $\Ttau(\GG)$ if and only if $\mu^{\ii{t}}=1$ for all $\mu\in{S(\Linf(\GG))\cap\RR_{>0}}$. In particular,
\begin{enumerate}
\item\label{TTIIIlambda} if $\Linf(\GG)$ is a factor of type $\mathrm{III}_\lambda$ for some $\lambda\in\left]0,1\right[$ then $\Ttau(\GG)=T(\Linf(\GG))=\tfrac{2\pi}{\log{\lambda}}\ZZ$,
\item\label{TTIII1} if $\Linf(\GG)$ is a factor of type $\mathrm{III}_1$ then $\Ttau(\GG)=T(\Linf(\GG))=\{0\}$,
\item\label{TTsemi} if $\Linf(\GG)$ is semifinite then $\Ttau(\GG)=T(\Linf(\GG))=\RR$ and, in particular, $\GG$ is of Kac type.
\end{enumerate}
\end{theorem}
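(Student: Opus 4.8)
The plan is to reduce the statement to a computation of $\operatorname{Sp}{\modOp[\bh]}$, prove the two halves of the equivalence separately (the ``only if'' half being a refinement of Proposition~\ref{TGTLG}), and then read off the three special cases by inserting the standard values of $S(\rM)\cap\RR_{>0}$ for each type.

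First I would record the shape of $S(\Linf(\GG))$. Since $\Linf(\GG)^\sigma$ is the centralizer of the Haar state $\bh$ and is a factor by hypothesis, the result recalled at the end of Section~\ref{ConnesTandS} gives $S(\Linf(\GG))=\operatorname{Sp}{\modOp[\bh]}$. By the first line of \eqref{sigmatau} each vector $U^\alpha_{i,j}\Omega_\bh$ is an eigenvector of $\modOp[\bh]$ for the eigenvalue $\uprho_{\alpha,i}\uprho_{\alpha,j}$, and since $\operatorname{Pol}(\GG)\Omega_\bh$ is dense in $\Ltwo(\GG)$ and consists of such (pairwise orthogonal, by the orthogonality relations) eigenvectors, $\modOp[\bh]$ is diagonalised by them, so that
\[
S(\Linf(\GG))=\operatorname{Sp}{\modOp[\bh]}=\overline{\bigl\{\uprho_{\alpha,i}\uprho_{\alpha,j}\,\bigr|\bigl.\,\alpha\in\operatorname{Irr}{\GG},\:i,j\in\{1,\dotsc,\dim{\alpha}\}\bigr\}}.
\]
In particular every product $\uprho_{\alpha,i}\uprho_{\alpha,j}$ lies in $S(\Linf(\GG))\cap\RR_{>0}$.

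For the implication ``$\mu^{\ii{t}}=1$ on $S(\Linf(\GG))\cap\RR_{>0}\;\Rightarrow\;t\in\Ttau(\GG)$'' I would argue directly: the hypothesis gives $\uprho_{\alpha,i}^{\ii{t}}\uprho_{\alpha,j}^{\ii{t}}=1$ for all $\alpha,i,j$, so for fixed $\alpha$ all the numbers $\uprho_{\alpha,i}^{\ii{t}}$ coincide, say with $c_\alpha\in\TT$; then the second line of \eqref{sigmatau} yields $\tau^\GG_t(U^\alpha_{i,j})=\uprho_{\alpha,i}^{\ii{t}}\uprho_{\alpha,j}^{-\ii{t}}U^\alpha_{i,j}=c_\alpha c_\alpha^{-1}U^\alpha_{i,j}=U^\alpha_{i,j}$, and since matrix elements span the $\sigma$-weakly dense algebra $\operatorname{Pol}(\GG)$ this forces $\tau^\GG_t=\id$. (Only factoriality of $\Linf(\GG)^\sigma$ is used here, not the symmetry assumption.) For the converse I would mimic the proof of Proposition~\ref{TGTLG}: if $\tau^\GG_t=\id$ then \eqref{sigmatau} together with $U^\alpha_{i,j}\neq 0$ gives $\uprho_{\alpha,i}^{\ii{t}}=\uprho_{\alpha,j}^{\ii{t}}$ for all $i,j$; choosing, by the symmetry assumption, an index $k$ with $\uprho_{\alpha,k}=\uprho_{\alpha,j}^{-1}$ we get $(\uprho_{\alpha,i}\uprho_{\alpha,j})^{\ii{t}}=\uprho_{\alpha,i}^{\ii{t}}\uprho_{\alpha,k}^{-\ii{t}}=\uprho_{\alpha,k}^{\ii{t}}\uprho_{\alpha,k}^{-\ii{t}}=1$. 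Since $\RR_{>0}\ni\mu\mapsto\mu^{\ii{t}}\in\TT$ is continuous, $\mu^{\ii{t}}=1$ persists on the closure intersected with $\RR_{>0}$, which by the formula above is exactly $S(\Linf(\GG))\cap\RR_{>0}$. This settles the main equivalence.

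For the three concrete cases I would substitute the standard values of $S(\rM)\cap\RR_{>0}$. If $\Linf(\GG)$ is semifinite, then using a trace one has $\operatorname{Sp}{\modOp}=\{1\}$, so $S(\Linf(\GG))\cap\RR_{>0}=\{1\}$ and the equivalence gives $\Ttau(\GG)=\RR$, i.e.\ $\GG$ is of Kac type, while $T(\Linf(\GG))=\RR$ by the characterisation of semifiniteness recalled in Section~\ref{ConnesTandS}. If $\Linf(\GG)$ is of type $\mathrm{III}_\lambda$ with $\lambda\in\left]0,1\right[$, then $S(\Linf(\GG))\cap\RR_{>0}=\lambda^{\ZZ}$, so $\Ttau(\GG)=\bigl\{t\,\bigr|\bigl.\,\lambda^{\ii{t}}=1\bigr\}=\tfrac{2\pi}{\log{\lambda}}\ZZ$, which coincides with the classical value of the Connes invariant $T$ for a type $\mathrm{III}_\lambda$ factor (\cite{Connes,Stratila,Takesaki3}). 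If $\Linf(\GG)$ is of type $\mathrm{III}_1$, then $S(\Linf(\GG))\cap\RR_{>0}=\RR_{>0}$, forcing $\Ttau(\GG)=\{0\}=T(\Linf(\GG))$. Type $\mathrm{III}_0$ cannot occur under our hypotheses: $0$ is never an isolated point of $\operatorname{Sp}{\modOp[\bh]}$ (it is not an eigenvalue, $\bh$ being faithful), so $S(\Linf(\GG))=\{0,1\}$ would force $\operatorname{Sp}{\modOp[\bh]}=\{1\}$ and hence $\bh$ tracial; thus the three cases are exhaustive. I do not anticipate a genuine obstacle: the only non-formal inputs are the identity for $\operatorname{Sp}{\modOp[\bh]}$ (which merely repeats the mechanism used for $\HH_{\nu,q}$ in Section~\ref{skladniki}) and the classical values of $T$ for semifinite and type $\mathrm{III}_\lambda$ factors; the only care needed is the passage to closures in the ``only if'' direction and the elimination of type $\mathrm{III}_0$.
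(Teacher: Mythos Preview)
Your proposal is correct and follows essentially the same approach as the paper's proof: identify $S(\Linf(\GG))$ with $\operatorname{Sp}\modOp[\bh]$ via factoriality of the centralizer, compute the latter from \eqref{sigmatau}, and then argue both implications using the symmetry of $\operatorname{Sp}(\uprho_\alpha)$. Two small remarks: your observation that the implication ``$\mu^{\ii t}=1$ on $S\cap\RR_{>0}\Rightarrow t\in\Ttau(\GG)$'' needs only factoriality (not symmetry) is a genuine sharpening over the paper, which invokes symmetry in both directions; and your exclusion of type $\mathrm{III}_0$ is handled in the paper as a separate corollary rather than inside this proof, but your argument for it is the same in spirit (the paper phrases the contradiction as ``$\GG$ would be Kac type, impossible for type $\mathrm{III}$'').
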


\begin{proof}
Since $\bh$ is a faithful normal state on $\Linf(\GG)$ and $\Linf(\GG)^\sigma$ is a factor, we have
\[
S\bigl(\Linf(\GG)\bigr)=\operatorname{Sp}(\modOp[\bh])=\overline{\bigl\{\uprho_{\alpha,i}\uprho_{\alpha,j}\,\bigr|\bigl.\,\alpha\in\operatorname{Irr}{\GG},\:i,j\in\{1,\dotsc,\dim{\alpha}\}\bigr\}}.
\]
Take $t\in\Ttau(\GG)$ and $\mu=\uprho_{\alpha,i}\uprho_{\alpha,j}$ for some $\alpha\in\operatorname{Irr}{\GG}$ and $i,j\in\{1,\dotsc,\dim{\alpha}\}$, so that $\mu\in{S(\Linf(\GG))\setminus\{0\}}$. By assumption there exists $k\in\{1,\dotsc,\dim{\alpha}\}$ such that $\uprho_{\alpha,j}=\uprho_{\alpha,k}^{-1}$ and thus by \eqref{sigmatau} we obtain
\[
U^\alpha_{i,k}=\tau^\GG_t(U^\alpha_{i,k})=\uprho_{\alpha,i}^{\ii{t}}\uprho_{\alpha,k}^{-\ii{t}}U^\alpha_{i,k}=(\uprho_{\alpha,i}\uprho_{\alpha,j})^{\ii{t}}U^\alpha_{i,k},
\]
i.e.~$\mu^{\ii{t}}=1$. Hence by continuity $\mu^{\ii{t}}=1$ for all $\mu\in{S(\Linf(\GG))}\cap\RR_{>0}$.

On the other hand, if $t\in\RR$ and $\mu^{\ii{t}}=1$ for all $\mu\in{S(\Linf(\GG))\setminus\{0\}}$ then $(\uprho_{\alpha,i}\uprho_{\alpha,j})^{\ii{t}}=1$ for all $\alpha$, $i$ and $j$ which, under the symmetry assumption $\operatorname{Sp}(\uprho_\alpha)=\operatorname{Sp}(\uprho_\alpha^{-1})$ forces $t\in\Ttau(\GG)$.

If $\Linf(\GG)$ is of type $\mathrm{III}_\lambda$ with $\lambda\in\left]0,1\right[$ then $S(\Linf(\GG))=\{0\}\cup\lambda^\ZZ$ and $T(\Linf(\GG))=\tfrac{2\pi}{\log{\lambda}}\ZZ$. From Proposition \ref{TGTLG} we already know that $\Ttau(\GG)\subset{T(\Linf(\GG))}$. Moreover the number $t=\tfrac{2\pi}{\log{\lambda}}$ satisfies $\mu^{\ii{t}}=1$ for all $\mu\in{S(\Linf(\GG))}\setminus\{0\}$, so by the reasoning above we have $t\in\Ttau(\GG)$. Finally, since $\Ttau(\GG)$ is a group, we obtain $T(\Linf(\GG))=t\ZZ\subset\Ttau(\GG)$ which proves \eqref{TTIIIlambda}.

Statement \eqref{TTIII1} follows easily from Proposition \ref{TGTLG}, since in this case $\{0\}\subset\Ttau(\GG)\subset{T(\Linf(\GG))}=\{0\}$.

Finally if $\Linf(\GG)$ is semifinite then $T(\Linf(\GG))=\RR$ and $S(\Linf(\GG))=\{1\}$. It follows that any $t\in\RR$ satisfies $\mu^{\ii{t}}=1$ for all $\mu\in{S(\Linf(\GG))}$, so $\Ttau(\GG)=\RR$ as well. This proves statement \eqref{TTsemi}.
\end{proof}

The next example shows that the assumption $\operatorname{Sp}(\uprho_\alpha)=\operatorname{Sp}(\uprho_\alpha^{-1})$ for all $\alpha\in\operatorname{Irr}{\GG}$ cannot be dropped from the hypothesis of Proposition \ref{TGTLG}.

\begin{example}\label{exFkappa}
Consider the invertible matrix $F=\operatorname{diag}(\sqrt{\pi},1,1)$ and the associated free unitary quantum group $\operatorname{U}_F^+$ (see e.g.~\cite[Example 1.1.6]{NeshveyevTuset} or \cite[Section 4.2]{DeCommerFreslonYamashita}). Let $U$ denote the fundamental representation of $\operatorname{U}_F^+$. Then the corresponding $\uprho$-operator is $\uprho_U=\kappa{F^2}$, where $\kappa=\sqrt{\tfrac{2+\pi^{-1}}{2+\pi}}$. It is shown in \cite[Appendix]{DeCommerFreslonYamashita} that $\Linf(\operatorname{U}_F^+)$ is a full\footnote{A von Neumann algebra $\rM$ is \emph{full} if $\operatorname{Inn}(\rM)$ is closed in $\operatorname{Aut}(\rM)$ (\cite{ConnesIII1}).} non-injective factor.

Let us turn our attention to the invariant $\Sd(\Linf(\operatorname{U}_F^+))$ (\cite[Definition 1.2]{ConnesIII1}) which is the intersection of point spectra of all diagonalizable modular operators. By the results of \cite[Appendix]{DeCommerFreslonYamashita} the invariant $\Sd(\Linf(\operatorname{U}_F^+))$ is the subgroup of $\RR_{>0}$ generated by $\kappa^2$ and $\pi$ (this is the subgroup generated by the eigenvalues of $\kappa{F^2}\tens\kappa{F^2}$, see \cite[Theorem 33]{DeCommerFreslonYamashita}). It follows that $\Sd(\Linf(\operatorname{U}_F^+))$ is dense in $\RR$, since otherwise we would have $\Sd(\Linf(\operatorname{U}_F^+))=\mu^\ZZ$ for some $\mu>0$ which is impossible because $\pi$ is not an algebraic number.

Since $\Linf(\operatorname{U}_F^+)$ is full, we can use \cite[Corollary 4.11]{ConnesIII1} to obtain $S(\Linf(\operatorname{U}_F^+))=\overline{\Sd(\Linf(\operatorname{U}_F^+))}$ and hence $S(\Linf(\operatorname{U}_F^+))=\RR_{\geq{0}}$ which shows that $\Linf(\operatorname{U}_F^+)$ is of type $\mathrm{III}_1$ and consequently $T(\Linf(\operatorname{U}_F^+))=\{0\}$ by \cite[Theorem 3.7]{Connes} (see also \cite[Proposition 28.9]{Stratila}). On the other hand $\Ttau(\operatorname{U}_F^+)=\tfrac{2\pi}{\log{\pi}}\ZZ$, as it can be checked by direct computation that $\tau^{\operatorname{U}_F^+}_t=\id$ if and only if $t=\tfrac{2\pi{n}}{\log{\pi}}$ for some $n\in\ZZ$:
\[
\bigl(\id\tens\tau^{\operatorname{U}_F^+}_t\bigr)(U)=F^{2\ii{t}}UF^{-2\ii{t}}
=\begin{bmatrix}
U_{1,1}&\pi^{\ii{t}}U_{1,2}&\pi^{\ii{t}}U_{1,3}\\
\pi^{-\ii{t}}U_{2,1}&U_{2,2}&U_{2,3}\\
\pi^{-\ii{t}}U_{3,1}&U_{3,2}&U_{3,3}
\end{bmatrix},\qquad{t}\in\RR
\]
and matrix elements of $U$ generate $\Linf(\operatorname{U}_F^+)$. In particular, $\Ttau(\operatorname{U}_F^+)\not\subset{T(\Linf(\operatorname{U}_F^+))}$.
\end{example}

\begin{corollary}
For any $\nu\in\RR\setminus\{0\}$ and $q\in\left]-1,1\right[\setminus\{0\}$ such that $\nu\log|q|\notin\pi\QQ$ the subalgebra $\Linf(\HH_{\nu,q})^\sigma$ is not a factor.
\end{corollary}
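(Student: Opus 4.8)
The plan is to obtain the statement as a direct contradiction with Theorem~\ref{thmTGTL}. Suppose, towards a contradiction, that $\Linf(\HH_{\nu,q})^\sigma$ were a factor. Recall from Section~\ref{skladniki} that under the standing hypothesis $\nu\log|q|\notin\pi\QQ$ the algebra $\Linf(\HH_{\nu,q})$ is the injective factor of type $\mathrm{II}_\infty$; in particular it is semifinite. Moreover, the $\uprho$-operators of $\HH_{\nu,q}$ are $\uprho_{U^{\gamma,s}}=\uprho_s=\operatorname{diag}\bigl(|q|^{-2s},|q|^{-2s+2},\dotsc,|q|^{2s}\bigr)$ for $(\gamma,s)\in\QQ\times\tfrac{1}{2}\ZZ_+$, so their spectra are manifestly invariant under inversion, i.e.\ $\operatorname{Sp}(\uprho_\alpha)=\operatorname{Sp}(\uprho_\alpha^{-1})$ for all $\alpha\in\operatorname{Irr}{\HH_{\nu,q}}$. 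Thus the hypotheses of Theorem~\ref{thmTGTL} are satisfied, and part~\eqref{TTsemi} of that theorem forces $\Ttau(\HH_{\nu,q})=\RR$, that is, $\HH_{\nu,q}$ would be of Kac type.

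This contradicts Example~\ref{exTtau}\eqref{exTtau2}, where $\Ttau(\HH_{\nu,q})=\tfrac{\pi}{\log|q|}\ZZ$; since $0<|q|<1$ this is a proper (discrete) subgroup of $\RR$, so $\HH_{\nu,q}$ is not of Kac type. Hence the assumption that $\Linf(\HH_{\nu,q})^\sigma$ is a factor is untenable, which is the desired conclusion.

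There is essentially no genuine obstacle here: the corollary is a packaging of already-established facts, and the only verifications needed are the symmetry of the spectra of the $\uprho_s$ (immediate from their explicit diagonal form) and the recollection that $\Linf(\HH_{\nu,q})$ is of type $\mathrm{II}_\infty$ and that $\Ttau(\HH_{\nu,q})=\tfrac{\pi}{\log|q|}\ZZ$.
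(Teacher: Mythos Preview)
Your proof is correct and follows essentially the same approach as the paper: assume $\Linf(\HH_{\nu,q})^\sigma$ is a factor, invoke Theorem~\ref{thmTGTL}\eqref{TTsemi} (using semifiniteness and the symmetry of $\operatorname{Sp}(\uprho_\alpha)$) to force Kac type, and contradict this via the known value of $\Ttau(\HH_{\nu,q})$. You are simply more explicit than the paper in verifying the spectral symmetry hypothesis and in pinpointing the contradiction through Example~\ref{exTtau}\eqref{exTtau2}.
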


\begin{proof}
$\Linf(\HH_{\nu,q})$ is a factor of type $\mathrm{II}_\infty$, i.e.~it is semifinite. If $\Linf(\HH_{\nu,q})^\sigma$ were a factor, the quantum group $\HH_{\nu,q}$ would be of Kac type by Theorem \ref{thmTGTL}\eqref{TTsemi}. However, $\HH_{\nu,q}$ is not of Kac type.
\end{proof}

\begin{corollary}
Let $\GG$ be a compact quantum group such that $\Linf(\GG)$ is a factor of type $\mathrm{III}_0$ and
$\operatorname{Sp}(\uprho_\alpha)=\operatorname{Sp}(\uprho_\alpha^{-1})$ for all $\alpha\in\operatorname{Irr}{\GG}$. Then $\Linf(\GG)^\sigma$ is not a factor.
\end{corollary}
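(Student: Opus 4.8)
The plan is to argue by contradiction, feeding the assumption that $\Linf(\GG)^\sigma$ is a factor directly into Theorem \ref{thmTGTL}. Suppose, then, that $\Linf(\GG)^\sigma$ is a factor. Together with the standing symmetry hypothesis $\operatorname{Sp}(\uprho_\alpha)=\operatorname{Sp}(\uprho_\alpha^{-1})$ for all $\alpha\in\operatorname{Irr}{\GG}$, this is precisely the situation covered by Theorem \ref{thmTGTL}, so its characterization applies verbatim: a real number $t$ belongs to $\Ttau(\GG)$ if and only if $\mu^{\ii{t}}=1$ for every $\mu\in{S(\Linf(\GG))\cap\RR_{>0}}$.

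Next I would exploit the type $\mathrm{III}_0$ hypothesis, but only through the value of the $S$-invariant. By the trichotomy for type $\mathrm{III}$ factors recalled in Section \ref{ConnesTandS}, a type $\mathrm{III}_0$ factor has $S(\Linf(\GG))=\{0,1\}$, hence $S(\Linf(\GG))\cap\RR_{>0}=\{1\}$. The condition ``$\mu^{\ii{t}}=1$ for all $\mu\in\{1\}$'' holds for every $t\in\RR$, so the characterization from Theorem \ref{thmTGTL} forces $\Ttau(\GG)=\RR$. At this point I would invoke the criterion recalled among the basic properties of $\Ttau$ (Section \ref{conventions}), namely that a compact quantum group $\GG$ satisfies $\Ttau(\GG)=\RR$ if and only if it is of Kac type. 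Thus $\GG$ would be of Kac type, its Haar state $\bh$ would be tracial, and consequently $\Linf(\GG)$ would admit a faithful normal tracial state and be a finite von Neumann algebra --- in particular semifinite --- contradicting the assumption that $\Linf(\GG)$ is of type $\mathrm{III}_0$. This contradiction completes the proof.

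There is no genuinely hard step here: the whole argument is a short deduction once Theorem \ref{thmTGTL} is available. The only points requiring a little care are that one should use the \emph{forward} direction of the characterization in Theorem \ref{thmTGTL} (rather than Theorem \ref{thmTGTL}\eqref{TTsemi}, whose statement runs in the opposite direction), that the type $\mathrm{III}_0$ hypothesis enters solely via $S(\Linf(\GG))=\{0,1\}$, and that one should route the final contradiction through Kac type (and hence finiteness of $\Linf(\GG)$) rather than through the equivalence $T(\rM)=\RR\iff\rM$ semifinite, so as not to require separability of the predual.
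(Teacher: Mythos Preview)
Your proposal is correct and follows essentially the same approach as the paper's own proof: both argue by contradiction, use the first part of Theorem \ref{thmTGTL} together with $S(\Linf(\GG))=\{0,1\}$ to deduce $\Ttau(\GG)=\RR$, conclude that $\GG$ is of Kac type, and derive a contradiction from the existence of a faithful tracial state on a type $\mathrm{III}$ algebra. One minor citation slip: the equivalence ``$\Ttau(\GG)=\RR$ iff $\GG$ is of Kac type'' is recorded in the remarks of Section \ref{sectTtau}, not Section \ref{conventions}.
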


\begin{proof}
We have $S(\Linf(\GG))=\{0,1\}$, so if $\Linf(\GG)^\sigma$ were a factor, from the first part of Theorem \ref{thmTGTL} we would infer that $\Ttau(\GG)=\RR$. Thus $\GG$ would be of Kac type, so in particular, it would admit a faithful tracial state which is impossible for an algebra of type $\mathrm{III}$.
\end{proof}

We end this section with three questions which arise naturally in the context of the results presented so far.

\begin{questions}
\noindent
\begin{enumerate}
\item Assume that $\GG$ is a second countable compact quantum group and $\TtauInn(\GG)=\RR$. Is it true that $\GG$ is of Kac type? That might not be the case if $\GG$ is not second countable (e.g.~consider $\RR\bowtie\operatorname{SU}_q(2)$, where $\RR$ is equipped with the discrete topology and acts on $\Linf(\operatorname{SU}_q(2))$ via the scaling automorphisms).
\item Can all injective factors of type $\mathrm{III}_0$ be obtained as $\Linf(\GG)$ for some compact quantum group $\GG$?
\end{enumerate}
\end{questions}

\section{On the type \texorpdfstring{$\mathrm{I}$}{I} case}\label{sectTypeI}

The question whether a given unital \cst-algebra can be the algebra of functions on a compact quantum group was initially addressed in \cite{without}. Next the topic was taken up in \cite{qdisk} and \cite{typeI}. In this section we solve a similar question on the von Neumann algebra level by showing that the algebra $\B(\sK)$ for $\sK$ a Hilbert space of dimension strictly greater than $1$ cannot be $\Linf(\GG)$. In contrast, locally compact quantum groups $\GG$ with $\Linf(\GG)\cong\B(\sK)$ are plentiful (cf.~e.g.~\cite{Fima}).

The techniques used to prove Theorem \ref{thmBH} below owe much to those employed in \cite{qdisk}. One of the new ingredients is the passage to a product quantum group which ensures the symmetry condition on the spectra of the $\uprho$-operators which already appeared in previous sections. As we mentioned in Remark \ref{remSym}, this condition is automatic for some quantum groups (cf.~\cite{symmetry}), but can easily fail as e.g.~in Example \ref{exFkappa}.

\begin{theorem}\label{thmBH}
Let $\sK$ be a Hilbert space of infinite dimension and $\rN$ a von Neumann algebra or the $0$ vector space. Then there does not exist a compact quantum group $\GG$ with $\Linf(\GG)\cong\rN\oplus\B(\sK)$.
\end{theorem}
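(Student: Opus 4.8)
Suppose, for contradiction, that $\Linf(\GG)\cong\rN\oplus\B(\sK)$ with $\dim\sK=\infty$, and let $p\in\cZ(\Linf(\GG))$ be the central projection onto the summand $\B(\sK)$, so that $p\Linf(\GG)p\cong\B(\sK)$ is a type $\mathrm{I}_\infty$ factor. The arguments below work smoothly only under the symmetry condition $\operatorname{Sp}(\uprho_\alpha)=\operatorname{Sp}(\uprho_\alpha^{-1})$ for all $\alpha\in\operatorname{Irr}\GG$, which need not hold (Example \ref{exFkappa}). The plan is to secure it first by passing to a suitable product: replacing $\GG$ by $\GG\times\GG$ one still has $\Linf(\GG\times\GG)=\Linf(\GG)\vtens\Linf(\GG)$ containing the type $\mathrm{I}_\infty$ factor summand $\B(\sK)\vtens\B(\sK)=\B(\sK\otimes\sK)$, while the whole argument can be run using only the representations $\beta\btens\overline{\beta}$ of $\GG\times\GG$, whose $\uprho$-operator $\uprho_\beta\otimes\uprho_{\overline\beta}$ has the reciprocal-symmetric eigenvalue multiset $\{\uprho_{\beta,i}\uprho_{\beta,j}^{-1}\}$ (cf.\ Remark \ref{remSym}); these suffice because, $\GG$ being non-Kac exactly when the subgroup $\Lambda\subseteq\RR$ generated by the $\log\uprho_{\alpha,i}$ is nontrivial, the $\log$-eigenvalues of the $\beta\btens\overline\beta$ still generate a nontrivial subgroup (it contains $(\dim\beta)\Lambda$). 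Thus I may assume $\GG$ itself satisfies the symmetry condition.

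Next I would extract the modular data of the summand. The Haar state restricted to $p\Linf(\GG)p\cong\B(\sK)$, suitably normalised, is a faithful normal state, hence of the form $\operatorname{Tr}(\rho\,\cdot\,)$ for a trace-class positive $\rho$ with trivial kernel; since $\dim\sK=\infty$ its eigenvalues accumulate only at $0$, so $\operatorname{Sp}(\log\rho)$ is bounded above and unbounded below, and $\cZ(p\Linf(\GG)p)=\CC p$. Reading off \eqref{sigmatau} on the summand (where $\modOp[\bh]$ restricts to $\operatorname{Ad}(\rho)$) gives, for every representation $\alpha$ of $\GG$, the commutation relation $[\log\rho,\,pU^\alpha_{ij}p]=(\log\uprho_{\alpha,i}+\log\uprho_{\alpha,j})\,pU^\alpha_{ij}p$, so each $pU^\alpha_{ij}p$ shifts the spectral subspaces of $\rho$ by $\log\uprho_{\alpha,i}+\log\uprho_{\alpha,j}$, and the available shifts, taken over all representations, exhaust $\Lambda$ (using $\uprho_{\alpha\otimes\beta}=\uprho_\alpha\otimes\uprho_\beta$ and $\uprho_{\overline\alpha}$ for closure under $+$ and $-$).

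The heart of the proof is then to upgrade this to translation invariance of $\operatorname{Sp}(\log\rho)$ under $\Lambda$. Here I would combine the unitarity relations $\sum_k U^\alpha_{ik}(U^\alpha_{ik})^*=\I=\sum_k(U^\alpha_{ki})^*U^\alpha_{ki}$, the orthogonality relations for $\bh$, the density condition $[\Delta(\Linf(\GG))(\I\otimes\Linf(\GG))]=\Linf(\GG)\vtens\Linf(\GG)$, the factoriality of $\B(\sK)$ (so that, e.g., the top spectral subspace of $\rho$ is a single finite-dimensional block), and the symmetry of the $\uprho$-spectra, following the techniques of \cite{qdisk}: one shows that the operators $pU^\alpha_{ij}p$ cannot collectively annihilate any spectral subspace of $\rho$ in a way that would obstruct the shifts, so $\operatorname{Sp}(\log\rho)+\Lambda=\operatorname{Sp}(\log\rho)$. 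If $\GG$ is not of Kac type then $\Lambda\neq\{0\}$; a nonempty closed subset of $\RR$ invariant under a nontrivial subgroup is unbounded above, contradicting the boundedness of $\operatorname{Sp}(\log\rho)$. Hence $\GG$ is of Kac type, so $\bh$ is a trace; but then $\bh|_{p\Linf(\GG)p}$ is a faithful normal tracial state on $\B(\sK)$, which is impossible for $\dim\sK=\infty$.

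The main obstacle is precisely this last step: converting the \emph{local} information that $pU^\alpha_{ij}p$ moves spectral subspaces of $\rho$ by prescribed amounts into the \emph{global} statement that $\operatorname{Sp}(\log\rho)$ is genuinely $\Lambda$-invariant, since a priori those operators may vanish on large pieces of $\sK$; ruling this out is where factoriality of the type $\mathrm{I}_\infty$ summand and the symmetry condition are both essential (note that Theorem \ref{thmTGTL} cannot be invoked verbatim, because the centralizer $\Linf(\GG)^\sigma$ is here never a factor). A secondary point to check carefully is that the symmetrisation step indeed both preserves an infinite-dimensional type I factor summand and lets one work exclusively with representations having reciprocal-symmetric $\uprho$-operators.
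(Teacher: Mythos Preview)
Your setup is essentially the paper's: reduce to separable $\sK$, pass to $\GG\times\GG$ to obtain a type $\mathrm{I}_\infty$ summand together with an irreducible representation $U=V\btens\overline{V}$ whose $\uprho$-operator has reciprocal-symmetric spectrum, and observe that the Haar state restricted to the summand is $\operatorname{Tr}(A\,\cdot\,)$ with $A$ positive compact, so that the matrix elements $p\pi(U_{k,j})$ shift the eigenspaces of $A$ multiplicatively by $\uprho_{U,k}\uprho_{U,j}$.

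The gap is exactly where you locate it: you try to deduce full $\Lambda$-invariance of $\operatorname{Sp}(\log A)$ from these shift relations, but the operators $p\pi(U_{k,j})$ may well vanish on large pieces of $\sK$, and the tools you list (unitarity, orthogonality, density of $\Delta(\Linf(\GG))(\I\tens\Linf(\GG))$) do not by themselves rule this out. In particular, with only the modular operator $A$ in play, the symmetry of $\operatorname{Sp}(\uprho_U)$ allows $\uprho_{U,k}\uprho_{U,1}=1$ for some $k$, so $p\pi(U_{k,1})$ can map the top $A$-eigenspace to itself and the unitarity relation $\sum_k p\pi(U_{k,1})^*p\pi(U_{k,1})=p$ gives no contradiction.

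The missing idea is to use the scaling group as well. It also preserves the summand and is implemented there by a second positive operator $B$ strongly commuting with $A$; the same calculation with \eqref{sigmatau} shows that $p\pi(U_{k,1})$ shifts $B$-eigenspaces by the factor $\uprho_{U,k}\uprho_{U,1}^{-1}$. Now restrict $B$ to the finite-dimensional top $A$-eigenspace $\sK(A=q_1)$ and pick a nonzero $\zeta$ in the lowest $B$-eigenspace there. For each $k$: if $\uprho_{U,k}=\uprho_{U,1}$ then $p\pi(U_{k,1})\zeta$ lands in an $A$-eigenspace for an eigenvalue strictly larger than $q_1=\|A\|$, hence vanishes; if $\uprho_{U,k}<\uprho_{U,1}$ then the reciprocal symmetry gives $\uprho_{U,k}\uprho_{U,1}\geq 1$, so either the $A$-shift again overshoots, or $p\pi(U_{k,1})\zeta$ stays in $\sK(A=q_1)$ but is pushed to a $B$-eigenvalue strictly below the minimum of $B$ on that finite-dimensional space, hence vanishes. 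Either way $p\pi(U_{k,1})\zeta=0$ for all $k$, contradicting $\sum_k p\pi(U_{k,1})^*p\pi(U_{k,1})\zeta=\zeta\neq 0$. This bypasses entirely the need for global $\Lambda$-invariance of $\operatorname{Sp}(\log A)$.
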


It is worth comparing Theorem \ref{thmBH} with \cite[Theorem 3.4]{dCKSS} which says that a von Neumann algebra of the form $\rN\oplus\B(\sK)$ with $\sK$ finite-dimensional equipped with an ergodic action of a compact quantum group must be finite-dimensional. The two results are, however, proved by vastly different techniques.

\begin{proof}[Proof of Theorem \ref{thmBH}]
If $\sK$ is not separable then there are no faithful normal states on $\B(\sK)$ (and the Haar measure would induce one). Thus from now on we will assume that $\GG$ is a compact quantum group such that $\Linf(\GG)\cong\rN\oplus\B(\sK)$ with $\dim{\sK}=\aleph_0$ and argue toward a contradiction.

Let $\HH$ be the compact quantum group $\GG\times\GG$, so that
\[
\Linf(\HH)=\Linf(\GG)\vtens\Linf(\GG)\cong
(\rN\vtens\rN)\oplus\bigl(\rN\vtens\B(\sK)\bigr)
\oplus\bigl(\B(\sK)\vtens\rN\bigr)\oplus\bigl(\B(\sK)\vtens\B(\sK)\bigr)\cong\rM\oplus\B(\sK)
\]
where
\[
\rM=(\rN\vtens\rN)
\oplus\bigl(\rN\vtens\B(\sK)\bigr)
\oplus\bigl(\B(\sK)\vtens\rN\bigr)
\]
and let us denote the isomorphism $\Linf(\HH)\to\rM\oplus\B(\sK)$ by $\pi$. Clearly neither $\HH$ nor $\GG$ can be of Kac type (as there are no tracial states on $\B(\sK)$). Let $p\in\rM\oplus\B(\sK)$ be the central projection corresponding to $\B(\sK)\subset\rM$. The Haar measure $\bh$ on $\Linf(\HH)$ is necessarily of the form
\[
\bh(x)=\varphi\bigl((\I-p)\pi(x)\bigr)+\operatorname{Tr}\bigl(A\,p\,\pi(x)\bigr),\qquad{x}\in\Linf(\HH)
\]
for some normal faithful state $\varphi\in\rM_*$ (or zero if $\rM=0$) and a positive trace-class operator $A$ on $\sK$. Let $q_1>q_2>\dotsm$ be the list of all eigenvalues of $A$ in descending order (we note that $0$ is not an eigenvalue of $A$ because $\bh$ is faithful, in particular, the sequence of eigenvalues of $A$ is infinite) and for each $n$ let $\sK(A=q_n)$ be the corresponding eigenspace. The Hilbert space $\sK$ decomposes as
\[
\sK=\bigoplus_{n=1}^\infty\sK(A=q_n)
\]
and we note that each $\sK(A=q_n)$ is finite-dimensional, since $A$ is compact.

Observe that the modular group leaves $\B(\sK)\subset\rM\oplus\B(\sK)$ globally invariant. Indeed, it follows from the fact that $\B(\sK)$ is a factor and the function $\RR\ni{t}\mapsto\pi\bigl(\sigma^{\bh}_t(x)\bigr)\in\rM\oplus\B(\sK)$ is norm-continuous for all $x\in\C(\GG)$. The same argument shows that the scaling group leaves $\B(\sK)\subset\rM\oplus\B(\sK)$ globally invariant.

Let us note that the modular group of $\bh$ restricted to $\B(\sK)$ is implemented by $A$ in the sense that
\[
p\pi\bigl(\sigma_t^\bh(x)\bigr)=A^{\ii{t}}p\pi(x)A^{-\ii{t}},\qquad{x}\in\Linf(\HH),\:t\in\RR.
\]

Next let $B$ be a strictly positive self-adjoint operator on $\sK$ implementing the scaling group of $\HH$ restricted to $\B(\sK)$, so that
\[
p\pi\bigl(\tau^\HH_t(x)\bigr)=B^{\ii{t}}p\pi(x)B^{-\ii{t}},\qquad{x}\in\Linf(\HH),\:t\in\RR.
\]
which exists by Stone's theorem and \cite[Theorem 4.13]{Kadison} (cf.~\cite{qdisk,typeI}). In the same way as in \cite[proof of Theorem 3.5]{typeI} we find that $A$ and $B$ strongly commute, so in particular, for each $n$ the operator $B$ restricts to a positive operator on the finite-dimensional space $\sK(A=q_n)$. Let
\[
\mu_{n,1}>\dotsm>\mu_{n,P_n}
\]
be the eigenvalues of this restriction, so that
\[
\sK=\bigoplus_{n=1}^\infty\bigoplus_{p=1}^{P_n}\sK(A=q_n)\cap\sK(B=\mu_{n,p}),
\]
where $\sK(B=\mu_{n,p})$ denotes the eigenspace of $B$ corresponding to $\mu_{n,p}$.

Since $\GG$ is not of Kac type, there exists an irreducible representation $V$ of $\GG$ with $\uprho_V\neq\I$. Let $U\in\B(\sH_U)\tens\Linf(\GG)$ be the irreducible representation of $\HH=\GG\times\GG$ constructed as the exterior tensor product $V\boxtimes\overline{V}$ of $V$ and its conjugate (cf.~Section \ref{sectIPQG}).

Then $\uprho_U=\uprho_V\tens\uprho_{\overline{V}}$, so that
\[
\operatorname{Sp}(\uprho_U)=\operatorname{Sp}(\uprho_V)\cdot\operatorname{Sp}(\uprho_V)^{-1}.
\]
In particular, $\operatorname{Sp}(\uprho_U)$ is invariant under taking inverses. Choose an orthonormal basis of $\sH_U$ in such a way that the matrix of $\uprho_U$ in this basis is $\operatorname{diag}(\uprho_{U,1},\dotsc,\uprho_{U,\dim{U}})$ with $\uprho_{U,1}\geq\dotsm\geq\uprho_{U,\dim{U}}$.

As $U$ is a unitary representation writing $U_{i,j}$ for its matrix elements in the chosen basis, for all $i,j\in\{1,\dotsc,\dim{U}\}$ we have
\[
\sum_{k=1}^{\dim{U}}{U_{k,i}^{\;*}}U_{k,j}=\sum_{k=1}^{\dim{U}}(U^*)_{i,k}U_{k,j}=\delta_{i,j}\I.
\]
In particular,
\begin{equation}\label{UstarUjj}
\sum_{k=1}^{\dim{U}}U_{k,1}^{\;*}U_{k,1}=\I.
\end{equation}
Now using \eqref{sigmatau} we will show that $p\pi(U_{k,1})$ shifts the eigenspaces of $A$ and $B$ as follows:
\begin{align*}
p\pi(U_{k,1})\bigl(\sK(A=q_n)\bigr)&\subset\sK(A=\uprho_{U,k}\uprho_{U,1}q_n),\\
p\pi(U_{k,1})\bigl(\sK(B=\mu_{n,p})\bigr)&\subset\sK\bigl(B=\uprho_{U,k}\uprho_{U,1}^{-1}\mu_{n,p}\bigr).
\end{align*}
Indeed, if $\xi\in\sK(A=q_n)$ then for all $t\in\RR$ we have
\begin{align*}
A^{\ii{t}}p\pi(U_{k,1})\xi=A^{\ii{t}}p\pi(U_{k,1})A^{-\ii{t}}A^{\ii{t}}\xi
&=p\pi\bigl(\sigma_t^\bh(U_{k,1})\bigr)q_n^{\ii{t}}\xi\\
&=p\pi\bigl(\uprho_{U,k}^{\ii{t}}\uprho_{U,1}^{\ii{t}}U_{k,1}\bigr)q_n^{\ii{t}}\xi
=(\uprho_{U,k}\uprho_{U,1}q_n)^{\ii{t}}p\pi(U_{k,1})\xi
\end{align*}
which proves the first inclusion, and similarly for $\eta\in\sK(B=\mu_{n,p})$
\begin{align*}
B^{\ii{t}}p\pi(U_{k,1})\eta=B^{\ii{t}}p\pi(U_{k,1})B^{-\ii{t}}B^{\ii{t}}\eta
&=p\pi\bigl(\tau^\HH_t(U_{k,1})\bigr)\mu_{n,p}^{\ii{t}}\eta\\
&=p\pi\bigl(\uprho_{U,k}^{\ii{t}}\uprho_{U,1}^{-\ii{t}}U_{k,1}\bigr)\mu_{n,p}^{\ii{t}}\eta
=\bigl(\uprho_{U,k}\uprho_{U,1}^{-1}\mu_{n,p}\bigr)^{\ii{t}}p\pi(U_{k,1})\eta
\end{align*}
for all $t$, which proves the second inclusion.

Now let $\zeta$ be a non-zero vector in $\sK(A=q_1)\cap\sK(B=\mu_{1,P_1})$. Then for any $k\in\{1,\dotsc,\dim{U}\}$ we have
\[
p\pi(U_{k,1})\zeta\in\sK(A=\uprho_{U,k}\uprho_{U,1}q_1)\cap\sK\bigl(B=\uprho_{U,k}\uprho_{U,1}^{-1}\mu_{1,P_1}\bigr).
\]
Depending on $k$ we have two possibilities:
\begin{itemize}
\item[a)] $\uprho_{U,k}=\uprho_{U,1}$. Then $\uprho_{U,k}\uprho_{U,1}q_1=\uprho_{U,1}^2q_1>q_1=\|A\|$, so $\sK(A=\uprho_{U,k}\uprho_{U,1}q_1)=\{0\}$ and consequently $p\pi(U_{k,1})\zeta=0$,
\item[b)] $\uprho_{U,k}<\uprho_{U,1}$. Then first of all
\[
\uprho_{U,k}\uprho_{U,1}q_1\geq\bigl(\min\operatorname{Sp}(\uprho_U)\bigr)\uprho_{U,1}q_1=\uprho_{U,1}^{-1}\uprho_{U,1}q_1=q_1
\]
(because the spectrum of $\uprho_U$ is invariant under taking inverses), so
\[
\sK(A=\uprho_{U,k}\uprho_{U,1}q_1)=\sK(A=q_1)\quad\text{or}\quad\sK(A=\uprho_{U,k}\uprho_{U,1}q_1)=\{0\}.
\]
In the latter case $p\pi(U_{k,1})\zeta=0$, but also in the former situation we have
\[
p\pi(U_{k,1})\zeta\in\sK(A=q_1)\cap\sK\bigl(B=\uprho_{U,k}\uprho_{U,1}^{-1}\mu_{1,P_1}\bigr)
\]
and $\uprho_{U,k}\uprho_{U,1}^{-1}\mu_{1,P_1}=\uprho_{U,1}^{-2}\mu_{1,P_1}<\mu_{1,P_1}=\min\operatorname{Sp}\bigl(\bigl.B\bigr|_{\sK(A=q_1)}\bigr)$, so that
\[
\sK(A=q_1)\cap\sK\bigl(B=\uprho_{U,k}\uprho_{U,1}^{-1}\mu_{1,P_1}\bigr)=\{0\}
\]
\end{itemize}
and hence $p\pi(U_{k,1})\zeta=0$. However, by \eqref{UstarUjj}
\[
0\neq\zeta=\sum_{k=1}^{\dim{U}}p\pi(U_{k,1})^*p\pi(U_{k,1})\zeta=0
\]
which is a contradiction.
\end{proof}

As an immediate corollary we obtain

\begin{corollary}
Let $\sK$ be a Hilbert space of dimension strictly greater than $1$. Then there does not exist a compact quantum group $\GG$ with $\Linf(\GG)\cong\B(\sK)$.
\end{corollary}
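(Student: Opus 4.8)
The plan is to treat the two ranges of $\dim\sK$ separately, reducing the infinite-dimensional case directly to Theorem \ref{thmBH} and handling the finite-dimensional case by hand. If $\sK$ is infinite-dimensional, then applying Theorem \ref{thmBH} with $\rN$ taken to be the $0$ vector space immediately shows that $\Linf(\GG)\cong\{0\}\oplus\B(\sK)=\B(\sK)$ is impossible for any compact quantum group $\GG$ (recall that the proof of Theorem \ref{thmBH} already disposes of the non-separable case on its own), so nothing new is needed here.

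The remaining case is $1<n<\infty$ with $n=\dim\sK$, which Theorem \ref{thmBH} does not formally cover; here I would argue directly. Suppose towards a contradiction that $\GG$ is a compact quantum group with $\Linf(\GG)\cong\B(\sK)$. Since $\B(\sK)$ is finite-dimensional, all the relevant vector-space topologies coincide, so the strongly dense image of $\C(\GG)$ in $\Linf(\GG)$ is all of $\Linf(\GG)$, and the norm-dense Hopf $*$-subalgebra $\operatorname{Pol}(\GG)$ coincides with $\C(\GG)$; thus $\operatorname{Pol}(\GG)\cong\B(\sK)$ as a $*$-algebra. The counit $\eps\colon\operatorname{Pol}(\GG)\to\CC$ is then a unital $*$-homomorphism, hence a nonzero algebra homomorphism from the simple algebra $\B(\sK)$ to $\CC$; its kernel is a proper two-sided ideal of $\B(\sK)$, therefore $\{0\}$, which forces $n^2=\dim\B(\sK)=\dim\CC=1$, contradicting $n>1$. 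Since no such $\GG$ exists in either range of $\dim\sK$, the corollary follows.

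I do not expect a genuine obstacle here: the entire substance of the statement resides in Theorem \ref{thmBH}, and the only point requiring attention is that the finite-dimensional regime is not literally an instance of that theorem and must be closed off separately. The counit argument above does this in one line, so this is the ``hard'' step only in the sense of being the one that is not a direct citation.
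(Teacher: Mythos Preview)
Your proof is correct and follows essentially the same approach as the paper: invoke Theorem \ref{thmBH} with $\rN=0$ for the infinite-dimensional case, and for $1<\dim\sK<\infty$ observe that $\B(\sK)$ is simple while $\operatorname{Pol}(\GG)$ admits the counit as a character. The paper states the finite-dimensional argument in one line, whereas you spell out why $\operatorname{Pol}(\GG)=\Linf(\GG)$ and why simplicity forces a contradiction, but the content is the same.
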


This result follows from the previous theorem and an easy observation that $\B(\sK)$ cannot be isomorphic to $\Linf(\GG)$ when $1<\dim(\sK)<+\infty$ because $\B(\sK)$ is simple and $\Linf(\GG)=\operatorname{Pol}(\GG)$ admits a character -- the counit.

\section{Appendix: infinite tensor product of closed operators}\label{sectApp}

\begin{lemma}\label{lemA}
For $n\in\NN$ let $A_n$ be a closed densely defined operator on a Hilbert space $\sH_n$. Furthermore let $\Omega_n$ be a unit vector in $\sH_n$ such that $\Omega_n\in\Dom(A_n)\cap\Dom(A_n^*)$ and $A_n\Omega_n=A_n^*\Omega_n=\Omega_n$. Define a subspace $\Dom(\bA)$ of the Hilbert space $\sH=\bigotimes\limits_{n=1}^{\infty}(\sH_n,\Omega_n)$ by
\[
\Dom(\bA)=\operatorname{span}\bigl\{\xi_1\tens\dotsm\tens\xi_N\tens\Omega_{N+1}\tens\Omega_{N+2}\tens\dotsm\,\bigr|\bigl.\,N\in\NN,\:\xi_i\in\Dom(A_i),\:i\in\{1,\dotsc,N\}\bigr\}
\]
and let $\bA\colon\Dom(\bA)\to\sH$ be the linear extension of the map
\[
\xi_1\tens\dotsm\tens\xi_N\tens\Omega_{N+1}\tens\Omega_{N+2}\tens\dotsm\longmapsto
A_1\xi_1\tens\dotsm\tens{A_N}\xi_N\tens\Omega_{N+1}\tens\Omega_{N+2}\tens\dotsm.
\]
Then $\bA$ is a closable operator and denoting its closure by $\bigotimes\limits_{n=1}^{\infty}A_n$ we have
\[
\biggl(\bigotimes\limits_{n=1}^{\infty}A_n\biggr)^*=\bigotimes\limits_{n=1}^{\infty}A_n^*.
\]
In particular, if each $A_n$ is self-adjoint then so is $\bigotimes\limits_{n=1}^{\infty}A_n$.
\end{lemma}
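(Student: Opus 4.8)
The plan is to run everything through the algebraic infinite tensor product, reducing at each finite stage to a tensor product of finitely many closed operators. Write $\bA^\dagger$ for the operator built exactly as $\bA$ but with each $A_n$ replaced by $A_n^*$; both $\bA$ and $\bA^\dagger$ are densely defined (their domains contain every simple tensor), and both are \emph{well} defined, the consistency of the defining formula under regarding $\xi_1\tens\dotsm\tens\xi_N\tens\Omega_{N+1}\tens\Omega_{N+2}\tens\dotsm$ as a simple tensor of length $N$ or of length $N+1$ being exactly the hypothesis $A_n\Omega_n=A_n^*\Omega_n=\Omega_n$. First I would check $\bA^\dagger\subseteq\bA^*$: for simple tensors $\xi\in\Dom(\bA)$, $\eta\in\Dom(\bA^\dagger)$, after padding both to a common length $N$ one has $\is{\bA\xi}{\eta}=\prod_{n=1}^{N}\is{A_n\xi_n}{\eta_n}=\prod_{n=1}^{N}\is{\xi_n}{A_n^*\eta_n}=\is{\xi}{\bA^\dagger\eta}$, and bilinearity gives the rest. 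Consequently $\bA$ has a densely defined adjoint, so it is closable, and symmetrically so is $\bA^\dagger$; moreover one inclusion of the desired identity is now immediate, since $\bA^\dagger\subseteq\bA^*$ and $\bA^*$ is closed force $\overline{\bA^\dagger}\subseteq\bA^*=(\overline{\bA})^*$.

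For the reverse inclusion $(\overline{\bA})^*\subseteq\overline{\bA^\dagger}$ I would truncate. Let $\sH^{(N)}=\sH_1\tens\dotsm\tens\sH_N\tens(\Omega_{N+1}\tens\Omega_{N+2}\tens\dotsm)$, identified with $\sH_1\tens\dotsm\tens\sH_N$, and let $P_N$ be the orthogonal projection of $\sH$ onto $\sH^{(N)}$; the $P_N$ increase to $\I$ strongly. Using $\is{A_n\xi_n}{\Omega_n}=\is{\xi_n}{A_n^*\Omega_n}=\is{\xi_n}{\Omega_n}$ and its counterpart for $A_n^*$, one verifies that $P_N$ commutes with $\bA$ and with $\bA^\dagger$ on their domains, that $\bA$ maps $\Dom(\bA)\cap\sH^{(N)}$ into $\sH^{(N)}$, and that $\Dom(\bA)\cap\sH^{(N)}$ is precisely the algebraic tensor product $\Dom(A_1)\atens\dotsm\atens\Dom(A_N)$, on which $\bA$ restricts to the finite tensor product $A_1\tens\dotsm\tens A_N$. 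Then, for $\eta\in\Dom(\bA^*)$ with $\bA^*\eta=\zeta$ and any $v\in\Dom(A_1)\atens\dotsm\atens\Dom(A_N)$, one has $v=P_Nv$ and $\bA v\in\sH^{(N)}$, so $\is{(A_1\tens\dotsm\tens A_N)v}{P_N\eta}=\is{\bA v}{\eta}=\is{v}{\zeta}=\is{v}{P_N\zeta}$; that is, $P_N\eta\in\Dom\bigl((\overline{A_1\tens\dotsm\tens A_N})^*\bigr)$ with value $P_N\zeta$.

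At this point I would invoke the $N$-fold analogue of the statement being proved — equivalently, by induction, the two-factor case — that $(\overline{A_1\tens\dotsm\tens A_N})^*=\overline{A_1^*\tens\dotsm\tens A_N^*}$; this is classical when the factors are self-adjoint (cf.~\cite{ReedSimon1}) and reduces to that case for general closed densely defined operators. It then places $P_N\eta$ in $\Dom\bigl(\overline{A_1^*\tens\dotsm\tens A_N^*}\bigr)\subseteq\Dom(\overline{\bA^\dagger})$ with $\overline{\bA^\dagger}(P_N\eta)=P_N\zeta$. Letting $N\to\infty$ we have $P_N\eta\to\eta$ and $P_N\zeta\to\zeta$, and closedness of $\overline{\bA^\dagger}$ gives $\eta\in\Dom(\overline{\bA^\dagger})$ with $\overline{\bA^\dagger}\eta=\zeta=\bA^*\eta$. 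Hence $(\overline{\bA})^*=\bA^*=\overline{\bA^\dagger}$, which is the assertion once we set $\bigotimes_{n=1}^{\infty}A_n^*:=\overline{\bA^\dagger}$; and when every $A_n$ is self-adjoint, $A_n^*=A_n$ for all $n$, so $\bigotimes_{n=1}^{\infty}A_n$ is self-adjoint.

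The two places where real work is needed are: (i) the bookkeeping that identifies $\Dom(\bA)\cap\sH^{(N)}$ with $\Dom(A_1)\atens\dotsm\atens\Dom(A_N)$ — in particular, that a linear combination of simple tensors of length greater than $N$ which happens to lie in $\sH^{(N)}$ can be rewritten, by applying $P_N$, as a linear combination of simple tensors of length $N$, which again uses $A_n\Omega_n=\Omega_n$; and (ii) the finite-factor identity $(\overline{A_1\tens\dotsm\tens A_N})^*=\overline{A_1^*\tens\dotsm\tens A_N^*}$, which carries the genuine analytic content. I expect (ii) to be the main obstacle; I would handle it by induction on the number of factors, the two-factor statement for closed densely defined operators being either cited or obtained by the same truncation-and-graph argument applied to one tensor slot at a time.
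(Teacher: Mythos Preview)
Your proposal is correct and follows essentially the same route as the paper: both obtain closability and one inclusion from $\bA^\dagger\subset\bA^*$ on simple tensors, then for the reverse inclusion truncate via the projections $P_N$ onto $\sH^{(N)}$, show $P_N\eta\in\Dom\bigl((A_1\tens\dotsm\tens A_N)^*\bigr)$ with value $P_N\zeta$, invoke the finite-factor identity $(A_1\tens\dotsm\tens A_N)^*=A_1^*\tens\dotsm\tens A_N^*$ (the paper cites \cite[Proposition~7.26]{Schmuedgen} for this), and pass to the limit using closedness. The only cosmetic difference is that the paper phrases the limit step as showing $\bigl(\overline{\bA^\dagger}\,p_N\zeta\bigr)_N$ is Cauchy by identifying it with $\bigl(p_N(\bigotimes A_n)^*\zeta\bigr)_N$, whereas you appeal directly to closedness; the content is the same.
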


\begin{proof}
Clearly $\Dom(\bA)$ is dense in $\sH$. Closability of $\bA$ follows from the fact that the dense subspace
\[
\operatorname{span}\bigl\{\eta_1\tens\dotsm\tens\eta_N\tens\Omega_{N+1}\tens\Omega_{N+2}\tens\dotsm\,\bigr|\bigl.\,N\in\NN,\:\eta_i\in\Dom(A_i^*),\:i\in\{1,\dotsc,N\}\bigr\}
\]
is contained in the domain of $\bA^*$. Indeed, for any $N$ and any $\eta_1\tens\dotsm\tens\eta_N\tens\Omega_{N+1}\tens\Omega_{N+2}\tens\dotsm$ with $\eta_i\in\Dom(A_i^*)$ the linear extension of the mapping taking
\[
\xi_1\tens\dotsm\tens\xi_N\tens\Omega_{N+1}\tens\Omega_{N+2}\tens\dotsm\in\Dom(\bA)
\]
to
\begin{align*}
&\is{\eta_1\tens\dotsm\tens\eta_N\tens\Omega_{N+1}\tens\Omega_{N+2}\tens\dotsm}{\bA(\xi_1\tens\dotsm\tens\xi_N\tens\Omega_{N+1}\tens\Omega_{N+2})}\\
&\qquad=\is{\eta_1\tens\dotsm\tens\eta_N\tens\Omega_{N+1}\tens\Omega_{N+2}\tens\dotsm}{A_i\xi_1\tens\dotsm\tens{A_N}\xi_N\tens\Omega_{N+1}\tens\Omega_{N+2}}\\
&\qquad=\is{A_1^*\eta_1\tens\dotsm\tens{A_N^*}\eta_N\tens\Omega_{N+1}\tens\Omega_{N+2}\tens\dotsm}{\xi_1\tens\dotsm\tens\xi_N\tens\Omega_{N+1}\tens\Omega_{N+2}}
\end{align*}
is clearly continuous, so $\eta_1\tens\dotsm\tens\eta_N\tens\Omega_{N+1}\tens\Omega_{N+2}\tens\dotsm\in\Dom(\bA^*)$.

By the same argument the infinite tensor product of the operators $\bigl\{A_n^*\bigr\}_{n\in\NN}$ is closable and denoting the respective closures respectively by
\[
\bigotimes\limits_{n=1}^{\infty}A_n
\quad\text{and}\quad
\bigotimes\limits_{n=1}^{\infty}A_n^*
\]
we obtain
\begin{equation}\label{Anst}
\bigotimes\limits_{n=1}^{\infty}A_n^*\subset\biggl(\bigotimes\limits_{n=1}^{\infty}A_n\biggr)^*.
\end{equation}

In order to obtain the reverse inclusion to \eqref{Anst} we let $p_N$ be the projection
\[
\underbrace{\I\tens\dotsm\tens\I}_{N\text{ times}}\tens\bigl(\ket{\Omega_{N+1}}\bra{\Omega_{N+1}}\bigr)\tens\bigr(\ket{\Omega_{N+2}}\bra{\Omega_{N+2}}\bigr)\tens\dotsm
\]
which is easily seen to be a well defined operator on $\sH$. Now take any $\zeta\in\Dom\Biggl(\biggl(\bigotimes\limits_{n=1}^{\infty}A_n\biggr)^*\Biggr)$ and consider the vector $p_N\zeta$. Clearly
\[
p_N\zeta=\zeta_N\tens\Omega_{N+1}\tens\dotsm
\]
for some $\zeta_N\in\sH_1\tens\dotsm\tens\sH_N$. Furthermore, the linear extension of the mapping
\begin{align*}
\Dom(A_1)\atens\dotsm\atens\Dom(A_N)\ni(\xi_1&\tens\dotsm\tens\xi_N)\longmapsto\is{\zeta_N}{(A_1\tens\dotsm\tens{A_N})(\xi_1\tens\dotsm\tens\xi_N)}\\
&=\is{\zeta_N}{A_1\xi_1\tens\dotsm\tens{A_N}\xi_N}\\
&=\is{\zeta_N\tens\Omega_{N+1}\tens\dotsm}{A_1\xi_1\tens\dotsm\tens{A_N}\xi_N\tens\Omega_{N+1}\tens\dotsm}\\
&=\is{p_N\zeta}{A_1\xi_1\tens\dotsm\tens{A_N}\xi_N\tens\Omega_{N+1}\tens\dotsm}\\
&=\is{\zeta}{p_N(A_1\xi_1\tens\dotsm\tens{A_N}\xi_N\tens\Omega_{N+1}\tens\dotsm)}\\
&=\is{\zeta}{(A_1\xi_1\tens\dotsm\tens{A_N}\xi_N\tens\Omega_{N+1}\tens\dotsm)}\\
&=\is{\zeta}{\biggl(\bigotimes\limits_{n=1}^{\infty}A_n\biggr)(\xi_1\tens\dotsm\tens\xi_N\tens\Omega_{N+1}\tens\dotsm)}\\
&=\is{\biggl(\bigotimes\limits_{n=1}^{\infty}A_n\biggr)^*\zeta}{\xi_1\tens\dotsm\tens\xi_N\tens\Omega_{N+1}\tens\dotsm}
\end{align*}
is continuous, so that $\zeta_N\in\Dom\bigl((A_1\tens\dotsm\tens{A_N})^*\bigr)=\Dom\bigl(A_1^*\tens\dotsm\tens{A_N^*}\bigr)$ (cf.~\cite[Proposition 7.26]{Schmuedgen}). Consequently for each $N$ we have $p_N\zeta\in\Dom\biggl(\bigotimes\limits_{n=1}^{\infty}A_n^*\biggr)$. Furthermore $p_N\zeta\xrightarrow[N\to\infty]{}\zeta$, hence in order to show that $\zeta\in\Dom\biggl(\bigotimes\limits_{n=1}^{\infty}A_n^*\biggr)$ we only need to show that
\[
\Biggl(\biggl(\bigotimes\limits_{n=1}^{\infty}A_n^*\biggr)p_N\zeta\Biggr)_{N\in\NN}
\]
is a Cauchy sequence.

Let $P_N$ be the composition of $p_N$ with the canonical isomorphism of the range of $p_N$ onto $\sH_1\tens\dotsm\tens\sH_N$
\[
\theta\tens\Omega_{N+1}\tens\dotsm\longmapsto\theta
\]
(the inverse of the canonical inclusion). Then $\zeta_N$ as above is $P_N\zeta$. Moreover the calculation above shows that for any $N$ and any $\xi_1,\dotsc,\xi_N$ with $\xi_i\in\Dom(A_i)$
\begin{align*}
&\is{\zeta_N}{(A_1\tens\dotsm\tens{A_N})(\xi_1\tens\dotsm\tens\xi_N)}\\
&\qquad=\is{\biggl(\bigotimes\limits_{n=1}^{\infty}A_n\biggr)^*\zeta}{\xi_1\tens\dotsm\tens\xi_N\tens\Omega_{N+1}\tens\dotsm}\\
&\qquad=\is{\biggl(\bigotimes\limits_{n=1}^{\infty}A_n\biggr)^*\zeta}{p_N\bigl(\xi_1\tens\dotsm\tens\xi_N\tens\Omega_{N+1}\tens\dotsm\bigr)}\\
&\qquad=\is{p_N\biggl(\bigotimes\limits_{n=1}^{\infty}A_n\biggr)^*\zeta}{\xi_1\tens\dotsm\tens\xi_N\tens\Omega_{N+1}\tens\dotsm}\\
&\qquad=\is{P_N\biggl(\bigotimes\limits_{n=1}^{\infty}A_n\biggr)^*\zeta}{\xi_1\tens\dotsm\tens\xi_N}
\end{align*}
so that
\[
\bigl(A_1^*\tens\dotsm\tens{A_N^*}\bigr)P_N\zeta=P_N\biggl(\bigotimes\limits_{n=1}^{\infty}A_n\biggr)^*\zeta.
\]
Thus for any $M,N$
\begin{align*}
&\Biggl\|\biggl(\bigotimes_{n=1}^{\infty}A_n^*\biggr)p_N\zeta-\biggl(\bigotimes_{n=1}^{\infty}A_n^*\biggr)p_M\zeta\Biggr\|\\
&\qquad=\Bigl\|\Bigl(\bigl(A_1^*\tens\dotsm\tens{A_N^*}\bigr)\zeta_N\Bigr)\tens\Omega_{N+1}\tens\dotsm
-\Bigl(\bigl(A_1^*\tens\dotsm\tens{A_M^*}\bigr)\zeta_M\Bigr)\tens\Omega_{M+1}\tens\dotsm\Bigr\|\\
&\qquad=\Biggl\|\Biggl(P_N\biggl(\bigotimes_{n=1}^{\infty}{A_n}\biggr)^*\zeta\Biggr)\tens\Omega_{N+1}\tens\dotsm
-\Biggl(P_M\biggl(\bigotimes_{n=1}^{\infty}{A_n}\biggr)^*\zeta\Biggr)\tens\Omega_{M+1}\tens\dotsm\Biggr\|\\
&\qquad=\Biggl\|
p_N\biggl(\bigotimes_{n=1}^{\infty}A_n\biggr)^*\zeta-p_M\biggl(\bigotimes_{n=1}^{\infty}A_n\biggr)^*\zeta
\Biggr\|\xrightarrow[N,M\to\infty]{}0.
\end{align*}
Consequently we get equality in \eqref{Anst} which ends the proof.
\end{proof}

\begin{remark}\label{uwagaDom}
The above reasoning proves that
\begin{enumerate}
\item if $\theta\in\Dom(A_1\tens\dotsm\tens{A_N})$ for some $N$ then the vector $\theta\tens\Omega_{N+1}\tens\dotsm$ belongs to $\Dom\biggl(\bigotimes\limits_{n=1}^{\infty}A_n\biggr)$ and we have
\[
\biggl(\bigotimes_{n=1}^{\infty}A_n\biggr)\bigl(\theta\tens\Omega_{N+1}\tens\dotsm\bigr)
=\bigl((A_1\tens\dotsm\tens{A_N})\theta\bigr)\tens\Omega_{N+1}\tens\dotsm,
\]
\item if $\zeta\in\Dom\biggl(\bigotimes\limits_{n=1}^{\infty}A_n\biggr)$ then with $p_N$ and $\zeta_N$ as above the vectors $\zeta_N$ and $p_N\zeta$ belong to $\Dom(A_1\tens\dotsm\tens{A_N})$ and $\Dom\biggl(\bigotimes\limits_{n=1}^{\infty}A_n\biggr)$ respectively and
\[
\biggl(\bigotimes\limits_{n=1}^{\infty}A_n\biggr)p_N\zeta=\bigl((A_1\tens\dotsm\tens{A_N})\zeta_N\bigr)\tens\Omega_{N+1}\tens\dotsm.
\]
\end{enumerate}
\end{remark}

In the special case when $A_n=\I$ for $n>N$ we denote the operator $\bigotimes\limits_{n=1}^{\infty}A_n$ constructed in Lemma \ref{lemA} by
\[
\bigotimes_{n=1}^N{A_n}\tens\I^{\tens\infty}.
\]
The next result says that if $A_n=A_n^*$ for all $n$ then the operators $\bigotimes\limits_{n=1}^NA_n\tens\I^{\tens\infty}$ approximate $\bigotimes\limits_{n=1}^{\infty}A_n$ in the \emph{strong resolvent sense} (\cite[Definition in Section VIII.7]{ReedSimon1}), i.e.~for any $\lambda\in\CC\setminus\RR$ the resolvent of $\bigotimes\limits_{n=1}^N{A_n}\tens\I^{\tens\infty}$ at $\lambda$ converges, as $N\to\infty$, to the resolvent of $\bigotimes\limits_{n=1}^{\infty}A_n$ at $\lambda$. In fact, by \cite[Theorem VIII.19]{ReedSimon1} this is equivalent to strong convergence of resolvents at one $\lambda_0\in\CC\setminus\RR$.

\begin{lemma}\label{lemB}
Let $(A_n,\sH_n,\Omega_n)_{n\in\NN}$ be as in Lemma \ref{lemA} and assume additionally that each $A_n$ is self-adjoint. Then
\begin{equation}\label{convergence}
\bigotimes\limits_{n=1}^N{A_n}\tens\I^{\tens\infty}\xrightarrow[N\to\infty]{}\bigotimes\limits_{n=1}^{\infty}A_n
\end{equation}
in the strong resolvent sense and
\begin{equation}\label{widmo}
\operatorname{Sp}\biggl(\bigotimes\limits_{n=1}^{\infty}A_n\biggr)
=\overline{\bigl\{\lambda_1\dotsm\lambda_N\,\bigr|\bigl.\,N\in\NN,\:\lambda_i\in\operatorname{Sp}(A_i),\:i\in\{1,\dotsc,N\}\bigr\}}.
\end{equation}
In particular, if each $A_n$ is positive then so is $\bigotimes\limits_{n=1}^{\infty}A_n$.
\end{lemma}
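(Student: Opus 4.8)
The plan is to obtain \eqref{convergence} from the standard common-core criterion for strong resolvent convergence of self-adjoint operators, and then to read off \eqref{widmo} and the positivity assertion by squeezing $\operatorname{Sp}\bigl(\bigotimes_{n=1}^\infty A_n\bigr)$ between the set $S:=\overline{\{\lambda_1\dotsm\lambda_N\mid N\in\NN,\ \lambda_i\in\operatorname{Sp}(A_i)\}}$ and the spectra of the finite truncations $B_N:=\bigotimes_{n=1}^N A_n\tens\I^{\tens\infty}$, using Weyl sequences drawn from the core $\Dom(\bA)$ of Lemma \ref{lemA}.

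First I would record that each $B_N$ is self-adjoint (Lemma \ref{lemA} applied to $A_1,\dotsc,A_N,\I,\I,\dotsc$), and, writing $\sH=(\sH_1\tens\dotsm\tens\sH_N)\tens\sH'$ with $\sH'=\bigotimes_{n>N}(\sH_n,\Omega_n)$, that $B_N=T_N\tens\I_{\sH'}$ where $T_N=A_1\tens\dotsm\tens A_N$ is the self-adjoint closed operator of Lemma \ref{lemA}. For each $N$ the space $\Dom(\bA)$ contains a core for $B_N=T_N\tens\I_{\sH'}$ (an algebraic tensor of $\Dom(A_1)\atens\dotsm\atens\Dom(A_N)$, a core for $T_N$, with a dense subspace of $\sH'$ consisting of finite tensors ending in $\Omega$'s) and is contained in $\Dom(B_N)$; since a subspace lying between a core and the full domain of a closed operator is again a core, $\Dom(\bA)$ is a common core for $\bigotimes_{n=1}^\infty A_n$ and for every $B_N$. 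For $\xi=\xi_1\tens\dotsm\tens\xi_M\tens\Omega_{M+1}\tens\dotsm\in\Dom(\bA)$ one checks, using $A_n\Omega_n=\Omega_n$, that $B_N\xi=\bA\xi$ whenever $N\ge M$; hence $B_N\xi\to\bA\xi$ for every $\xi$ in this common core, and \cite[Theorem VIII.25(a)]{ReedSimon1} gives \eqref{convergence}.

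For \eqref{widmo}, the inclusion $S\subset\operatorname{Sp}\bigl(\bigotimes_{n=1}^\infty A_n\bigr)$ is a Weyl-sequence construction: given $\lambda_i\in\operatorname{Sp}(A_i)$ for $i\in\{1,\dotsc,N\}$, choose unit vectors $\xi_i^{(m)}\in\Dom(A_i)$ with $\|A_i\xi_i^{(m)}-\lambda_i\xi_i^{(m)}\|\to 0$ (so also $\|A_i\xi_i^{(m)}\|\to|\lambda_i|$); then the unit vectors $\eta^{(m)}:=\xi_1^{(m)}\tens\dotsm\tens\xi_N^{(m)}\tens\Omega_{N+1}\tens\dotsm$ lie in $\Dom(\bA)$, and a telescoping estimate gives $\|\bA\eta^{(m)}-\lambda_1\dotsm\lambda_N\eta^{(m)}\|\to 0$, so $\lambda_1\dotsm\lambda_N\in\operatorname{Sp}\bigl(\bigotimes_{n=1}^\infty A_n\bigr)$; closedness of the spectrum then yields $S\subset\operatorname{Sp}\bigl(\bigotimes_{n=1}^\infty A_n\bigr)$. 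For the reverse inclusion, take $\mu\in\operatorname{Sp}\bigl(\bigotimes_{n=1}^\infty A_n\bigr)$ and a Weyl sequence $\zeta^{(m)}$ of unit vectors in $\Dom\bigl(\bigotimes_{n=1}^\infty A_n\bigr)$. The self-adjoint case of the identity $\bigl(A_1^*\tens\dotsm\tens A_N^*\bigr)P_N\zeta=P_N\bigl(\bigotimes_{n=1}^\infty A_n\bigr)^*\zeta$ from the proof of Lemma \ref{lemA}, combined with Remark \ref{uwagaDom}, reads $\bigl(\bigotimes_{n=1}^\infty A_n\bigr)p_N\zeta=p_N\bigl(\bigotimes_{n=1}^\infty A_n\bigr)\zeta=B_N p_N\zeta$; since moreover $p_N\zeta^{(m)}\to\zeta^{(m)}$ as $N\to\infty$, for a suitable $m$ and then $N$ the vector $p_N\zeta^{(m)}$ is a nonzero near-eigenvector of $B_N$ for $\mu$, so $\operatorname{dist}(\mu,\operatorname{Sp}(B_N))$ is arbitrarily small. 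Since $\operatorname{Sp}(B_N)=\operatorname{Sp}(T_N)=\overline{\{\lambda_1\dotsm\lambda_N\mid\lambda_i\in\operatorname{Sp}(A_i)\}}\subset S$ (the spectrum of a finite tensor product of self-adjoint operators being the closure of the product of the spectra, by the spectral theorem) and $S$ is closed, we get $\mu\in S$. Finally, if each $A_n$ is positive then $\operatorname{Sp}(A_n)\subset\RR_{\ge 0}$, hence $S\subset\RR_{\ge 0}$, hence $\operatorname{Sp}\bigl(\bigotimes_{n=1}^\infty A_n\bigr)\subset\RR_{\ge 0}$, and as $\bigotimes_{n=1}^\infty A_n$ is self-adjoint it is positive.

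The main obstacle I expect is the bookkeeping in the reverse inclusion of \eqref{widmo}: one has to exploit carefully that the cutoff projections $p_N$ commute with $\bigotimes_{n=1}^\infty A_n$ in the sense furnished by the proof of Lemma \ref{lemA}, and to order the two nested limits (first $m\to\infty$ in the Weyl sequence, then $N\to\infty$ in the truncation) so that the near-eigenvector estimate for $B_N$ survives. The verification that $\Dom(\bA)$ is a common core (needed to invoke \cite[Theorem VIII.25(a)]{ReedSimon1}), the telescoping norm estimate, and the finite-tensor-product spectrum computation are all routine.
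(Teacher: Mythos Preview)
Your proposal is correct and covers all three claims, but it differs from the paper's argument in two places.

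For the strong resolvent convergence \eqref{convergence} the paper does not invoke the common-core criterion \cite[Theorem VIII.25(a)]{ReedSimon1}. Instead it computes directly that for any simple tensor $\xi=\xi_1\tens\dotsm\tens\xi_K\tens\Omega_{K+1}\tens\dotsm$ and any $N\geq K$ the resolvents $\bigl(\ii\I-B_N\bigr)^{-1}$ and $\bigl(\ii\I-\bigotimes_{n}A_n\bigr)^{-1}$ actually \emph{agree} on $\xi$ (using Remark \ref{uwagaDom}), so the resolvent sequence is eventually constant on a total set; since resolvents are contractions this gives strong convergence. Your route via the common core is equally valid and arguably more conceptual; the paper's route avoids having to verify that $\Dom(\bA)$ is a core for every $B_N$.

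For the inclusion $\operatorname{Sp}\bigl(\bigotimes_n A_n\bigr)\subset S$ the paper does not use the commutation $\bigl(\bigotimes_n A_n\bigr)p_N=p_N\bigl(\bigotimes_n A_n\bigr)$. It simply approximates a Weyl vector $\zeta^\eps$ in the graph norm by an element $\zeta^\eps_k\in\Dom(\bA)$ (available because $\bigotimes_n A_n=\overline{\bA}$), and reads off that $\zeta^\eps_k$ is an approximate eigenvector for the finite truncation $A_1\tens\dotsm\tens A_{N_{\eps,k}}$. Your argument replaces the graph-norm approximation by the projection $p_N$; this is clean and reuses the identity you quote from the proof of Lemma \ref{lemA}. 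Both approaches ultimately land in $\operatorname{Sp}(T_N)\subset S$ via the finite-tensor-product spectrum formula. The inclusion $S\subset\operatorname{Sp}\bigl(\bigotimes_n A_n\bigr)$ and the positivity consequence are handled the same way in both.
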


\begin{proof}
As we mentioned above, the strong resolvent convergence \eqref{convergence} follows once we show
\begin{equation}\label{zbiez}
\biggl(\ii\I-\bigotimes\limits_{n=1}^N{A_n}\tens\I^{\tens\infty}\biggr)^{-1}\xrightarrow[N\to\infty]{\text{\sc{sot}}}\biggl(\ii\I-\bigotimes\limits_{n=1}^{\infty}A_n\biggr)^{-1}
\end{equation}
and since all the operators in the sequence are contractions, it is enough to check convergence on a linearly dense subset of $\sH=\bigotimes\limits_{n=1}^{\infty}\sH_n$.

Take $K\in\NN$ and $\xi_i\in\sH_i$ for $i\in\{1,\dotsc,K\}$. Let us first observe that for any $N\geq{K}$ we have
\begin{equation}\label{odwrot}
\begin{aligned}
\biggl(\ii\I-\bigotimes_{n=1}^{N}A_n\tens\I^{\tens\infty}\biggr)^{-1}&(\xi_1\tens\dotsm\tens\xi_K\tens\Omega_{K+1}\tens\dotsm)\\
&=\biggl(\ii\I-\bigotimes_{n=1}^{\infty}A_n\biggr)^{-1}(\xi_1\tens\dotsm\tens\xi_K\tens\Omega_{K+1}\tens\dotsm)
\end{aligned}
\end{equation}
Indeed,
\begin{align*}
&\biggl(\ii\I-\bigotimes_{n=1}^{N}A_n\tens\I^{\tens\infty}\biggr)^{-1}(\xi_1\tens\dotsm\tens\xi_K\tens\Omega_{K+1}\tens\dotsm)\\
&\qquad=\bigl((\ii\I_{\sH_1\tens\dotsm\tens\sH_N}-A_1\tens\dotsm\tens{A_N})^{-1}\tens\I^{\tens\infty}\bigr)(\xi_1\tens\dotsm\tens\xi_K\tens\Omega_{K+1}\tens\dotsm)\\
&\qquad=\bigl((\ii\I_{\sH_1\tens\dotsm\tens\sH_N}-A_1\tens\dotsm\tens{A_N})^{-1}(\xi_1\tens\dotsm\tens\xi_K\tens\Omega_{K+1}\tens\dotsm\tens\Omega_N)\bigr)\tens\Omega_{N+1}\dotsm.
\end{align*}
Note that the vector
\[
(\ii\I_{\sH_1\tens\dotsm\tens\sH_N}-A_1\tens\dotsm\tens{A_N})^{-1}(\xi_1\tens\dotsm\tens\xi_K\tens\Omega_{K+1}\tens\dotsm\tens\Omega_N)
\]
belongs to $\Dom(A_1\tens\dotsm\tens{A_N})$, so by Remark \ref{uwagaDom} the vector
\[
\bigl((\ii\I_{\sH_1\tens\dotsm\tens\sH_N}-A_1\tens\dotsm\tens{A_N})^{-1}(\xi_1\tens\dotsm\tens\xi_K\tens\Omega_{K+1}\tens\dotsm\tens\Omega_N)\bigr)\tens\Omega_{N+1}\dotsm
\]
belongs to $\Dom\biggl(\bigotimes\limits_{n=1}^{\infty}A_n\biggr)$ and
\[
\resizebox{\textwidth}{!}{\ensuremath{
\begin{aligned}
\biggl(\ii\I-\bigotimes_{n=1}^{\infty}A_n\biggr)
\bigl((\ii\I_{\sH_1\tens\dotsm\tens\sH_N}-A_1\tens\dotsm\tens{A_N})^{-1}(\xi_1\tens\dotsm\tens\xi_K\tens&\Omega_{K+1}\tens\dotsm\tens\Omega_N)\bigr)\tens\Omega_{N+1}\dotsm
\\
&=\xi_1\tens\dotsm\tens\xi_K\tens\Omega_{K+1}\tens\dotsm
\end{aligned}
}}
\]
which proves equation \eqref{odwrot} and the strong resolvent convergence \eqref{zbiez} follows.

Now we pass to the proof of \eqref{widmo}. Take $\lambda$ belonging to the right-hand side. Then for any $\eps>0$ there is an integer $N_\eps$ and $\lambda_i^\eps\in\operatorname{Sp}(A_i)$ ($i=1,\dotsc,N_\eps$) such that $|\lambda-\lambda^\eps|\leq\eps$, where
\[
\lambda^\eps=\lambda_1^\eps\dotsm\lambda_{N_\eps}^\eps.
\]
As $\operatorname{Sp}(A_1\tens\dotsm\tens{A_{N_\eps}})=\overline{\operatorname{Sp}(A_1)\dotsm\operatorname{Sp}(A_{N_\eps})}$ (\cite[Theorem VII.33]{ReedSimon1}), the spectral theorem for self-adjoint operators (\cite[Theorem 10.4]{primer}) implies that there is a sequence of unit vectors $(\zeta_{\eps,k})_{k\in\NN}$ in $\Dom(A_1\tens\dotsm\tens{A_{N_\eps}})$ such that $\bigl\|(A_1\tens\dotsm\tens{A_{N_\eps}})\zeta_{\eps,k}-\lambda^\eps\zeta_{\eps,k}\bigr\|\xrightarrow[k\to\infty]{}0$. Consequently
\[
\biggl\|
\biggl(\bigotimes_{n=1}^{\infty}A_n\biggr)(\zeta_{\eps,k}\tens\Omega_{N_\eps+1}\tens\dotsm)-\lambda^\eps(\zeta_{\eps,k}\tens\Omega_{N_\eps+1}\tens\dotsm)\biggr\|\xrightarrow[k\to\infty]{}0,
\]
i.e. $\lambda^\eps$ is an approximate eigenvalue. Since the spectrum of $\bigotimes\limits_{n=1}^{\infty}A_n$ is closed, we find that $\lambda\in\operatorname{Sp}\biggl(\bigotimes\limits_{n=1}^{\infty}A_n\biggr)$.

Now let $\lambda$ be an arbitrary element of $\operatorname{Sp}\biggl(\bigotimes\limits_{n=1}^{\infty}A_n\biggr)$. For any $\eps>0$ there is a norm-one vector $\zeta^\eps\in\Dom\biggl(\bigotimes_{n=1}^{\infty}A_n\biggr)$ such that
\[
\biggl\|\biggl(\bigotimes_{n=1}^{\infty}A_n\biggr)\zeta^\eps-\lambda\zeta^\eps\biggr\|\leq\eps.
\]
Since $\bigotimes\limits_{n=1}^{\infty}A_n$ is the closure of $\bA$ (as in Lemma \ref{lemA}), for $k\in\NN$ there exists $\zeta^\eps_k\in\Dom(\bA)$ such that
\[
\|\zeta^\eps-\zeta^\eps_k\|\leq\tfrac{1}{k}\quad\text{and}\quad\biggl\|\biggl(\bigotimes_{n=1}^{\infty}A_n\biggr)\zeta^\eps-\biggl(\bigotimes_{n=1}^{\infty}A_n\biggr)\zeta^\eps_k\biggr\|\leq\tfrac{1}{k}.
\]
In particular, for each $k$ there is an integer $N_{\eps,k}$ such that $\zeta^\eps_k$ belongs to
\[
\operatorname{span}
\bigl\{\eta_1\tens\dotsm\tens\eta_{N_{\eps,k}}\tens\Omega_{N_{\eps,k}+1}\tens\dotsm\,\bigr|\bigl.\,\eta_i\in\Dom(A_i),\:i\in\{1,\dotsc,N_{\eps,k}\}\bigr\}
\]
and writing $\zeta^\eps_k$ as $\zeta^{\eps,0}_k\tens\Omega_{N_{\eps,k}+1}\tens\dotsm$ we obtain
\begin{align*}
\Bigl\|(A_1\tens\dotsm\tens{A_{N_{\eps,k}}})\zeta^{\eps,0}_k&-\lambda\zeta^{\eps,0}_k\Bigr\|
=\biggl\|\biggl(\bigotimes_{n=1}^{\infty}A_n\biggr)\zeta^\eps_k-\lambda\zeta^\eps_k\biggr\|\\
&\leq\biggl\|\biggl(\bigotimes_{n=1}^{\infty}A_n\biggr)\zeta^\eps_k-\biggl(\bigotimes_{n=1}^{\infty}A_n\biggr)\zeta^\eps\biggr\|+
\biggl\|\biggl(\bigotimes_{n=1}^{\infty}A_n\biggr)\zeta^\eps-\lambda\zeta^\eps\biggr\|+\|\lambda\zeta^\eps-\lambda\zeta^\eps_k\|\\
&\leq\tfrac{1}{k}+\eps+\tfrac{|\lambda|}{k},
\end{align*}
which implies that $\lambda\in\operatorname{Sp}(A_1\tens\dotsm\tens{A_{N_{\eps,k}}})$.
\end{proof}

\section*{Acknowledgments}

Research presented in this paper was partially supported by the Polish National Agency for the Academic Exchange, Polonium grant PPN/BIL/2018/1/00197 as well as by the FWO–PAS project VS02619N: von Neumann algebras arising from quantum symmetries and by the University of Warsaw Thematic Research Program "Quantum Symmetries". Additionally the first author was supported by EPSRC grants EP/T03064X/1 and EP/T030992/1 and the second author by NCN (National Science Centre, Poland) grant no.~2022/47/B/ST1/00582.


\begin{thebibliography}{10}

\bibitem{AlaghmandanCrann}
Mahmood Alaghmandan and Jason Crann.
\newblock Character density in central subalgebras of compact quantum groups.
\newblock {\em Canad. Math. Bull.}, 60(3):449--461, 2017.

\bibitem{Banica}
Teodor Banica.
\newblock Le groupe quantique compact libre {${\rm U}(n)$}.
\newblock {\em Comm. Math. Phys.}, 190(1):143--172, 1997.

\bibitem{BrannanVergnioux}
Michael Brannan and Roland Vergnioux.
\newblock Orthogonal free quantum group factors are strongly 1-bounded.
\newblock {\em Adv. Math.}, 329:133--156, 2018.

\bibitem{typeI}
Alexandru {Chirvasitu}, Jacek {Krajczok}, and Piotr~M. {So{\l}tan}.
\newblock {Compact quantum group structures on type-I $\mathrm{C}^*$-algebras}.
\newblock {\em arXiv e-prints}, page arXiv:2008.03772, August 2020.

\bibitem{ConnesIII1}
Alain Connes.
\newblock Almost periodic states and factors of type {${\rm III}_{1}$}.
\newblock {\em J. Functional Analysis}, 16:415--445, 1974.

\bibitem{Connes-anti}
Alain Connes.
\newblock A factor not anti-isomorphic to itself.
\newblock {\em Bull. London Math. Soc.}, 7:171--174, 1975.

\bibitem{Connes}
Alain Connes.
\newblock The {T}omita-{T}akesaki theory and classification of type-{III}
  factors.
\newblock In {\em {$C\sp*$}-algebras and their applications to statistical
  mechanics and quantum field theory ({P}roc. {I}nternat. {S}chool of {P}hysics
  ``{E}nrico {F}ermi'', {C}ourse {LX}, {V}arenna, 1973)}, pages 29--46, 1976.

\bibitem{DeCommerFreslonYamashita}
Kenny De~Commer, Amaury Freslon, and Makoto Yamashita.
\newblock C{CAP} for universal discrete quantum groups.
\newblock {\em Comm. Math. Phys.}, 331(2):677--701, 2014.
\newblock With an appendix by Stefaan Vaes.

\bibitem{dCKSS}
Kenny De~Commer, Pawe{\l} Kasprzak, Adam Skalski, and Piotr~M. So{\l}tan.
\newblock Quantum actions on discrete quantum spaces and a generalization of
  {C}lifford's theory of representations.
\newblock {\em Israel J. Math.}, 226(1):475--503, 2018.

\bibitem{delaHarpe}
Pierre de~la Harpe.
\newblock Operator algebras, free groups and other groups.
\newblock Number 232, pages 121--153. 1995.
\newblock Recent advances in operator algebras (Orl\'{e}ans, 1992).

\bibitem{DesmedtPhD}
Pieter Desmedt.
\newblock Aspects of the theory of locally compact quantum groups : amenabiity
  - {P}lancherel measure, 2003.
\newblock Thesis (Ph.D.)--Katholieke Universiteit Leuven (Belgium).

\bibitem{Fima}
Pierre Fima.
\newblock On locally compact quantum groups whose algebras are factors.
\newblock {\em J. Funct. Anal.}, 244(1):78--94, 2007.

\bibitem{FimaMukherjeePatri}
Pierre Fima, Kunal Mukherjee, and Issan Patri.
\newblock On compact bicrossed products.
\newblock {\em J. Noncommut. Geom.}, 11(4):1521--1591, 2017.

\bibitem{HaagerupStandardForm}
Uffe Haagerup.
\newblock The standard form of von {N}eumann algebras.
\newblock {\em Math. Scand.}, 37(2):271--283, 1975.

\bibitem{Haagerup}
Uffe Haagerup.
\newblock Connes' bicentralizer problem and uniqueness of the injective factor
  of type {${\rm III}_1$}.
\newblock {\em Acta Math.}, 158(1-2):95--148, 1987.

\bibitem{SunderJones}
Vaughan~F.R. Jones and Viakalathur~S. Sunder.
\newblock {\em Introduction to subfactors}, volume 234 of {\em London
  Mathematical Society Lecture Note Series}.
\newblock Cambridge University Press, Cambridge, 1997.

\bibitem{Kadison}
Richard~V. Kadison.
\newblock Transformations of states in operator theory and dynamics.
\newblock {\em Topology}, 3(suppl, suppl. 2):177--198, 1965.

\bibitem{Kallman}
Robert~R. Kallman.
\newblock A generalization of free action.
\newblock {\em Duke Math. J.}, 36:781--789, 1969.

\bibitem{Varenna}
Daniel Kastler, editor.
\newblock {\em {$C\sp*$}-algebras and their applications to statistical
  mechanics and quantum field theory}.
\newblock Proceedings of the International School of Physics ``Enrico Fermi'',
  Course LX. North-Holland Publishing Co., Amsterdam-New York-Oxford; Societ\`a
  Italiana di Fisca, Bologna, 1976.
\newblock Held at Varenna, 23rd July--4th August 1973.

\bibitem{KawahigashiSutherlandTakesaki}
Yasuyuki Kawahigashi, Colin~E. Sutherland, and Masamichi Takesaki.
\newblock The structure of the automorphism group of an injective factor and
  the cocycle conjugacy of discrete abelian group actions.
\newblock {\em Acta Math.}, 169(1-2):105--130, 1992.

\bibitem{Koornwinder}
Tom~H. Koornwinder.
\newblock Orthogonal polynomials in connection with quantum groups.
\newblock In {\em Orthogonal polynomials ({C}olumbus, {OH}, 1989)}, volume 294
  of {\em NATO Adv. Sci. Inst. Ser. C: Math. Phys. Sci.}, pages 257--292.
  Kluwer Acad. Publ., Dordrecht, 1990.

\bibitem{symmetry}
Jacek Krajczok.
\newblock Symmetry of eigenvalues of operators associated with representations
  of compact quantum groups.
\newblock {\em Colloq. Math.}, 156(2):267--272, 2019.

\bibitem{KrajczokTypeI}
Jacek Krajczok.
\newblock Type {I} locally compact quantum groups: integral characters and
  coamenability.
\newblock {\em Dissertationes Math.}, 561:151, 2021.

\bibitem{modular}
Jacek Krajczok.
\newblock Modular properties of type {I} locally compact quantum groups.
\newblock {\em J. Operator Theory}, 87(2):319--354, 2022.

\bibitem{qdisk}
Jacek {Krajczok} and Piotr~M. {So{\l}tan}.
\newblock {The quantum disk is not a quantum group}.
\newblock {\em {\rm To appear in \emph{Journal of Topology and Analysis},}
  arXiv e-prints}, page arXiv:2005.02967, May 2020.

\bibitem{invariants}
Jacek Krajczok and Piotr~M. So{\l}tan.
\newblock On certain invariants of compact quantum groups.
\newblock {\em arXiv e-prints}, page arXiv:2311.01204, November 2023.

\bibitem{KrajczokWasilewski}
Jacek Krajczok and Mateusz Wasilewski.
\newblock On the von {N}eumann algebra of class functions on a compact quantum
  group.
\newblock {\em J. Funct. Anal.}, 283(5):Paper No. 109549, 29, 2022.

\bibitem{KustermansVaes}
Johan Kustermans and Stefaan Vaes.
\newblock Locally compact quantum groups.
\newblock {\em Ann. Sci. \'Ecole Norm. Sup. (4)}, 33(6):837--934, 2000.

\bibitem{Marrakchi}
Amine Marrakchi.
\newblock Full factors, bicentralizer flow and approximately inner
  automorphisms.
\newblock {\em Invent. Math.}, 222(1):375--398, 2020.

\bibitem{MurrayVonNeumann1}
Francis~J. Murray and John von Neumann.
\newblock On rings of operators.
\newblock {\em Ann. of Math. (2)}, 37(1):116--229, 1936.

\bibitem{MurrayVonNeumann2}
Francis~J. Murray and John von Neumann.
\newblock On rings of operators. {II}.
\newblock {\em Trans. Amer. Math. Soc.}, 41(2):208--248, 1937.

\bibitem{MurrayVonNeumann4}
Francis~J. Murray and John von Neumann.
\newblock On rings of operators. {IV}.
\newblock {\em Ann. of Math. (2)}, 44:716--808, 1943.

\bibitem{NeshveyevTuset}
Sergey Neshveyev and Lars Tuset.
\newblock {\em Compact quantum groups and their representation categories},
  volume~20 of {\em Cours Sp\'{e}cialis\'{e}s [Specialized Courses]}.
\newblock Soci\'{e}t\'{e} Math\'{e}matique de France, Paris, 2013.

\bibitem{Powers}
Robert~T. Powers.
\newblock Representations of uniformly hyperfinite algebras and their
  associated von {N}eumann rings.
\newblock {\em Ann. of Math. (2)}, 86:138--171, 1967.

\bibitem{ReedSimon1}
Michael Reed and Barry Simon.
\newblock {\em Methods of modern mathematical physics. {I}}.
\newblock Academic Press, Inc. [Harcourt Brace Jovanovich, Publishers], New
  York, second edition, 1980.
\newblock Functional analysis.

\bibitem{Schmuedgen}
Konrad Schm\"{u}dgen.
\newblock {\em Unbounded self-adjoint operators on {H}ilbert space}, volume 265
  of {\em Graduate Texts in Mathematics}.
\newblock Springer, Dordrecht, 2012.

\bibitem{qBohr}
Piotr~M. So{\l}tan.
\newblock Quantum {B}ohr compactification.
\newblock {\em Illinois J. Math.}, 49(4):1245--1270, 2005.

\bibitem{without}
Piotr~M. So{\l}tan.
\newblock Quantum spaces without group structure.
\newblock {\em Proc. Amer. Math. Soc.}, 138(6):2079--2086, 2010.

\bibitem{primer}
Piotr~M. So{\l}tan.
\newblock {\em A primer on {H}ilbert space operators}.
\newblock Compact Textbooks in Mathematics. Birk\-h\"{a}u\-ser/Sprin\-ger,
  Cham, 2018.

\bibitem{SoltanWoronowicz}
Piotr~M. So{\l}tan and Stanis{\l}aw~L. Woronowicz.
\newblock From multiplicative unitaries to quantum groups. {II}.
\newblock {\em J. Funct. Anal.}, 252(1):42--67, 2007.

\bibitem{Stormer}
Erling St{\o}rmer.
\newblock On infinite tensor products of von {N}eumann algebras.
\newblock {\em Amer. J. Math.}, 93:810--818, 1971.

\bibitem{Stratila}
\c{S}erban Str\u{a}til\u{a}.
\newblock {\em Modular theory in operator algebras}.
\newblock Editura Academiei Republicii Socialiste Rom\^{a}nia, Bucharest;
  Abacus Press, Tunbridge Wells, 1981.
\newblock Translated from the Romanian by the author.

\bibitem{StratilaZsido}
\c{S}erban Str\u{a}til\u{a} and L\'{a}szl\'{o} Zsid\'{o}.
\newblock {\em Lectures on von {N}eumann algebras}.
\newblock Editura Academiei, Bucharest; Abacus Press, Tunbridge Wells, 1979.
\newblock Revision of the 1975 original, Translated from the Romanian by Silviu
  Teleman.

\bibitem{Takesaki1}
Masamichi Takesaki.
\newblock {\em Theory of operator algebras. {I}}, volume 124 of {\em
  Encyclopaedia of Mathematical Sciences}.
\newblock Springer-Verlag, Berlin, 2002.
\newblock Reprint of the first (1979) edition, Operator Algebras and
  Non-commutative Geometry, 5.

\bibitem{Takesaki2}
Masamichi Takesaki.
\newblock {\em Theory of operator algebras. {II}}, volume 125 of {\em
  Encyclopaedia of Mathematical Sciences}.
\newblock Springer-Verlag, Berlin, 2003.
\newblock Operator Algebras and Non-commutative Geometry, 6.

\bibitem{Takesaki3}
Masamichi Takesaki.
\newblock {\em Theory of operator algebras. {III}}, volume 127 of {\em
  Encyclopaedia of Mathematical Sciences}.
\newblock Springer-Verlag, Berlin, 2003.
\newblock Operator Algebras and Non-commutative Geometry, 8.

\bibitem{Tomatsu}
Reiji Tomatsu.
\newblock Amenable discrete quantum groups.
\newblock {\em J. Math. Soc. Japan}, 58(4):949--964, 2006.

\bibitem{Vaes}
Stefaan Vaes.
\newblock Strictly outer actions of groups and quantum groups.
\newblock {\em J. Reine Angew. Math.}, 578:147--184, 2005.

\bibitem{vonNeumann3}
John von Neumann.
\newblock On rings of operators. {III}.
\newblock {\em Ann. of Math. (2)}, 41:94--161, 1940.

\bibitem{WangTensor}
Shuzhou Wang.
\newblock Tensor products and crossed products of compact quantum groups.
\newblock {\em Proc. London Math. Soc. (3)}, 71(3):695--720, 1995.

\bibitem{su2}
Stanis{\l}aw~L. Woronowicz.
\newblock Twisted {${\rm SU}(2)$} group. {A}n example of a noncommutative
  differential calculus.
\newblock {\em Publ. Res. Inst. Math. Sci.}, 23(1):117--181, 1987.

\end{thebibliography}
\end{document}